\numberwithin{equation}{section}     
\setlist[enumerate,1]{label={\upshape(\roman*)},ref=\roman*}
\setlist[enumerate,2]{label={\upshape(\alph*)},ref=\alph*}
\newcommand{\R}{\mbox{$\mathbb{R}$}}
\newcommand{\wt}[1]{\widetilde{#1}}
 \def\RR{{\mathbb R}}  \def\TT{{\mathbb T}}
 \def\ZZ{{\mathbb Z}}
\def\cA{\mathcal{A}}  \def\cG{\mathcal{G}}  
\def\cB{\mathcal{B}}   \def\cN{\mathcal{N}} \def\cT{\mathcal{T}}
\def\cC{\mathcal{C}}   \def\cO{\mathcal{O}} \def\cU{\mathcal{U}}
   \def\cP{\mathcal{P}} \def\cV{\mathcal{V}}
   \def\cQ{\mathcal{Q}} \def\cW{\mathcal{W}}
\def\cF{\mathcal{F}}  \def\cL{\mathcal{L}}
\newtheorem*{teo*}{Informal Statement}
\newtheorem{teo}{Theorem}[section]
\newtheorem{quest}{Question}
\newtheorem{cor}[teo]{Corollary}
\newtheorem{claim}[teo]{Claim}
\newtheorem{lema}[teo]{Lemma}
\newtheorem{lemma}[teo]{Lemma}
\newtheorem{prop}[teo]{Proposition}
\newtheorem{proposition}[teo]{Proposition}
\newtheorem{thmintro}{Theorem}
\theoremstyle{definition}
\newtheorem{defi}[teo]{Definition}
\theoremstyle{remark}
\newtheorem{obs}[teo]{Remark}
\newtheorem{remark}[teo]{Remark}
\newtheorem{notation}[teo]{Notation}
\newcommand{\eps}{\varepsilon}
\newcommand{\en}{\subset}
\newcommand{\mt}{\widetilde{M}}
\newcommand{\ft}{\widetilde{f}}
\newcommand{\Ft}{\widetilde{\cF}}
\newcommand{\tild}[1]{\widetilde{#1}}
\newcommand{\cs}{\cW^{cs}}
\newcommand{\cu}{\cW^{cu}}
\newcommand{\fol}{\mathcal{F}}
\newcommand{\fn}{\widetilde{\mathcal{F}}}
\newcommand{\oo}{\mathcal{O}}
\newcommand{\lcs}{\mathcal{L}^{cs}}
\newcommand{\lcu}{\mathcal{L}^{cu}}
\newcommand{\lecs}{\mathcal{L}^{cs}_{\eps}}
\newcommand{\lc}{\mathcal{L}^c}
\newcommand{\fes}{\cF^{cs}_{\eps}}
\newcommand{\feu}{\cF^{cu}_{\eps}}
\newcommand{\fec}{\cF^c_\eps}
\newcommand{\wfes}{\widetilde{\mathcal{F}^{cs}_{\varepsilon}}}
\newcommand{\wfeu}{\widetilde{\mathcal{F}^{cu}_{\varepsilon}}}
\newcommand{\wfec}{\widetilde\fec}
\newcommand{\hs}{h^{cs}}
\newcommand{\hu}{h^{cu}}
\newcommand{\wcs}{\widetilde{\cW^{cs}}}
\newcommand{\wcu}{\widetilde{\cW^{cu}}}
\newcommand{\wF}{\widetilde{\cF}}
\newcommand{\wG}{\widetilde{\cG}}
\title[Collapsed Anosov flows]{Collapsed Anosov flows and self orbit equivalences}
\author[T.~Barthelm\'e]{Thomas Barthelm\'e}
\address{Queen's University, Kingston, ON}
\email{thomas.barthelme@queensu.ca}
\urladdr{sites.google.com/site/thomasbarthelme}
\author[S.~Fenley]{Sergio R.\ Fenley} 
\address{Florida State University, Tallahassee, FL 32306}
\email{fenley@math.fsu.edu}
\author[R.~Potrie]{Rafael Potrie} 
\address{Centro de Matem\'atica, Universidad de la Rep\'ublica, Uruguay}
\email{rpotrie@cmat.edu.uy}
\urladdr{http://www.cmat.edu.uy/~rpotrie/}
\thanks{T.B. was partially supported by the NSERC (Funding reference number RGPIN-2017-04592). S.F. was partially supported by Simons Foundation grant numbers 280429 and 637554. R. P. was partially supported by CSIC 618, FCE-1-2017-1-135352. This work started and mostly completed while R.P. was a Von Neumann fellow at IAS, funded by Minerva Research Fundation Membership Fund and NSF DMS-1638352 and he wishes to thank the IAS for the excellent working conditions. }
\keywords{Anosov flows, Partially hyperbolic diffeomorphisms, 3-manifolds, Classification.}
\subjclass[2010]{}
\begin{document}

\begin{abstract}
We propose a generalization of the concept of discretized Anosov flows
that covers a wide class of
partially hyperbolic diffeomorphisms in 3-manifolds, and that 
we call 
collapsed Anosov flows. They  are related to Anosov flows
via a self orbit equivalence of the
flow. 
We show that all the examples from \cite{BGHP} belong to this class, and that 
it is an open and closed class among partially hyperbolic diffeomorphisms. 
We provide some equivalent definitions which may be
more amenable to analysis and are useful in different situations.
Conversely, we describe the isotopy classes of partially hyperbolic diffeomorphisms
that are collapsed Anosov flows
associated with certain types of Anosov flows.
\end{abstract}

\maketitle

\section{Introduction}\label{s.intro}

For about $15$ years Pujals' conjecture \cite{BW} has served as a blueprint and motivation for the understanding and classification of partially hyperbolic diffeomorphisms in dimension $3$. In most $3$-manifolds, that is, those with non virtually solvable fundamental group\footnote{See, e.g., \cite{HPsurvey} for the case of manifolds with virtually solvable fundamental group.}, the conjecture affirmed that, up to iterates and finite lifts, a transitive partially hyperbolic diffeomorphism had to behave like the discretization of an Anosov flow: the diffeomorphism should globally fix each orbit of an associated Anosov flow, moving points along the orbits.

In the past few years, Pujals' conjecture was disproved: Examples built in \cite{BGP,BGHP} (see also \cite{BPP,BZ}) gave a plethora of new partially hyperbolic diffeomorphisms. All of these new examples are such that they have infinite order in the mapping class group of their underlying manifolds, contradicting Pujals' conjecture.

Thanks to a criterion developed in \cite{BGHP}, called $\varphi$-transversality (see Definition \ref{def.phi-transversality}), these new examples --- as well as the older examples of \cite{BW} ---
can be described in the following way: Start with an Anosov flow $\phi_t$ on a manifold $M$. Then find a diffeomorphism $\varphi$ of $M$ that preserves the transversality of the bundles of the Anosov splitting (more precisely, such that the flow $\phi_t$ is $\varphi$-transversal to itself, see 
Definition \ref{def.phi-transversality}). Finally, compose $\varphi$ with a very large time of the flow $\phi_t$ and obtain a partially hyperbolic diffeomorphism.

Finding good diffeomorphisms $\varphi$ is generally the difficult step, but one type of map that does work (as chosen in \cite{BW}) is a smooth symmetry of the Anosov flow.

In this article we show that, while not obvious from the constructions, all of the new examples of partially hyperbolic diffeomorphisms are related to symmetries (self orbit equivalences to be precise) of the initial flow.

More generally, the main goal of our article is to introduce, a new class of partially hyperbolic diffeomorphisms in dimension $3$, that we call \emph{collapsed Anosov flows}. A partially hyperbolic diffeomorphism is a collapsed Anosov flow if there exists a global collapsing map, homotopic to the identity, that semi-conjugates a self orbit equivalence of a topological Anosov flow with the diffeomorphism.

This class of diffeomorphisms has very interesting properties. 
In particular we show 
the following (formalized below as Theorems \ref{teo.main1} and \ref{teo.main6}).
\begin{teo*}
Collapsed Anosov flows form an open and closed class of partially hyperbolic diffeomorphisms in dimension 3 that contains all known examples in manifolds with non-virtually solvable fundamental group. 
\end{teo*}

Since our goal is in part to lay down the basis for a future study of this class, we introduce four definitions: Three of them (\emph{collapsed Anosov flow}, \emph{strong collapsed Anosov flow} and \emph{leaf space collapsed Anosov flow}, Definitions \ref{defi1w}, \ref{defi1} and \ref{defiw2} respectively) have to do with how restrictive one wants the semi-conjugacy to be in terms of its behavior with respect to either center
curves or the branching foliations of the partially hyperbolic diffeomorphisms. The last definition (\emph{quasigeodesic partially hyperbolic diffeomorphisms}, Definition \ref{defiQG}) is different as it instead asks for the center foliation to be by quasigeodesics inside each center stable and center unstable leaf\footnote{It also requires the center stable/unstable branching foliations to be by Gromov-hyperbolic leaves.}. 

Under some orientability conditions, we prove
equivalence  between quasigeodesic partially hyperbolic diffeomorphisms, strong collapsed Anosov flows and leaf space collapsed Anosov flows (Theorems \ref{teo.main3} and \ref{teo.main5}).
We believe that these equivalences will show themselves to be quite useful: For instance, the proof, obtained in \cite{FP-2}, that every hyperbolic $3$-manifold that admits a partially hyperbolic also admits an Anosov flow relies on these equivalences.

In light of the fact that the known counter-examples to Pujals' conjecture are all collapsed Anosov flows, it is natural to ask the following (thus extending \cite[Question 1]{BGP} and making \cite[Question 12]{PotICM} precise).

\begin{quest}\label{q.PHisCAF}
Let $M$ be a $3$-manifold with non virtually solvable fundamental group and $f\colon M \to M$ a (transitive) partially hyperbolic diffeomorphism. Is $f$ a collapsed Anosov flow? 
\end{quest}

One interest of Pujals' conjecture was to suggest that the classification of partially hyperbolic diffeomorphisms in dimension $3$ could be done up to classification of Anosov flows. If the question above admits a positive answer, then that view behind Pujals' conjecture may still be true, as it seems possible to understand all the self orbit equivalences of Anosov flows without having a full classification of the flows\footnote{Prior to the present article, self orbit equivalences of Anosov flows were only understood in very specific cases: for geodesic flows as can, for instance, be deduced from \cite{Matsumoto}, or for self orbit equivalences that are homotopic to identity \cite{BaG}. In section \ref{s.classification_results}, we describe self orbit equivalences of the Franks--Williams example. After the completion of this article, the first author and K.~Mann also obtained a general result that can be used to classify self orbit equivalences for any $\RR$-covered Anosov flow \cite{BarMann}.}.

While we will not suggest an answer to Question \ref{q.PHisCAF} in full generality, there are several contexts where we can say more:
\begin{enumerate}[label =(\roman*)]
 \item When $M$ is hyperbolic, the answer is proven to be positive in \cite{FP-2}.
 \item When the partially hyperbolic diffeomorphism is homotopic to the identity, the answer is likely positive (see \cite{BFFP_part1,BFFP_part2}).
 \item For Seifert manifolds, current work in progress by the second and third authors also indicates a positive answer.
\end{enumerate}

Another potential interest we see in collapsed Anosov flows is that it may allow one to successfully decouple the dynamical study of partially hyperbolic diffeomorphisms from the question of their classification. Indeed, one may be able to obtain fine dynamical properties when restricting to particular types of collapsed Anosov flows.

This strategy has previously been successfully used in \cite{AVW,BFT,BarthelmeGogolev_centralizers,DWX,FP-acc,GM} for \emph{discretized Anosov flows}, or similar concepts. 
Discretized Anosov flows were 
introduced in \cite{BFFP_part1} (although related notions appeared previously, for instance in \cite{BW,BG}). One can view them
as collapsed Anosov flows where the self orbit equivalence is trivial, meaning that it fixes every orbit of the flow (see \S\ref{s.AF}). By \cite{BFFP_part1,BFFP_part2}, discretized Anosov flows
represent a very large class of partially hyperbolic diffeomorphisms. 
An example of a dynamical consequence is \cite{FP-acc}, where it is shown
that discretized Anosov flows are always \emph{accessible} unless they come from suspensions (in particular, smooth volume preserving ones are ergodic).
In fact, another, albeit slightly weaker, accessibility result is obtained for some specific collapsed Anosov flows in \cite{FP-acc} (but without using our terminology), and it seems plausible that such results could be achievable for other classes of collapsed Anosov flows\footnote{After the completion of this work, the second and third authors proved accessibility of collapsed Anosov flow under the assumption that the non-wandering set is everything \cite{FP-3}.}.

Finally, we can use the interaction of collapsed Anosov flows with self orbit equivalences of Anosov flows to classify them up to isotopy. 
Over the years, a deep knowledge of the orbit space of Anosov flows in dimension $3$ has been attained. This in turn gives restrictions on how a self orbit equivalence can act. For instance, self orbit equivalences that are homotopic to the identity were classified in \cite{BaG}, showing that
there are at most two types of actions in that case. Knowing restrictions about self orbit equivalences (for instance which isotopy classes can support them) directly implies restrictions on possible collapsed Anosov flows.

On the other hand, a general method to build self orbit equivalences of Anosov flow has not yet been developed. The construction methods of \cite{BGHP} together with Theorem \ref{teo.main1} gives one such method.

We illustrate what consequences this interaction gives us in a few specific cases. In particular, we give a complete description of collapsed Anosov flows up to isotopy in the following cases: 1) when the manifold is the unit tangent bundle of a surface; \ 2) when the associated flow is the Franks--Williams example \cite{FranksWilliams}; or \ 3) when the collapsed Anosov flow is homotopic to the identity (see \S\ref{s.classification_results}).

The conceptualization of the notion of collapsed Anosov flows that we introduce here has been in part motivated by \cite{FP-2} (and to a lesser extent by \cite{BFFP_part1,BFFP_part2}). The indebtedness we have to these previous works does not translate, however, into their direct use in the present article. Indeed, the scope, as well as most of the techniques we use here, are different in nature from those in the aforementioned works.

\section{Collapsed Anosov flows}\label{section.CAF}

In this paper, $M$ will always denote a closed 3-dimensional manifold.
It is possible that some notions make sense in higher dimensions but we will repeatedly use facts about foliations and Anosov flows in dimension 3 that are unknown in higher dimensions and we have not checked to which extent arguments extend (even if only in part) to higher dimensions.

In this section, we will make precise the different definitions of collapsed Anosov flows alluded to earlier and formally state the main results of this article.

First we review the notion of topological Anosov flows, in order to be able to introduce collapsed Anosov flows.

\subsection{Anosov flows and topological Anosov flows}

Recall that an \emph{Anosov flow} is a flow $\phi_t \colon M \to M$ generated by a vector field $X$ such that $D\phi_t$ preserves a splitting $TM = E^s \oplus \R X \oplus E^u$ and there exists $a,b$ such that, for all $t>0$,
\begin{align*}
 \|D\phi_t v^s \| &< a e^{bt}\|v^s\| \text{ for all } v^s \in E^s \\
 \|D\phi_{-t} v^u \| &< a e^{bt}\|v^u\| \text{ for all } v^u \in E^u.
\end{align*}

It is well known that this property implies that the splitting is continuous. Moreover, it follows that the bundles $E^s$ and $E^u$ are uniquely integrable into $\phi_t$-invariant foliations $\cF^s$ and $\cF^u$ tangent respectively to $E^s$ and $E^u$ \cite{An} called the \emph{strong stable} and \emph{strong unstable} foliations of $\phi_t$. One obtains $\phi_t$-invariant foliations $\cF^{ws}$ and $\cF^{wu}$ called the \emph{weak stable} and \emph{weak unstable} foliations by taking the saturation of the previous foliations by the flow. Note that these foliations are the unique foliations tangent respectively to $E^{ws}:=E^s \oplus \R X$ and $E^{wu}:=\R X \oplus E^u$ \cite{An}.  
See \cite{HPS} or \cite[\S 4]{CP} and references therein for more details. 

The following generalizes Anosov flows:

 \begin{defi}\label{def_TAF}
  A \emph{topological Anosov flow} is a continuous flow $\phi_t \colon M \to M$ that satisfies the following:
\begin{enumerate}[label = (\roman*)]
 \item\label{item_TAF_tangentVF} The flow $\phi_t$ is generated by a continuous, non-singular, vector field $X$. In particular, the orbits of $\phi^t$ are $C^1$-curves in $M$;
\item \label{item_TAF_weak_foliations} The flow $\phi_t$ preserves two continuous, topologically transverse, $2$-   dimensional foliations $\cF^{ws}$ and $\cF^{wu}$.
\item \label{item_TAF_forward_asymptotic} Given any $x\in M$ and $y \in \cF^{ws}$ (resp.~$y \in \cF^{wu}$), there exists a continuous increasing reparametrization $h\colon \R \to \R$ such that $d\left(\phi_t(x), \phi_{h(t)}(y)\right)$ converges to $0$ as $t\to +\infty$ (resp.~$t\to -\infty$).
\item \label{item_TAF_backwards_expansivity} There exists $\eps>0$ such that for any $x\in M$ and any $y\in \cF^{ws}_{\eps}(x)$ (resp.~$y\in \cF^{wu}_{\eps}(x)$), with $y$ not on the same orbit as $x$, then for any continuous increasing reparametrization $h\colon \R \to \R$, there exists a $t\leq 0$ (resp.~$t\geq 0$) such that $d\left(\phi_t(x), \phi_{h(t)}(y)\right) >\eps$. 
\end{enumerate}
 \end{defi}

\begin{remark}
 Historically, topological Anosov flows were first considered as the class of pseudo-Anosov flows, as introduced by Mosher \cite{MosherI,MosherII}, that did not have any singular orbits.
 Since then, many different versions of the definition have been used in the literature.
 As we will see in \S\ref{s.AF}, thanks to works of Inaba and Matsumoto \cite{IM} and Paternain \cite{Pat}, one can now make a very succinct definition of topological Anosov flow as an expansive flow, tangent to a non-singular continuous vector field, and preserving a foliation (see Theorem \ref{teo.expimpliesTAF}). Our definition is equivalent to this, but we express it in a way that is convenient for the statements of our results, in particular, as we will later explain, it is equivalent to the definition used in \cite{Shannon}. 
\end{remark}

An \emph{orbit equivalence} between (topological) Anosov flows $\phi_t^1\colon M \to M$ and $\phi_t^2\colon N \to N$ is a homeomorphism $\beta\colon M \to N$ sending orbits of $\phi_t^1$ to orbits of $\phi_t^2$ preserving the orientation.
 In other words, there exists a reparametrization $\phi^2_{u(x,t)}$\footnote{In order for $\phi^2_{u(x,t)}$ to be a flow, the function $u$ must satisfy the following cocycle condition:  $u(x,t+s)= u(\phi^1_t(x),s)+ u(x,t)$. See, e.g., \cite[\S 2.2]{KH}.} of $\phi^2_t$ such that $\beta$ is a conjugation, i.e., $\beta(\phi_t^1(x)) = \phi^2_{u(t,x)}(\beta(x))$, where $u(t,x)$ is monotone increasing for fixed $x$.

\begin{remark} It has been recently proved that every \emph{transitive} topological Anosov flow is \emph{orbit equivalent} to a (smooth) Anosov flow \cite{Shannon}.

There are several reasons why we choose to consider topological Anosov flows, despite Shannon's result. First, from a philosophical stand point, this article aims to relate the topological classification problem for partially hyperbolic diffeomorphisms to that of the topological classification of Anosov flows, making topological Anosov flow the more natural setting. But, more importantly, in some of our results (specifically in Theorem \ref{teo.main5}) we only obtain a \emph{topological} Anosov flow. For the generic partially hyperbolic diffeomorphisms that we consider, the associated topological Anosov flow has no reason to be transitive, thus it is yet unknown whether the topological Anosov flows that we obtain can be taken to be orbit equivalent to a smooth Anosov flow. We will however show that under some general assumptions on the partially hyperbolic diffeomorphism $f$ (for instance $f$-minimality of one of the foliations, see \S\ref{ss.aplicationGT}), the corresponding Anosov flow is transitive, hence can be taken to be a smooth Anosov flow up to orbit equivalence.
\end{remark} 

\begin{defi} A \emph{self orbit equivalence} of an Anosov flow $\phi_t$ is an orbit equivalence between $\phi_t$ and itself. 
\end{defi}

Self orbit equivalences homotopic to the identity have been studied in \cite{BaG} to understand fiberwise Anosov dynamics, but in fact
there are self orbit equivalences of certain Anosov flows which are not homotopic to the identity.

\begin{defi}\label{def.trivialsoe_and_equivalentsoe}
We say that a self orbit equivalence $\beta$ is \emph{trivial} if there exists a  continuous function $\tau\colon M \to \RR$ such that $\beta(x)= \phi_{\tau(x)}(x)$.
Two self orbit equivalences $\alpha,\beta$ are said to be \emph{equivalent} (or that they belong to the same class) if $\alpha \circ \beta^{-1}$ is a trivial self orbit equivalence.  
\end{defi}

\subsection{Partially hyperbolic diffeomorphisms}

A \emph{partially hyperbolic diffeomorphism} is a diffeomorphism $f\colon M \to M$ such that $Df$ preserves a splitting $TM = E^s \oplus E^c \oplus E^u$ into non-trivial bundles such that there is $\ell>0$ verifying that for every $x\in M$ and unit vectors $v^\sigma \in E^\sigma(x)$ ($\sigma = s,c,u$) one has:

\begin{equation*}
 \|Df^\ell v^s \| < \frac{1}{2} \min\{ 1, \|Df^\ell v^c \| \},  \text{ and } \|Df^\ell v^u\| > 2 \max \{1, \|Df^\ell v^c \| \}.
\end{equation*}

As in the Anosov flow case, this condition implies that the bundles are continuous. It also implies unique integrability of the bundles $E^s$ and $E^u$ into foliations $\cW^s$ and $\cW^u$, called \emph{strong stable} and \emph{strong unstable} foliations respectively (see \cite{CP}). 
We denote by $E^{cs}=E^s \oplus E^c$ and $E^{cu}=E^c \oplus E^u$. 

\begin{remark} It follows that given an Anosov flow, its
 time one map is partially hyperbolic and $E^c$ coincides with the bundle generated by the vector field tangent to the flow. 
\end{remark}

When necessary, we will denote the dependence of bundles or foliations on the maps with a subscript, e.g., $E^s_f$ or $\cF^{s}_\phi$. 

\subsection{Collapsed Anosov flows and strong collapsed Anosov flows} 

We are now ready to give the formal definition of a collapsed Anosov flow.

\begin{defi}[Collapsed Anosov Flow]\label{defi1w} 
A partially hyperbolic diffeomorphism $f$ of a closed 3-dimensional manifold $M$ is said to be a \emph{collapsed Anosov flow} if there exists a topological Anosov flow $\phi_t$, a continuous map $h\colon M \to M$ homotopic to the identity and a self orbit equivalence $\beta\colon M \to M$ of $\phi_t$ such that: 
\begin{enumerate} [label= (\roman*)]
\item \label{item.Center_direction_preserved} The map $h$ is differentiable along the orbits of $\phi_t$ and maps the vector field tangent to $\phi_t$ to non-zero vectors tangent to $E^c$.
\item For every $x \in M$ one has that $f \circ h(x)= h\circ \beta(x)$. 
\end{enumerate}
\end{defi}

As noted earlier, \emph{discretized Anosov flows} (as defined in \cite{BFFP_part1}, see \S\ref{ss.DAF}) are collapsed Anosov flows, where $h$ can be taken to be the identity and $\beta$ is a trivial self orbit equivalence.

\begin{remark} The name of this class was chosen as a natural extension of the class of discretized Anosov flows. We stress that these collapsed Anosov flows are diffeomorphisms, and not flows, and apologize in advance if it leads to any confusion. Other possible names that we considered, but decided against, were ``of collapsed Anosov flow type'' or ``collapsed self orbit equivalences''.
\end{remark}

Another case, discussed previously, that is easily seen to be a collapsed Anosov flow is when an Anosov flow $\phi_t$ commutes with a smooth map $\beta$ (as in \cite[Proposition 4.5]{BW} for instance), then $\beta \circ \phi_1$ is a collapsed Anosov flow (with self orbit equivalence 
$\beta \circ \phi_1$ and $h$ the identity).
However, we show (Proposition \ref{prop.C1soe}) that these examples are always ``periodic'' in the sense that a power of the diffeomorphism is a discretized Anosov flow (or, equivalently, a power of the self orbit equivalence is trivial).

In contrast, the examples of \cite{BGHP} will give, thanks to Theorem \ref{teo.main1}, collapsed Anosov flows associated with self orbit equivalences of infinite order. Indeed, from a topological point of view, the examples built in \cite{BGHP} are of two forms: Either the manifold $M$ is toroidal with a torus $T$ transverse to an Anosov flow and the examples are in the isotopy class of a Dehn twist on $T$ (or a composition of such). Or the manifold is the unit tangent bundle $T^1S$ of a surface and the examples are in the isotopy class of the differential of any diffeomorphism of the base $S$.

\begin{remark}\label{rem.corientable}
The definition of a collapsed Anosov flow
forces the center direction of $f$ 
to be orientable, since we can induce an orientation from the orientation of the flow direction via $h$. 
To see this, suppose that $E^c$ is not orientable, and suppose that
$\alpha$ is a closed curve starting at $x$ in $M$ that 
reverses the local orientation of $E^c$.
Let $\gamma$ be the deck transformation associated to $\alpha$.
Lift $x$ to $\tild{x}$ in $\mt$. 
Let $\tild{h}$ be a lift of $h$ which is a lift
of a homotopy of $h$ to the identity.
Then $\tild{h}$ commutes with every deck transformation.
Since $h$ is homotopic to the identity it is degree one,
so there is $\tild{y}$ in $\mt$ such that $\tild{h}(\tild{y})
= \tild{x}$. 
Let $\eta$ be a curve in $\mt$ from $\tild{y}$ to 
$\gamma(\tild{y})$. Along $\tild{h}(\eta)$ the projection
of the flow lines of $\widetilde{\phi_t}$ by 
$\tild{h}$ induces a non zero vector tangent to $E^c$.
Since $\gamma$ commutes with $\tild{h}$ the final vector
is the tangent to $E^c$ in the direction induced
by $\gamma$. But $\gamma$ was supposed to reverse the direction
of $E^c$ so this is a contradiction to the fact that
the tangent vectors to $E^c$ are changing continuously
along the curve $\tild{h}(\eta)$.
\end{remark}

Note that we require more from a collapsed Anosov flow than just being semi-conjugated to a self orbit equivalence of an Anosov flow. Indeed condition \ref{item.Center_direction_preserved} of Definition \ref{defi1w} asks that the semi-conjugacy $h$ at least sends the flow direction to the center direction. 

There are several reasons for this condition:
 First, going back to at least \cite{HPS}, an overarching idea has been that any kind of classification for partially hyperbolic diffeomorphisms should be ``up to center dynamics'' (where the precise meaning of this can be taken to be more or less strong, and somehow has to be adapted to the particular situation of study). Therefore we see condition \ref{item.Center_direction_preserved} as the minimal requirement in order to keep to the spirit of this paradigm.
A second, less philosophical, reason is that a collapsed Anosov flow thus defined is a natural extension of the concept of discretized Anosov flow introduced in \cite{BFFP_part1} (see \S\ref{ss.DAF}, in particular Proposition \ref{p.daf2}).
Finally, Definition \ref{defi1w} provides us with a model of the dynamical behavior of the partially hyperbolic diffeomorphism to compare it with. Moreover, since some features of the dynamics of a self orbit equivalence $\beta$ can be readily understood, we hope that Definition \ref{defi1w} is enough to understand some of the dynamical properties of a collapsed Anosov flow, as has been the case for discretized Anosov flows.

There is an issue one quickly runs into when one wants to extract more geometrical or topological information about the partially hyperbolic diffeomorphism from the collapsed Anosov flow definition: The map $h$ sends orbits of the Anosov flow to center curves (i.e., curves tangent to the center direction) of the diffeomorphism $f$. However, it is usually difficult in partially hyperbolic dynamics to extract much knowledge about the behavior of center stable and center unstable (branching) foliations from coarse information about center curves. In fact, the center curves obtained via $h$ may, a priori, not even be inside the intersection of a center stable and center unstable leaf\footnote{It is not even known if a collapsed Anosov flow necessarily admits invariant center stable or center unstable branching foliations, as the existence result of Burago--Ivanov \cite{BI}, see \S\ref{s.branching}, requires some orientability conditions.}.

One can resolve this issue, while preserving the interest of the class of partially hyperbolic diffeomorphisms thus defined, by requiring that the semi-conjugacy $h$ somehow sends the weak stable and unstable directions of the flow to the center stable and unstable directions of the diffeomorphism. This leads us to our next definition.

\begin{defi}[Strong Collapsed Anosov Flow]\label{defi1}
A partially hyperbolic diffeomorphism $f$ of a closed $3$-manifold $M$ is called a \emph{strong collapsed Anosov flow} if there exists a topological Anosov flow $\phi_t$, a continuous map $h\colon M \to M$ homotopic to identity and a self orbit equivalence $\beta\colon M \to M$ of $\phi_t$ such that: 
\begin{enumerate}
\item The map $h$ is differentiable along orbits of $\phi_t$ and maps the vector field tangent to $\phi_t$ to non vanishing vectors tangent to $E^c$.
\item  The image by $h$ of a leaf of $\cF^{ws}_\phi$ (resp. $\cF^{wu}_\phi$) is a $C^1$-surface tangent to $E^{cs}$ (resp. $E^{cu}$).  
\item The map $h$ is \emph{transversaly collapsing}: Given a lift $\tilde h$ of $h$ to the universal cover $\mt$, then for any leaf $\wt F$ of $\wt\cF^{ws}_{\phi}$ (or $\wt\cF^{wu}_{\phi}$) and any orbit $\gamma$ of $\wt \phi_t$ on $F$, the map $\tilde h$ sends $\gamma$ to a curve $c$ in $\tilde h (F)$ that separates $\tilde h (F)$ in two. Moreover, $\wt h$ sends the open half-spaces inside the closed half-spaces. More precisely, the image by $\wt h$ of one of the connected component of $F\smallsetminus \gamma$ is contained in the closure (in $\tilde h(F)$) of exactly one connected component of $\tilde h(F)\smallsetminus c$.
\item For every $x \in M$ one has that $f \circ h(x)= h\circ \beta(x)$. 
\end{enumerate}
\end{defi}

By being a $C^1$-surface tangent to $E^{cs}$ we mean that if $\tild{h}$ is a lift of $h$ to $\mt$ and
$L$ is a leaf of $\widetilde{\cF}^{ws}_\phi$ then $\tild{h}(L)$
is a $C^1$, properly embedded plane in $\mt$ tangent to $E^{cs}$.

Clearly, a strong collapsed Anosov flow is a collapsed Anosov flow, but we do not know whether those definitions are distinct or equivalent.

Notice that a strong collapsed Anosov flow automatically admits a pair of invariant center stable and center unstable branching foliations (see \S\ref{s.branching} for the precise definition) by looking at the image under $h$ of the weak foliations of the Anosov flow. Indeed, the ``transversaly collapsing'' condition in the definition ensures that the images of the leaves under $h$ may merge but do not topologically cross.
Definition \ref{defi1w} on the other hand does not directly require the existence of such branching foliations. But even if one assumes that a collapsed Anosov flow has branching foliations (or even true foliations), it is not clear that it is enough to make it a strong collapsed Anosov flow. Part of the issue arising here is that, in general, the center direction of a partially hyperbolic diffeomorphism is not uniquely integrable (even when it integrates to a foliation, see \cite{HHU}).

Let us mention here, that we obtain some results about unique integrability of the center direction in \S\ref{ss.uniqueness_center_curves}.

\subsection{First results and examples} 

In \cite{BGHP} a notion of transversality was introduced that allows to produce new examples of partially hyperbolic diffeomorphisms. This encompasses results proved in previous papers \cite{BPP, BGP, BZ}.   

\begin{defi}\label{def.phi-transversality}
Let $\phi_t \colon M \to M$ be an Anosov flow generated by a vector field $X$ in a closed $3$-manifold and preserving a splitting $TM = E^{s} \oplus \RR X \oplus E^u$ and  $\varphi\colon M \to M$ a diffeomorphism. We say that $\phi_t$ is $\varphi$-\emph{transverse to itself} if $D\varphi(E^{u})$ is transverse to $E^s \oplus \RR X$ and $D\varphi^{-1}(E^s)$ is transverse to $\RR X \oplus E^u$. 
\end{defi}
Note that this notion makes sense more generally when considering any partially hyperbolic diffeomorphism instead of an Anosov flow $\phi_t$, see \cite{BGHP}. 

Using this notion, \cite{BGHP} proves:

\begin{prop}[Proposition 2.4 \cite{BGHP}]\label{prop.BGHP_example}
If an Anosov flow $\phi_t$ is $\varphi$-transverse to itself, then there exists $T>0$ such that , for all $t>T$, the map  $f_t := \phi_t \circ \varphi \circ \phi_t$ is\footnote{Note that we wrote it this way for convenience, since $f_t$ is smoothly conjugate to $\phi_{2t} \circ \varphi$ and to $\varphi \circ \phi_{2t}$.} partially hyperbolic.
\end{prop}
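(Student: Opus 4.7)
The plan is to apply the invariant cone criterion for partial hyperbolicity. Let $b > 0$ be a lower bound on the expansion rates of $D\phi_t$ on $E^u$ and of $D\phi_{-t}$ on $E^s$, and let $\alpha_0 > 0$ be a uniform lower bound on the angles $\angle(D\varphi(E^u), E^{ws})$ and $\angle(D\varphi^{-1}(E^s), E^{wu})$; such $\alpha_0$ exists by compactness of $M$ and the $\varphi$-transversality hypothesis. Fix a small aperture $\delta \ll \alpha_0$ and introduce narrow cone fields $C^u$ around $E^u$ and $C^s$ around $E^s$, each of aperture $\delta$.

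The heart of the argument is to check that, for all $t$ larger than some $T_0 = T_0(\delta, \|D\varphi\|, \|D\varphi^{-1}\|)$, the map $Df_t = D\phi_t \circ D\varphi \circ D\phi_t$ sends $C^u$ strictly into itself with uniform expansion of order $e^{2bt}$, and symmetrically that $Df_t^{-1}$ sends $C^s$ strictly into itself with expansion of order $e^{2bt}$. For a unit vector $v \in C^u$ the three compositions act as follows: the inner $D\phi_t$ squeezes $v$ into a very thin cone around $E^u$ and multiplies its $E^u$-component by $e^{bt}$ (by Anosov hyperbolicity); next, $D\varphi$ is a bounded distortion whose image on this thin cone, thanks to the uniform transversality $D\varphi(E^u) \pitchfork E^{ws}$, retains a nontrivial $E^u$-component comparable to the norm of the vector; finally, the outer $D\phi_t$ squeezes the result back into a cone around $E^u$ that, for $t$ large, is narrower than $C^u$, while multiplying the $E^u$-component by another $e^{bt}$. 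The $C^s$ estimate is strictly symmetric, using $D\varphi^{-1}(E^s) \pitchfork E^{wu}$.

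From the cone estimates one obtains one-dimensional $Df_t$-invariant strong bundles $E^u_{f_t} := \bigcap_{n \geq 0} Df_t^n(C^u)$ and $E^s_{f_t} := \bigcap_{n \geq 0} Df_t^{-n}(C^s)$, with $\|Df_t|_{E^u_{f_t}}\| \geq c\, e^{2bt}$ and $\|Df_t|_{E^s_{f_t}}\| \leq C\, e^{-2bt}$. The center line $E^c_{f_t}$ is then the unique $Df_t$-invariant complement, automatically close to $\RR X$ because the strong bundles are close to $E^u$ and $E^s$. Its growth is controlled by the volume identity $|\det Df_t| = |\det D\phi_t|^2\, |\det D\varphi|$: since $|\det D\phi_t|$ is the product of the Jacobians of $D\phi_t$ on $E^s \oplus E^u$, the middle singular value of $Df_t$ matches this product up to a bounded multiplicative error depending only on $\varphi$, so that $\|Df_t|_{E^c_{f_t}}\|$ is bounded away from $0$ and $\infty$ uniformly in $t$. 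Combining the three estimates yields the partial hyperbolicity inequalities for all $t > T$, with $T \geq T_0$ chosen sufficiently large. The main technical obstacle I anticipate is the quantitative cone step: one must coordinate the choice of $\delta$ against the transversality angle $\alpha_0$, against $\|D\varphi\|$, $\|D\varphi^{-1}\|$, and finally against $T$, so that the hyperbolicity of $\phi_t$ uniformly overcomes the bounded but possibly large distortion introduced by $D\varphi$; compactness of $M$ supplies the uniform bounds, but the bookkeeping of constants has to be tracked carefully.
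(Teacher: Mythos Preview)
Your approach is essentially the one the paper sketches: it explicitly says the result ``is an almost immediate application of the classical cone criterion,'' and your three-step analysis of $D\phi_t \circ D\varphi \circ D\phi_t$ acting on a thin unstable cone is exactly the mechanism the paper invokes.

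One comment on the execution: the determinant detour you use to bound $\|Df_t|_{E^c_{f_t}}\|$ is unnecessary and slightly off the standard track. Once you have verified that $Df_t$ maps $C^u$ strictly into itself with uniform expansion and $Df_t^{-1}$ does the same for $C^s$, the cone criterion already delivers two dominated splittings $E^{cs}_{f_t} \oplus E^u_{f_t}$ and $E^s_{f_t} \oplus E^{cu}_{f_t}$; one then sets $E^c_{f_t} := E^{cs}_{f_t} \cap E^{cu}_{f_t}$, and the required partial hyperbolicity inequalities (both the absolute and the domination parts) follow directly, with no need to estimate the middle singular value via $\det Df_t$. Your phrase ``the unique $Df_t$-invariant complement'' presupposes domination rather than proving it, whereas intersecting the two $2$-plane bundles sidesteps that circularity.
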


Not only does \cite{BGHP} give that criterion for building partially hyperbolic diffeomorphisms, but it also gives many examples (using results of \cite{BPP, BGP, BZ}) of maps $\varphi$ and Anosov flows $\phi_t$ that are $\varphi$-transverse to themselves.

The proof of this criterion is an almost immediate application of the classical cone criterion: Given a, say, strong unstable cone $\mathcal{C^uu}$ for $\phi_t$, the transversality of $D\varphi(E^{u})$ with $E^s \oplus \RR X$ ensures that, for some large enough $t$, $D\phi_t \mathcal{C^uu}$ will also be a cone family for the map $f_t$.
A drawback of this proof is that one does not get any precise information about the dynamics.

Hence, while great at providing examples, this criterion fails, at least directly, to give any concrete understanding of the structure that these maps may enjoy. Indeed, it is a priori not obvious, and it may even 
seem contradictory, how these examples may act:
On the one hand, many of them are not homotopic to the identity, while when $t$ is large, the dynamics seems to be governed by the Anosov flow $\phi_t$.

Our first main result gives an understanding of how the behaviors of $\varphi$ and $\phi_t$ must play together and makes clear the structure of these examples.

\begin{thmintro}\label{teo.main1}
Let $\phi_t\colon M \to M$ be an Anosov flow on a closed 3-manifold and $\varphi\colon M \to M$ a diffeomorphisms such that $\phi_t$ is $\varphi$-transversal to itself. Then, there exists $t_0>0$ such that for all $t>t_0$ the diffeomorphism $f_t = \phi_t \circ \varphi \circ \phi_t$ is a strong collapsed Anosov flow of the flow $\phi_t$. 
\end{thmintro}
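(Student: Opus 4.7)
The plan is to construct, for all $t$ sufficiently large, a continuous map $h\colon M\to M$ homotopic to the identity and a self orbit equivalence $\beta$ of $\phi_t$ satisfying $f_t\circ h=h\circ\beta$ together with the remaining geometric conditions of Definition \ref{defi1}. The guiding principle is that, for $t$ large, the $\phi_t$-iterations on both sides of $\varphi$ dominate, so the invariant splitting and weak foliations of $\phi_t$ should persist, up to small perturbation, as the partially hyperbolic structures of $f_t$.

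First I would refine the cone-field argument underlying Proposition \ref{prop.BGHP_example} to establish that, as $t\to\infty$, the partially hyperbolic splitting $E^s_{f_t}\oplus E^c_{f_t}\oplus E^u_{f_t}$ converges uniformly (in $C^0$) to the Anosov splitting $E^s\oplus\RR X\oplus E^u$ of $\phi_t$. The $\varphi$-transversality ensures that arbitrarily thin $Df_t$-invariant cones around $E^u$ and $E^s$ can be chosen once $t$ is large, which forces $E^u_{f_t}\to E^u$, $E^s_{f_t}\to E^s$, and hence $E^{cs}_{f_t}\to E^{ws}$, $E^{cu}_{f_t}\to E^{wu}$, and $E^c_{f_t}\to\RR X$. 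By Burago--Ivanov (after passing to a finite orientation cover if necessary), $f_t$ then admits invariant branching foliations $\cW^{cs}_{f_t}$ and $\cW^{cu}_{f_t}$; by the $C^0$-closeness of the bundles, their leaves lie in uniformly thin neighborhoods of weak leaves of $\phi_t$ and can be canonically matched with them.

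Next I would define $\widetilde h\colon \widetilde M\to\widetilde M$ on the universal cover as follows. For each orbit $\gamma$ of $\widetilde\phi_t$, with weak leaves $L^s$ and $L^u$ through it, take the matched branching leaves $\widetilde L^{cs}$ and $\widetilde L^{cu}$ of $\widetilde f_t$; their intersection contains a canonical center curve $\tilde c_\gamma$. Send each point $x\in\gamma$ to the point of $\tilde c_\gamma$ obtained via the natural parametrization induced by projection along directions $C^0$-close to $E^s\oplus E^u$ (unambiguous because $E^c_{f_t}$ is very close to $\RR X$). Verify that $\widetilde h$ is deck-equivariant and $C^0$-close to the identity, so it descends to $h\colon M\to M$ homotopic to the identity. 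Define $\beta$ implicitly by the relation $h\circ\beta=f_t\circ h$: since $f_t$ preserves the branching foliations and hence maps center curves to center curves, this equation determines $\beta$ as a homeomorphism of $M$ sending orbits of $\phi_t$ to orbits of $\phi_t$, that is, a self orbit equivalence. Conditions (2)--(4) of Definition \ref{defi1} follow from the construction: $h$ sends weak leaves to branching leaves (which are $C^1$ and tangent to $E^{cs}_{f_t}$, $E^{cu}_{f_t}$), and the transversally-collapsing condition reflects that center curves separate center (un)stable leaves into half-planes.

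The main obstacle is producing the canonical matching of weak leaves with branching leaves and a consistent definition of $\widetilde h$. Despite the possible merging of leaves of $\cW^{cs}_{f_t}$ and $\cW^{cu}_{f_t}$ and the potential non-Hausdorffness of the orbit space of $\widetilde\phi_t$, one must produce a deck-equivariant assignment $\gamma\mapsto \tilde c_\gamma$ yielding a continuous map on $\widetilde M$. This is also the step where one must rule out or circumvent non-unique integrability of $E^c_{f_t}$ along images of weak leaves; the closeness of $E^c_{f_t}$ to $\RR X$ is essential, since it lets the flow of $X$ provide a canonical parametrization on each center curve of $\widetilde f_t$, pinning down both $h$ and the resulting $\beta$.
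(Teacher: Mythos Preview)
Your outline has the right large-scale picture (bundles converge, branching foliations exist, one must match orbits with center curves and then build $h$ and $\beta$), but the way you propose to build $h$ has a genuine gap, and the paper resolves it by a rather different route.

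The gap is in the step ``send each point $x\in\gamma$ to the point of $\tilde c_\gamma$ obtained via the natural parametrization induced by projection along directions $C^0$-close to $E^s\oplus E^u$.'' This is not well-defined: branching foliations can have many center-stable (and center-unstable) leaves through a single point, and $E^c_{f_t}$ need not be uniquely integrable, so there is no canonical center curve through a point and no canonical transverse projection onto it. Likewise, your sentence ``their leaves lie in uniformly thin neighborhoods of weak leaves of $\phi_t$ and can be canonically matched with them'' hides the real issue: two distinct branching leaves can coincide on arbitrarily large sets, so $C^0$-closeness to a weak leaf does not by itself single out a unique branching leaf. Finally, passing to a finite orientation cover to invoke Burago--Ivanov is fine, but descending $\cW^{cs}_{f_t},\cW^{cu}_{f_t}$ back to $M$ is nontrivial and you do not address it.

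The paper does not attempt a direct pointwise construction of $h$. Instead it first works at the level of leaf spaces: using the \emph{shadowing lemma} for the Anosov flow $\phi_t$ (Proposition~\ref{p.BGHP}, items~\ref{it.BGHP2} and~\ref{it.BGHP3}) it shows that every center curve is $\delta(t)$-shadowed by a \emph{unique} orbit of $\widetilde\phi_t$ and vice versa, and Lemma~\ref{claim.10.4} rules out distinct center curves shadowing the same orbit. This yields a $\pi_1$-equivariant homeomorphism $\cO_\phi\to\cL^c$, i.e.\ $f_t$ is a \emph{leaf space} collapsed Anosov flow, and simultaneously forces the branching foliations to be \emph{unique} (Proposition~\ref{prop.uniquebranchingBGHP}). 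Uniqueness is exactly what lets the branching foliations descend from the orientation cover back to $M$. Only then is the actual map $h$ produced, and not by a transverse projection: the paper uses the averaging construction of \S\ref{averaging} (orthogonal projection of the center curve onto a geodesic in the leaf, followed by integration over a window of length $T$) to get a map that is $C^1$ and injective along orbits, from which $\beta$ is read off. So the key ingredients you are missing are the shadowing/uniqueness argument replacing your ``canonical matching,'' and the averaging trick replacing your ``projection along $E^s\oplus E^u$.''
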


With the help of Theorem \ref{teo.main1} one can prove that
all the partially hyperbolic diffeomorphisms built in \cite{BGHP} are
collapsed Anosov flows.\footnote{To be precise, one proves
that all examples \textit{\`a la} \cite{BGHP}, understood as any example obtained via Proposition \ref{prop.BGHP_example}, are collapsed Anosov flows by applying Theorems \ref{teo.main1} and \ref{teo.main6} together. 
See Remark \ref{rem:thmA}.}
This not only gives
a wealth of examples, but also shows that all known constructions of partially hyperbolic diffeomorphisms on $3$-manifolds with non virtually solvable fundamental group are collapsed Anosov flows. Note that the non virtually solvable fundamental group assumption is necessary: Aside from the examples of partially hyperbolic diffeomorphisms on $\TT^3$, where no Anosov flow can exist, there are, as pointed out by a referee, some examples built similarly as in \cite{HHU} which are partially hyperbolic diffeomorphisms on the mapping torus of an Anosov automorphism, but which are not collapsed Anosov flows, see \S\ref{ss.open}.

In the examples that we advertised earlier, i.e., the discretized Anosov flows and the examples of \cite{BW}, the map $h$ of Definition \ref{defi1} could always be taken to be a homeomorphism (in fact, the identity). Now, some of the collapsed Anosov flows obtained through Theorem \ref{teo.main1} show why we cannot always ask for the collapsing map $h$ to be a homeomorphism: Indeed, if $h$ is injective, then the image by $h$ of the weak stable and weak unstable foliations of the Anosov flow $\phi_t$ are center stable and center unstable foliations of the strong collapsed Anosov flow $f$. In particular, $f$ must be \emph{dynamically coherent}\footnote{A partially hyperbolic diffeomorphism $f$ is called \emph{dynamically coherent} if it preserves a pair of foliations tangent to respectively $E^s \oplus E^c$ and $E^c \oplus E^u$.}. Since some examples built in \cite{BGHP} are shown to be non dynamically coherent, the associated map $h$ must be non-injective.

It is an interesting question to try to determine when the map $h$ can be a homeomorphism, or equivalently when a strong collapsed Anosov flow may be dynamically coherent. Some examples built in \cite{BPP,BZ} are dynamically coherent and associated with a non-periodic self orbit equivalence. But the associated Anosov flow is \emph{non transitive}.

So far, the only collapsed Anosov flows associated with a \emph{transitive} Anosov flow that are known to be dynamically coherent are such that the self orbit equivalent is periodic (i.e., such that a power is a trivial self orbit equivalence).

\subsection{Leaf space collapsed Anosov flows}

Although not explicit, the definition of a strong collapsed Anosov flow implies the existence of center stable $\cs$ and center unstable $\cu$ branching foliations (we defer their precise definitions to \S\ref{s.branching}) that are invariant under $f$.
By taking the intersection of these branching foliations (in an appropriate way), one gets an invariant $1$-dimensional \emph{center} branching foliation $\cW^c$. 

It is possible to generalize the definition of a leaf space of a true foliation to the branching case (see \S\ref{s.branching} or \cite{BFFP_part2}), and we thus obtain the center leaf space $\lc$, on which any lift $\wt f$ of $f$ to the universal cover acts naturally.

For a collapsed Anosov flow which preserves branching foliations, this center leaf space $\lc$ should be the same as the orbit space of an Anosov flow, and the action of $\wt f$ should correspond to the action of a lift of a self orbit equivalence. This idea is made precise in the next definition of a \emph{leaf space collapsed Anosov flow}.

For a topological Anosov flow $\phi_t\colon M \to M$ we denote by $\cO_\phi$ the orbit space of the flow $\wt\phi_t$ which is the lift of $\phi_t$ to $\mt$. We recall that $\cO_{\phi}$ is homeomorphic to $\RR^2$ and $\pi_1(M)$ acts naturally on $\cO_\phi$, see \cite{Barbot,FenleyAnosov}\footnote{Technically, the references \cite{Barbot,FenleyAnosov} only deal with smooth Anosov flows, however the proofs rely only on the existence of weak foliations and the behavior of orbits inside them, so apply directly to the topological Anosov setting. Formally, a proof for topological Anosov flow is contained in \cite{FenleyMosher}, where that result is proven for any pseudo-Anosov flow.}. 

\begin{defi}[Leaf space collapsed Anosov flow]\label{defiw2}
We say that a partially hyperbolic diffeomorphism $f$ of a closed 3-manifold is a \emph{leaf space collapsed Anosov flow} if it preserves center stable and center unstable branching foliations $\cs$ and $\cu$ and there exists a topological Anosov flow $\phi_t$ and a homeomorphism $H\colon \cO_\phi \to \lc$ which is $\pi_1(M)$-equivariant. 
\end{defi}

That $H$ is $\pi_1(M)$-equivariant means that if $\gamma \in \pi_1(M)$ is a deck transformation, then, $H(\gamma o) = \gamma H(o)$, for any $o \in \cO_\phi$. 

\begin{remark}
Notice that the branching foliations $\cs$ and $\cu$ are an integral part of the data needed to define a leaf space collapsed Anosov flow: The center leaf space $\lc$ is defined \emph{using} the branching foliations $\cs$ and $\cu$ (see \S\ref{s.branching}). In fact, we do not know the answer to the following questions in full generality: If $f$ preserves two pairs of branching foliations $\cW_1^{cs,cu}$ and $\cW_2^{cs,cu}$, is the center leaf space obtained from $\cW_1^{cs,cu}$ homeomorphic to that build from $\cW_2^{cs,cu}$? And if $f$ is a leaf space collapsed Anosov flow for  $\cW_1^{cs,cu}$, is it also a leaf space collapsed Anosov flow for $\cW_2^{cs,cu}$? Even assuming that $f$ is a leaf space collapsed Anosov flow for $\cW_1^{cs,cu}$ and for $\cW_2^{cs,cu}$, are the two associated center leaf spaces homeomorphic? Are the two associated Anosov flows orbit equivalent?

 We further emphasize that the map $H \colon \cO_\phi \to \lc$ in the definition above is a \emph{homeomorphism}, and not just a surjective continuous map as $h\colon M \to M$ was in Definitions \ref{defi1w} and \ref{defi1}. We can require this because, although distinct center leaves may merge, they always represent different points in the center leaf space $\lc$.
\end{remark}

\begin{remark}\label{r.lscafsoe}
Note that Definition \ref{defiw2} does not involve a self orbit equivalence explicitly. However, it is easy to note that there is a self orbit equivalence class associated to a leaf space collapsed Anosov flow since the action of a lift $\ft$ of $f$ to $\mt$ induces a permutation of leaves of $\lc$ which via $H$ induces a permutation of orbits of $\phi_t$. The fact that from this one can actually construct a self orbit equivalence follows from a standard averaging argument, see for instance \cite[Theorem 3.4]{Barbot}.
\end{remark}

The homeomorphism $H$ of Definition \ref{defiw2} identifies the center leaf space of a leaf space
collapsed Anosov flow $f$ with the orbit space of an Anosov flow $\phi_t$. The difficulty to go from there to a strong collapsed Anosov flow (Definition \ref{defi1}) is to build a map $h$ on the manifold from the map $H$ which is only on the orbit/center leaf space. This is done (in \S\ref{s.leafspace_implies_strong}) using a standard averaging argument (although made harder by the existence of branching).

There is however a wrinkle to smooth out before this: Suppose $f$ is a partially hyperbolic diffeomorphism, preserving two branching foliations $\cs$ and $\cu$, that is a leaf space collapsed Anosov flow. Then the map $H$ of Definition \ref{defiw2} is not explicitly required to behave well with respect to the center stable and unstable (branching) foliations. That is, $H$ is not assumed to identify the weak (un)stable leaf space of $\phi_t$ with the leaf space of $\cs$ ($\cu$). However, thanks to the fact that pairwise transverse foliations invariant by an Anosov flow are unique (see Proposition \ref{p.nontransitiveAF}), $H$ will automatically identify the weak (un)stable leaf space of the Anosov flows with the center (un)stable leaf space of the diffeomorphism (Proposition \ref{p.wcaf2impliescaf2}). 

Thus we obtain the following equivalence:

\begin{thmintro}\label{teo.main3} 
If a diffeomorphism $f$ is a strong collapsed Anosov flow then it is a leaf space collapsed Anosov flow. Moreover, if $E^s$ or $E^u$ are orientable, the converse also holds. 
\end{thmintro}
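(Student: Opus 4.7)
The plan is to extract the center stable and center unstable branching foliations directly from the images of the weak foliations of $\phi_t$ under the semi-conjugacy $h$. Fix a lift $\tilde h\colon \mt \to \mt$ of a homotopy from $h$ to the identity; then $\tilde h$ commutes with every deck transformation. For each leaf $\wt F$ of $\wfcs_{\phi}$ (resp.\ $\wfcu_\phi$), define the associated branching leaf as the $C^1$ surface $\tilde h(\wt F)$, which by Definition \ref{defi1} is tangent to $E^{cs}$ (resp.\ $E^{cu}$). The transversally collapsing condition guarantees that any two such images either coincide or do not topologically cross (orbits separate $\wt F$ into half-spaces mapping into closures of corresponding half-spaces in $\tilde h(\wt F)$), which is exactly the non-crossing condition required of a branching foliation. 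Intersecting them yields the center branching foliation $\wcW$, and the map sending an orbit $\gamma \in \cO_\phi$ to the center leaf containing $\tilde h(\gamma)$ produces a well-defined $H\colon \cO_\phi \to \lc$. Injectivity of $H$ is forced by the transversally collapsing condition (two distinct orbits cannot map into the same leaf on both sides), surjectivity by the construction of $\lc$ from these images, continuity in both directions by the uniform transverse structure, and $\pi_1(M)$-equivariance by the choice of lift.

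\textbf{Converse direction under orientability.} Given the $\pi_1(M)$-equivariant homeomorphism $H\colon \cO_\phi \to \lc$, the goal is to upgrade $H$ from a map on orbit/leaf spaces to a map $h\colon M \to M$ at the manifold level. First, one invokes Proposition \ref{p.wcaf2impliescaf2} to see that $H$ automatically sends weak stable/unstable leaf spaces of $\phi_t$ to the center stable/unstable leaf spaces of $f$, using uniqueness of transverse invariant foliations of Anosov flows (Proposition \ref{p.nontransitiveAF}). Next, one uses the orientability hypothesis on $E^s$ or $E^u$: together with Remark \ref{rem.corientable} this makes $E^c$ orientable, and orients the corresponding branching foliation in a way compatible with the flow direction of $\phi_t$. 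Then the construction of $\tilde h$ proceeds point by point: for $\tilde x \in \mt$, let $\tilde o$ be its orbit under $\tilde\phi_t$, let $\tilde c = H(\tilde o)$ be the associated center leaf, and choose a point on $\tilde c$ using an averaging scheme over a fundamental domain of $\tilde\phi_t$ acting on $\tilde o$. The standard averaging on $\mt$ ensures the resulting map commutes with deck transformations (so descends to $M$), is continuous, and conjugates the flow direction into the oriented center direction. The semi-conjugacy identity $f\circ h = h\circ \beta$ comes from the self orbit equivalence associated to $H$ (cf.\ Remark \ref{r.lscafsoe}).

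\textbf{Main obstacle.} The hard step is the second direction's averaging in the presence of branching. In the dynamically coherent case this is essentially standard, but with branching, a fundamental domain on $\tilde o$ has to be mapped consistently into a center leaf $\tilde c$ that may merge with other center leaves. One must verify that the transversally collapsing condition can be achieved, i.e., that a whole weak leaf $\wt F$ can be mapped to a single $C^1$ surface tangent to $E^{cs}$ without the averaged map crossing itself across branches. This is where orientability enters essentially: a coherent orientation on $E^c$ (and hence on the branching center foliation) allows us to linearly order the points on a center leaf, giving a well-defined averaging target even when leaves branch; without orientability, the averaging on both sides of a branch locus may not match consistently. The remaining technical work is routine: continuity of $h$ follows from continuity of $H$ and the uniform structure of the branching foliations, homotopy to the identity follows from $\pi_1(M)$-equivariance of $\tilde h$ together with density arguments on the orbit space, and the $C^1$ regularity along orbits is built into the averaging.
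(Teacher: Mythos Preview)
Your forward direction follows the paper's strategy, but the injectivity of $H$ is not a consequence of the transversally collapsing condition alone. The paper (Proposition \ref{prop-1implies2}) argues by contradiction: if two distinct orbits $o_1,o_2$ map to the same center leaf, one analyses separately the cases where $\widetilde{\cF^{ws}_\phi}(o_2)$ meets or does not meet $\widetilde{\cF^{wu}_\phi}(o_1)$, and in each case produces an open strip in a weak unstable leaf whose image under $\tilde h$ collapses into a single center curve, contradicting that $\tilde h(F)$ is a surface tangent to $E^{cu}$. The same style of argument is needed in Lemma \ref{lema-branch} to get the minimality item \ref{item.BF_minimal} of Definition \ref{def.branching}, i.e., that distinct weak stable leaves have distinct images; your sketch does not address this. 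One further technical point: the weak foliations of a topological Anosov flow have only $C^0$ leaves, so to parametrize $h(\cF^{ws}_\phi)$ as $C^1$ surfaces the paper first passes (via Proposition \ref{prop.smoothness}) to an orbit equivalent flow with smooth weak stable leaves.

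For the converse, your outline misses the paper's central construction and misidentifies the role of orientability. Orientability of $E^u$ (equivalently transversal orientability of $E^{cs}$) is used to apply Theorem \ref{teo-openbranch} and obtain a genuine approximating foliation $\fes$ with a Candel metric of negative curvature; it is not used to order points on a center leaf. With this in hand, the paper does \emph{not} average directly over $\tilde\phi_t$ (there is no ``fundamental domain of $\tilde\phi_t$ on $\tilde o$'' in any useful sense). Instead it builds an auxiliary flow $\psi_t$ orbit equivalent to $\phi_t$ whose orbits are the geodesics in each leaf $L\in\wfes$ pointing toward the funnel point $p(L)\in S^1(L)$ (this uses Proposition \ref{prop.convmain5} to know the centers form quasigeodesic fans). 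Now each center leaf $\ell_x$ and the geodesic $\tilde h^{cs}(o_x)$ share both ideal points, so orthogonal projection $p_x\colon \ell_x\to \tilde h^{cs}(o_x)$ is proper; averaging $p_x$ over a segment of fixed length $T$ along $\ell_x$ produces a $C^1$ monotone map whose inverse gives $q_x\colon o_x\to\ell_x$, and $\tilde h(x)=q_x(0)$ defines the desired map. The differentiability along orbits, the transversally collapsing property, and the construction of $\beta$ all follow from this geometric setup. Your proposal does not supply a mechanism to relate points on $\tilde o$ to points on $\tilde c$ before averaging, and without the geodesic/orthogonal-projection step there is no candidate map to average.
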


In order to prove Theorem \ref{teo.main1}, we will prove that the examples are leaf space collapsed Anosov flows and use Theorem \ref{teo.main3} (some additional work allows to bypass the orientability condition in Theorem \ref{teo.main3}).

\subsection{The space of collapsed Anosov flows}\label{ss.spaceofcollapsed}

It is classical, thanks to the cone criterion, that being partially hyperbolic is a $C^1$-open condition among diffeomorphisms. Based on a result of \cite{HPS}, that we expand upon in Appendix \ref{app.graphtransform}, we are able to obtain a global stability result for collapsed Anosov flows.

\begin{thmintro}\label{teo.main6}
The space of collapsed Anosov flows for a given Anosov flow $\phi_t$ and self orbit equivalence class $\beta$ is open and closed among partially hyperbolic diffeomorphisms on a given $3$-manifold. Similarly, the space of leaf space collapsed Anosov flows is open and closed among partially hyperbolic diffeomorphisms. 
\end{thmintro}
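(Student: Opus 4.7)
My plan is to treat openness and closedness in parallel, deriving both from the fact that the center direction of a collapsed Anosov flow is normally hyperbolic, so that the semi-conjugacy $h$ should be rigid enough under $C^1$-perturbation both to persist and to be extracted as a limit. The graph-transform machinery presumably codified in Appendix \ref{app.graphtransform} is the central tool.

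For openness of the class of collapsed Anosov flows with fixed data $(\phi_t,\beta)$, let $f$ satisfy $f\circ h=h\circ\beta$ and let $g$ be $C^1$-close to $f$ (partial hyperbolicity being itself $C^1$-open). I would produce $h_g$ near $h$ by solving $g\circ h_g=h_g\circ\beta$ as a fixed point of the operator $T_g(k)=g^{-1}\circ k\circ\beta$ on a $C^0$-neighborhood of $h$. The contraction should come from the partial hyperbolicity of $g$ in the transverse directions $E^s_g\oplus E^u_g$: iterating $T_g$ contracts any displacement of $k$ away from $h$ along strong stable/unstable directions, while displacement along $E^c_g$ is controlled because the orbits of $\beta$ are mapped by $h$ to curves tangent to $E^c_g$, i.e.~to the neutral direction of the linearized cocycle. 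Equivalently, viewing $\Gamma_h=\{(x,h(x))\}\subset M\times M$ as an invariant set for $\beta\times f$ whose normal bundle is hyperbolic and whose tangential direction matches the orbits of $\beta$, a Hirsch--Pugh--Shub type persistence theorem yields $\Gamma_{h_g}$ as an invariant graph for $\beta\times g$. One then verifies that $h_g$ is still homotopic to the identity (by $C^0$-closeness to $h$) and is differentiable along $\phi_t$-orbits with nonzero image tangent to $E^c_g$, as required by Definition \ref{defi1w}.

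For closedness, take $f_n\to f$ in $C^1$ with $f_n\circ h_n=h_n\circ\beta$. Partial hyperbolicity is $C^1$-closed under uniform bounds on the partial hyperbolicity constants, so $f$ is still partially hyperbolic. The contraction of the operators $T_{f_n}$ is uniform in $n$, yielding a uniform modulus of continuity on the $h_n$ (transverse oscillations are damped exponentially by iteration). Arzel\`a--Ascoli then extracts a uniform limit $h_\infty$, and the identity $f_n\circ h_n=h_n\circ\beta$ passes to the limit to give $f\circ h_\infty=h_\infty\circ\beta$. Homotopy to the identity survives uniform convergence, and differentiability of $h_\infty$ along flow lines with nonzero image in $E^c_f$ follows from $C^1$-continuity of the center bundle together with uniform bounds on $\partial_t(h_n\circ\phi_t)$ coming from the partial hyperbolicity constants. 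The leaf-space version is proved analogously: openness follows from stability of the branching foliations $\cs,\cu$ under $C^1$-perturbation (a standard consequence of normal hyperbolicity of $E^{cs},E^{cu}$, cf.~\cite{HPS}), giving a continuous deformation of $\lc$ to which the $\pi_1(M)$-equivariant homeomorphism $H\colon\cO_\phi\to\lc$ can be transported; closedness is again an Arzel\`a--Ascoli argument at the level of leaf spaces.

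The main obstacle I anticipate is that $h$ need not be injective, so the classical graph-transform for invariant manifolds does not apply verbatim; the technical point, which I expect is the purpose of Appendix \ref{app.graphtransform}, is to set up the fixed-point equation for the map $h$ itself as a point of a function space, rather than for its image as an embedded submanifold. The normal hyperbolicity of $g$ transverse to the center and the rigidity of orbits of $\beta$ tangent to the center provide a contraction mechanism that does not require any embedding property of $h$, which is exactly what is needed for the argument to carry non-dynamically-coherent examples such as those of \cite{BGHP}.
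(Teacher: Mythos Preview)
Your proposal has two genuine gaps.

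First, the fixed-point operator $T_g(k)=g^{-1}\circ k\circ\beta$ is not a contraction: $g^{-1}$ expands the stable direction of $g$, so iterating $T_g$ expands stable displacements of $k$ rather than damping them. One cannot repair this by also iterating $T_g^{-1}$, since the center direction is controlled by neither. The alternative graph $\Gamma_h\subset M\times M$ fails for a different reason: $\beta$ is only a homeomorphism, so $\beta\times f$ is not a smooth system to which HPS normal hyperbolicity applies. The paper instead regards $h$ as a $C^1$-\emph{leaf immersion} $\imath_g\colon V\to M$, where $V$ is the disjoint union of $\phi_t$-orbits (each with its own smooth structure) and $\imath_g=h$ sends each orbit to a curve tangent to $E^c_g$; the $g$-invariance is encoded by $(\imath_g)_\ast g=\beta$. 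The uniform HPS persistence theorem for normally hyperbolic leaf immersions (Theorem~\ref{teo.uniformHPS}) then produces, for any $g'$ in a fixed neighborhood, a $g'$-invariant leaf immersion $\imath_{g'}$ tangent to $E^c_{g'}$ and $C^1$-close to $\imath_g$. From $\imath_{g'}$ one reconstructs both a new collapsing map $h'$ \emph{and} a new self orbit equivalence $\beta'$ (in the class of $\beta$); fixing $\beta$ itself, as you do, is too rigid. The remaining subtlety is that $V$ and $M$ agree as sets but not as topological spaces, so one must argue separately (via a closest-point/averaging trick along center curves) that the resulting $h'$ and $\beta'$ are continuous for the topology of $M$.

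Second, your closedness argument via Arzel\`a--Ascoli is unfounded: there is no a priori equicontinuity of the $h_n$, and passing the differentiability-along-orbits condition through a $C^0$-limit is not justified. The paper avoids limits entirely by a uniformity trick: Theorems~\ref{teo.uniformHPS} and~\ref{teo.graphtransform} are stated so that the neighborhood $\cU$ depends only on a base point $f_0$, not on which $g\in\cU$ carries the collapsed-Anosov-flow structure. Hence if $f_n\to f$ with each $f_n$ a collapsed Anosov flow, one applies the result centered at $f_0=f$: for $n$ large $f_n\in\cU$, and the uniform statement with $g=f_n$, $g'=f$ makes $f$ a collapsed Anosov flow directly. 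Your sketch for the leaf-space version is close to the paper for openness (stability of branching foliations is indeed Theorem~\ref{teo.graphtransform}), but closedness has the same defect and is fixed by the same uniform-neighborhood device.
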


Similar statements for other classes of systems have been obtained in \cite{Pot-T3, FPS}. This result has also been announced for discretized Anosov flows in \emph{any} dimension in \cite{Martinchich}.

In terms of classification, Theorem \ref{teo.main6} gives us that any partially hyperbolic diffeomorphism in a connected component of a collapsed Anosov flow (in the space of partially hyperbolic diffeomorphisms) is also a collapsed Anosov flow, for the same flow and the same self orbit equivalence class. In particular, two leaf space collapsed Anosov flows in the same connected component have center leaf spaces that can be chosen to be homeomorphic (i.e., there exists a pair of branching foliations making the associated center leaf space homeomorphic) and act equivariantly on them.

However, to stay even closer to the spirit of the first efforts at a classification of partially hyperbolic diffeomorphisms, as in \cite{HPS} or Pujals' conjecture \cite{BW}, we may want to ask more: One may hope that inside a connected component, not only are the center leaf spaces homeomorphic, but so is the structure of \emph{branching of center leaves}.
More precisely, suppose that $f_1$ and $f_2$ are two leaf space collapsed Anosov flows associated with an Anosov flow $\phi_t$ and the same self orbit equivalence class $\beta$. Then $\lc_1$ the center leaf space of $f_1$ is homeomorphic to $\lc_2$, via the composition $H_2\circ H_1^{-1}$, where the $H_i$ are as in Definition \ref{defiw2}. However it may a priori happen that two center leaves $c_1,c_1'\in \lc_1$ merge (i.e., have a non empty intersection in $\wt M$), while their images by $H_2\circ H_1^{-1}$ do not.

We show (in \S\ref{ss.DAF}) that this issue does not arise for discretized Anosov flows (or, as a consequence, for collapsed Anosov flows for which the self orbit equivalence class is periodic), but, in general, we do not know whether the branching structure is completely determined by the Anosov flow and the self orbit equivalence class.

\begin{quest}\label{q.branch}
Is the branching locus\footnote{To be precise, consider $\cO_\phi$ to be the orbit space of the lift $\widetilde{\phi_t}$ of $\phi_t$ to the universal cover. We can define the \emph{branching locus} as a function $B\colon \cO_\phi \times \cO_\phi  \to \{0,1\}$ such that $B(o_1,o_2)=1$ if and only if the corresponding center leaves intersect in $\mt$. One could define more refined notions taking into account how many connected components of intersection they have, or the direction on which center curves branch, etc.... All these things could a priori be determined by the data of the flow and the self orbit equivalence class and be independent of the partially hyperbolic diffeomorphism that realizes this as a collapsed Anosov flow.} of a collapsed Anosov flow determined by the Anosov flow $\phi_t$ and the self orbit equivalence class (or, at least, is the branching locus constant in a connected component of partially hyperbolic diffeomorphisms)?
\end{quest}
Related questions can be found in \cite[\S 7]{HPS}.

A first step towards Question \ref{q.branch} could be to prove that, if a collapsed Anosov flow is dynamically \emph{incoherent}, then all collapsed Anosov flows in its connected component are also dynamically incoherent. This is true in certain manifolds, or classes of partially hyperbolic diffeomorphisms (e.g., hyperbolic manifolds \cite[Theorem B]{BFFP_part2}, or Seifert manifolds when the action on the base is pseudo-Anosov \cite{BFFP2}).  One natural, seemingly simple but far from well-understood, class of examples where this is not known is for partially hyperbolic diffeomorphisms in Seifert manifolds which act as a Dehn-twist on the base.

\subsection{Quasigeodesic partially hyperbolic diffeomorphisms}

The last definition we introduce describes a class of partially hyperbolic diffeomorphism that are, in some sense, geometrically well-behaved.

As before, we consider $f\colon M \to M$ a partially hyperbolic diffeomorphism which preserves branching foliations $\cs$ and $\cu$ tangent respectively to $E^{cs}$ and $E^{cu}$ (cf.~\S \ref{s.branching}).

We say that a curve in a leaf $L$ of the lifted branching foliation $\wcs$ (or $\wcu$) is a \emph{quasigeodesic} if it admits a parametrization
$\eta\colon \RR \to L$ such that $c|t-s| + c' \geq d_L(\eta(t), \eta(s)) \geq c^{-1} |t-s| - c'$ for some $c >1, c' >0$ where $d_L$ is the path 
metric induced on $L$ by the pullback metric from $\mt$. A family of curves is \emph{uniformly quasigeodesic} if the constants $c,c'$ can be chosen independently of the curve and the leaf.
Following common usage in the field, we say that a curve $\alpha$ in a leaf $L$ of $\cs$ or $\cu$ is a quasigeodesic, if a lift $\tilde{\alpha}$ to a leaf $\widetilde{L}$ in $\mt$ is a quasigeodesic.

\begin{defi}[Quasigeodesic partially hyperbolic diffeomorphism]\label{defiQG}
Consider $f\colon M \to M$ a partially hyperbolic diffeomorphism. We say that $f$ is \emph{quasigeodesic} if it preserves center stable and center unstable branching foliations with Gromov-hyperbolic leaves such that the center curves are uniform quasigeodesics inside center stable and center unstable leaves.  
\end{defi}

Note that this definition is independent of the choice of Riemannian metric on $M$ (cf.~Proposition \ref{prop-GH}).

For true foliations, deciding whether their leaves are Gromov-hyperbolic can be done via Candel's uniformization theorem (see \cite[\S I.12.6]{CanCon}, although Candel's Theorem is stated for foliations with smooth leaves, it extends to foliation with less regular leaves thanks to \cite{Cal-smoothing}). While Candel's theorem does not directly apply to branching foliations, it may be used when these branching foliations are \emph{well-approximated} by true foliations (see Appendix \ref{ss.Candelmetric}), as occurs
for example in the existence theorem of Burago--Ivanov (Theorem \ref{teo-openbranch}). In particular, one can prove that, when the branching foliations are \emph{minimal} and the manifold has fundamental group with exponential growth, then the leaves are Gromov-hyperbolic \cite[\S 5.1]{FP-acc}. Notice that the weak stable and weak unstable foliations of Anosov flows always have Gromov-hyperbolic leaves \cite{FenleyAnosov}.

It turns out that quasigeodesic partially hyperbolic diffeomorphisms and leaf space collapsed Anosov flows are one and the same class (at least under some orientability conditions), thereby giving a nice geometrical description of (strong) collapsed Anosov flows.

\begin{thmintro}\label{teo.main5}
A leaf space collapsed Anosov flow is a quasigeodesic partially hyperbolic diffeomorphism. Moreover, if the bundles $E^s$ and $E^u$ are orientable, the converse holds.
\end{thmintro}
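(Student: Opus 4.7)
My plan is to prove each direction separately, using as the main bridge the interplay between leaf spaces on the partially hyperbolic side and orbit spaces on the Anosov side.

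For the forward direction, I would start from the $\pi_1(M)$-equivariant homeomorphism $H \colon \cO_\phi \to \lc$ of Definition \ref{defiw2} and, via Proposition \ref{p.wcaf2impliescaf2} referenced in the text, extend it to an equivariant correspondence between the weak stable/unstable leaf spaces of $\wt\phi_t$ and the leaf spaces of $\wcs, \wcu$. Next, I would use the approximation of branching foliations by true foliations (see Appendix \ref{ss.Candelmetric}), and apply the extension of Candel's uniformization theorem to this approximation, to equip the leaves of $\cW^{cs}$ and $\cW^{cu}$ with uniformly Gromov-hyperbolic metrics, fulfilling the first half of Definition \ref{defiQG}. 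Fenley's theorem provides the analogous uniformly hyperbolic structure on the weak leaves of $\phi_t$, inside which orbits of $\wt\phi_t$ are uniform quasigeodesics. I would then transfer the quasigeodesic property to center curves by comparing, inside a fixed center (un)stable leaf, the leaf-space combinatorics via the equivariant identification, exploiting the cocompactness of the $\pi_1(M)$-action to pass from pointwise comparisons to uniform ones.

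For the converse, assume $f$ is quasigeodesic with $E^s$ and $E^u$ orientable; this forces $E^c$ to be orientable, giving a coherent direction along center curves. Each center curve, being a quasigeodesic in a Gromov-hyperbolic center (un)stable leaf, has two well-defined endpoints in the corresponding Gromov boundary. I would take the orbit space to be $\lc$ itself, endowed with the two transverse one-dimensional foliations coming from $\cs$ and $\cu$, and construct a continuous, non-vanishing vector field $X$ tangent to $E^c$ by averaging along center curves (here orientability and uniform quasigeodesicity are what make the average well-defined and continuous). The resulting flow $\phi_t$ will admit $\cs$ and $\cu$ as its weak foliations. To check Definition \ref{def_TAF}, the forward/backward asymptoticity \ref{item_TAF_forward_asymptotic} follows from the general fact that two uniform quasigeodesics sharing an endpoint in a Gromov-hyperbolic plane are eventually Hausdorff-close, combined with the uniform contraction of $f$ along $E^s$ on center stable leaves (and its inverse along $E^u$). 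Expansivity \ref{item_TAF_backwards_expansivity} follows from showing that two distinct orbits in the same weak leaf cannot share both endpoints at infinity, by a standard hyperbolic geometry argument. The map $H$ is then tautological under this identification.

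The main obstacle I anticipate lies in the converse direction, specifically in promoting the abstract leaf-space data into a genuine topological Anosov flow. Building a continuous non-vanishing generator $X$ requires delicate averaging on the universal cover that respects the branching structure, and verifying expansivity as in Definition \ref{def_TAF}\ref{item_TAF_backwards_expansivity} demands ruling out pathological coincidences of endpoints for distinct orbits; this calls for combining the Gromov hyperbolicity with the uniform quasigeodesic constants and an equivariance argument to upgrade a local estimate to a uniform one. The orientability hypothesis enters the construction at several places (orienting the flow, defining co-orientations on branching foliations, averaging center curves), so extending the argument beyond it would require an equivariant descent from a finite cover that needs careful bookkeeping.
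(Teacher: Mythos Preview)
Your overall two-direction strategy is right, but both directions have genuine gaps compared with the paper's proof, and one of them is serious.

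\textbf{Forward direction.} Your plan to ``transfer the quasigeodesic property to center curves by comparing leaf-space combinatorics via the equivariant identification'' does not work as stated. The map $H$ is only a homeomorphism of leaf spaces; it carries no metric information, so knowing that orbits of $\wt\phi_t$ are quasigeodesics in their weak stable leaves says nothing directly about center curves in $\wcs$-leaves, which live in a different Gromov-hyperbolic space with no a priori quasi-isometry to the corresponding $\wt{\cF^{ws}_\phi}$-leaf. The paper's proof avoids this by a different route: it passes to the approximating foliations $\fes,\feu$, takes their one-dimensional intersection $\cG$, and shows that the leaf space of $\wt\cG$ is $\pi_1(M)$-equivariantly homeomorphic to $\cO_\phi$. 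Then a theorem of Haefliger--Ghys--Barbot upgrades this to an actual homeomorphism of $M$ sending the flow foliation of $\phi_t$ to $\cG$, so $\cG$ becomes the orbit foliation of a flow $\psi_t$ orbit equivalent to $\phi_t$, hence expansive, hence topologically Anosov with weak stable foliation equal to $\fes$. Now Proposition~\ref{prop.topAnosovQG} applies to $\psi_t$ itself and yields directly that $\fes$-leaves are Gromov hyperbolic with $\psi_t$-orbits uniform quasigeodesics; this is then pulled back to $\cs$ and the centers.

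\textbf{Converse direction.} Your plan is close in spirit to the paper's---the paper also builds a flow tangent to the approximate center direction and shows it is topologically Anosov---but you are missing the hardest step. You write that expansivity ``follows from showing that two distinct orbits in the same weak leaf cannot share both endpoints at infinity, by a standard hyperbolic geometry argument.'' This is precisely Theorem~\ref{teo.QG} (that the centers form a quasigeodesic \emph{fan} in each $\cs$ and $\cu$ leaf), and it is far from standard: its proof occupies all of \S\ref{s.QG}, requires building and analyzing invariant sublaminations, and uses partial hyperbolicity in an essential way (iterating by $f^{-1}$ to blow up bigons). Hyperbolic geometry alone does not forbid a quasigeodesic foliation of $\HH^2$ with an interval of leaves sharing both endpoints. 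Also, the paper constructs the generating vector field not by ``averaging along center curves'' but by intersecting the approximating foliations $\fes\cap\feu$; this is where the orientability of $E^s,E^u$ (hence of $E^{cs},E^{cu}$) enters, via Theorem~\ref{teo-openbranch}.
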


Although we do not use it to prove Theorem \ref{teo.main1}, this characterization can be used to prove that some partially hyperbolic diffeomorphism are collapsed Anosov flows, as is done in \cite{FP-2}. (In fact, \cite{FP-2} motivated some of the results in this article, including Theorem \ref{teo.main5}.)

\begin{remark}
 Use of leafwise quasigeodesic behavior and the Morse Lemma to prove stability results have a long and fruitful history. While not directly applicable, some of our techniques share a similar philosophy as those previously used by, for instance, Ghys \cite{Ghys}, or, more recently, by Bowden and Mann \cite{BowdenMann}.
\end{remark}

The geometric description we obtain for collapsed Anosov flows is in fact more precise than this: We show that the center leaves of a quasigeodesic partially hyperbolic diffeomorphism must form a \emph{quasigeodesic fan} inside each center stable or unstable leaf, as is the case for orbits of Anosov flows (see Theorem \ref{teo.QG}). In addition, we prove that the branching of center leaves, if it exists, is fairly well-behaved,
see Lemma \ref{l.semiflow}.

A parallel can be made between the cases studied here and the classification of partially hyperbolic diffeomorphisms on $3$-manifolds with (virtually) nilpotent fundamental group: On these manifolds, while the branching foliations are not Gromov hyperbolic, the center leaves may be quasigeodesics inside their branching center (un)stable leaves. Determining when this is the case turned out to be a successful strategy for the classification, see \cite{HPsurvey,HP-cstori}.

\begin{remark}
 Both Theorem \ref{teo.main6} and Theorem \ref{teo.main5}, giving the equivalence between strong collapsed Anosov flows, leaf space collapsed Anosov flows and quasigeodesic partially hyperbolic diffeomorphism require some orientability conditions for one of their directions.
 The knowledgeable reader might surmise that this is linked to the theorem of Burago--Ivanov (Theorem \ref{teo-openbranch}), giving the existence of branching foliations under some orientability conditions of the bundles. This is only partly true: each of the Definitions \ref{defi1}, \ref{defiw2} and \ref{defiQG} assumes already the existence of branching foliations, but what we do need for some arguments from Burago--Ivanov Theorem is that these branching foliations are well approximated by true foliations.
 
 While we think it likely that both Theorem \ref{teo.main6} and Theorem \ref{teo.main5} would hold without the orientability assumptions, we are not able to prove it at this time.

 In particular, one step that would be very helpful to solve this problem, would be to prove uniqueness of the invariant branching foliations tangent to the center stable and center unstable bundles for partially hyperbolic diffeomorphisms (see Question \ref{q.branchingunique}).
 
 The uniqueness question, which has a very wide scope of potential applications, is completely open in general. Here we prove it for the examples of Theorem \ref{teo.main1} in Proposition \ref{prop.uniquebranchingBGHP}.
\end{remark}

\subsection{Realization of self orbit equivalences}

One way of looking at the definition of collapsed Anosov flows is as a partially hyperbolic realization of a self orbit equivalence of an Anosov flow.

Quite clearly, not every self orbit equivalence of an Anosov flow
can be a partially hyperbolic diffeomorphism: Just consider a trivial self orbit equivalence $\phi_{h(\cdot)}$ of an Anosov flow $\phi_t\colon M \to M$ where $h\colon M \to \RR$ is such that $h(x_0) = 0$ for some $x_0\in M$, which therefore cannot be partially hyperbolic. However, if we consider the \emph{equivalence class} of a trivial self orbit equivalence, then that class has an element that can be represented as a partially hyperbolic diffeomorphism.

Therefore, the following natural problem presents itself.

\begin{quest}\label{q.realization_soe}
 Is every self orbit equivalence class of an Anosov flow realized by a collapsed Anosov flow?
\end{quest}

Notice that a positive answer would in particular imply that
there exists examples of partially hyperbolic diffeomorphisms
in hyperbolic $3$-manifolds which are not discretized Anosov flows,
even up to finite powers, see \cite[Theorem B]{BFFP_part2}.

While not complete enough to fully answer Question \ref{q.realization_soe}, the constructions of \cite{BGHP} lead, via Theorem \ref{teo.main1}, to the realization of many classes of self orbit equivalences. In fact, for some Anosov flows, their construction is enough to realize (virtually) all self orbit equivalence classes.

On the other hand, a basic understanding of the self orbit equivalences of Anosov flows, such as the one obtained in \cite{BaG} for those homotopic to the identity, directly leads to restrictions on possible collapsed Anosov flows (up to isotopy).

We choose to illustrate both of these principles on three specific, but important, examples.

In Theorem \ref{teo.CAF_homotopic_to_identity}, we completely describe strong collapsed Anosov flows that are homotopic to identity (this result is obtained from \cite[Theorem 1.1]{BaG}, which describes the self-orbit equivalences of Anosov flows that are homotopic to identity).

In Theorem \ref{teo.CAFonT1S} and Theorem \ref{teo-SOE_on_FranksWilliams}, we show that, on the unit tangent bundle of a hyperbolic surface and when considering the Franks--Williams example (or some generalizations of it) the answer to Question \ref{q.realization_soe} is (virtually) positive and we describe all possible collapsed Anosov flows up to isotopy.

\subsection{Organization of the paper}

In \S\ref{s.branching}, we recall the definition of branching foliation and the existence theorem of Burago--Ivanov. We also state a more precise existence theorem for true foliations that approximate branching foliations. This precision can be extracted from the original proof of Burago--Ivanov and we explain how to do that in Appendix \ref{app.branching}.

In \S\ref{s.openclosed}, we prove Theorem \ref{teo.main6}. To prove it, we first recall some results that can be extracted from \cite{HPS}, as explained in Appendix \ref{app.graphtransform}.

In \S\ref{s.AF}, we prove some general facts on topological Anosov flows and show that discretized Anosov flows (in the sense of \cite{BFFP_part1,BFFP_part2}) are (strong) collapsed Anosov flows.

In \S\ref{s.QG}, we prove (Theorem \ref{teo.QG}) that the center leaves of a quasigeodesic partially hyperbolic diffeomorphism must make a \emph{quasigeodesic fan} in each center (un)stable leaf. To prove this, we study general subfoliations by quasigeodesic leaves of a foliation and obtain some results that apply in the general case.

In \S\ref{s.criteria}, we prove Theorem \ref{teo.main5}.

In \S\ref{s.strong_implies_leafspace} and \S\ref{s.leafspace_implies_strong}, we prove both directions of Theorem \ref{teo.main3}.

In \S\ref{s.examples}, we prove Theorem \ref{teo.main1}. We also (see \S\ref{ss.uniqueness_center_curves}) prove a result about the uniqueness of center stable and center unstable branching foliations, in the setting of the examples of Theorem \ref{teo.main1}, as well as a (branching) version of unique integrability of their center curves (Proposition \ref{prop.uniquecenters}).

Finally, in \S\ref{s.classification_results}, we prove some classification results about collapsed Anosov flows and self orbit equivalences.

\subsection{On some simplifying assumptions}\label{ss.simplifying_assumption}

Some elements of this article are technical in nature, in part because of the broad generality of our context. So we mention here a simplifying assumption that the reader could make.

If all partially hyperbolic diffeomorphisms are assumed to preserve branching foliations that are \emph{$f$-minimal} (that is, the only closed, non-empty, $f$-invariant set that is foliated by center stable or center unstable leaves is the whole manifold), then the following occurs:
\begin{enumerate}
 \item If one of the branching foliations is $f$-minimal and $f$ is any version of collapsed Anosov flow, then the associated topological Anosov flow will be transitive (see Remark \ref{r.transitiveAF}). Hence, Shannon's result \cite{Shannon} applies and, up to an orbit equivalence, the topological Anosov flow can then be assumed to be a smooth Anosov flow.
 \item In \S\ref{s.QG}, which contains the key for the proof of Theorem \ref{teo.main5}, subsection \ref{ss.general_result_laminations} can be skipped, as Proposition \ref{prop.WQGF} then follows trivially from Proposition \ref{prop.closedfans}.
\end{enumerate}

The $f$-minimality of branching foliations follows readily from any of the following assumptions on $f$: $f$ is (chain)-transitive, $f$ is volume preserving, the manifold $M$ is hyperbolic (\cite{BFFP_part2}), or if $f$ is in the same connected component (among partially hyperbolic diffeomorphisms) from a transitive one (see Proposition \ref{p.fminimalopenclosed}).


\section{Branching foliations and leaf spaces}\label{s.branching} 

In this section we review the notion of branching foliations introduced in \cite{BI} and their leaf spaces. Under some orientability assumptions, partially hyperbolic diffeomorphisms always preserve branching foliations which are well approximated by foliations, so it makes sense to consider partially hyperbolic diffeomorphisms preserving some branching foliations. We assume basic familiarity with foliations in 3-manifolds, see, e.g., \cite[Appendix B]{BFFP_part1} and references therein.

Given a plane field $E$ in a 3-manifold $M$ we call \emph{complete surface tangent to $E$} a $C^1$-immersion $\varphi\colon U \to M$ from a simply connected domain $U \subset \RR^2$ into a 3-manifold $M$ which is complete for the pull-back metric and such that $D_p\varphi(\RR^2) = E(\varphi(p))$ at every $p \in U$. 

\begin{defi}[Branching foliation]\label{def.branching}
A branching foliation $\cF$ of a 3-manifold $M$ tangent to $E$ is a collection of complete surfaces tangent to $E$ such that:
\begin{enumerate}[label= (\roman*)]
\item\label{item.BF_covereverything} every point $x \in M$ belongs to (the image of) some surface, 
\item\label{item.BF_nocrossing} the surfaces pairwise do not topologically cross (see below),
\item\label{item.BF_complete} the family is \emph{complete}, i.e., it contains all the limits of its leaves in the pointed compact open topology (see below),
\item\label{item.BF_minimal} it is minimal in the sense that one cannot remove any surface from the collection and still satisfy properties \ref{item.BF_covereverything} to \ref{item.BF_complete}.  
\end{enumerate}
\end{defi}

The condition of no topological crossing is quite subtle, since the crossing may take place far in the manifold (it cannot be defined locally, and it is part of the reason surfaces are defined in terms of simply connected domains). Following \cite[Section 4]{BI}, given two complete surfaces $\varphi\colon U \to M$ and $\psi\colon V \to M$ tangent to $E$ we say that they \emph{topologically cross} if there is a curve $\gamma\colon (0,1) \to U$ a $C^1$-immersion $\Psi\colon V \times (-\eps,\eps) \to M$ such that $\Psi(\cdot,0)= \psi$ and a map $\tilde\gamma \colon (0,1) \to V \times (-\eps,\eps)$ whose image intersects both $V \times (0,\eps)$ and $V\times (-\eps,0)$ such that $\varphi \circ \gamma = \Psi \circ \tilde \gamma$. This notion is well defined and symmetric on the surfaces. 

\begin{remark}
The key difference between branching foliations defined above and the \emph{branched laminations} introduced in \cite[\S 6.B]{HPS} is the additional assumption \ref{item.BF_nocrossing} that there are no topological crossing between surfaces. Of course, this added notion only makes sense in the codimension one setting. 
\end{remark}

The notion of completeness of the family stated in the definition of branching foliation should be understood in the following sense: Let $\varphi_n\colon U_n \to M$ be a sequence of complete surfaces tangent to $E$ in the family. Suppose that $\varphi_n(p_n) \to x$ for some points $p_n \in U_n$. Then, there exists a surface $\varphi\colon U \to M$ in the family that verifies the following. Given a point $p \in U$, there is an arbitrarily large ball around $p$ and large balls around each $p_n\in U_n$ on which the map $\varphi$ is $C^1$-close to some reparametrization (see
next paragraph) of the maps $\varphi_n$ (see also \cite[Lemma 7.1]{BI}). 

Note that condition \ref{item.BF_minimal} above is not stated explicitly in \cite{BI}, but can be easily deduced by choosing one leaf in each equivalence class (up to topological reparametrization). There is a lot of ambiguity for the choice of parametrizations and since we want to focus on their images,
we want to avoid it. For that, we will say that two complete surfaces $\varphi\colon U \to M$ and $\psi\colon V \to M$ tangent to $E$ are the \emph{same up to reparametrization} if there is a homeomorphism $h\colon U \to V$ such that $\varphi = \psi \circ h$. 

It is standard in the literature to abuse notation and talk about leaves of a branching foliation $\cF$ to refer either to the complete surface $\varphi\colon U \to M$ up to reparametrization, or to its image. In this article, we will try to avoid this for clarity. In fact, some of our results a posteriori help justifying this classical abuse (see the end of Remark \ref{rem-novikov}). 

In \cite[Theorem 7.2]{BI} it is shown that branching foliations can be approximated arbitrarily well by true foliations. The statement of \cite[Theorem 7.2]{BI} does not state explicitly some properties of the approximation that we will need.  Precisely, item \ref{item.ApporxFol_uniqueness} below is not stated in \cite[Theorem 7.2]{BI}, but we explain in Proposition \ref{prop-uniqueleaf} of the appendix how it follows from its proof.

\begin{teo}\label{teo-openbranch}
Let $\cF$ be a branching foliation tangent to a transversely orientable distribution $E$ on a closed 3-manifold $M$. Then, for every $\eps>0$ there exists a foliation $\cF_\eps$ with smooth leaves and a continuous map $h_\eps\colon M \to M$ such that the following conditions hold:
\begin{enumerate}[label=(\roman*)]
\item\label{item.ApporxFol_smallangle} the angle between $E$ and $T\cF_\eps$ at every point is smaller than $\eps$, 
\item\label{item.ApporxFol_uniqueness} for every surface $\varphi\colon U \to M$ in $\cF$ there is a unique leaf $L$ of the foliation $\cF_\eps$ such that $h_\eps$ is a local $C^1$ diffeomorphism from $L$ to the surface: That is, for every $x \in L$ there is a neighborhood $V$ of $x$ in $L$ and an open subset $W \subset U$ such that $\varphi^{-1} \circ h_{\eps}\colon V \to W$ is a diffeomorphism,
\item\label{item.ApporxFol_smalldistance} $d(h_\eps(x),x)< \eps$ for every $x\in M$. 
\end{enumerate}
\end{teo}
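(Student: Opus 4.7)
The plan is to build directly on Burago--Ivanov's construction \cite[Theorem 7.2]{BI}, which already supplies a foliation $\cF_\eps$ and a continuous map $h_\eps$ realizing conditions \ref{item.ApporxFol_smallangle} and \ref{item.ApporxFol_smalldistance}, and then to extract the extra uniqueness information in \ref{item.ApporxFol_uniqueness} by examining how their construction produces the approximating leaves. Properties \ref{item.ApporxFol_smallangle} and \ref{item.ApporxFol_smalldistance} require no new ideas: once the transverse orientability of $E$ is fixed, Burago--Ivanov smooth out the branching by deforming each complete surface within a thin tubular neighborhood of itself, producing leaves of class $C^1$ whose tangent planes differ from $E$ by an arbitrarily small angle, and such that the local projection back along the transverse direction displaces points by less than $\eps$.

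The substantive content is \ref{item.ApporxFol_uniqueness}. I would unpack the Burago--Ivanov deformation as follows: Using the transverse orientability of $E$, one gets a continuous ``height coordinate'' along any short arc transverse to $\cF$. For each point $x\in M$ the collection of surfaces in $\cF$ passing through $x$ is totally ordered by this coordinate (because the no-topological-crossing condition prevents two such surfaces from switching sides), and the construction perturbs each surface into a graph over itself in such a way that distinct surfaces acquire distinct heights on a neighborhood of $x$. Thus to each $\varphi\colon U\to M$ in $\cF$ corresponds a specific leaf $L=L_\varphi$ of $\cF_\eps$, obtained as the graph of the perturbation of $\varphi$, and by construction $h_\eps$ restricted to $L$ is a local $C^1$-diffeomorphism onto the image of $\varphi$.

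For uniqueness, I would argue by contradiction: if two distinct leaves $L_1,L_2$ of $\cF_\eps$ were both sent by $h_\eps$ to the image of a single $\varphi$ as local diffeomorphisms, then at any common fibre of the transverse projection they would be separated by the height coordinate, so they would originate from two distinct surfaces $\varphi_1,\varphi_2$ of $\cF$ whose images coincide locally with that of $\varphi$. By completeness (Definition \ref{def.branching}\ref{item.BF_complete}) and the minimality clause \ref{item.BF_minimal}, $\varphi_1$ and $\varphi_2$ must then be the same surface up to reparametrization as $\varphi$, and the Burago--Ivanov perturbation assigns the same graph to them, forcing $L_1=L_2$.

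The main obstacle is making precise the ``height coordinate'' and the statement that distinct leaves of $\cF_\eps$ over the same surface of $\cF$ must lie at distinct heights; this is really a bookkeeping exercise inside Burago--Ivanov's construction, and I would carry it out in Appendix \ref{app.branching} (as Proposition \ref{prop-uniqueleaf}), since re-deriving the entire construction here would distract from the rest of the paper. Once that local-to-global comparison is in place, the three conclusions of the theorem are immediate.
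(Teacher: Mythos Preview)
Your proposal is correct and follows essentially the same approach as the paper: both defer items \ref{item.ApporxFol_smallangle} and \ref{item.ApporxFol_smalldistance} to \cite[Theorem 7.2]{BI} and extract \ref{item.ApporxFol_uniqueness} by unpacking the Burago--Ivanov push-off construction in the appendix (Proposition \ref{prop-uniqueleaf}). The only cosmetic difference is that the paper argues directly---the order-parametrizations $\theta_i\colon\cA_i\to(0,1)$ are determined by the branching leaf as an equivalence class, hence so is the push-off function $F$, hence the associated leaf of $\cF_\eps$ is unique---whereas your contradiction via the minimality clause \ref{item.BF_minimal} reaches the same conclusion by a slightly more roundabout route.
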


\begin{remark}
 While the theorem of Burago and Ivanov, a priori only gives an approximating foliation with $C^1$ leaves, thanks to work of Kazez and Roberts \cite{KazezRoberts}, any foliation with $C^1$ leaves can be approximated by a foliation with smooth ($C^{\infty}$) leaves with the same properties. Hence, one can then obtain a family of foliations with smooth leaves that approximates the branching foliations.
\end{remark}

The uniqueness of the correspondence between leaves of the true and branching foliations, given by item \ref{item.ApporxFol_uniqueness} above, allows to simplify the definition of the leaf spaces of the center stable, center unstable and center (branching) foliations given in \cite[Section 3]{BFFP_part2}. 

Let $f\colon M \to M$ be a partially hyperbolic diffeomorphism and assume that $f$ preserves branching foliations $\cs$ and $\cu$ tangent respectively to $E^{cs}=E^s \oplus E^c$ and $E^{cu}=E^c \oplus E^u$. That $f$ preserves the branching foliation means that each surface of the collection is mapped, up to reparametrization, to another surface in the collection. 

\begin{remark}
 We pause here to emphasize the different assumptions about the respective smoothness of the foliations we consider in this article. While all our foliations and branching foliations are assumed to be only continuous transversaly, the assumed regularity of their leaves is different depending on the context:
 \begin{itemize}
  \item If $\cF^{ws}$ is the weak (un)stable foliation of a topological Anosov flow, then the leaves of $\cF^{ws}$ are only assumed to be $C^0$. In particular, they may not even be rectifiable, which leads to some technicalities in \S\ref{s.AF}.
  \item If $\cs$ is a branching foliation of a partially hyperbolic diffeomorphism, then the leaves of $\cs$ are assumed to be at least $C^1$.
  \item If $\cF_\eps$ is an approximating foliation for a branching foliation $\cs$, then the leaves of $\cF_\eps$ are assumed to be smooth ($C^{\infty}$).
 \end{itemize}
 The reason for these different regularity assumptions, is that we chose to take the broadest, and most ``natural'', definitions for each categories of objects.
\end{remark}

\begin{obs}\label{rem-novikov} 
By \cite[Lemma 2.3]{BI}, there is no closed contractible curve everywhere transverse to $E^{cs}$. Indeed, thanks to Novikov theorem, a closed transversal would imply the existence of a Reeb component for some of the approximating foliations. Now, a Reeb component would force disks inside to be sent into themselves by the unstable holonomy (see \cite[Lemma 2.2]{BI}) and thus one would obtain a closed curve tangent to $E^u$ which is impossible. This also has the important consequence that the approximating foliation $\fes$ given by Theorem \ref{teo-openbranch}, for small $\eps$, is \emph{Reebless}. The same holds for $E^{cu}$ and $\cu$ and $\feu$. Notice that once one knows that the branching foliation is Reebless, one can simplify a bit its treatment, in particular, when lifting to the universal cover, there is no ambiguity in identifying surfaces up to reparametrization with their images. 
\end{obs}

Remark \ref{rem-novikov} and Palmeira's theorem imply that the universal cover $\mt$ of $M$ is homeomorphic to $\RR^3$ and that the leaf space of the lifted foliations 
$\wfes$ and $\wfeu$ of $\fes$ and $\feu$ respectively are $1$-dimensional simply connected (possibly non-Hausdorff) manifolds. Theorem \ref{teo-openbranch} implies that all of these leaf spaces 
$\wfes$ for $\eps > 0$ are independent of $\eps>0$ and
are naturally bijective with the leaf space of $\wcs$.
It allows one to put a topology on the leaf space $\lcs = \mt/_{\wcs}$ 
which is the same as the topology on 
 $\mt/_{\wfes}$ independently of $\eps > 0$.
In the same way we define a topology on $\lcu = \mt/_{\wcu}$. 
It also allows to define the action of $\ft$, a lift of $f$ on these spaces, since $\ft$ preserves the lifted branching foliations $\wcs$ and $\wcu$.

The same holds for every deck transformation $\gamma \in \pi_1(M)$ that acts on these leaf spaces canonically. 
Using these identifications there is a canonical action on the
leaf spaces of $\wfes, \wfeu$ by either lifts of $f$ or deck 
transformations.

We obtain also a way to define a leaf space $\lc$ for the center branching foliation. A center leaf in $\mt$ is a connected component of the intersection
of a leaf $L$ of $\wcs$ and a leaf $U$ of $\wcu$. The center leaf space
is this set with the natural topology induced from the quotient of the
subset of the Cartesian product of the two original leaf spaces.
Another way to see this is using the identification of leaf spaces of
$\wcs, \wfes$;  and $\wcu, \wfeu$ to define the center leaf space 
 as the leaf space of the foliation $\widetilde{\cF^c_\eps}$ obtained as the intersection of the foliations $\wfes$ and $\wfeu$. This is again well defined independently of $\eps$ and there is a well defined action of $\pi_1(M)$ on this leaf space as well as an action of any lift, $\ft$, of $f$ to $\mt$. 

\begin{remark}\label{rem.leafspaces}
The notions of leaf spaces of $\wcs$ and $\wcu$ coincide with the ones studied in \cite[Section 3]{BFFP_part2} where we did not rely on the approximating foliations.
The definition of the center leaf space $\cL^c$ taken here may however differ slightly\footnote{We do not know whether there exists examples where the two definitions are actually different, but, at least formally, they are not the same.} from the one defined in \cite[Section 3]{BFFP_part2} which is a quotient of this definition:
In \cite[Section 3]{BFFP_part2} if two connected components $c_1$ of $L_1 \cap U_1$ and $c_2$ of $L_2 \cap U_2$ ($L_i$ in $\wcs$, $U_i$ in $\wcu$) are the
same set in $\wt M$, then they produced a single center leaf. Here
we do not identify them. So the center leaf space defined in \cite{BFFP_part2}
is a quotient of the one we define here.
\end{remark}

In the cases we will be interested in, there will be a nice topological structure in the leaf space $\lc$ which will be homeomorphic to $\RR^2$. Notice that in this setting, the foliations induced in $\lc$ by $\wcs,\wcu$ (or by $\wfes,\wfeu$ using the above identifications) are (topologically) transverse and invariant under the action of $\pi_1(M)$ and $\ft$. 

The assumption that a partially hyperbolic diffeomorphism of a 3-manifold preserves branching foliations is justified, since it always holds up to finite cover and iterate as the following fundamental result of \cite{BI} shows. 

\begin{teo}[Burago--Ivanov]\label{BI}
Let $f\colon M \to M$ be a partially hyperbolic diffeomorphism with splitting $TM =E^s \oplus E^c \oplus E^u$ such that the bundles are oriented and $Df$ preserves their orientation. Then, there are $f$-invariant branching foliations $\cW^{cs}$ and $\cW^{cu}$ tangent respectively to $E^{cs}=E^s \oplus E^c$ and $E^{cu}=E^c \oplus E^u$. 
\end{teo}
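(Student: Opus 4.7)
The plan is to construct $\cW^{cs}$ as a limit of smooth approximating foliations and then check the branching foliation axioms; the construction of $\cW^{cu}$ is identical after replacing $f$ by $f^{-1}$. Throughout, the orientability hypothesis is used to distinguish the ``two sides'' of a $cs$-surface via the $E^u$-direction, which is what ultimately rules out topological crossings.

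\textbf{Step 1 (smooth approximation).} By partial hyperbolicity, for each small $\eps>0$ there exists $N=N(\eps)$ such that the cone $\cC^{cs}_\eps$ of radius $\eps$ around $E^{cs}$ satisfies $Df^{-N}(\cC^{cs}_\eps) \subset \cC^{cs}_{\eps/2}$. Using the orientability of $E^{cs}$ and standard bump-function constructions, I would produce a smooth distribution $\widetilde E^{cs}_\eps$ close to $E^{cs}$ and then, by a local perturbation argument (or invoking the existence of smooth approximating foliations for orientable, transversely orientable continuous plane fields on closed $3$-manifolds), a smooth foliation $\cG_0$ tangent to a distribution inside $\cC^{cs}_\eps$. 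Since $Df$ preserves the orientations of the bundles, this foliation may be chosen coherently with the chosen orientation.

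\textbf{Step 2 (iteration and extraction of limits).} Consider the sequence of smooth foliations $\cG_n := f^{-n}(\cG_0)$. By Step 1, the tangent planes of $\cG_n$ lie in $\cC^{cs}_{\eps_n}$ with $\eps_n \to 0$, so they converge uniformly to $E^{cs}$. For each $x \in M$ let $L_n(x)$ be the leaf of $\cG_n$ through $x$, viewed as a pointed complete surface. By uniform control on their tangent planes and Arzel\`a--Ascoli in the pointed $C^1$ compact-open topology, a subsequence of $L_n(x)$ converges to a complete $C^1$ surface $S_x$ tangent to $E^{cs}$. Let $\cW^{cs}$ be the family of all such limit surfaces (after passing to a minimal complete subfamily). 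By construction this family is complete in the sense of Definition \ref{def.branching}\ref{item.BF_complete} and covers every point.

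\textbf{Step 3 (the hard step: no topological crossings).} This is where the orientability assumption is essential and where the real work lies. Suppose, for contradiction, that two limit surfaces $\varphi\colon U\to M$ and $\psi\colon V\to M$ in $\cW^{cs}$ topologically cross. Lift everything to $\mt$. Since $E^u$ is uniquely integrable and its orientation is preserved by all deck transformations (by orientability), it defines a coherent transverse orientation on every complete $C^1$ surface tangent to $E^{cs}$; such a surface therefore locally separates a small $E^u$-tubular neighborhood into a ``positive'' and ``negative'' side. A topological crossing forces $\psi$ to have points on both sides of $\varphi$ arbitrarily close to the crossing locus. Pulling this back to the approximating foliations $\cG_n$, one finds, for all large $n$, two leaves of the true foliation $\cG_n$ whose $E^u$-holonomy graph is forced to ``wrap around''; invoking the Novikov-type argument sketched in Remark \ref{rem-novikov} (no closed transversals, no Reeb components in the approximating foliations for small $\eps$, as an immediate consequence of the uniquely integrable $E^u$), this produces a closed $E^u$-curve, contradicting the fact that orbits of $\cF^u$ are non-recurrent in the strong sense (they expand under forward iteration). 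This is the main obstacle, and the delicate part is transferring the no-crossing property from the approximating foliations to their limits while keeping track of the coherent $E^u$-orientation.

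\textbf{Step 4 ($f$-invariance and conclusion).} By construction $\cG_{n+1} = f^{-1}(\cG_n)$, so if $S_x$ is a limit of leaves $L_{n_k}(x)$ then $f(S_x)$ is a limit of $L_{n_k-1}(f(x))$, hence again belongs to $\cW^{cs}$ (after identifying surfaces up to reparametrization). Thus $\cW^{cs}$ is $f$-invariant. Applying the same construction to $f^{-1}$ with the bundle $E^{cu}$ yields the second branching foliation $\cW^{cu}$. Finally, passing to a minimal subfamily (picking one representative in each reparametrization class) yields axiom \ref{item.BF_minimal} of Definition \ref{def.branching}, completing the proof.
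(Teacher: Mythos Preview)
This theorem is not proved in the paper; it is quoted as the main existence result of Burago--Ivanov \cite{BI}, and only some features of their construction are recalled in Appendix~\ref{app.branching}. So there is no ``paper's own proof'' to compare against, but one can still evaluate your proposal on its merits and against the actual Burago--Ivanov strategy as described there.

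Your argument has a genuine gap at Step~1. You assert the existence of a smooth foliation $\cG_0$ whose tangent distribution lies in a small cone around $E^{cs}$, appealing to ``the existence of smooth approximating foliations for orientable, transversely orientable continuous plane fields''. No such general fact is available: a continuous $2$-plane field on a closed $3$-manifold need not be $C^0$-close to an integrable one, and producing such a $\cG_0$ is essentially equivalent to what you are trying to prove. Indeed, in the paper the logic runs the other way around: \emph{first} one has the branching foliation (via \cite{BI}), and \emph{then} Theorem~\ref{teo-openbranch} produces the approximating true foliations $\cF_\eps$. Your scheme of iterating $f^{-n}$ on a seed foliation and extracting limits is attractive, but it presupposes the seed, which is exactly the missing ingredient. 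Step~3 inherits the same circularity: you invoke Remark~\ref{rem-novikov}, but that remark concerns the approximating foliations of an already-constructed branching foliation, so it cannot be used in the construction itself.

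For comparison, the actual Burago--Ivanov construction (sketched in Appendix~\ref{app.branching}, especially Proposition~\ref{topmost}) is completely different and does not pass through any global approximating foliation. It is local and order-theoretic: through each point $p$ one builds the \emph{lowest} local $C^1$ surface tangent to $E^{cs}$ in the forward $E^c$-direction (this is where the orientation hypotheses enter, to make ``lowest'' and ``forward'' meaningful), using a $2$-dimensional reduction to curves tangent to a line field on a transversal disk. These local patches are then extended by an inductive ``upper/lower enveloping surface'' procedure (alternating forward and backward in the $E^c$-direction) that respects a total order on germs of surfaces through each point; the no-crossing property is built into this order from the start rather than recovered at the end.
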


To be precise, the invariance by $f$ of the branching foliations means the following:
If $(\varphi,U)$ is a leaf of $\cs$ then $(f \circ \varphi, U)$ is also a leaf of $\cs$ modulo reparametrization.

Notice that the branching foliations constructed in \cite{BI} are invariant under \emph{every} diffeomorphism that preserves the bundles $E^{cs}$ and $E^{cu}$ and preserves orientations of the
bundles $E^c, E^s$ and $E^u$. Some other consequences of their construction is explored in Appendix \ref{app.branching}. One goal  being to better understand the uniqueness properties these foliations may have. 

\begin{notation}\label{not.branching}
Given a branching foliation $\cF$ on $M$ we will use the notation $(\varphi,U)\in \cF$ to refer to the surface $\varphi\colon U \to M$. If $f\colon M \to M$ is a diffeomorphism that preserves the branching foliation $\cF$, then we write $f(\varphi,U)$ for the leaf $(\psi,V) \in \cF$ which is a reparametrization of $(f\circ \varphi, U)$. 
\end{notation}

\section{The space of collapsed Anosov flows}\label{s.openclosed}

We want to show Theorem \ref{teo.main6}. First we recall some results from \cite{HPS} that we need. 

\subsection{Graph transform method}\label{ss.graphtransform} 
The structural stability results of Hirsch, Pugh and Shub \cite{HPS} provide conditions implying that perturbations of a partially hyperbolic diffeomorphisms preserving a foliation tangent to the center direction are \emph{leaf conjugate} to the original one. Their classical stability result (see \cite[\S 7]{HPS}) requires the center bundle to be integrable plus a technical condition called \emph{plaque expansivity}. We refer the reader to \cite{HPS} for the precise definitions of these notions since we do not use them here. 

In \cite[\S 6]{HPS} the authors develop a more general theory that permits
leaves to merge (see also \cite[Theorem 4.26]{CP}). The more general
theory allows one to remove the technical conditions at the expense of not knowing if centers remain disjoint after perturbation. Since in our case this is what usually happens, this is precisely what we need. 

We will use a couple of variants of \cite[Theorem 6.8]{HPS} (which is part of \cite[Theorem 6.1]{HPS}). The version we need here is a uniform version of the results there. The key observation is that  the proof given in \cite{HPS} provides uniform estimates that depend, rather than on the diffeomorphism, on some of its properties which hold in uniform neighborhoods of a given partially hyperbolic diffeomorphism. We will provide a sketch of the proof in  Appendix \ref{app.graphtransform}. We state the results in dimension 3  where we will use them, but similar results should hold in any dimension (see \cite{Martinchich}). 

We first need some definitions from \cite{HPS}.  A \emph{$C^1$-leaf immersion} is a $C^1$-immersion, $\imath\colon V\to M$, of a manifold $V$ (which is typically a disjoint union of possibly uncountably many connected complete manifolds) to $M$ whose image is a closed set in $M$. To give a metric to $V$ we consider the metric in each connected component of $V$ and declare distance equal to $\infty$ between points in different connected components\footnote{Technically this is not a metric, but it serves well for notions such as being uniformly close, etc.}.

For a diffeomorphism $g\colon M \to M$, a $C^1$-leaf immersion $\imath\colon V\to M$ is said to be \emph{$g$-invariant} if there exists a $C^1$-diffeomorphism $\imath_\ast g\colon V \to V$ verifying $\imath \circ \imath_\ast g = g \circ \imath$. 
Two $C^1$-leaf immersions $\imath, \imath'$ from $V$ to $M$ are said to be \emph{$C^1$-close} if they are uniformly $C^1$-close, meaning that there exists $\eps>0$ such that for every $x \in V$ we have $d(\imath(x),\imath'(x))< \eps$ and\footnote{To make sense of the difference of derivatives, one can for instance, embed $M$ in some $\RR^k$ with large $k$.}  $\|D_x\imath - D_x\imath'\|< \eps$. 

\begin{teo}\label{teo.uniformHPS}
Let $f_0\colon M \to M$ be a partially hyperbolic diffeomorphism. There exists  a $C^1$-neighborhood $\cU$  of $f_0$ such that if $g,g'\in \cU$ and $\imath_g\colon V \to M$ is a $g$-invariant $C^1$-leaf immersion tangent to $E^c_g$, then there exists $\imath_{g'}\colon  V \to M$ a $g'$-invariant $C^1$-leaf immersion tangent to $E^c_{g'}$ and $C^1$-close to $\imath_g$ and a homeomorphism $\tau\colon V \to V$ which is $C^0$-close to the identity\footnote{In particular, it preserves connected components.} verifying that $(\imath_g)_\ast g(x) = (\imath_{g'})_\ast g' (\tau(x))$ for every $x \in V$. 
\end{teo}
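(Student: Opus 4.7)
The argument is a uniform version of the graph transform scheme of \cite[\S 6]{HPS}. The first step is to fix the neighborhood $\cU$. By the cone criterion, partial hyperbolicity is $C^1$-open, so there is a $C^1$-neighborhood of $f_0$ on which every $g$ admits a partially hyperbolic splitting $E^s_g \oplus E^c_g \oplus E^u_g$ with uniform hyperbolicity constants, uniformly transverse bundles varying continuously in $g$, and strong stable and strong unstable foliations whose leaves vary continuously in $g$ in the $C^0$ topology on compact sets. All the quantitative ingredients of the classical graph transform estimate on $f_0$ can therefore be bounded uniformly over $\cU$, possibly after shrinking it.

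Given a $g$-invariant $C^1$-leaf immersion $\imath_g \colon V \to M$ tangent to $E^c_g$, for each $x \in V$ I would choose a small embedded disk $D_x$ through $\imath_g(x)$ that is tangent to $E^s_{g'}(\imath_g(x)) \oplus E^u_{g'}(\imath_g(x))$, obtaining a continuous tubular neighborhood of $\imath_g$ of uniform size. Any $C^1$-leaf immersion $\jmath \colon V \to M$ sufficiently $C^1$-close to $\imath_g$ is then the graph of a unique $C^1$-section $\sigma$ of this tubular neighborhood, and the condition that $\jmath$ be tangent to $E^c_{g'}$ becomes a uniform bound on $\|D\sigma\|$ together with a pointwise constraint.

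Next I would define the graph transform operator $\Gamma = \Gamma_{g',\imath_g}$ on a closed ball of such sections by setting $\Gamma(\sigma)(x)$ to be the unique point of $D_x$ whose image under $g'$ lies in $D_{(\imath_g)_\ast g(x)}$ at the section value determined by $\sigma$; this is the classical pull-back along $g'$ combined with the stable/unstable holonomies between nearby transversals. The estimates of \cite[\S 6]{HPS} then show that $\Gamma$ is a contraction, with contraction rate bounded away from $1$ uniformly for $g,g' \in \cU$, provided one controls (i) the hyperbolicity rates, (ii) the $C^0$-distance between $E^c_g$ and $E^c_{g'}$, which can be made arbitrarily small by shrinking $\cU$, and (iii) the angles between $E^c$ and $E^s \oplus E^u$. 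All three are uniform on $\cU$, so one neighborhood works for every pair in $\cU$. The unique fixed point $\sigma_\star$ produces a $C^1$-section whose graph is the desired $\imath_{g'}$, which is tangent to $E^c_{g'}$, $g'$-invariant, and $C^1$-close to $\imath_g$. The homeomorphism $\tau \colon V \to V$, $C^0$-close to the identity and satisfying $(\imath_g)_\ast g = (\imath_{g'})_\ast g' \circ \tau$, is then read off from the uniqueness of the fixed point: near each $x$ there is a unique choice of point in $V$ whose $\imath_{g'}$-image lies in $D_x$, and the intertwining relation is an immediate consequence of invariance and uniqueness.

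The main obstacle is keeping all constants genuinely uniform over $\cU$, rather than along a path or relative to a single $g$ as in the usual stability statement. The cone and transversality arguments make this possible for the bundles, but one also needs the strong stable/unstable holonomies used inside $\Gamma$ to have uniformly controlled Lipschitz behavior on scales comparable to the tubular neighborhood; this in turn uses the uniform dominated splitting and is what forces the appeal to the quantitative estimates of \cite[\S 6]{HPS} rather than the packaged stability theorem of \cite[\S 7]{HPS}. The verification of these uniform bounds is the content that Appendix \ref{app.graphtransform} is announced to supply.
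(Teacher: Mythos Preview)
Your overall strategy --- redo the graph transform of \cite[\S 6]{HPS} while tracking that every constant depends only on data that is uniform on a $C^1$-neighborhood of $f_0$ --- is exactly the paper's point of view, and your final paragraph correctly identifies the reason one must go back to the estimates of \cite[\S 6]{HPS} rather than quote the packaged theorem of \cite[\S 7]{HPS}.

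There is, however, a genuine gap in your implementation. The leaf immersion $\imath_g$ is tangent to $E^c_g$, so the normal bundle has \emph{both} a contracting and an expanding subbundle: this is normal hyperbolicity, not normal expansion. Your operator $\Gamma$, described as a single pull-back along $g'$ (plus some unspecified holonomy correction), will not be a contraction on sections of the full normal bundle, because iterating in one direction contracts the unstable component while expanding the stable one. The sentence ``combined with the stable/unstable holonomies between nearby transversals'' hints at an awareness of this, but does not resolve it.

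The paper handles this by reducing to two codimension-one problems. One first applies the stable manifold theorem (itself a graph transform) to obtain, through each plaque of the plaquation of $V$, local center-stable and center-unstable plaques for $g'$; each of these families is genuinely normally expanded (respectively contracted), so the argument you wrote \emph{does} apply to them and produces $g'$-invariant $cs$- and $cu$-plaque families $C^1$-close to those of $g$. The sought immersion $\imath_{g'}$ is then obtained by intersecting the two families, and $\tau$ is read off from the fact that the graph transform moves points only a small amount along the center direction. A secondary point: because $\imath_g$ is only an immersion, your ``tubular neighborhood of $\imath_g$'' is not globally well defined in $M$; the paper's use of plaquations (one plaque per point of $V$, with precompact image family) is what makes the local graph-transform charts available without assuming embeddedness.
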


We will also need a version of the result above for branching foliations\footnote{See \cite[\S 6.B]{HPS} for the related notion of branched lamination (which is also intimately  related to the statement of Theorem \ref{teo.uniformHPS}) which differs from the notion of branching foliations we use in this paper.  The latter has to do with codimension one phenomena and features the non-topological crossing condition that makes no sense in the setting of \cite[\S 6.B]{HPS}.}.

\begin{defi}\label{defi-epseq} If $g\colon M \to M$ and $g'\colon M \to M$ are partially hyperbolic diffeomorphisms preserving respectively branching foliations $\cs_g$ and $\cs_{g'}$ tangent to $E^{cs}_g$ and $E^{cs}_{g'}$. We say that $\cs_g$ and $\cs_{g'}$ are $\eps$-\emph{equivalent} if:
\begin{enumerate}
\item There exists a $\pi_1(M)$-invariant homeomorphism $H$ from $\cL^{cs}_g$ to $\cL^{cs}_{g'}$, the leaf spaces of $\widetilde{\cs_g}$ and $\widetilde{\cs_{g'}}$ in $\mt$ respectively.
\item There are lifts $\tilde{g}$ and $\tilde{g'}$ of $g$ and $g'$ respectively such that the actions on $\cL^{cs}_g$ and $\cL^{cs}_{g'}$ are conjugate via $H$, that is, $H \circ \tilde{g} = \tilde{g}' \circ H$. 
\item\label{it.closeleafuc} Given $L =(\varphi, U) \in \cL^{cs}_g$ a leaf of $\widetilde{\cs_{g}}$ we have that the leaf $H(L) = (\psi, V)$ of $\widetilde{\cs_{g'}}$ is uniformly $\eps$-$C^1$-close to $L$. This means that there exists a diffeomorphism $\eta\colon U \to V$ such that the maps $\varphi$ and $\psi \circ \eta$ are uniformly $\eps$-close as well as their derivatives. 
\end{enumerate}
 \end{defi}
 We can now state the result we will need. 
 
 \begin{teo}\label{teo.graphtransform}
 Let $f_0\colon M \to M$ be a partially hyperbolic diffeomorphism of a closed 3-manifold $M$. There exists  an open neighborhood $\cU$ of $f_0$ in the $C^1$ topology and $\eps>0$ with the property that every $g \in \cU$ is partially hyperbolic and if $\cs_g$ is a branching foliation tangent to $E^{cs}_g$ and invariant under $g$, then, for every $g' \in \cU$ there is a branching foliations $\cs_{g'}$, invariant under $g'$ and $\eps$-equivalent to $\cs_g$. 
 \end{teo}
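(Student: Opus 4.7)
The plan is to reduce Theorem \ref{teo.graphtransform} to a center-stable analog of the uniform HPS persistence result (Theorem \ref{teo.uniformHPS}), applied to the leaves of $\cs_g$ viewed collectively as a single $C^1$-leaf immersion. Concretely, I would set $V := \bigsqcup_{(\varphi,U) \in \cs_g} U$ to be the disjoint union of the domain surfaces of all leaves of $\cs_g$, each component a copy of $\RR^2$, and let $\imath_g \colon V \to M$ be the map whose restriction to each $U$ coincides with $\varphi$. Since $\cs_g$ is complete and invariant under $g$ (Definition \ref{def.branching}), $\imath_g$ is a $C^1$-leaf immersion tangent to $E^{cs}_g$ with closed image, and the permutation of leaves induced by $g$ (with each leaf reparametrized by the map $g|_\varphi$ composed with the reparametrization identifying $(g\circ\varphi,U)$ with its representative) gives a $C^1$-diffeomorphism $(\imath_g)_*g \colon V \to V$ satisfying $\imath_g \circ (\imath_g)_*g = g \circ \imath_g$.

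Next I would invoke the center-stable version of the uniform graph transform from Appendix \ref{app.graphtransform}: because $E^{cs}_g$ is normally hyperbolic in the sense of HPS (with strongly expanding normal direction $E^u_g$), the same plaque-by-plaque fixed-point argument that yields Theorem \ref{teo.uniformHPS} for $E^c$ works for $E^{cs}$, producing, for every $g' \in \cU$, a $g'$-invariant $C^1$-leaf immersion $\imath_{g'} \colon V \to M$ tangent to $E^{cs}_{g'}$ that is uniformly $\eps$-$C^1$-close to $\imath_g$, together with a homeomorphism $\tau\colon V \to V$ conjugating $(\imath_g)_*g$ and $(\imath_{g'})_*g'$. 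Uniqueness of the fixed point of the graph transform in a small $C^1$-tube around $\imath_g$ ensures that if one carries out the construction equivariantly on the universal cover $\mt$, the resulting lift $\widetilde{\imath}_{g'}$ commutes with the lifted deck action, giving a $\pi_1(M)$-equivariant correspondence between leaves.

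It then remains to verify that $\cs_{g'} := \{(\imath_{g'}|_U,U)\}_{U}$, after choosing one representative per reparametrization class, forms a branching foliation satisfying Definition \ref{defi-epseq}. Coverage of $M$ follows from the $C^0$-closeness of $\imath_{g'}$ to the surjective map $\imath_g$ together with closedness of the image; completeness in the pointed $C^1$ topology transfers from $\cs_g$ via the correspondence and the $C^1$-bounds; the induced map $H \colon \cL^{cs}_g \to \cL^{cs}_{g'}$ is a homeomorphism by the bijective correspondence of components and the uniform $C^1$-closeness; the $\pi_1(M)$-equivariance and the conjugation of lifted dynamics follow from the previous step; and item \ref{it.closeleafuc} of Definition \ref{defi-epseq} is exactly the $C^1$-closeness of $\imath_g$ and $\imath_{g'}$ on each component. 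The delicate step, which I expect to be the main obstacle, is verifying the no-topological-crossing condition \ref{item.BF_nocrossing}: a hypothetical topological crossing of two leaves of $\cs_{g'}$ would, by the $C^1$-smallness of the perturbation and transversality to the approximating $\feu_g$ direction, produce either a transverse crossing of the corresponding original leaves of $\cs_g$ (contradicting \ref{item.BF_nocrossing} for $\cs_g$) or a small closed loop everywhere transverse to $E^{cs}_{g'}$, which is ruled out by the Novikov argument recalled in Remark \ref{rem-novikov} applied to the approximating foliation $\feu_{g'}$. Once this is established, minimality is obtained by passing to one representative per reparametrization class, concluding the proof.
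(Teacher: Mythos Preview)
Your overall strategy matches the paper's: both treat $\cs_g$ as a single $C^1$-leaf immersion $\imath_g\colon V \to M$ tangent to the normally expanded bundle $E^{cs}_g$ and run the HPS graph-transform machinery (via plaquations and coherent families of sections) to produce $\imath_{g'}$. One organizational remark: the paper proves Theorem~\ref{teo.graphtransform} directly rather than by reducing it to a center-stable analog of Theorem~\ref{teo.uniformHPS}; in fact the $E^{cs}$ case is the simpler of the two, since one has normal \emph{expansion} (a single expanding normal direction $E^u$) rather than normal hyperbolicity, so no preliminary stable-manifold step is needed.

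The one genuine gap is in your no-topological-crossing argument. The dichotomy you propose---that a crossing of two leaves of $\cs_{g'}$ must yield either a crossing of the corresponding leaves of $\cs_g$ or a short closed transversal to $E^{cs}_{g'}$---is not justified, and it is not clear it can be: the delicate case is when two leaves $L_1,L_2$ of $\wcs_g$ merge along a common region (the branching phenomenon), so that the perturbed leaves $L_1',L_2'$ are both $\eps$-$C^1$-graphs over that same region. An a posteriori crossing of $L_1',L_2'$ there need not force either alternative of your dichotomy. (Separately, you invoke an approximating foliation $\feu_g$, but the hypotheses give only $\cs_g$; no center-unstable branching foliation for $g$ is assumed.) The paper's argument is instead dynamical and a priori: the fixed family of sections is the limit of the iterates $(g')_\sharp^n$ applied to the \emph{trivial} sections (i.e., to $\imath_g$ itself), and each application of $(g')_\sharp$---being essentially $(g')^{-1}$ followed by reparametrization over the plaques---preserves the local transverse order of sections. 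Hence non-crossing is preserved at every stage of the iteration and passes to the limit. Replacing your a posteriori topological dichotomy with this monotonicity-along-the-iteration argument is what your proof needs.
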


The proof of both Theorem \ref{teo.uniformHPS} and Theorem \ref{teo.graphtransform} are the same as the ones given in \cite{HPS} with some simplifications (due to the fact that we are only interested in part of their statement). The key difference is that we are claiming that the size of the neighborhoods where the results of \cite[\S 6]{HPS} hold are uniform and depend only on some constants of the diffeomorphism (like the $C^1$-norm, the angle between the bundles and the contraction/expansion rates) and not the diffeomorphism itself. 

For the convenience of the reader, we will include a short sketch of the proof of Theorem \ref{teo.graphtransform} in Appendix \ref{app.graphtransform} (part of the justification for this appendix is the fact that \cite[\S 6]{HPS} proves many other results and what we need is not always easily separated from what we do not need).  The sketch will also serve to show how the uniform estimates, which are the main tool we will use follow from the same arguments (and how the non-crossing condition is automatically satisfied). This will also allow us to explain how Theorem \ref{teo.uniformHPS} follows from \cite{HPS}.

\begin{remark}
As remarked by a referee, it is possible that some of the arguments in the proofs of Theorem \ref{teo.uniformHPS} and \ref{teo.graphtransform} can be made to show that the maps obtained for the stability vary continuously with respect to parameters if one makes a deformation of the original partially hyperbolic map (that is, a curve $f_t$ of partially hyperbolic diffeomorphisms so that $f_0$ is the original map). There is some difficulty in the fact that the leaf conjugacies are not really canonical, but it is true that understanding this further could provide useful information on how the collapsing maps vary with respect to the map. We do not enter into this problem in this paper. 
\end{remark}

\subsection{Proof of Theorem \ref{teo.main6}}\label{ss.open}

Recall (cf.~Remark \ref{r.lscafsoe}) that a leaf space collapsed Anosov flow induces naturally a self orbit equivalence class (i.e., a self orbit equivalence up to trivial self orbit equivalences).  

\begin{prop}\label{prop.open}
Let $f\colon M \to M$ be a partially hyperbolic diffeomorphism. There exists a neighborhood $\cU$ of $f$ such that if there is $g \in \cU$ which is a leaf space collapsed Anosov flow associated to a topological Anosov flow $\phi_t\colon M \to M$ and a self orbit equivalence class $[\beta]$ the following holds: every $g' \in \cU$ is a leaf space 
collapsed Anosov with respect to $\phi_t$ and $[\beta]$. 
\end{prop}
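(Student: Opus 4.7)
The plan is to apply Theorem \ref{teo.graphtransform} separately to the center stable and center unstable branching foliations of $g$, then propagate the leaf space data from $g$ to $g'$ via the resulting $\eps$-equivalences.

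First, I would shrink $\cU$ if necessary so that Theorem \ref{teo.graphtransform} applies to both pairs $(E^{cs},E^{cu})$ at $g$. Since $g \in \cU$ is a leaf space collapsed Anosov flow, it preserves invariant branching foliations $\cs_g$ and $\cu_g$. For any $g' \in \cU$, Theorem \ref{teo.graphtransform} yields $g'$-invariant branching foliations $\cs_{g'}$ and $\cu_{g'}$, together with $\pi_1(M)$-equivariant homeomorphisms $H^{cs}\colon\lcs_g\to\lcs_{g'}$ and $H^{cu}\colon\lcu_g\to\lcu_{g'}$ that conjugate the induced actions of suitably chosen lifts $\wt g$ and $\wt g'$, and whose corresponding leaves are uniformly $C^1$-close.

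Next, I would show that these two homeomorphisms induce a $\pi_1(M)$-equivariant homeomorphism $\Psi\colon \lc_g \to \lc_{g'}$ that conjugates the center leaf space actions of $\wt g$ and $\wt g'$. A point of $\lc_g$ is, by definition, a connected component of an intersection $L^{cs}\cap L^{cu}$ with $L^{cs}\in\wcs_g$ and $L^{cu}\in\wcu_g$; I would send such a component to the corresponding connected component of $H^{cs}(L^{cs})\cap H^{cu}(L^{cu})$ in $\widetilde{\cs_{g'}}\cap\widetilde{\cu_{g'}}$. The main technical point is to show that this assignment is a well-defined bijection. The idea is that, since $\cU$ is a $C^1$-neighborhood, the angle between $E^{cs}$ and $E^{cu}$ is bounded below uniformly over $\cU$, and the corresponding leaves of the two branching foliations are uniformly $C^1$-close; locally, intersections are graphs of uniformly transverse immersions, so small perturbations neither create nor destroy components of intersection. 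Gluing the local correspondences along each center leaf yields the bijection, and continuity in the leaf space topology follows from the definition of the topology via the approximating foliations provided by Theorem \ref{teo-openbranch} together with item \ref{item.ApporxFol_uniqueness} of that theorem.

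Finally, since $g$ is a leaf space collapsed Anosov flow associated with $\phi_t$, there is a $\pi_1(M)$-equivariant homeomorphism $H_g\colon\cO_\phi\to\lc_g$. Setting $H_{g'}:=\Psi\circ H_g$ produces a $\pi_1(M)$-equivariant homeomorphism $\cO_\phi\to\lc_{g'}$, exhibiting $g'$ as a leaf space collapsed Anosov flow with respect to $\phi_t$. The associated self orbit equivalence class is still $[\beta]$: by construction, $\Psi$ conjugates the actions of the chosen lifts $\wt g$ and $\wt g'$ on the center leaf spaces, so the induced permutations of orbits of $\wt \phi_t$ via $H_g$ and $H_{g'}$ agree, and the averaging construction recalled in Remark \ref{r.lscafsoe} produces self orbit equivalences differing by a trivial one. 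The main obstacle is the careful verification that $\Psi$ is a well-defined homeomorphism respecting the branching structure; everything else is essentially a bookkeeping exercise once Theorem \ref{teo.graphtransform} is in hand.
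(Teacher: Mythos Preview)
Your proposal is correct and follows essentially the same approach as the paper: apply Theorem \ref{teo.graphtransform} to both branching foliations, use the $\eps$-equivalence to match connected components of intersections (the paper simply asserts that each center leaf $c$ of $g$ has a unique $\eps$-close component $c'$ in the intersection of the corresponding $g'$-leaves), and then compose with $H_g$. The paper's proof is considerably terser than yours and does not spell out the transversality argument for well-definedness of $\Psi$, but the structure is identical.
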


\begin{proof}
Let $\cU$ be a neighborhood given by Theorem \ref{teo.graphtransform}. Then, for any $g'\in \cU$, we obtain a pair of branching foliations $\cs_{g'}$ and $\cu_{g'}$ with the same dynamics as $g$ in their leaf spaces. 

Let  $\tild{\cs_{g'}}$ and $\tild{\cu_{g'}}$ be the lifted foliations to the universal cover.
For each leaf $F$ of $\tild{\cs_{g}}$ there is a unique leaf $F' = H(F)$
of $\tild{\cs_{g'}}$ which is $\eps$ close to it, and vice versa. (Recall Definition \ref{defi-epseq}(\ref{it.closeleafuc}).)

Let $c$ be a center leaf of $g$. It is a connected component of the intersection
of a leaf $F$ of $\tild{\cs_g}$ and $L$ of $\tild{\cu_g}$. Hence, there is a unique component $c'$ of the intersection of 
$H(F)$ and $H(L)$ which is $\eps$-close to $c$. So the 
center leaf spaces of $g$ and $g'$ are equivariantly homeomorphic. 
So one gets Definition \ref{defiw2} for $g'$ which implies that $g'$ 
is a leaf space collapsed Anosov flow with respect to the flow $\phi_t$. Moreover, $H$ conjugates the respective actions of lifts of $g$ and $g'$ on their center leaf spaces. Hence, their corresponding self orbit equivalence are equivalent (cf.~Remark \ref{r.lscafsoe}).  
 \end{proof}
 
 This proposition implies Theorem \ref{teo.main6} for leaf space collapsed Anosov flows. 
The open property is immediate.
In order to see that it is a closed property consider $f_n\colon M \to M$ leaf space collapsed Anosov flows converging to a partially hyperbolic diffeomorphism $f\colon M \to M$. If we apply Proposition \ref{prop.open} to $f$ we get a neighborhood $\cU$ such that if $g \in \cU$ is leaf space collapsed Anosov flow, then every $g' \in \cU$ is leaf space collapsed Anosov flow. Since $f_n \to f$ it follows that for large $n$ we have that $f_n \in \cU$ and so we can apply the proposition with $g=f_n$ and $g'=f$. 

An analogous proof, below, will give Theorem \ref{teo.main6} for collapsed Anosov flows using Theorem \ref{teo.uniformHPS} instead of Theorem \ref{teo.graphtransform}.
This case is however much more involved because we need to construct
a map in the manifold and not just on the leaf space level.

\begin{prop}\label{p.CAFoc} The space of collapsed Anosov flows is open and closed
among partially hyperbolic diffeomorphisms.
\end{prop}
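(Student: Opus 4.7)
The strategy parallels the proof of Proposition \ref{prop.open}, with Theorem \ref{teo.uniformHPS} taking the role of Theorem \ref{teo.graphtransform}. The key new difficulty is that one must produce a genuine collapsing map $h_g\colon M \to M$, not merely an equivariant homeomorphism of leaf spaces. I will focus on the openness statement; closedness then follows by the same trick as in the leaf space case: given a sequence $f_n$ of collapsed Anosov flows converging to a partially hyperbolic $f$, apply the openness construction to $f$ to obtain a neighborhood $\cU$, and for $n$ large enough $f_n \in \cU$ is a collapsed Anosov flow, so the construction transports the structure to $f$.

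For openness, given $f$ with data $(\phi_t, \beta, h)$, I would let $V$ denote the set $M$ equipped with the $1$-manifold structure whose connected components are the individual orbits of $\phi_t$. Condition (i) of Definition \ref{defi1w} makes $\imath_f := h\colon V \to M$ into a $C^1$-leaf immersion tangent to $E^c_f$. After replacing $\beta$ with an equivalent self orbit equivalence that is $C^1$ along each orbit --- which can be arranged within its equivalence class by composing with a trivial self orbit equivalence whose time function smooths the reparametrization --- the identity $f \circ h = h \circ \beta$ shows that $\imath_f$ is $f$-invariant with $\imath_{f*} f = \beta$. Applying Theorem \ref{teo.uniformHPS} then produces a neighborhood $\cU$ of $f$ such that every $g' \in \cU$ admits a $g'$-invariant $C^1$-leaf immersion $\imath_{g'}\colon V \to M$ tangent to $E^c_{g'}$ and $C^1$-close to $\imath_f$, together with a homeomorphism $\tau\colon V \to V$ close to the identity satisfying $\imath_{g'*}g' = \beta \circ \tau^{-1}$.

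From this data I would define $h_{g'}\colon M \to M$ set-theoretically as $\imath_{g'}$. Since $\tau$ is close to the identity in the $V$-topology, it preserves each orbit of $\phi_t$ with orientation and hence has the form $\tau(x) = \phi_{s(x)}(x)$ for some function $s\colon M \to \RR$. Setting $\beta_{g'} := \beta \circ \tau^{-1}$, the relation $\imath_{g'*}g' = \beta_{g'}$ translates directly to $g' \circ h_{g'} = h_{g'} \circ \beta_{g'}$; condition (i) of Definition \ref{defi1w} holds because $\imath_{g'}$ is a leaf immersion tangent to $E^c_{g'}$ with nonzero derivative along each orbit; and $h_{g'}$ is homotopic to the identity because it is $C^0$-close to $h$, which is.

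The hard part will be to show that $h_{g'}$ is continuous as a map from $M$ with its usual topology into $M$, and likewise that $s$ (hence $\beta_{g'}$) is continuous transverse to the orbits. Theorem \ref{teo.uniformHPS} only guarantees continuity in the finer $V$-topology, in which orbits are at infinite distance from each other, so uniform closeness of $\imath_{g'}$ to the continuous map $h$ is by itself not enough to conclude transverse continuity. To resolve this I would inspect the graph-transform construction producing $\imath_{g'}$ in Appendix \ref{app.graphtransform}: the contraction operator whose fixed point is $\imath_{g'}$ preserves transverse continuity when it is applied to a continuous starting immersion such as $\imath_f = h$, and the same analysis applied to the conjugation yields the continuity of $\tau$ and of $s$. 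Once this ingredient is in place, the openness statement, and hence the proposition, follows.
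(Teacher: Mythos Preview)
Your setup is essentially identical to the paper's: encode the collapsing map $h$ as a $C^1$-leaf immersion $\imath_f\colon V\to M$ of the $1$-manifold $V$ of $\phi_t$-orbits, apply Theorem~\ref{teo.uniformHPS}, and then rebuild the collapsed Anosov flow data for $g'$ from the resulting $\imath_{g'}$ and $\tau$. You also correctly isolate the genuine difficulty---transverse ($M$-)continuity of the candidate map $h_{g'}$ and of the new self orbit equivalence---which Theorem~\ref{teo.uniformHPS} does not provide. (Incidentally, you need not modify $\beta$ to be $C^1$ along orbits: the equation $h\circ\beta = f\circ h$ together with condition (i) of Definition~\ref{defi1w} and the nonvanishing of $Dh$ along orbits already forces this.)

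Where you diverge from the paper is in the resolution of that difficulty. You propose to open the graph-transform argument in Appendix~\ref{app.graphtransform} and trace through that transverse continuity is preserved under iteration from the continuous seed $\imath_f=h$. This is plausible and probably works, but it requires care with uniform limits and is not what the paper does. The paper instead exploits the reparametrization freedom in $\imath_{g'}$: for any $C^1$-diffeomorphism $\varphi\colon V\to V$, the composition $\imath_{g'}\circ\varphi$ is an equally valid output of the theorem. The paper builds a specific $\varphi$ by projecting each point $\imath_{g'}(x)$ to its closest point on the nearby arc $\imath_g(I_x)$ (which is $M$-continuous since $\imath_g=h$ is), then integrating to make this a $C^1$-diffeomorphism of each component. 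Setting $h' = \imath_{g'}\circ\varphi^{-1}$, continuity on $M$ follows because the closest-point construction depends continuously on the arc $\ell_x$, which in turn varies $C^1$-continuously in $M$. The self orbit equivalence $\beta'$ is then \emph{defined} as $(\imath_{g'})_\ast g'$ viewed on $M$ (rather than as $\beta\circ\tau^{-1}$), and its $M$-continuity is deduced directly from continuity of $g'$. This approach has the advantage of treating Theorem~\ref{teo.uniformHPS} as a black box; yours would give a cleaner $h_{g'}$ if the bookkeeping goes through, but leaves more to verify.
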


\begin{proof}
Let $f_0\colon M \to M$ be a partially hyperbolic diffeomorphism. We will show that there is a neighborhood $\cU$ of $f_0$ satisfying that if there is $g \in \cU$ which is a collapsed Anosov flow, then every $f \in \cU$ is a collapsed Anosov flow. This shows that being collapsed Anosov flow is an open and closed property among partially hyperbolic diffeomorphisms as explained above. 

For such a $f_0\colon M \to M$ we will take $\cU$ to be the neighborhood given by Theorem \ref{teo.uniformHPS} and assume that there is $g \in \cU$ which is a collapsed Anosov flow. That is, there exists an Anosov flow $\phi_t\colon M \to M$, a continuous map $h\colon M \to M$ homotopic to the identity as in Definition \ref{defi1w} and a self orbit equivalence $\beta$ such that $g \circ h = h \circ \beta$. We want to construct, for $g' \in \cU$ a map $h'\colon M \to M$ and a self orbit equivalence $\beta'$ of $\phi_t$ which verify Definition \ref{defi1w}. 

First, we will consider a leaf immersion $\imath_g\colon V \to M$ induced by $h$ and $\phi_t$. This is defined as follows: Consider $V$ to be the disjoint union of orbits of $\phi_t$, each one with the smooth structure induced by the length of the curves in $M$. Note that even if $V$ is a disjoint union, we can think of points in $V$ as points of $M$ so we can apply both $h$ and $\beta$ to these leaves. We define $\imath_g(x)= h(x)$. This is a well defined $C^1$-leaf immersion since leaves can be lifted to the universal cover where the lift of $g$ acts and induces a map from $V$ to $V$ which is exactly $\beta$. In particular, we get that $(\imath_g)_\ast g = \beta$. 

We now consider $g' \in \cU$ and Theorem \ref{teo.uniformHPS} gives us a $C^1$-leaf immersion $\imath_{g'}\colon V\to M$ and a homeomorphism $\tau\colon V\to V$ which is globally $C^0$-close to the identity such that $(\imath_g)_\ast g(x) = (\imath_{g'})_\ast g' (\tau(x))$. We need to construct $h'$ and $\beta'$ using this map. 

Note that for $\varphi\colon V \to V$ a $C^1$-diffeomorphism we get that $\imath_{g'} \circ \varphi$ is also a $C^1$-leaf immersion with the same properties, so we need to show that there is a choice of $\varphi$ which makes $h'\colon M \to M$ continuous when defined as $h'(x) = \imath_{g'} \circ \varphi(x)$ where we identify $V$ with $M$ as a set. 
The subtlety here is that even though $V$ and $M$ are identified as
sets, their topologies are completely different. In particular
$V$ has many more open sets.

To obtain $h', \beta'$ we will take advantage of the fact that $\imath_g$ was defined using $h$ which is continuous and that $\imath_{g'}$ and $\imath_g$ are uniformly $C^1$-close. 
First some local considerations.
The curves $i_g(\alpha)$ where $\alpha$ is a component of $V$ are
all integral curves of $E^c_g$ and likewise those of $i_{g'}$ 
are integral curves of $E^c_{g'}$.
In a fixed small scale one can choose local coordinates $(x,y,z)$
so that the curves $i_g(\alpha)$ are all $\eps_0$ \ $C^1$-close
to vertical curves,
with fixed $\eps_0$. The same happens for $i_{g'}(\alpha)$. 
Hence in a local box, for a fixed point $z$ in $i_{g'}(\alpha)$ 
there is a unique point denoted by $\eta(z)$
 in the corresponding local sheet
of $i_{g}(\alpha)$ which is the closest point to
$z$. This defines a function $\eta$. Switching the
roles, this implies that this function is locally injective.
Finally this function has
derivative which is non zero everywhere.

So, given $x \in V$ we consider $I_x$ the $\eps$-neighborhood around $x$ with the metric of $V$ (induced by $M$), in particular this is
contained in the same component of $V$.
Consider $\ell_x = \imath_g(I_x)$.  
Take $\varphi_0(x)$ to be the preimage by $\imath_{g}$ restricted to $\ell_x$ of the closest point in $\ell_x$ to $\imath_{g'}(x)$. The map $\varphi_0\colon V \to V$ is continuous and close to the identity. By integrating in $I_x$ and using the orientation, one can make it to be a $C^1$-diffeomorphism $\varphi$
of $V$. We claim that $\imath_{g'} \circ \varphi^{-1}$ works as a choice of $h'$. Consider $x_n \to x$ a converging sequence in $M$. It follows that $\ell_{x_n} \to \ell_x$ uniformly in the $C^1$-topology. We get that $\imath_{g'}(\varphi^{-1}(x_n))$ is the closest point in average to $\ell_{x_n}$ in $\imath_{g'}(I_{x_n})$ and this point varies continuously. 
This shows that $\imath_{g'} \circ \varphi^{-1}$ is continuous seen
as a map from $M$ to $M$.

Once we have the leaf immersion $\imath_{g'} \circ \varphi^{-1}$, that we 
will now just rename as $\imath_{g'}$ to simplify notation,
 we can also define $\beta'\colon M \to M$ viewing $M$ as the disjoint union of the components of $V$. We set $\beta'(x) = (\imath_{g'})_\ast g' (x) \in V$ and consider this to be in $M$. The map $\beta'$ is bijective since it is bijective in each component of $V$ and maps components to components bijectively. We need to check that $\beta'$ is continuous with the topology of $M$ (which is weaker than the one of $V$). But the continuity of $\beta'$ is a direct consequence of the continuity of $g'$ which forces the maps $(\imath_{g'})_\ast g'$ in different components of $V$ to be close when the components are close in $M$. 

The equation $g' \circ h' = h' \circ \beta'$ is automatically verified. 
\end{proof}

\begin{remark}
We can also show that being a quasigeodesic partially hyperbolic diffeomorphism is an open and closed property:
If $f$ is a quasigeodesic partially hyperbolic diffeomorphism, in a finite cover, an iterate of $f$ is a leaf space collapsed Anosov flow (cf.~Theorem \ref{teo.main5}). 
Suppose that $f_n \to f$ is a sequence of quasigeodesic partially hyperbolic diffeomorphisms converging to a partially hyperbolic diffeomorphism $f$.
Using the neighborhood $\mathcal{U}$ of $f$ given 
by Theorem \ref{teo.graphtransform}, it follows that
there are 
$f$-invariant
branching foliations tangent to $E^{cs}, E^{cu}$.
Let $g$ be a lift of a finite iterate $f^i$ of $f$ to a finite cover
$M_1$ of $M$ so that the lifted bundles $E^c, E^s, E^u$ in $M_1$
are orientable and $g$ preserves the orientations. Let $g_n$ 
be the lifts to $M_1$ of $f_n^i$ which converge to $g$.
Since $g_n$ converges to $g$ and $g$ preserves orientations of
the bundles then the same happens for $g_n$ for $n$ big enough. We assume it is true for all $n$. It now follows
from Theorem \ref{teo.main5} that the $g_n$ are leaf space collapsed Anosov flows. By Theorem \ref{teo.main6} it follows that $g$ is a
leaf space collapsed Anosov flow.
Hence using Theorem \ref{teo.main5} again, it follows that 
$g$ is a quasigeodesic partially hyperbolic diffeomorphism.
Since $f$ itself preserves branching foliations, it now follows
that $f$ is a quasigeodesic partially hyperbolic diffeomorphism,
because the foliations of
$g$ obtained, up to taking subsequences, as limits of the branching foliations of $g_n$, are lifts of foliations of $f$ (which are limit of the branching foliations of $f_n$).
This proves that being a quasigeodesic partially hyperbolic diffeomorphism
is a closed property among partially hyperbolic diffeomorphisms. 
The open property is proved analogously.
\end{remark}

As mentioned in \S\ref{s.intro}, we may wonder whether a collapsed Anosov flow is automatically a strong, or leaf space, collapsed Anosov flow. Notice that, if not, then Theorem \ref{teo.main6} implies that there is at least one entire connected component of partially hyperbolic diffeomorphisms on which all maps are collapsed Anosov flows, but none are leaf space collapsed Anosov flows.

To try to decide whether all collapsed Anosov flows are leaf space collapsed Anosov flows, one tool that would greatly help is if the following was true:

\begin{quest}\label{q.branchingunique}
Let $M$ with non virtually solvable fundamental group and $f \colon M \to M$ a collapsed Anosov flow. Suppose that the bundles $E^c, E^s, E^u$ are orientable.
 Is the invariant branching foliation of $f$
tangent to the center stable (resp. the center unstable) bundle unique?
\end{quest}

Notice that this question also naturally arises in the existence theorem of Burago--Ivanov (Theorem \ref{teo-openbranch}), as their construction yields two, a priori distinct, center (un)stable branching foliations (see also Appendix \ref{app.branching}). For virtually solvable fundamental group, there are examples where this question admits a negative answer \cite{HHU} (i.e. one can create dynamically coherent examples which also admit other branching foliations\footnote{In fact, as pointed out by a referee, a very interesting example can be made in the spirit of \cite{HHU} showing that a partially hyperbolic diffeomorphism may leave invariant a foliation with $C^0$-leaves which admits a flow orbit equivalent to an Anosov flow, while not being a collapsed (nor discretized) Anosov flow in our definition (in particular, the center bundle of this example is not orientable). These examples are not known to exist if the fundamental group of $M$ is not virtually solvable.}). In the other extreme, for hyperbolic 3-manifolds, we know that the answer to the question is affirmative (see \cite[\S 10]{FP-2}). More evidence that could indicate a positive answer to Question \ref{q.branchingunique} in manifolds with non-virtually solvable fundamental group is in Section \ref{s.examples}. 

But the scope of potential use, if Question \ref{q.branchingunique} were to be true, is much greater:
When studying partially hyperbolic diffeomorphisms in dimension $3$, if one wants to use branching foliations (which so far have been the main tool to understand partially hyperbolic diffeomorphisms geometrically or topologically), then one has to use the existence result of Burago--Ivanov. Now that result comes with an orientability condition, thus forcing one to take a finite lift and finite power to ensure the existence of such foliations. Knowing uniqueness of such foliations would then allow to prove that they can project to the original manifold. Hence, one may hope to obtain geometrical consequences for the original map as well as for its lifts and powers.

\subsection{Another application of Theorem \ref{teo.graphtransform}}\label{ss.aplicationGT}

We will give another application of Theorem \ref{teo.graphtransform} that may be useful to simplify some of the arguments of the rest of the paper in some particularly relevant cases. We recall from \cite{BFFP_part2} the following notion for an $f$-invariant branching foliation. If $f \colon M \to M$ is a diffeomorphism preserving a branching foliation $\cF$, we say $\cF$ is $f$-minimal if every non empty, closed and $f$-invariant $\cF$-saturated set is all of $M$ (see \cite[Definition 3.23]{BFFP_part2}).

A direct consequence of Theorem \ref{teo.graphtransform} is the following: 

\begin{prop}\label{p.fminimalopenclosed}
Let $f \colon M \to M$ be a partially hyperbolic diffeomorphism admitting an $f$-invariant branching foliation $\cs_f$ tangent to $E^{cs}_f$ which is $f$-minimal. Then, for every $g\colon M \to M$ that can be connected to $f$ by a path of partially hyperbolic diffeomorphisms, the map $g$ admits a $g$-invariant branching foliation $\cs_g$ tangent to $E^{cs}_g$ which is $g$-minimal. 
\end{prop}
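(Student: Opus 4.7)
The plan is to show that admitting a branching foliation tangent to $E^{cs}$ that is $f$-minimal is a property both open and closed among partially hyperbolic diffeomorphisms; combined with the connectedness of the path from $f$ to $g$, this local constancy yields the conclusion. I would prove openness first and then deduce closedness by applying the openness statement to any sequence $g_n\to f$ of such diffeomorphisms.

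For openness, I would invoke Theorem \ref{teo.graphtransform} to produce a $C^1$-neighborhood $\cU$ of $f$ and some $\eps > 0$ such that every $g \in \cU$ admits a $g$-invariant branching foliation $\cs_g$ that is $\eps$-equivalent to $\cs_f$. In particular there is a $\pi_1(M)$-equivariant homeomorphism $H \colon \cL^{cs}_f \to \cL^{cs}_g$ conjugating a chosen lift $\wt f$ of $f$ to a lift $\wt g$ of $g$, with corresponding leaves uniformly $\eps$-$C^1$-close. If $\cs_g$ failed to be $g$-minimal, witnessed by a proper closed $g$-invariant $\cs_g$-saturated set $K \subsetneq M$, I would lift to $\mt$ and consider $\hat K = \{L \in \cL^{cs}_g : L \subset \wt K\}$, which is $\pi_1(M)$-invariant, $\wt g$-invariant, and closed in the leaf space $\cL^{cs}_g$ (the last point coming from the completeness requirement in Definition \ref{def.branching}(\ref{item.BF_complete})). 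Applying $H^{-1}$ yields a closed, $\pi_1(M)$- and $\wt f$-invariant subset $\hat K_f \subset \cL^{cs}_f$, and the projection to $M$ of the union $\bigcup_{L \in \hat K_f} L$ gives a closed, non-empty, $f$-invariant, $\cs_f$-saturated subset $K_f$. The $f$-minimality of $\cs_f$ then forces $K_f = M$.

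The main obstacle I anticipate is ruling out $K_f = M$ while $K \neq M$: the $\eps$-$C^1$-closeness of corresponding leaves only delivers $\wt K_f \subset N_\eps(\wt K)$, hence one obtains a priori only that $K$ is $\eps$-dense in $M$. To bridge this gap I would shrink $\cU$ so that $\eps$ may be taken arbitrarily small, and then exploit that the complement $M \smallsetminus K$ is a non-empty open $g$-invariant set: the uniform expansion of $g$ along its strong unstable direction, combined with the $\cs_g$-saturation of $K$, should force the iterates of a small ball inside $M \smallsetminus K$ to occupy arbitrarily large scales in the complement, contradicting the $\eps$-density of $K$. With openness established, the proposition then follows from the connectedness of the path from $f$ to $g$.
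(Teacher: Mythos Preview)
Your overall strategy---show the property is open and closed, then use connectedness of the path---is exactly the paper's. The paper fixes a path $f_t$ from $f$ to $g$, defines $A \subset [0,1]$ as the set of parameters with the property, and shows $A$ is nonempty, open, and closed. For openness the paper simply invokes Theorem~\ref{teo.graphtransform} together with \cite[Lemma B.1]{BFFP_part2}; the latter is precisely the statement that $f$-minimality transfers under $\eps$-equivalence. Closedness is then obtained, as you also suggest, by applying the openness argument around an accumulation point.

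The difference is that you attempt to prove the content of \cite[Lemma B.1]{BFFP_part2} by hand, and the ``obstacle'' you flag is a genuine subtlety: from $K_f = M$ one does not immediately get $\hat K_f = \cL^{cs}_f$, because for a branching foliation a proper closed subset of the leaf space can in principle have union covering $M$. Your proposed fix via unstable expansion does work once the last step is made precise: if $K$ is closed, $\cs_g$-saturated and $\eps$-dense, then in a local product chart of size a fixed multiple of $\eps$ any unstable segment of length $\gtrsim \eps$ through a point $m$ must cross the center-stable leaf through a nearby point $y \in K$ (since that leaf is transverse to $E^u_g$ and passes within $\eps$ of $m$), hence intersects $K$. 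On the other hand $M\smallsetminus K$ is open, nonempty and $g$-invariant, so iterating a short unstable arc produces arbitrarily long unstable arcs entirely in $M\smallsetminus K$. This is the contradiction you want. So your proposal is correct; it just reproves what the paper cites, and the final step needs the transversality observation above rather than the vaguer ``occupy arbitrarily large scales''.
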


\begin{proof}
Fix a path $f_t\colon M \to M$ of partially hyperbolic diffeomorphisms such that $f_0=f$ and $f_1=g$. Let $A \subset [0,1]$ be the set of $t\in [0,1]$ such that $f_t$ verifies that it admits a $f_t$-invariant branching foliation which is $f$-minimal. The set $A$ contains $0$ and thus is non-empty. It is open thanks to Theorem \ref{teo.graphtransform} and \cite[Lemma B.1]{BFFP_part2}. 

To see that it is closed, fix an accumulation point $t$ of $A$. It follows that $f_t$ has a neighborhood $\cU$ where Theorem \ref{teo.graphtransform} holds. There is $t'\in A$ such that $f_{t'} \in \cU$, and thus we can apply Theorem \ref{teo.graphtransform} to deduce that $t \in A$. This shows $A$ is closed and thus $A=[0,1]$, in particular $g=f_1$ verifies the desired property. 
\end{proof}

\begin{remark}\label{r.transitiveAF}
A \emph{leaf space collapsed Anosov flow} $f$ will have its branching foliations $f$-minimal if and only if the corresponding Anosov flow is transitive, so the previous proposition implies that this will be the case for all partially hyperbolic diffeomorphisms in the connected component of $f$. In particular, if $f$ is a leaf space collapsed Anosov flow with respect to an Anosov flow which is not transitive, we deduce that $f$ cannot be in the same connected component as a partially hyperbolic diffeomorphism which is chain recurrent\footnote{We recall that being chain recurrent means that there is no proper open subset $U \subset M$ such that $f(\overline{U})\subset U$. This is implied for instance when $f$ is volume preserving, or transitive.}. Since transitive topological Anosov flows are orbit equivalent to true Anosov flows \cite{Shannon}, one can ignore the distinction between topological Anosov flows and smooth Anosov flows when working in the connected component of a partially hyperbolic diffeomorphism which is transitive or volume preserving, for instance. 
\end{remark}


\section{Some results about topological Anosov flows}\label{s.AF}

\subsection{Foliations of Anosov flows}
Let $\phi_t\colon M \to M$ be a topological Anosov flow on a closed 3-manifold $M$. We study here the $\phi_t$-invariant foliations saturated by orbits. We say that a foliation $\cF$ is $\phi_t$-\emph{saturated} if for every leaf $L \in \cF$ and $x \in L$ we have that $\phi_t(x) \in L$ for all $t \in \RR$. 

\begin{prop}\label{prop.uniquefol}
Let $\cF$ be a foliation by surfaces which is saturated by orbits of $\phi_t$ and such that $\cF^{ws}_\phi \neq \cF$. Then there is an attractor of $\phi_t$ on which $\cF=\cF^{wu}_\phi$. 
\end{prop}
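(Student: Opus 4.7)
I would work in the orbit space $\cO_\phi \cong \RR^2$ of the lifted flow $\wt\phi_t$ on $\wt M$. Since $\cF$, $\cF^{ws}_\phi$, and $\cF^{wu}_\phi$ are all flow-saturated $2$-foliations, their lifts descend to $1$-dimensional $\pi_1(M)$-invariant foliations $\cL, \cL^s, \cL^u$ of $\cO_\phi$, with $\cL^s \pitchfork \cL^u$. This reduces everything to comparing three $1$-foliations on $\RR^2$ equipped with a $\pi_1(M)$-action.

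The first step is to show that at every fixed point $p$ of a non-trivial deck transformation $\gamma$ (corresponding to a periodic orbit of $\phi_t$), the local leaf $\cL(p)$ must coincide with $\cL^s(p)$ or $\cL^u(p)$. Indeed, $\gamma$ acts hyperbolically at $p$ with stable/unstable invariant curves exactly $\cL^s(p)$ and $\cL^u(p)$; any $\gamma$-invariant local curve through $p$ must be one of these two, and the leaf of $\cL$ through $p$ is $\gamma$-invariant.

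Next, let $A$ (respectively $B$) be the open, $\phi_t$-invariant set of points of $M$ where $\cF$ locally coincides with $\cF^{ws}_\phi$ (resp.\ $\cF^{wu}_\phi$); these are disjoint by transversality of $\cF^{ws}_\phi$ and $\cF^{wu}_\phi$. By the first step, every periodic orbit lies in $A\cup B$. I would argue, using continuity of the three foliations together with the hypothesis $\cF\neq\cF^{ws}_\phi$, that $B$ meets some basic set of the spectral decomposition (which exists for topological Anosov flows by the expansivity results of \cite{IM,Pat}): otherwise every periodic orbit would lie in $A$, and continuity of leaves plus density of periodic orbits in the non-wandering set, combined with flow-invariance, would propagate $\cF=\cF^{ws}_\phi$ from the basic sets throughout $M$, contradicting the hypothesis. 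Let $\gamma_0\subset B$ be such a periodic orbit and $\Lambda$ the basic set containing it.

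The core step is to show $B\supset \Lambda$. To this end I would prove that $B\cap\Lambda$ is both open and closed in $\Lambda$: openness is immediate, while closedness uses the local product structure of $\Lambda$, $\phi_t$-invariance of both $\cF$ and $\cF^{wu}_\phi$, and continuity of $\cL,\cL^u$ on $\cO_\phi$, to transfer local coincidence from a dense set of periodic orbits to every point of $\Lambda$. Transitivity of $\phi_t$ on $\Lambda$ then forces $B\cap\Lambda=\Lambda$, so $\cF=\cF^{wu}_\phi$ on $\Lambda$. Finally, since local coincidence with $\cF^{wu}_\phi$ implies that the weak unstable manifold of every $x\in\Lambda$ is locally contained in a leaf of $\cF$ that already contains $x$, and hence is contained in $\Lambda$ (which is closed and $\phi_t$-invariant), $\Lambda$ is weak unstable saturated; a basic set containing the weak unstable manifolds of its points is an attractor.

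The main obstacle will be the closedness of $B\cap\Lambda$ in $\Lambda$: the agreement at each periodic orbit comes with a potentially very small neighborhood, and for merely continuous foliations the neighborhoods need not be uniform. Overcoming this requires using that on the basic set, transversals can be uniformly chosen, and that the holonomy of $\cF^{wu}_\phi$ along stable paths is continuous, so that local agreement along a dense set of periodic orbits propagates through the local product charts of $\Lambda$ to all of $\Lambda$.
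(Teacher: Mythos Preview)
Your approach is genuinely different from the paper's and contains a real gap in the final step.

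The paper's argument is short and direct: it first fixes an attractor. Using the spectral decomposition, the set of points whose $\omega$-limit lies in an attractor is open and dense; intersecting this with the open set where $\cF\neq\cF^{ws}_\phi$ produces a point $x$ in the weak stable leaf of a periodic orbit $o$ inside an attractor $A$. A one-line $\lambda$-lemma (inclination lemma) argument---looking at the first return map to a transverse disk at $o$---shows that forward iterates of the leaf $\cF(x)$ accumulate onto $\cF^{wu}_\phi(o)$, so by continuity $\cF^{wu}_\phi(o)$ is a leaf of $\cF$. Since unstable leaves of periodic orbits in an attractor are dense in that attractor, $\cF=\cF^{wu}_\phi$ on $A$.

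Your route instead tries to classify the leaf of $\cF$ at every periodic orbit, build the open sets $A,B$, and then locate a basic set $\Lambda$ on which $\cF=\cF^{wu}_\phi$. The serious problem is your last paragraph: you assert that because $\cF^{wu}_\phi(x)$ is (locally) a leaf of $\cF$ through $x$, and $\Lambda$ is closed and $\phi_t$-invariant, it follows that $\cF^{wu}_\phi(x)\subset\Lambda$. This is a non sequitur---a leaf of $\cF$ has no reason to be contained in $\Lambda$; closedness and flow-invariance of $\Lambda$ say nothing about $\cF$-saturation. What you actually get from $\cF=\cF^{wu}_\phi$ on $\Lambda$ is that $\cF=\cF^{wu}_\phi$ on all of $W^u(\Lambda)$, and then you would need to follow $W^u(\Lambda)$ downstream through the spectral decomposition until you hit an attractor, propagating the equality by continuity at each step. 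That is doable but is exactly the kind of argument the paper circumvents by \emph{starting} at an attractor.

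There is also a smaller gap earlier: your first step gives only that the \emph{single leaf} $\cL(p)$ coincides with $\cL^s(p)$ or $\cL^u(p)$, not that the foliations agree on an open neighbourhood; so the inclusion of periodic orbits in your open sets $A\cup B$ is not yet justified. You can work instead with the closed sets $\{x:\cF(x)=\cF^{ws}_\phi(x)\}$ and $\{x:\cF(x)=\cF^{wu}_\phi(x)\}$, but then the openness arguments need to be reworked. The paper's $\lambda$-lemma step accomplishes in one stroke what your periodic-orbit classification plus propagation are trying to do.
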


\begin{proof}
We use the spectral decomposition of Anosov flows, see \cite[\S 5.3]{FH},
which also works for topological Anosov flows using essentially
the same arguments.
This implies that the set of points in $M$ whose $\omega$-limit set is contained in an attractor of the topological Anosov flow is open and dense. Note that the set of points on which $\cF^{ws}_\phi \neq \cF$ is open, therefore, there is an open set  $U$ of points whose $\omega$-limit set is contained in an attractor $A \subset M$ of the flow $\phi_t$ and such that $\cF^{ws}_\phi \neq \cF$. 

In particular, there is a point $x \in U$ which belongs to the stable manifold $\cF^{ws}_\phi$ of a periodic orbit $o$.

\begin{claim}
Let $S$ be a surface intersecting $\cF^{ws}_\phi(o)$ but not
contained in $\cF^{ws}_\phi(o)$,
where $o$ is a periodic point of $\phi$. Then, any point of $\cF^{wu}_\phi(o)$ is a limit of points in $\phi_t(S)$ as $t \to +\infty$. 
\end{claim}
\begin{proof}
The fact that the surface $S$ is not contained in $\cF^{ws}_\phi(o)$ means that (up to iterating forward) there is a small transversal $D$ to the flow through $o$ on which the trace of $S$ contains a curve not contained
in the trace of $\cF^{ws}_\phi(o)$ with $D$. The Poincar\'e first return map to $D$ is conjugate to a fixed saddle on $o \cap D$ and its forward iterates then make $S$ converge to the trace of $\cF^{wu}_\phi(o)$ by forward iteration. 
\end{proof}

Since the foliation is continuous, this implies that the leaf $\cF^{wu}_\phi(o)$ is contained in $\cF$. Since this leaf is dense in the attractor $A$ it follows that $\cF$ coincides with $\cF^{wu}_\phi$ in $A$ as announced. 
\end{proof}

A direct corollary is:

\begin{cor}\label{cor.uniquefol}
Let $\phi_t$ be a transitive topological Anosov flow, then, there are 
exactly two $\phi_t$-saturated foliations, which are $\cF^{ws}_\phi$ and $\cF^{wu}_\phi$. 
\end{cor}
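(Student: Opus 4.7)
The plan is to deduce this directly from Proposition \ref{prop.uniquefol} together with the spectral decomposition for transitive topological Anosov flows.

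First, I would observe that the two foliations $\cF^{ws}_\phi$ and $\cF^{wu}_\phi$ are both $\phi_t$-saturated by construction (they are the weak stable and weak unstable foliations), so there exist at least two saturated foliations. It remains to show that these are the only two.

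Let $\cF$ be any $\phi_t$-saturated foliation. If $\cF = \cF^{ws}_\phi$ we are done, so assume $\cF \neq \cF^{ws}_\phi$. Then Proposition \ref{prop.uniquefol} provides an attractor $A \subset M$ of $\phi_t$ on which $\cF = \cF^{wu}_\phi$. Here I would invoke the spectral decomposition used in the proof of Proposition \ref{prop.uniquefol}: since $\phi_t$ is transitive, its non-wandering set is all of $M$, so the spectral decomposition consists of a single basic set equal to $M$. In particular, the only attractor of the flow is $M$ itself, and hence the equality $\cF = \cF^{wu}_\phi$ actually holds on all of $M$. This shows $\cF = \cF^{wu}_\phi$ as foliations, completing the classification.

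Since the argument is essentially a one-line application of the preceding proposition, I do not expect any serious obstacle; the only point to verify carefully is that transitivity of a topological Anosov flow indeed forces the unique attractor of the spectral decomposition to be the full manifold. This follows from the same spectral decomposition machinery (valid in the topological setting as noted in the proof of Proposition \ref{prop.uniquefol}) together with the fact that the non-wandering set of a transitive flow equals $M$.
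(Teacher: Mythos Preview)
Your proof is correct and follows the same approach as the paper's one-line argument: transitivity forces $M$ to be the unique attractor (and repeller), so Proposition~\ref{prop.uniquefol} immediately gives $\cF = \cF^{wu}_\phi$ on all of $M$.
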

\begin{proof}
Note that if $\phi_t$ is transitive, then $M$ is the unique attractor and repeller. 
\end{proof}

\begin{remark}\label{rem.nouniquefol}
If $\phi_t$ is not transitive, then Corollary \ref{cor.uniquefol} does not hold. Indeed, it is possible to construct several $\phi_t$ saturated foliations which coincide with the weak-stable and unstable foliations in subsets
of the non-wandering set, but 
which are different from both of these foliations 
in the wandering region. Indeed, to make a concrete example, consider the Franks--Williams \cite{FranksWilliams} Anosov flow $\phi_t\colon M \to M$ with an attractor $A$ and a repeller $R$ such that every orbit not in $A \cup R$ intersects a $C^1$ smooth
torus $T$ transverse to $\phi_t$ and choose a foliation $\cG$ of $T$ which is transverse\footnote{Notice that these foliations are indeed $C^1$, so one can take any foliation generated by a vector field between the two tangent spaces.} to both $\cF^{ws}_\phi \cap T$ and $\cF^{wu}_\phi \cap T$. If one considers the orbit of $\cG$ by $\phi_t$ one gets a $\phi_t$-saturated foliation on $M \smallsetminus (A \cup R)$ that can be completed to a $\phi_t$-saturated foliation by taking the foliation $\cF^{wu}_\phi$ in $A$ and $\cF^{ws}_{\phi}$ in $R$. Notice that one can construct uncountably many such foliations. Other examples can be constructed along the same lines using the zoo 
 of examples from \cite{BBY}. 
\end{remark}

Notice however that while non-transitive (topological) Anosov flows may have several flow saturated foliations, one cannot choose them to be pairwise transverse as we will show:

\begin{prop}\label{p.nontransitiveAF}
Let $\phi_t$ be a topological Anosov flow and let $\cF_1$ and $\cF_2$ be two topologically transverse $\phi_t$-saturated foliations. Then, up to relabeling, one has that $\cF_1= \cF^{ws}_\phi$ and $\cF_2=\cF^{wu}_\phi$. 
\end{prop}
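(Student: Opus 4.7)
The plan is to argue by contradiction, assuming $\cF_1 \neq \cF^{ws}_\phi$ and deriving that $\cF_2 = \cF^{ws}_\phi$ globally; transversality and the same argument with $\cF_1, \cF_2$ exchanged then give $\cF_1 = \cF^{wu}_\phi$. Proposition \ref{prop.uniquefol} applied to $\cF_1$ produces an attractor $A$ of $\phi_t$ on which $\cF_1 = \cF^{wu}_\phi$. At every periodic orbit $\gamma \subset A$ the tangent plane of the $\phi_t$-saturated foliation $\cF_2$ is a $D\phi_t$-invariant plane containing $\RR X$; hyperbolicity of the return map on a local transversal forces it to be $E^{ws}$ or $E^{wu}$ along $\gamma$. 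Transversality with $\cF_1 = \cF^{wu}$ on $A$ rules out the second option, and by density of periodic orbits in the basic set $A$ and continuity of tangent planes, $\cF_2 = \cF^{ws}_\phi$ on $A$.

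The next step is to propagate this equality. Let $\ell_2 := T\cF_2/\RR X$ be the induced continuous $D\phi_t$-invariant line field in the quotient bundle $N = TM/\RR X$. Using the continuous splitting $N = E^s \oplus E^u$ coming from the transverse weak foliations, the topological Anosov conditions (Definition \ref{def_TAF}\ref{item_TAF_forward_asymptotic}--\ref{item_TAF_backwards_expansivity}) give exponential contraction of $E^s$ and expansion of $E^u$ under the induced action of $D\phi_t$ on $N$. For any $x$ in the basin $W^s(A)$, one has $\phi_t(x) \to A$, so by invariance and continuity of $\ell_2$ we get $D\phi_t\,\ell_2(x) = \ell_2(\phi_t(x)) \to E^s$; exponential domination then prevents $\ell_2(x)$ from having any non-trivial $E^u$-component, forcing $\ell_2(x) = E^s(x)$. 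Hence $\cF_2 = \cF^{ws}_\phi$ on $W^s(A)$. To extend this globally, I would apply Proposition \ref{prop.uniquefol} to $\phi_{-t}$ (for which $\cF^{ws}_{\phi_{-t}} = \cF^{wu}_\phi$): if $\cF_2 \neq \cF^{ws}_\phi$ somewhere, then, using also that $\cF_2$ is transverse to $\cF_1=\cF^{wu}$ on $A$ hence $\cF_2 \neq \cF^{wu}_\phi$ globally, this gives a repeller $R$ of $\phi_t$ on which $\cF_2 = \cF^{wu}_\phi$; running the symmetric propagation for $\phi_{-t}$ yields $\cF_2 = \cF^{wu}_\phi$ on the unstable set $W^u(R)$. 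Then at any common point of $W^s(A) \cap W^u(R)$ one would have $\ell_2 = E^s = E^u$, a contradiction.

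The principal obstacles are twofold. First, the propagation argument via exponential domination must be made rigorous in the topological Anosov setting, where only continuous bundles $E^s, E^u$ are available and the exponential rates have to be extracted from the backward-expansivity of Definition \ref{def_TAF}\ref{item_TAF_backwards_expansivity} rather than from a smooth splitting. Second, when the flow has several basic sets one must ensure $W^s(A) \cap W^u(R) \neq \emptyset$ for the attractor $A$ and repeller $R$ produced by Proposition \ref{prop.uniquefol} and its $\phi_{-t}$ analog; this follows from the spectral-decomposition structure underlying the proof of Proposition \ref{prop.uniquefol}, but requires a combinatorial argument ensuring that the chosen $A$ and $R$ can be linked by a heteroclinic orbit. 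Once these points are settled, applying the same reasoning with the roles of $\cF_1$ and $\cF_2$ exchanged gives $\cF_1 = \cF^{wu}_\phi$ and concludes the proof.
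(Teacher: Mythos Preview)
Your proposal has a genuine gap: the entire propagation argument relies on differential objects that are not available for a \emph{topological} Anosov flow. In Definition~\ref{def_TAF} the weak foliations $\cF^{ws}_\phi,\cF^{wu}_\phi$ are only assumed continuous with $C^0$ leaves, and the same is true of the given foliations $\cF_1,\cF_2$. Hence there is in general no tangent plane field $T\cF_2$, no quotient line field $\ell_2 = T\cF_2/\RR X$, no continuous splitting $N = E^s\oplus E^u$ of a normal bundle, and no action of $D\phi_t$ on such bundles with exponential rates. You flag this as an obstacle to be ``made rigorous'', but in the topological setting these objects simply do not exist, so the argument cannot be salvaged along these lines. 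The step at periodic orbits (``the tangent plane of $\cF_2$ is a $D\phi_t$-invariant plane containing $\RR X$; hyperbolicity of the return map forces it to be $E^{ws}$ or $E^{wu}$'') already fails for the same reason.

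The paper's argument avoids all of this by staying at the level of leaves, not tangent spaces. It works pointwise near a wandering point $x$ whose forward orbit accumulates on an attractor $A$ and backward orbit on a repeller $R$ (such points are open and dense by the spectral decomposition). If in a neighborhood of $x$ the pair $\{\cF_1,\cF_2\}$ is not $\{\cF^{ws}_\phi,\cF^{wu}_\phi\}$, then one can arrange that neither $\cF_1$ nor $\cF_2$ coincides with $\cF^{ws}_\phi$ near $x$ (or the symmetric situation with $\cF^{wu}_\phi$ and $R$). The argument of Proposition~\ref{prop.uniquefol} --- which is purely topological, using only the saddle behavior of the return map near a periodic orbit --- then forces both $\cF_1$ and $\cF_2$ to coincide with $\cF^{wu}_\phi$ on $A$, contradicting transversality. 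This directly gives the local equality on an open dense set, hence globally, with no need for bundle arguments, propagation via domination, or combinatorics linking specific attractors and repellers.
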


\begin{proof}
The proof is very similar to that of Proposition \ref{prop.uniquefol}. In the transitive case the result follows directly from Corollary \ref{cor.uniquefol}, so we will assume that $\phi_t$ is non-transitive. 

Consider a point $x \in M$ such that its forward orbit accumulates in an attractor $A$ and its backward orbit in a repeller $R$. Then, we claim that in 
a neighborhood of
$x$ the foliations must coincide with $\cF^{ws}_\phi$ and $\cF^{wu}_\phi$. If this were not the case, then, say $\cF_1$ does not coincide with either of them in a neighborhood of $x$. Assume that $\cF_2$ does not coincide with $\cF^{ws}_\phi$ in a neighborhood of $x$ (if it does not coincide with $\cF^{wu}_\phi$ one makes a symmetric argument). Then, it follows by the argument in Proposition \ref{prop.uniquefol} that both $\cF_1$ and $\cF_2$ must coincide with $\cF^{wu}_\phi$ in $A$, so they cannot be transverse. 

Now, notice that points whose forward orbit accumulates in an attractor and backward orbit in a repeller are an open and dense subset of $M$ in~\cite[\S 5.3]{FH}  this is proven for hyperbolic flows, but the proof also applies to topological Anosov flows).
This completes the proof. 
\end{proof}

\subsection{Leaf space collapsed Anosov flows respect weak foliations}\label{ss.weakimpliesstrong}

Recall that, given a topological Anosov flow $\phi_t$, we denote by $\cO^{ws}_\phi$ and $\cO^{wu}_\phi$ the one-dimensional foliations of $\cO_\phi$ induced respectively by $\widetilde{\cF^{ws}_\phi}$ and $\widetilde{\cF^{wu}_\phi}$, the weak stable and weak unstable foliations of $\tilde \phi_t$ which are precisely the lifts of the foliations $\cF^{ws}_\phi$ and $\cF^{wu}_\phi$ to $\mt.$ We also denote by $\cO^{cs}_f$ and $\cO^{cu}_f$ the foliations induced in $\lc$ by the center stable and center unstable branching foliations.

We now show that the map $H$, in the definition of a leaf space collapsed Anosov flow (Definition \ref{defiw2}), respect the weak foliations:

\begin{prop}\label{p.wcaf2impliescaf2} 
If $f$ is a leaf space collapsed Anosov flow (Definition \ref{defiw2}) associated with Anosov flow $\phi_t$ and map $H\colon \cO_\phi \to \cL^c$, then, up to taking $\phi_{-t}$ instead, the map $H$ maps $\cO^{ws}_\phi$ to $\cO^{cs}_f$ and $\cO^{wu}_\phi$ to $\cO^{cu}_f$.  
\end{prop}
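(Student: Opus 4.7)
The plan is to transport the center (un)stable branching foliations to the orbit space of $\phi_t$ via $H^{-1}$ and then apply Proposition \ref{p.nontransitiveAF} to identify them with the weak (un)stable foliations of the flow.

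First, I would pull back $\cO^{cs}_f$ and $\cO^{cu}_f$ via the $\pi_1(M)$-equivariant homeomorphism $H^{-1}\colon \cL^c \to \cO_\phi$, obtaining two topologically transverse, $\pi_1(M)$-invariant foliations $\cG^s := H^{-1}(\cO^{cs}_f)$ and $\cG^u := H^{-1}(\cO^{cu}_f)$ on $\cO_\phi$. The transversality of $\cO^{cs}_f$ and $\cO^{cu}_f$ on $\cL^c$ itself comes from the fact that (as explained in \S\ref{s.branching}) the leaf space $\cL^c$ can be identified with the quotient of $\mt$ by the approximating foliation $\wfec$, and $\cO^{cs}_f, \cO^{cu}_f$ are then induced by the pairwise transverse approximating foliations $\wfes$ and $\wfeu$.

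Next, let $\pi\colon \mt \to \cO_\phi$ be the projection sending a point to its $\wt\phi_t$-orbit. In any flow box of $\wt\phi_t$ the map $\pi$ is topologically a trivial $\RR$-bundle, so the $\pi$-preimages of the leaves of $\cG^s$ and $\cG^u$ patch together to form two topologically transverse, $\pi_1(M)$-invariant, $\wt\phi_t$-saturated $C^0$-foliations of $\mt$. These descend to two topologically transverse, $\phi_t$-saturated foliations $\cF^s_\ast$ and $\cF^u_\ast$ of $M$. I would then invoke Proposition \ref{p.nontransitiveAF}: these two transverse $\phi_t$-saturated foliations must coincide with $\cF^{ws}_\phi$ and $\cF^{wu}_\phi$ up to relabeling. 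Swapping the labels, if required, is exactly the effect of replacing $\phi_t$ by $\phi_{-t}$ (whose weak stable foliation is $\cF^{wu}_\phi$). Pushing back via $\pi$ and $H$ then yields the announced identifications $H(\cO^{ws}_\phi) = \cO^{cs}_f$ and $H(\cO^{wu}_\phi) = \cO^{cu}_f$.

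The main obstacle I see is to justify that pulling back a $C^0$ foliation on $\cO_\phi$ by $\pi$ gives an honest $C^0$ foliation of $\mt$. Since $\pi$ is not globally a fiber bundle and the orbits of $\wt\phi_t$ are only $C^1$, one must verify local triviality of $\pi$ inside flow boxes and the compatibility of the pullbacks under change of flow box; once this local-to-global patching is in place (and similarly the descent to $M$ uses $\pi_1(M)$-invariance), Proposition \ref{p.nontransitiveAF} delivers the conclusion.
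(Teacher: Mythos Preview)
Your proof is correct and follows essentially the same approach as the paper: pull back $\cO^{cs}_f$ and $\cO^{cu}_f$ via $H^{-1}$ to obtain two transverse $\phi_t$-saturated foliations on $M$, then invoke Proposition \ref{p.nontransitiveAF}. The paper is terser about the lifting step (it simply asserts that the preimages ``project to foliations in $M$ by the $\pi_1(M)$-invariance''), whereas you spell out the flow-box patching; but the argument is the same.
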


\begin{proof}  Assume that $f$ verifies Definition \ref{defiw2}. 
Here $H$ is the map $H\colon \cO_\phi \to \lc$ which is
$\pi_1(M)$-invariant.
Consider the preimage under $H$ of the center stable and center unstable foliations $\cO^{cs}_f$ and $\cO^{cu}_f$ in $\lc$.
These clearly project to foliations in $M$ by the $\pi_1(M)$-invariance,
and provide different foliations which are $\phi_t$ saturated.  

It follows from Proposition \ref{p.nontransitiveAF} that one must be $\cF^{ws}_\phi$ and the other $\cF^{wu}_\phi$. Thus, up to changing the flow $\phi_t$ to the flow $\eta_t$ defined by $\eta_t = \phi_{-t}$, the homeomorphism $H$ must map the foliations $\cO^{ws}_\phi$ and $\cO^{wu}_\phi$ to $\cO^{cs}_f$ and  $\cO^{cu}_f$ respectively. 
\end{proof}

\subsection{Expansive flows and topological Anosov flows}\label{ss.expansiveflow}

We first recall the notion of expansive flow:

\begin{defi}\label{def.expflow}
A non singular flow $\phi_t \colon M \to M$ is \emph{expansive} if for every $\eps>0$ there exists $\delta>0$ such that if $x,y \in M$ and $\sigma\colon \RR \to \RR$ is an increasing homeomorphism with $\sigma(0)=0$ such that $d(\phi_t(x), \phi_{\sigma(t)}(y)) \leq \delta$ for every $t \in \RR$ then $y = \phi_s(x)$ for some $|s|< \eps$. 
\end{defi}

\begin{remark}\label{rem.expunivcov}
The use of $\eps$ in the definition of expansivity is to account for the recurrence of the flow in $M$ itself and so that orbits that auto-accumulate also separate. If one knows that the flow $\phi_t\colon M \to M$ has properly embedded orbits\footnote{This cannot happen if $M$ is compact, but will sometimes be easy to know for instance, when lifting the flow to the universal cover.} then to establish expansivity it is enough to show that there is some $\delta$ such that different orbits cannot be Hausdorff distance less than $\delta$ from each other. In such cases we will call $\delta$ an \emph{expansivity constant} for $\phi_t$. 
We refer the reader to \cite{BowenWalters} for more on expansive flows. 
\end{remark}

The following is a direct consequence of \cite[Theorem 1.5]{IM} or \cite[Lemma 7]{Pat}:

\begin{teo}\label{teo.expimpliesTAF}
Let $\phi_t\colon M \to M$ be a flow tangent to a non vanishing vector field, such that $\phi_t$ is expansive and preserves a foliation. Then $\phi_t$ is a topological Anosov flow. 
\end{teo}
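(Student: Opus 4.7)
The plan is essentially to unpack the statements of \cite{IM} and \cite{Pat}. Both of those references address precisely the setup at hand: a continuous non-singular flow on a closed 3-manifold, expansive in the sense of Definition \ref{def.expflow}, preserving a codimension-one foliation. Their conclusion produces a topological Anosov structure, but stated in slightly different language. So the work in the proof reduces to checking each of the four items of Definition \ref{def_TAF} against the conclusions extracted from those theorems.

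Item (i) is given by hypothesis. For item (ii), I would first observe that the preserved foliation $\cG$ must be flow-saturated: since $\phi_t$-invariance preserves the leaf through each point, the flow direction must be tangent to $\cG$ at every point (otherwise the flow would carry a point off its leaf). Applying \cite[Theorem 1.5]{IM} or \cite[Lemma 7]{Pat} then yields a second $\phi_t$-invariant codimension-one foliation $\cG'$ topologically transverse to $\cG$ along the non-flow direction, and we may set $\cF^{ws}:=\cG$ and $\cF^{wu}:=\cG'$, possibly after reversing the time direction. Topological transversality of the two $\phi_t$-invariant foliations together with the flow direction gives the local product structure used in the sequel.

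Items (iii) and (iv) are the dynamical content of the cited results. For (iii), if two distinct orbits lying in a common leaf of $\cF^{ws}$ failed to be forward asymptotic for every continuous increasing reparametrization, then combining the local product structure with the compactness of $M$ would produce pairs of points at uniformly bounded distance but not on the same orbit, violating the expansivity constant; this forces the existence of a reparametrization $h$ realizing asymptotic convergence. Item (iv) is more immediate: it is essentially the direct translation of expansivity when restricted to pairs of orbits both lying in a common local weak stable (respectively weak unstable) leaf, where the symmetric half is ruled out by the contracting behavior just established.

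The main obstacle is the matching of conventions. The references \cite{IM} and \cite{Pat} state their conclusions in terms of local product structures or metric estimates, whereas Definition \ref{def_TAF} is phrased in asymptotic terms with a uniform constant $\eps$. The uniform $\eps$ in item (iv), and the continuous reparametrization $h$ in item (iii), have to be extracted from the arguments in those papers rather than quoted from their statements. Once that bookkeeping is done, the theorem follows without further dynamical input.
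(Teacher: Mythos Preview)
Your reading of \cite{IM} and \cite{Pat} is the source of a genuine gap. Those results do not take the preserved foliation $\cG$ as input and output a second transverse foliation $\cG'$. What they actually prove is that any expansive non-singular flow on a closed $3$-manifold admits a pair of transverse \emph{singular} two-dimensional foliations $\cF^{ws},\cF^{wu}$ whose singular set consists of finitely many periodic orbits with local $p$-prong structure, $p\geq 3$. The stable and unstable objects they construct come from expansivity alone; the hypothesis that $\phi_t$ preserves a foliation plays no role in their argument, and in particular $\cG$ is not automatically identified with either $\cF^{ws}$ or $\cF^{wu}$ (indeed, Remark \ref{rem.nouniquefol} shows a non-transitive Anosov flow may preserve many flow-saturated foliations distinct from both).

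So the real content of the theorem, which your proposal skips, is the step from ``singular Anosov'' to ``Anosov'': one must argue that the preserved foliation $\cG$ is incompatible with any $p$-prong singularity. The paper does this directly: at a prong orbit $\alpha$, the leaf $L$ of $\cG$ through $\alpha$ must locally agree with a stable or unstable prong on each side (by the same forward-iteration argument as in Proposition \ref{prop.uniquefol}); since there are at least three stable and three unstable prongs, some complementary component of $L$ contains both a stable and an unstable prong, and a nearby leaf $L'$ of $\cG$ in that component is forced to topologically cross itself under forward iteration. This contradiction eliminates the singularities. Once $\cF^{ws},\cF^{wu}$ are genuine foliations, items \ref{item_TAF_weak_foliations}--\ref{item_TAF_backwards_expansivity} are part of the structure those references provide, and no further bookkeeping about reparametrizations is needed.
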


\begin{proof}
The results \cite[Theorem 1.5]{IM} or \cite[Lemma 7]{Pat} show that an expansive flow preserves transverse \emph{singular} $2$-dimensional foliations (one weak stable and one weak unstable) whose singularities consist of periodic orbits whose local structure is of a $p$-prong with $p\geq 3$.

We claim that prong singularities of singular stable and unstable
foliations are incompatible with preserving a foliation. 
Suppose that $\phi_t$ preserves a foliation $\cF$ and $\phi_t$ has
a singular $p$-prong orbit $\alpha$. 
Let $L$ be the leaf of $\cF$ through $\alpha$. While we cannot use Proposition \ref{prop.uniquefol} at this stage (since we do not yet know $\phi$ is a topological Anosov flow), the arguments in its proof apply equally well to show that, on each side
of $\alpha$, the leaf $L$ has to agree with a prong of a stable or an unstable leaf of $\phi_t$ .

Now, looking transversely, since there are at least six prongs of $\phi_t$ at $\alpha$ (at least $3$ stable and at least $3$ unstable), then
locally transversally one component of $M \smallsetminus L$ intersects at least
one stable and one unstable prong of $\alpha$. A nearby leaf $L'$ of
$\cF$ intersecting that component will intersect a stable and an
unstable prong of $\alpha$. Flowing forward along $\phi_t$ preserves $L'$,
brings it closer to $\alpha$ along the stable of $\alpha$ and farther
from $\alpha$ along the unstable of $\alpha$. This forces $L'$ to
topologically cross itself, which is impossible for a foliation.

This contradiction shows that the weak stable and unstable foliations of $\phi_t$ cannot have any singularities.
Therefore $\phi_t$ is a topological Anosov flow. 
\end{proof}

\begin{remark}\label{rem.expTAF}
Notice that the previous Theorem does not require the foliation invariant by the flow  to coincide with the weak stable and unstable foliations of the topological Anosov flow (cf.~Remark \ref{rem.nouniquefol}). If $\phi_t$ preserves two transverse foliations then these must coincide with the weak stable and unstable foliations of the topological Anosov flow thanks to Proposition \ref{p.nontransitiveAF}. 
\end{remark}

The next result follows quickly:

\begin{cor}\label{orbfoliations}
Suppose that $\varphi_t$ is orbit equivalent to a topological
Anosov flow. Then it has weak stable and unstable
foliations. 
In other words, any flow that is orbit equivalent to a topological Anosov flow satisfies conditions \ref{item_TAF_weak_foliations} to \ref{item_TAF_backwards_expansivity} of Definition \ref{def_TAF}, but may fail to satisfy condition \ref{item_TAF_tangentVF}.
\end{cor}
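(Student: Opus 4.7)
\medskip

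\noindent\textbf{Proof proposal.} Let $\phi_t\colon M\to M$ be the topological Anosov flow and $\beta\colon M\to N$ the orbit equivalence to $\varphi_t$, so $\beta(\phi_t(x))=\varphi_{u(t,x)}(\beta(x))$ where for each fixed $x$ the map $t\mapsto u(t,x)$ is an orientation-preserving homeomorphism of $\RR$. The plan is simply to transport the weak foliations of $\phi_t$ to $N$ via $\beta$ and then verify conditions \ref{item_TAF_weak_foliations}--\ref{item_TAF_backwards_expansivity} of Definition \ref{def_TAF} using the uniform continuity of $\beta$ and $\beta^{-1}$.

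First I would define $\cF^{ws}_\varphi:=\beta(\cF^{ws}_\phi)$ and $\cF^{wu}_\varphi:=\beta(\cF^{wu}_\phi)$. Since $\beta$ is a homeomorphism and $\beta^{-1}$ carries $\varphi_t$-orbits to $\phi_t$-orbits, each leaf of $\cF^{ws}_\varphi$ (resp. $\cF^{wu}_\varphi$) is a topological $2$-manifold saturated by $\varphi_t$-orbits, and the two foliations are continuous and topologically transverse because $\beta$ is a homeomorphism. This handles condition \ref{item_TAF_weak_foliations}.

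For condition \ref{item_TAF_forward_asymptotic}, take $x',y'$ in the same leaf of $\cF^{ws}_\varphi$ and let $x=\beta^{-1}(x')$, $y=\beta^{-1}(y')$, which lie in a common leaf of $\cF^{ws}_\phi$. Condition \ref{item_TAF_forward_asymptotic} for $\phi_t$ yields an increasing reparametrization $h\colon\RR\to\RR$ with $d(\phi_t(x),\phi_{h(t)}(y))\to 0$. Since $M$ is compact, $\beta$ is uniformly continuous, so
\[
d\bigl(\varphi_{u(t,x)}(x'),\varphi_{u(h(t),y)}(y')\bigr)=d\bigl(\beta\phi_t(x),\beta\phi_{h(t)}(y)\bigr)\longrightarrow 0.
\]
Setting $s=u(t,x)$ and $\tilde h(s):=u\bigl(h(u^{-1}(s,x)),y\bigr)$ produces a continuous increasing reparametrization showing that $\varphi_s(x')$ and $\varphi_{\tilde h(s)}(y')$ are forward asymptotic. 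The unstable case is symmetric.

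For condition \ref{item_TAF_backwards_expansivity}, I would argue by contradiction: suppose for every $\varepsilon'>0$ there are $x',y'$ with $y'\in\cF^{ws}_{\varphi,\varepsilon'}(x')$ on distinct orbits and an increasing reparametrization $h'$ with $d(\varphi_t(x'),\varphi_{h'(t)}(y'))\le\varepsilon'$ for all $t\le 0$. Let $\varepsilon$ be the constant given by condition \ref{item_TAF_backwards_expansivity} for $\phi_t$; by uniform continuity of $\beta^{-1}$ choose $\varepsilon'$ so small that $d(a,b)\le\varepsilon'$ forces $d(\beta^{-1}(a),\beta^{-1}(b))<\varepsilon$, and simultaneously that the $\varepsilon'$-ball in $\cF^{ws}_\varphi(x')$ is contained in $\beta(\cF^{ws}_{\phi,\varepsilon}(\beta^{-1}(x')))$. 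Setting $x=\beta^{-1}(x')$, $y=\beta^{-1}(y')$ and reparametrizing the cocycle backwards as above yields an increasing $\tilde h$ with $d(\phi_t(x),\phi_{\tilde h(t)}(y))<\varepsilon$ for all $t\le 0$ with $y$ not on the orbit of $x$, contradicting \ref{item_TAF_backwards_expansivity} for $\phi_t$.

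The only mild technicality I foresee is the bookkeeping of the reparametrization cocycle $u(t,x)$, specifically checking that $u^{-1}(\cdot,x)$ exists and is jointly continuous enough to guarantee that $\tilde h$ is monotone increasing in $t$; this follows immediately from $u(\cdot,x)$ being an orientation-preserving self-homeomorphism of $\RR$. No condition is imposed on tangent vectors to orbits of $\varphi_t$, which is consistent with the fact that condition \ref{item_TAF_tangentVF} may fail.
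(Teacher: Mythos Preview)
Your proof is correct: pushing the weak foliations of $\phi_t$ forward by the orbit equivalence and checking conditions \ref{item_TAF_weak_foliations}--\ref{item_TAF_backwards_expansivity} directly via the uniform continuity of $\beta$ and $\beta^{-1}$ works exactly as you describe, and the cocycle bookkeeping you flag is indeed routine.

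The paper, however, takes a different route. Rather than defining $\cF^{ws}_\varphi$ and $\cF^{wu}_\varphi$ as $\beta$-images, it argues intrinsically: a topological Anosov flow is expansive, expansivity is preserved under orbit equivalence, and then the Inaba--Matsumoto/Paternain theory gives $\varphi_t$ a pair of \emph{intrinsic} (possibly singular) stable and unstable foliations defined purely from the dynamics of $\varphi_t$. The existence of the transported foliation $\beta(\cF^{ws}_\phi)$ is then used only to rule out prong singularities, by the same incompatibility argument as in Theorem~\ref{teo.expimpliesTAF}. What this buys is that the resulting foliations are canonically attached to $\varphi_t$ rather than to the particular orbit equivalence $\beta$; this is relevant later (e.g., in Lemma~\ref{smoothstr} and Proposition~\ref{prop.topAnosovQG}), where one changes the flow by an orbit equivalence and wants to speak unambiguously of ``the'' weak stable foliation of the new flow. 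Your approach is more elementary and perfectly adequate for the corollary as stated; the paper's approach ties the result into the expansive-flow machinery that the section is built around.
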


\begin{proof}
A topological Anosov flow is expansive, and any orbit equivalence preserves the property of being expansive. Therefore $\varphi_t$ has
weak stable and unstable possibly singular foliations.
In addition, $\varphi_t$ preserves a $2$-dimensional foliation
--- the image of the stable foliation under the orbit equivalence.
The arguments of the previous theorem show that this is incompatible
with singularities in the stable and unstable foliations of $\varphi_t$.
Hence the stable and unstable foliations of $\varphi_t$ do not
have singular orbits, proving the result.
\end{proof}

\subsection{Smoothness, Gromov hyperbolicity and the quasigeodesic property} 

Let $\phi_t$ be a topological Anosov flow on a closed $3$-manifold
$M$. 
The definition of a topological Anosov flow assumes very little 
regularity. In particular, the weak stable or unstable leaves may
only be $C^0$, and may not have the structure of
a path metric space. Up to an orbit equivalence 
we will show that one can assume
more regularity (for at least one of the foliations) and prove the following properties: The leaves of one of the weak foliations are 
Gromov hyperbolic and the flow lines are quasigeodesic within each leaf (these properties were proved in \cite[\S 5]{FenleyAnosov} for Anosov flows, but we need a different argument in our setting).

Note that, if $\phi_t$ is transitive then Shannon \cite{Shannon} proved that $\phi_t$
is orbit equivalent to an Anosov flow. Hence \cite[\S 5]{FenleyAnosov} implies the results in that case. 
What we prove here applies also to the non transitive case and is independent of Shannon's result.

In order to prove our results, we go through an intermediary flow, that is not quite a topological Anosov flow: Specifically
it lacks property \ref{item_TAF_tangentVF} of Definition \ref{def_TAF}, but satisfies all the other properties.

Recall that Corollary \ref{orbfoliations} shows that any flow that is orbit equivalent to a topological Anosov flow admits weak stable and unstable foliations (with proper asymptotic behaviors as described in conditions \ref{item_TAF_forward_asymptotic} and \ref{item_TAF_backwards_expansivity} of Definition \ref{def_TAF}).
Thus we can start our first step in building an orbit equivalence from $\phi_t$ to one with smooth weak stable leaves:

\begin{lemma}\label{smoothstr}
Let $\phi_t$ be a topological Anosov flow.
Then $\phi_t$ is orbit equivalent to a flow $\varphi_t$, 
such that the weak stable leaves of $\varphi_t$ are smooth
($C^{\infty}$)
surfaces.
\end{lemma}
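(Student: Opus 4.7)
The plan is to smooth the weak stable foliation of $\phi_t$ to one with $C^{\infty}$ leaves, and then transport the flow $\phi_t$ by the homeomorphism realizing this smoothing.

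More precisely, I would first apply a leafwise smoothing result for codimension-one foliations of $3$-manifolds (such as Calegari's theorem, cited in the paper as \cite{Cal-smoothing}) to the weak stable foliation $\cF^{ws}_\phi$. This foliation is $\phi_t$-invariant, codimension one, and taut, because the orbits of $\phi_t$ provide a closed transversal through every point of $M$. Such a smoothing theorem produces a foliation $\cG^{ws}$ of $M$ with $C^{\infty}$ leaves, together with a homeomorphism $h\colon M \to M$ that sends each leaf of $\cF^{ws}_\phi$ onto a leaf of $\cG^{ws}$.

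I would then define $\varphi_t := h \circ \phi_t \circ h^{-1}$. Since $h$ is a homeomorphism, $\varphi_t$ is a continuous flow on $M$ topologically conjugate to $\phi_t$ via $h$; in particular $h$ is an orbit equivalence in the sense defined earlier in the paper. Since the forward asymptotic property that characterizes the weak stable foliation (item \ref{item_TAF_forward_asymptotic} of Definition \ref{def_TAF}) is preserved by topological conjugacy, the weak stable foliation of $\varphi_t$ equals $h(\cF^{ws}_\phi) = \cG^{ws}$, whose leaves are smooth by construction. Note that the lemma makes no assertion that $\varphi_t$ is itself a topological Anosov flow, nor that $\varphi_t$ is generated by a smooth vector field; only the $C^{\infty}$ regularity of its weak stable leaves is required.

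The main obstacle is extracting from the smoothing theorem a genuine leaf-preserving homeomorphism of $M$, rather than merely a homotopy of abstract foliations. In the codimension-one setting on a closed $3$-manifold, such a homeomorphism can be constructed by realizing the smoothing as an ambient isotopy; this is essentially what Calegari's theorem provides. One should also verify that $\cF^{ws}_\phi$, as arising from the minimal regularity in Definition \ref{def_TAF}, is a $C^0$ foliation in the sense required by the smoothing theorem, but this is immediate from item \ref{item_TAF_weak_foliations} of that definition. Once $h$ is in hand, every remaining step is a direct unpacking of the definitions.
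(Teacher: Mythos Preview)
Your proposal is correct and follows essentially the same approach as the paper: apply Calegari's smoothing theorem to obtain a homeomorphism of $M$ (isotopic to the identity) sending $\cF^{ws}_\phi$ to a foliation with smooth leaves, and conjugate $\phi_t$ by this homeomorphism. You also correctly anticipate the paper's caveat that the resulting $\varphi_t$ need not satisfy condition~\ref{item_TAF_tangentVF} of Definition~\ref{def_TAF}.
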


\begin{proof}
This directly follows from a result of Calegari in \cite{Cal-smoothing}:
He shows that $\cF^{ws}_{\phi}$ is isotopic to a foliation
with smooth leaves. 
The isotopy produces a homeomorphism from $M$ to itself
which is isotopic to the identity.
This homeomorphism induces an orbit equivalence from $\phi_t$
to another flow $\varphi_t$ so that the weak stable leaves of
$\varphi_t$ are smooth.
As remarked above $\varphi_t$ has weak stable
and unstable foliations.
\end{proof}

We stress that the flow $\varphi_t$ has smooth weak
stable leaves, but a priori only $C^0$ weak unstable leaves.
Of course one can do the same procedure with the weak unstable foliation
instead of the weak stable foliation.

\begin{remark}
One could use Calegari's work in a different way: It is also shown in \cite{Cal-smoothing} that one can change the \emph{smooth structure} of $M$, to make the weak stable leaves of $\phi_t$ smooth for that new smooth structure.
This is not the way we choose to proceed and hence emphasize that the smooth structure of $M$ is fixed throughout this article.
\end{remark}

The issue we have to deal with here is that, after the orbit equivalence, 
the orbits of $\varphi_t$ may not be tangent to a vector field, in fact, a priori,
they may not even be rectifiable.
So technically $\varphi_t$ may not be a topological Anosov flow, but it still has the weak stable and unstable
foliations, which are denoted the same way.

\begin{prop}\label{prop.topAnosovQG} 
Suppose that $\varphi_t$ is a flow in $M$ which verify conditions \ref{item_TAF_weak_foliations} to \ref{item_TAF_backwards_expansivity} of Definition \ref{def_TAF}.
Suppose that the weak stable leaves of $\varphi_t$ are smooth.
Then they are Gromov hyperbolic and the orbits of $\widetilde{\varphi_t}$ are uniform quasigeodesics in each leaf of $\widetilde{\cF^{ws}_\varphi}$.

Moreover, writing $S^1(L)$ for the Gromov-boundary of $L$, there exists a unique $\xi\in S^1(L)$ such that every forward ray of $\widetilde{\varphi_t}$ ends at $\xi$, and for any $\zeta\in S^1(L)$, $\zeta\neq \xi$, there exists a unique orbit of $\widetilde{\varphi_t}$ that has $\zeta$ and $\xi$ as its endpoints.
\end{prop}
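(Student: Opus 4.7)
The plan is to adapt the arguments of \cite{FenleyAnosov} to the present topological setting, where $\varphi_t$ need not be tangent to a vector field but has smooth weak stable leaves. First I would work in the universal cover. Each leaf $\wt L$ of $\widetilde{\cF^{ws}_\varphi}$ is simply connected and homeomorphic to $\RR^2$: the foliation $\cF^{ws}_\varphi$ is Reebless, since a Reeb component would contradict the expansive behavior forced by conditions (iii) and (iv) of Definition \ref{def_TAF}. Inside $\wt L$, the orbits of $\wt\varphi_t$ form an honest $1$-dimensional foliation whose leaf space is simply connected, hence $\RR$; this gives global product coordinates $\wt L \cong \RR_{\mathrm{transverse}} \times \RR_{\mathrm{flow}}$ in which orbits are the vertical lines.

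The next, and main, step is to promote the qualitative forward convergence of (iii) and qualitative backward separation of (iv) into a uniform quantitative estimate. Fix $\eps_0$ smaller than the expansivity constant of (iv) and so small that every $\eps_0$-ball inside a weak stable leaf is embedded. Using compactness of $M$ together with (iii), I would produce constants $T>0$ and $\lambda \in (0,1)$ such that whenever $x,y$ lie in a common $\wt L$ at intrinsic distance at most $\eps_0$, there is an increasing reparametrization $h$ so that $\rho_{\wt L}(\wt\varphi_T(x), \wt\varphi_{h(T)}(y)) \leq \lambda\,\rho_{\wt L}(x,y)$; symmetrically, (iv) gives backward expansion by at least $1/\lambda$ per time $T$ as long as the separation stays below $\eps_0$. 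This step replaces the strong stable exponential contraction available in the smooth Anosov setting, where no strong stable foliation is available.

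With this estimate in hand, I would construct a quasi-isometric embedding $\wt L \hookrightarrow \HH^2$ into the upper half-plane model by sending $(s,t) \mapsto (\tau(s), \lambda^{-t/T})$, where $\tau$ is a reparametrization of the transverse coordinate built by iterating the contraction. Since $\HH^2$ is Gromov hyperbolic, so is $\wt L$, and the fact that vertical half-lines are geodesics in $\HH^2$ shows orbits of $\wt\varphi_t$ are uniform quasigeodesics in $\wt L$. The distinguished ideal point $\xi \in S^1(L)$ is the one corresponding to $+\infty$ in the half-plane model: every forward orbit ray converges to $\xi$ by (iii). For any other $\zeta \in S^1(L)$, existence of an orbit with endpoints $\{\zeta,\xi\}$ follows by taking a limit of orbits whose transverse coordinates accumulate on the one representing $\zeta$, and uniqueness follows because two distinct such orbits would lie at bounded Hausdorff distance in $\wt L$, contradicting the backward-separation estimate.

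The main obstacle is the quantitative contraction/separation estimate in the second step, executed without transitivity and without either a strong stable foliation or a smooth generating vector field. Unlike in the smooth Anosov case, (iii) only yields qualitative convergence to $0$ while (iv) only yields a positive lower bound, so the exponential rates $\lambda$ are not free. Upgrading these qualitative statements to uniform exponential rates is a compactness-plus-recurrence argument in the spirit of \cite{IM,Pat}, using the spectral decomposition of topological Anosov flows (invoked already in the proof of Proposition \ref{prop.uniquefol}) to guarantee that every orbit, in every leaf, recurs in bounded time to a fixed compact set where one can trivialize transversals and make the local estimates uniform.
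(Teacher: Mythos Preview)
Your outline has the right global shape, but the second step --- upgrading conditions (iii)--(iv) to a \emph{uniform exponential} contraction rate $\lambda<1$ per fixed flow-time $T$ --- is a genuine gap, and the tools you invoke (compactness, spectral decomposition, the results of \cite{IM,Pat}) do not close it. Condition (iii) only guarantees that for \emph{some} reparametrization $h$ the distance $d(\varphi_t(x),\varphi_{h(t)}(y))$ tends to zero; it gives no rate, and the reparametrization depends on the pair $(x,y)$. When you iterate, the reparametrizations compose and the effective flow-time on $y$ may grow uncontrollably relative to $T$, so a one-step contraction does not yield contraction by $\lambda^n$ after $nT$. Spectral decomposition tells you orbits recur to basic sets, but says nothing quantitative about contraction along stable leaves, and \cite{IM,Pat} establish existence of weak foliations for expansive flows, not metric estimates. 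There is a second, related issue: your quasi-isometry $(s,t)\mapsto(\tau(s),\lambda^{-t/T})$ uses flow-time $t$ as a coordinate, but here the orbits may be non-rectifiable (condition (i) of Definition \ref{def_TAF} is \emph{not} assumed), so flow-time has no a priori relationship to arclength in $\wt L$, and the map has no reason to be bi-Lipschitz onto its image.

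The paper sidesteps both problems by abandoning the search for a metric rate entirely. Instead of embedding $\wt L$ in $\HH^2$, it discretizes: one covers $M$ by finitely many flow boxes with small transversals $I_\alpha$, and uses expansivity (not contraction) to prove a purely combinatorial merging lemma --- there is $T>0$ such that each transversal $I_\alpha$ flows, in time between $T^{-1}$ and $T$, entirely into some single transversal $I_\beta$. This makes the directed graph $\cT$ on the transversals (with edges ``$I_\alpha$ flows into $I_\beta$'') a \emph{tree}, since orbits merge forward but never cross. One then shows $\cT$ is quasi-isometric to $\wt L$ by a direct length comparison (every path in $\wt L$ is shadowed by a path in $\cT$ of comparable length, using only that vertices are $\eps$-dense and that nearby transversals merge in bounded time), so $\wt L$ is Gromov hyperbolic. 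Orbits correspond to bi-infinite directed paths in the tree, hence are uniform quasigeodesics, and the funnel point $\xi$ is the common forward end of every directed path. The key point is that the tree argument needs only the \emph{qualitative} merging forced by expansivity, never an exponential rate; this is exactly what makes it go through when the flow is not tangent to a vector field and orbits are not rectifiable.
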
 

 The reason we are working with flows preserving smooth weak stable or unstable foliations is so that we can consider the induced path metric on the leaves. The result stays true for any orbit equivalent flows if one considers the metric in the leaves induced by the homeomorphism realizing the orbit equivalence.

\subsubsection{Proof of Proposition \ref{prop.topAnosovQG}}

In order to prove Proposition \ref{prop.topAnosovQG}, we will ``discretize'' the flow in a given leaf $L$ obtaining an oriented tree that we will show is quasi-isometric to $L$ and such that any orbit of $\wt\varphi_t$ on $L$ is quasi-isometric to a unique oriented path in that tree. One important thing is that all the constants of quasi-isometry are uniform, i.e., depend only on the flow, not on the leaf $L$.

First, as $\wt\varphi_t$ is expansive in $\wt M$, it admits an expansivity constant, say $\rho>0$.
Moreover, we can pick $\rho$ so that if two forward (resp.~backwards) rays in $\wt M$ are at distance less than $\rho$ apart then they are in the same weak stable (resp.~unstable) leaf. (This is because the ``$\eps$-stable manifold'' of a point $x$ is a regular neighborhood of $x$ in its stable leaf, see e.g.~\cite{IM}.)

We pick a constant $\eps>0$ much smaller than $\rho$. 
We will choose two finite coverings, one contained in the other.
First choose a finite covering $\{C_i\}$ of $M$ by flow boxes of the following form: $C_i = \bigcup_{|t|<\eps} \varphi_t(E_i)$ where $E_i$ are pairwise disjoint transversals to the flow of diameter less than $\eps/10$.
In the same way choose a finite covering $\{ B_i \}$ of $M$
of the form $B_i  = \bigcup_{|t| < 
\eps} \varphi_t(D_i)$ where $D_i$ are also pairwise disjoint transversals
of diameter less than $\eps$. We choose $D_i$ so that $E_i \subset D_i$ and the distance from any point in $\partial E_i$ to $\partial D_i$ is at least $\eps/4$.

Now, given a leaf $L \in \wt\cF^{ws}$, we call $\{\wt B^L_\alpha\}$ the connected components of the intersection of lifts of the $B_i$ with $L$. Note that $\{\wt B^L_\alpha\}$ is an $\eps$-dense, locally finite, open cover of $L$.
For each $\alpha$, we call $I_\alpha \subset \wt B^L_\alpha$ the transversal obtained as the intersection of the corresponding lift of $D_i$ with $L$. For this choice of $I_\alpha$, we have:
\begin{itemize}
\item Each $\wt B^L_\alpha$ is of the form $\bigcup_{|t|< \eps} \wt\varphi_t(I_\alpha)$,
\item the distance between two different $I_\alpha$ is bounded below and above by some constants $a,b>0$. (Note that these constants only depends on our choice of open cover of $M$ by the flow boxes, not on the leaf $L$). 
\end{itemize}

Similarly we define the sets $\wt C^L_\alpha$.
A standard consequence of expansivity is the following (see \cite{Pat,IM} for similar results).

\begin{lema}\label{l.expansiveness} There exists $T>0$ (independent of $L$) such that, for every $I_\alpha$ there is some $I_\beta$ such that, for every $x \in I_\alpha$, there exists $t_x \in [T^{-1},T]$ with $\wt\varphi_{t_x}(x) \in I_\beta$.  
\end{lema}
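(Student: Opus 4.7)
The plan is to exploit the $\eps/4$ buffer between $\partial E_j$ and $\partial D_j$ via uniform continuity of $\varphi_t$: once a reference orbit from $I_\alpha$ lands in a lift of some $E_j$, every nearby orbit lands in the corresponding lift of $D_j$ at the same time, producing a single $I_\beta$ that works for every $x\in I_\alpha$.

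First I would establish the upper bound via compactness. The cover $\{C_i\}$ yields, for every $q\in M$, some $t\in (0,2\eps)$ with $\varphi_t(q)\in\bigsqcup_j E_j$: writing $\varphi_\eps(q)=\varphi_s(r)$ with $r\in E_j$ and $|s|<\eps$ gives $\varphi_{\eps-s}(q)=r$ with $\eps-s\in(0,2\eps)$. I would also assume from the outset that $\eps$ has been chosen small enough that uniform continuity of $\varphi$ on the compact product $M\times[0,3\eps]$ yields $d_M(\varphi_t(p),\varphi_t(p'))<\eps/4$ whenever $d_M(p,p')<\eps$ and $t\in[0,3\eps]$, and that $\eps$ is smaller than the injectivity radius of $M$ so these estimates lift cleanly to $\wt M$.

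Now fix $I_\alpha$ and a reference point $p_0\in I_\alpha$. To get a uniform lower bound on the time, I apply the compactness estimate not at $p_0$ but at $\wt\varphi_{\eps/2}(p_0)$: this produces $t_0\in[\eps/2,\,5\eps/2]$ such that $\wt\varphi_{t_0}(p_0)$ lies in a lift $\wt E_j\subset\wt D_j$. Setting $I_\beta := L\cap\wt D_j$ (the connected component containing $\wt\varphi_{t_0}(p_0)$), I need to check that $I_\beta\ne I_\alpha$ and that every other $x\in I_\alpha$ also satisfies $\wt\varphi_{t_0}(x)\in I_\beta$. The first point follows from a flow-box argument: since $\wt B^L_\alpha=\bigcup_{|t|<\eps}\wt\varphi_t(I_\alpha)$ is a genuine flow box (the map $I_\alpha\times(-\eps,\eps)\to \wt B^L_\alpha$ is a homeomorphism), two distinct points of $I_\alpha$ lie on distinct orbits of $\widetilde{\varphi_t}$, so the orbit of $p_0$ meets $I_\alpha$ only at $t=0$, and $t_0>0$ forces $\wt\varphi_{t_0}(p_0)\notin I_\alpha$.

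For the crucial uniformity across $x\in I_\alpha$, observe that $d_{\wt M}(x,p_0)\le\eps$, so uniform continuity lifted to $\wt M$ gives $d_{\wt M}(\wt\varphi_{t_0}(x),\wt\varphi_{t_0}(p_0))<\eps/4$; the $\eps/4$ buffer between $\partial \wt E_j$ and $\partial\wt D_j$ then forces $\wt\varphi_{t_0}(x)\in\wt D_j$. Since $L$ is flow invariant, $\wt\varphi_{t_0}(x)\in L\cap \wt D_j$, and since $x\mapsto \wt\varphi_{t_0}(x)$ is a continuous map of the connected set $I_\alpha$ into $L\cap\wt D_j$, its image lies in a single component, namely $I_\beta$. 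Taking $t_x:=t_0$ and $T:=\max(5\eps/2,\,2/\eps)$ gives $t_x\in[T^{-1},T]$, and all constants depend only on $\eps$, $M$ and the flow, so $T$ is independent of $L$. The main obstacle is exactly arranging that the \emph{same} $I_\beta$ works for every $x\in I_\alpha$; the $\eps/4$ buffer built into the definition of the $D_j$ is precisely the device that makes this possible, and without it the lemma would not be true as stated.
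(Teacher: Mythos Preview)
Your direct constructive approach is quite different from the paper's (which argues by contradiction using expansivity), but unfortunately it contains two genuine errors.

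First, the uniform--continuity estimate you postulate is impossible. At $t=0$ one has $d_M(\varphi_0(p),\varphi_0(p'))=d_M(p,p')$, so if $d_M(p,p')$ is, say, $\eps/2$ then the conclusion $d_M(\varphi_t(p),\varphi_t(p'))<\eps/4$ already fails. No choice of small $\eps$ repairs this: a continuous flow does not contract distances by a factor of four over arbitrarily short times. Since $I_\alpha$ comes from a $D_i$ of diameter up to $\eps$ (not from an $E_i$), you genuinely need to control points starting $\eps$ apart.

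Second, even granting that estimate, the step ``the $\eps/4$ buffer \ldots\ forces $\wt\varphi_{t_0}(x)\in\wt D_j$'' is a category error. The set $\wt D_j$ is a two--dimensional transversal disk; the $\eps/4$--ball around $\wt\varphi_{t_0}(p_0)$ is three--dimensional. Lying within $\eps/4$ of a point of $\wt E_j$ in $\wt M$ does not place you on $\wt D_j$. Worse, you then take $t_x:=t_0$ for \emph{every} $x$, which would mean $\wt\varphi_{t_0}(I_\alpha)\subset I_\beta$. But $\wt\varphi_{t_0}(I_\alpha)$ is itself a transversal to the flow in $L$, so it meets the transversal $I_\beta$ only at isolated points (here at $\wt\varphi_{t_0}(p_0)$), never in a full arc. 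What you would need is that each orbit from $I_\alpha$ crosses $I_\beta$ at some time $t_x$ \emph{near} $t_0$; proving that all these crossings exist and stay inside $I_\beta$ is precisely the content of the lemma.

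This is where the paper brings in dynamics that your argument lacks. An image $\wt\varphi_{t}(I_\alpha)$ has diameter comparable to $\eps$ for short $t$, so there is no reason it should fit inside a single $I_\beta$; without some contraction in $L$ one can cook up flows and transversal systems where it never does. The paper's proof proceeds by contradiction: if no uniform $T$ works, one extracts $x_n,v_n\in I_{\alpha_n}$ and times $t_n\to\infty$ with $\wt\varphi_{t_n}(x_n)$ landing in a small piece $\wt C^L_{\beta_n}$ while the forward orbit of $v_n$ misses $I_{\beta_n}$ entirely; the $\eps/4$ buffer is used here to guarantee a \emph{lower} bound on transverse separation. One then finds intermediate points $w_n$ whose backward orbits shadow those of $z_n$ within the expansivity constant $\rho$ for arbitrarily long times, and a limit argument produces two distinct orbits lying in the same stable and unstable leaves --- contradicting expansivity.
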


\begin{proof}
Now, assuming the statement is not true, we can find sequences of points $x_n,v_n \in I_{\alpha_n}$ such that, for times $t_n \to \infty$, we have that $z_n := \wt \varphi_{t_n}(x_n)$ 
is in $I_{\beta_n} \cap \wt C^L_{\beta_n}$, 
but $\wt \varphi_{[0,\infty)}(v_n)$ does not intersect
$I_{\beta_n}$. So the distance in $L$ from this flow ray to 
$I_{\beta_n} \cap \wt C^L_{\beta_n}$
is $\geq \eps/4$ (this is where the two covers are used).
Therefore we can then choose $w_n \in I_{\beta_n}$ with 
$d(z_n,w_n) \in [\eps/4,\rho]$ so that $\wt \varphi_{(-\infty,0]}(w_n)$
intersects $I_{\alpha_n}$ in a point, denoted by $y_n$.
(Recall that $\rho$ is the expansivity constant.)
Let $s_n > 0$ be such that $\wt \varphi_{s_n}(y_n) = w_n$.
Notice that $s_n \to \infty$ as $n \to \infty$.
Using connectedness of the tranversal, we have that the distance between the pieces of orbits induces a distance in the transversal. We can then, up to changing $x_n,y_n$ pick $t_n$ and $s_n$ so that for every $t \in [0,t_n]$ there is $s \in [0,s_n]$ such that $\wt \varphi_t(x_n)$ and $\wt \varphi_s(y_n)$ are in the same flow box and at distance less than $\rho$. 
In addition $d(z_n, w_n) \geq \eps/4$.
Without loss of generality we can assume that $\rho$ is smaller than the size of foliation boxes of the weak stable foliation.

Up to the action under deck transformations and taking subsequences, we can pick $\gamma_n \in \pi_1(M)$ so that $\gamma_n \cdot z_n$ and $\gamma_n \cdot w_n$  converge to  points $z_\infty$ and $w_\infty$.
The points $\gamma_n \cdot z_n$ and $\gamma_n \cdot w_n$ lie in the same leaf in a foliation box thus $z_\infty$ and $w_\infty$ lie on the same stable leaf, but in distinct orbits (because the transversal distance of $\gamma_n \cdot z_n$ and $\gamma_n \cdot w_n$ is larger than $\eps/4$).

Now, by construction the backwards rays of $z_\infty$ and $w_\infty$ are at distance less than $\rho$ apart, so they must be on the same unstable leaf. But they are already on the same stable leaf, thus they should be on the same orbit, a contradiction.
\end{proof}

Now, we choose a point (for instance, the mid point) $x_\alpha \in I_\alpha$ for every $\alpha$. We define a directed graph $\cT$ whose vertices are the points $x_\alpha$ and there is a directed edge $x_\alpha \to x_\beta$ if: 

\begin{itemize}
\item For every $x \in I_\alpha$ there is some $t_x \in [T^{-1},T]$ such that $\wt\varphi_{t_x}(x) \in I_\beta$. 
\item $I_\beta$ is the first transversal verifying the previous property, that is, if $I_\gamma$ also verifies the previous property, then, for every $x \in I_\alpha$ we have that the orbit of $\wt\varphi_t(x)$ intersects $I_\beta$ before intersecting $I_\gamma$. 
\end{itemize}

We give $\cT$ a distance given by assigning length $1$ to each edge and defining the distance between two vertices to be the minimal length path from one to the other. Note that the distance does not take into account the direction of the arrows. 

\begin{remark}
Notice that the construction of $\cT$ depends on $L$. However, the constants we will find depend only on the initial construction (i.e., our choice of the finite covering by flow boxes) and thus will be independent on the leaf $L$. 
\end{remark}

Remark that, by construction of $\cT$ together with Lemma \ref{l.expansiveness}, every point $x_\alpha$ has a unique successor in $\cT$.  Notice also that, by definition of a topological Anosov flow, given any two points $y,z\in L$, we can find $t_1,t_2$ such that $\wt\varphi_{t_1}(y)$ and $\wt\varphi_{t_2}(z)$ are close. Thus the orbits of $y$ and $z$ will eventually hit the same transversal. Hence, $\cT$ is connected. Moreover, since orbits cannot intersect a transversal twice (by Poincar\'e--Bendixson's theorem) it follows that: 

\begin{lema} 
The graph $\cT$ is a tree, in particular, it is Gromov hyperbolic. 
\end{lema}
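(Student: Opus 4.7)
My plan is to establish that $\cT$ contains no cycles; combined with the connectedness already verified just above the statement, this will make $\cT$ a tree, and trees endowed with their simplicial edge-length metric are $0$-hyperbolic and hence Gromov hyperbolic. So the entire content of the lemma reduces to the acyclicity of $\cT$.

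The crucial piece of structure I will exploit is that every vertex of $\cT$ has exactly one outgoing directed edge, namely the edge to its flow-successor $s(x_\alpha)$ produced by Lemma \ref{l.expansiveness}. Suppose, toward a contradiction, that $\cT$ admits a simple undirected cycle $v_0 - v_1 - \cdots - v_n = v_0$. Each of the $n$ edges of this cycle is the unique outgoing edge of exactly one of its two endpoints, so the cycle uses at least $n$ of the outgoing slots at its vertices; but the $n$ cycle vertices collectively have total out-degree exactly $n$ in all of $\cT$, so every outgoing edge emanating from a cycle vertex must in fact lie inside the cycle. This forces all edges of the cycle to be coherently oriented, and hence $s^n(v_0) = v_0$.

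To derive a contradiction from this, I will translate it back to the flow. The equation $s^n(v_0) = v_0$ means that the forward $\wt\varphi_t$-orbit of $x_{\alpha_0}$ returns to $x_{\alpha_0}$ after some positive time, so this orbit meets the transversal $I_{\alpha_0}$ at two distinct times. But the Poincar\'e--Bendixson-type observation already invoked in the paragraph preceding the statement rules this out: an orbit of $\wt\varphi_t$ cannot intersect a transversal of the flow twice (since $L$ is a plane, a second intersection would bound a disk invariant under the non-singular flow, which is impossible). Therefore $\cT$ has no cycles.

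The main point to check carefully is the degree-counting step, which uses that the cycle is simple (each edge appears only once) and that each cycle vertex has out-degree exactly one in the ambient graph $\cT$ (so that all its outgoing contribution is consumed inside the cycle). Once this is granted, the geometric contradiction is immediate from material already present in the excerpt, and the conclusion that $\cT$ is a tree, hence Gromov hyperbolic, follows at once.
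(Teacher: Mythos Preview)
Your proof is correct and follows essentially the same approach as the paper's (terse) argument: the paper simply notes that each vertex has a unique successor and that orbits cannot meet a transversal twice by Poincar\'e--Bendixson, and you have made explicit the intermediate step that any undirected cycle in a graph with out-degree one must be a directed cycle. One minor imprecision: $s^n(v_0)=v_0$ only says the forward orbit of $x_{\alpha_0}$ returns to the transversal $I_{\alpha_0}$, not to the point $x_{\alpha_0}$ itself, but you immediately pass to this correct formulation in the next clause, so the argument stands.
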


Now, we can map $\cT$ inside $L$: The vertices are already points in $L$, and we identify the oriented edge between $x_\alpha$ and its successor $x_\beta$ with the segment of orbit of $\wt\varphi_t$ from $x_\alpha$ to $I_\beta$. Thus we obtain a map $i\colon \cT \to L$. (Note that this map has discontinuities at the vertices, but that is not an issue.)

Since the distance in $L$ between two consecutive vertices $x_\alpha$ and $x_\beta$ is bounded above by some uniform constant $b>0$ it follows that
\[
b d_{\cT}(x_\alpha, x_\beta) \geq d_L(x_\alpha, x_\beta), 
\]
because any path in $\cT$ induces a path in $L$ whose length is at most $b$ times the number of vertices it passes through. 

The key point is that there is an inverse estimate:

\begin{lema}\label{lem.graph_qi_to_L} 
The graph $\cT$ is quasi-isometric to $L$ (more precisely, the map $\imath\colon \cT \to L$ is a quasi-isometry). Moreover, the quasi-isometry constant is independent on $L$. 
\end{lema}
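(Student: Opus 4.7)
The map $\imath$ has quasi-dense image in $L$, since the cover $\{\wt B^L_\alpha\}$ is $\eps$-dense in $L$ and each $\wt B^L_\alpha$ contains the vertex $x_\alpha$ at distance at most $\eps$ from any of its points. Combined with the already-established inequality $d_L(x_\alpha, x_\beta) \leq b \cdot d_{\cT}(x_\alpha, x_\beta)$, what remains is the converse estimate
\[
d_{\cT}(x_\alpha, x_\beta) \leq c \cdot d_L(x_\alpha, x_\beta) + c',
\]
with $c, c'$ depending only on the choice of flow-box cover of $M$, not on $L$.

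First, I would reduce to the case of \emph{nearby} vertices: it suffices to find a uniform $K$ such that $d_L(x_\alpha, x_\beta) \leq R_0$ implies $d_{\cT}(x_\alpha, x_\beta) \leq K$ for some fixed $R_0 > 10\eps$. Indeed, taking a geodesic in $L$ between arbitrary vertices, subdivide it into $\lceil d_L(x_\alpha, x_\beta)/R_0 \rceil$ segments and use quasi-density to choose a vertex of $\cT$ inside each; consecutive chosen vertices satisfy $d_L \leq R_0 + 2\eps$, and concatenating the near-bound in $\cT$ yields the desired linear estimate.

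The key step is then the nearby case. Given $x_\alpha, x_\beta$ in the same leaf $L$ with $d_L(x_\alpha, x_\beta) \leq R_0$, I claim there exist a constant $K_0 > 0$, depending only on $R_0$ and the fixed cover, and times $s, t \in [0, K_0]$ such that $\wt\varphi_s(x_\alpha)$ and $\wt\varphi_t(x_\beta)$ lie in a common flow box $\wt C^L_\gamma$. To prove this, I would argue by contradiction using compactness: if the claim fails, we obtain sequences of pairs $x_n, y_n \in L_n$ with $d_{L_n}(x_n, y_n) \leq R_0$ but where no such bounded times exist. Translating by deck transformations and extracting subsequences, project to $M$ to obtain limits $x_\infty, y_\infty$; smoothness of the weak stable leaves (Lemma \ref{smoothstr}) ensures the paths of length $\leq R_0$ in $L_n$ converge to a path in a single weak stable leaf through $x_\infty$, so $y_\infty$ lies in the weak stable leaf of $x_\infty$. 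By Definition \ref{def_TAF}\ref{item_TAF_forward_asymptotic} applied in $M$, the forward $\varphi$-orbits of $x_\infty$ and $y_\infty$ come within $\eps/10$ in some bounded time; lifting this coincidence and using continuity of $\wt\varphi_t$ in both its arguments contradicts the supposed divergence for large $n$.

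Once $\wt\varphi_s(x_\alpha)$ and $\wt\varphi_t(x_\beta)$ share a flow box, the successor chains of $x_\alpha$ and $x_\beta$ in $\cT$ must merge: Lemma \ref{l.expansiveness} guarantees that in each step forward of time in $[T^{-1}, T]$ we traverse exactly one edge of $\cT$, and the initial flow-time needed is $\leq K_0$, so both forward chains reach the common box within at most $\lceil K_0 T \rceil + 1$ edges; after that they follow the same unique successor chain. Hence $d_{\cT}(x_\alpha, x_\beta) \leq 2\lceil K_0 T \rceil + 2$, providing the required $K$. The main obstacle is the compactness argument in the third paragraph: since $\cF^{ws}_\varphi$ is only $C^0$ transversally, one must carefully exploit the smoothness of individual leaves to transport bounded-length in-leaf paths through a limit, so that $y_\infty$ genuinely lands on the stable leaf of $x_\infty$ and the forward asymptotic property applies.
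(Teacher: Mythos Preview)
Your argument is correct and follows the same architecture as the paper's proof: establish coarse surjectivity, reduce the reverse inequality to a uniform bound on $d_{\cT}$ for vertices that are close in $L$ by subdividing a geodesic, and then prove that bound by showing the forward successor chains of two nearby vertices merge after uniformly bounded time. The only difference is in the justification of this last step: the paper simply invokes (a mild extension of) the expansivity argument of Lemma~\ref{l.expansiveness} to conclude that two nearby transversals flow into a common transversal in uniformly bounded time, whereas you run a direct compactness argument using the forward-asymptotic property \ref{item_TAF_forward_asymptotic} of Definition~\ref{def_TAF}. Your route is a bit more explicit about the limiting procedure (and correctly flags the need for smooth leaves so that bounded-length in-leaf paths pass to the limit), while the paper's route reuses machinery already set up; both yield the same uniform constant independent of $L$.
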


\begin{proof}
The vertices of $\cT$ are $\eps$-dense in $L$ for the distance on $\wt M$. Now, since we built $\cT$ by taking points inside small flow boxes, the distance in $L$, $d_L$, restricted to each box is bi-Lipschitz to the distance in $\wt M$, with a factor as close to $1$ as we want (the factor only depends on the size of the box, not on $L$). So we can assume that the vertices of $\cT$ are $2\eps$-dense in $L$ for the distance $d_L$.

Thus the map $\imath\colon \cT \to L$ is coarsely surjective. Hence, we only have left to show that there exists $b'>0$, independent of $L$, such that 
\[
d_{\cT}(x_\alpha, x_\beta) \leq b' d_L(x_\alpha, x_\beta). 
\]

To do this let us consider a path $\eta$ in $L$ joining $x_\alpha$ to $x_\beta$ so that $\mathrm{length}(\eta)= d_L (x_\alpha, x_\beta):=D$. 

Since the image of $\cT$ is $2\eps$ dense in $L$, we can cut $\eta$ in a sequence of points $p_1, \ldots, p_k$ where each $p_i$ is a point in the image of (the inclusion of) $\cT$ and such that 
\begin{itemize}
\item $p_1=x_\alpha, p_k=x_\beta$, 
\item $d_L(p_i, p_{i+1}) < 4\eps$, 
\item $D/\eps \geq k$. 
\end{itemize}

Note that these constants are independent of $L$. Now, it is then enough to prove the following claim.

\begin{claim}
There exists $C>0$ such that given two points in $\cT$ which are at distance less than $1$ in $L$ their distance in $\cT$ is less than $C$. 
\end{claim}

\begin{proof}
Note that if the transversals are at distance less than $2$ in $L$ then it takes a uniform amount of time for both to be contained in the same transversal (cf. Lemma \ref{l.expansiveness}). It follows that the directed path forward from each will reach the same transversal in a uniform amount of time, equivalently, there exists a uniform $C$ (independent on $L$) so that the distance between the transversals in $\cT$ is bounded by $C$.  
\end{proof}

Given the previous claim, it then follows that $D \geq k \eps \geq \frac{\eps}{C} d_{\cT}(p_1,p_k)$ which gives the desired estimate. 
\end{proof}

This result readily implies that leaves of the weak stable foliation in the universal cover are uniformly Gromov hyperbolic\footnote{Note that this fact could also have been obtained from Candel's Uniformization Theorem, since there are no transverse invariant measures for $\cF^{ws}$.}. We will now study the relationship between orbits of the flow inside a leaf and directed paths in $\cT$.

\begin{lema}\label{lem.uniform_bound_directed_path}
There is some uniform constant $c>0$ such that, for every directed path in $\cT$, there is $x \in L$ such that its forward ray remains at Hausdorff distance less than $c$ from the directed path.

Moreover, for every orbit of $\wt\varphi_t$ in $L$ there is a unique bi-infinite directed path in $\cT$ that remains at bounded distance from the orbit. 
\end{lema}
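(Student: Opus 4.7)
Both claims will rest on the uniform estimates built into the construction of $\cT$: the diameters of the transversals $I_\alpha$ are bounded by $\eps$, and consecutive successor hits occur in time $[T^{-1}, T]$ (Lemma \ref{l.expansiveness}). Since these bounds are uniform in the leaf $L$, so will the constant $c$ be.

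For the first claim, I would take $x := x_{\alpha_0}$. Iterating the defining property of the successor map, the forward orbit of $x$ meets every $I_{\alpha_i}$ at some time $t_i$ with $t_{i+1} - t_i \in [T^{-1}, T]$, entering at a point $z_i \in I_{\alpha_i}$ with $d_L(z_i, x_{\alpha_i}) \leq \eps$. The image under $\imath$ of the directed path is the concatenation of the orbit segments of each $x_{\alpha_i}$ up to its first arrival in $I_{\alpha_{i+1}}$, with jumps of size at most $\eps$ at the vertices; by compactness of $M$ and the uniform time bound $T$, each such segment has uniformly bounded diameter in $L$. Since $z_i$ and $x_{\alpha_i}$ lie on the same weak stable leaf $L$ at distance $\leq \eps$, the forward-convergence of orbits inside weak stable leaves keeps the forward orbits of $z_i$ and $x_{\alpha_i}$ within uniformly bounded distance thereafter, which combined with the uniform segment-diameter bound yields the Hausdorff bound $c$.

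For the second claim, I would build a bi-infinite directed path tracking the given orbit $\mathcal{O}$ in two steps. Fix any transversal $I_{\alpha_0}$ met by $\mathcal{O}$. The forward half is the successor chain from $I_{\alpha_0}$; Lemma \ref{l.expansiveness} guarantees that $\mathcal{O}$ meets each successor in turn, so this half stays at bounded distance from the forward ray of $\mathcal{O}$. For the backward half, I would invoke the time-symmetric version of Lemma \ref{l.expansiveness} --- which holds by the same proof, using backward expansivity --- to produce, at each step, a transversal $I_{\alpha_{-k-1}}$ met by the past of $\mathcal{O}$ with $\mathrm{succ}(I_{\alpha_{-k-1}}) = I_{\alpha_{-k}}$; iterating gives the full path. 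For uniqueness, if $\sigma,\sigma'$ are two bi-infinite directed paths staying at bounded distance from $\mathcal{O}$, then each produces (by applying the first claim to far-past vertices and passing to a compactness/contraction limit) a bi-infinite tracking orbit shadowing $\mathcal{O}$ in both time directions; expansivity of $\wt\varphi_t$ in $\wt{M}$ (Remark \ref{rem.expunivcov}) forces these tracking orbits to equal $\mathcal{O}$, whence both paths are the successor chain through the transversals visited by $\mathcal{O}$ and therefore agree.

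The main technical obstacle will be the backward-existence step in the second claim. The successor map in $\cT$ is not canonically invertible, so one must check that the transversal supplied by the time-symmetric lemma is actually met by the past of $\mathcal{O}$ and is a true $\mathrm{succ}$-preimage of $I_{\alpha_{-k}}$. The built-in buffer $\mathrm{dist}(\partial E_i, \partial D_i) \geq \eps/4$ between the inner transversals $E_i$ and the outer transversals $D_i$, together with the local finiteness of the covering and the fact that $\mathcal{O}$ hits the inner transversals infinitely often in backward time, should provide the slack needed to line up these successor relations.
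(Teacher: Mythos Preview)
Your Part 1 is essentially the paper's argument: any point $x$ in the first transversal works, since the forward orbit of $x$ passes through every successive $I_{\alpha_i}$ in bounded time.

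The genuine gap is in your backward-existence step for Part 2. A time-symmetric version of Lemma \ref{l.expansiveness} would hand you, for each $I_{\alpha_{-k}}$, some transversal $I_\beta$ whose forward orbits all hit $I_{\alpha_{-k}}$; but there is no reason that $\mathrm{succ}(I_\beta)=I_{\alpha_{-k}}$, since the successor is by definition the \emph{first} transversal capturing all of $I_\beta$, and that could easily be some other $I_\gamma$. The $\eps/4$ buffer between $E_i$ and $D_i$ was used only to derive expansiveness and does not supply the alignment you need here. So the backward chain you build may simply fail to be a directed path in $\cT$.

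The paper bypasses this entirely by never going backward in $\cT$. Given the orbit of $x$, it takes points $x_n=\wt\varphi_{t_n}(x)$ with $t_n\to -\infty$ and for each $x_n$ records the \emph{forward} successor chain starting at the first transversal $x_n$ meets. Because the forward ray of $x_{n+1}$ contains that of $x_n$, these forward chains are nested (modulo possibly the initial vertex), hence stabilize to a single bi-infinite directed path. For uniqueness the paper observes that in the tree $\cT$ any two distinct bi-infinite directed paths must diverge to infinite $\cT$-distance (they share a unique forward tail, so they split somewhere in the past and then sit on different branches), and the quasi-isometry of Lemma \ref{lem.graph_qi_to_L} converts this to infinite $L$-distance. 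Your expansivity route for uniqueness can be made to work, but the tree argument is one line and needs no limiting procedure.
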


\begin{proof}
First consider a directed path $\{p_1, p_2, \ldots, p_n, \ldots \}$ in $\cT$ and choose any point $x$ in the transversal corresponding to $p_1$. From the construction of the graph it follows that the forward orbit of $x$ intersects the transversals corresponding to each $p_i$ and the time it takes to go from one to the other is bounded between $T^{-1}$ and $T$ for some $T>0$. This allows to construct some $c$ with the desired property since the transversals have bounded length and the flow is continuous. 

Now, if $\{\ldots, p_{-n}, \ldots, p_0, p_1, \ldots p_n, \ldots \}$ is a bi-infinite directed path in $\cT$ then we can pick points $x_n$ in the transversal corresponding to $p_{-n}$ and consider the point $y_n$ in the transversal corresponding to $p_0$ so that $y_n$ is the intersection of this transversal with the orbit of $x_n$. Since the transversal is compact, we can assume that $y_n \to y_{\infty}$ and it follows that the orbit of $y_{\infty}$ is bounded distance from the directed path as desired. 

Finally, given $x \in L$ we can take points $x_n = \wt\varphi_{t_n}(x)$ and define for each $x_n$ a directed path $\{p_0^n, p_1^n, \ldots, p_k^n, \ldots \}$ defined by taking $p_0^n$ to correspond to the first transversal intersected by the forward orbit of $x_n$ and then taking the directed path associated to it (recall that each vertex has a unique successor). It follows that this directed path is distance less than $c$ from the forward orbit of $x_n$. It also follows that the sequence of directed paths stabilizes, by this, we mean that if $n>m$ it follows that $p_1^m, \ldots, p_k^m, \ldots$ is contained in the directed path induced by $x_n$ (note that the first point may not be in the path since it could be that it does not have predecessors). This way, we can define a bi-infinite directed path in $\cT$ which is unique and well defined by construction. Moreover, any other directed path in $\cT$ has to be infinite distance away, showing the stronger form of uniqueness.   
\end{proof}

We can finally finish the proof of Proposition \ref{prop.topAnosovQG}.

We have already shown that the leaves of $\cF^{ws}$ are Gromov-hyperbolic. Now, given an orbit of $\wt \varphi_t$ in $L$, Lemmas \ref{lem.graph_qi_to_L} and \ref{lem.uniform_bound_directed_path} imply that it is a uniform quasi-geodesic of $d_L$. Moreover, each orbits in $L$ converge to the same point, that we denote by $\xi$ on $S^1(L)$, the Gromov-boundary of $L$.

Furthermore, if $\zeta\in S^1(L)$ is the backward endpoint of a directed path in $\cT$, then there exists a unique orbit of $\wt \varphi_t$ which admits $\zeta$ as its backwards endpoints (thanks to Lemma \ref{lem.uniform_bound_directed_path}).

Since orbits of $\wt\varphi_t$ are quasigeodesics in $L$, the usual Morse Lemma states that they are a bounded distance away from a unique geodesic (for $d_L$). In fact, this bound is uniform:
\begin{remark}[Uniform bound]\label{rem.uniformbound} There is a positive constant $k>0$ such that 
for any flow line, $\ell$, of $\widetilde \varphi_t$
in a leaf $L$ of $\widetilde{\cF^{ws}_\varphi}$, if 
$\hat \ell$ is the geodesic in $L$ with the same endpoints as $\ell$ in $S^1(L)$, then the Hausdorff distance, $d_H$, in $L$, satisfies $d_H(\ell,\hat \ell)<k$.

Moreover, for any $x, y\in \ell$, denoting by $\hat \ell_{x,y}$ the geodesic segment in $L$
from $x$ to $y$ and by $\ell_{x,y}$ the compact segment in $\ell$ from
$x$ to $y$, then $d_H(\ell_{x,y},\hat \ell_{x,y}) < k$. See \cite[Theorem III.H.1.7]{BH}. 
\end{remark}

This uniform bound implies that the backwards endpoints of orbits of $\wt\varphi_t$ in $L$ move continuously has one moves transversally to the orbits of $\wt\varphi_t$ in $L$. Hence, for any $\zeta\in S^1(L)$, $\zeta\neq \xi$, there exists an orbit (which has to be unique) with $\zeta$ has its backwards endpoints.

This ends the proof of Proposition \ref{prop.topAnosovQG}.

\subsubsection{Building the topological Anosov flow with smooth leaves}
We now can build a new flow, that will be a true topological Anosov flow and orbit equivalent to $\varphi_t$. That is, we prove the following.

\begin{proposition} \label{prop.smoothness}
Let $\phi^0_t$ be a topological Anosov flow.
Then $\phi^0_t$ is orbit equivalent to a topological Anosov flow
$\phi^2_t$ in $M$ such that the weak stable leaves $\cF^{ws}_{\phi^2}$
are smooth. In particular the leaves of $\cF^{ws}_{\phi^2}$
are Gromov hyperbolic, and the flow lines of $\phi^2_t$ are
uniform quasigeodesics in the corresponding leaves of $\cF^{ws}_{\phi_2}$.
Finally the strong stable leaves of $\phi^2_t$ are also smooth.
\end{proposition}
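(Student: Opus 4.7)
The plan is to build $\phi^2_t$ in two successive modifications of $\phi^0_t$. First, apply Lemma~\ref{smoothstr} to obtain a flow $\phi^1_t$, orbit equivalent to $\phi^0_t$, whose weak stable foliation $\cF^{ws}_{\phi^1}$ has smooth leaves (though $\phi^1_t$ itself may not be tangent to any vector field). By Corollary~\ref{orbfoliations} the flow $\phi^1_t$ still satisfies conditions \ref{item_TAF_weak_foliations}--\ref{item_TAF_backwards_expansivity} of Definition~\ref{def_TAF}, so Proposition~\ref{prop.topAnosovQG} applies: every leaf $L$ of $\cF^{ws}_{\phi^1}$ is Gromov hyperbolic, the orbits of $\widetilde{\phi^1_t}$ in a lift $\widetilde L$ are uniform quasigeodesics, and they all share a common forward endpoint $\xi_{\widetilde L}\in S^1(\widetilde L)$. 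By $\pi_1(M)$-equivariance, $\xi_L$ descends to a canonical point in the Gromov boundary of each leaf $L$.

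Next, I would invoke Candel's uniformization theorem (as in \cite[\S I.12.6]{CanCon}), which applies since the weak stable foliation of a topological Anosov flow admits no transverse invariant measure, to equip the leaves of $\cF^{ws}_{\phi^1}$ with a continuously varying family of Riemannian metrics of constant curvature $-1$. On each such hyperbolic leaf $L$ the Busemann function $b_L$ at $\xi_L$ is smooth, and the unit vector field $X_L = -\nabla b_L$ (for the leaf metric) has as integral curves the unit-speed geodesic rays to $\xi_L$. Assembling these leafwise vector fields gives a non-vanishing vector field $X$ on $M$, and I let $\phi^2_t$ be its flow. By the Morse Lemma (see Remark~\ref{rem.uniformbound}), each orbit of $\phi^2_t$ in a leaf stays within a uniformly bounded distance from a unique quasigeodesic orbit of $\phi^1_t$, namely the one sharing its pair of endpoints at infinity, which exists and is unique by Proposition~\ref{prop.topAnosovQG}. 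Pushing points along a common transversal inside each leaf, for instance a level set of $b_L$, then defines a leaf-preserving homeomorphism $h\colon M\to M$ realizing an orbit equivalence from $\phi^1_t$ to $\phi^2_t$.

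The flow $\phi^2_t$ satisfies condition \ref{item_TAF_tangentVF} of Definition~\ref{def_TAF} by construction, and it satisfies conditions \ref{item_TAF_weak_foliations}--\ref{item_TAF_backwards_expansivity} because these properties are preserved under orbit equivalence (the pushforward by $h$ of the weak unstable foliation of $\phi^1_t$ is a continuous foliation transverse to $\cF^{ws}_{\phi^1}$, and expansivity passes through $h$); thus $\phi^2_t$ is a topological Anosov flow. Its orbits are geodesics in the smooth weak stable leaves, so in particular uniform quasigeodesics, and the weak stable leaves are Gromov hyperbolic as required. Finally, inside each leaf $L$ the strong stable leaves of $\phi^2_t$ are precisely the horocycles at $\xi_L$---since two forward-asymptotic geodesic rays in a hyperbolic surface are at bounded distance iff they lie on a common horocycle---which are the level sets of the smooth function $b_L$, hence smooth curves. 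The main obstacle I anticipate is verifying that the assembled vector field $X$ is continuous across leaves: this reduces to showing that $\xi_L$ varies continuously in $L$ (which follows from continuity of the orbits of $\phi^1_t$ together with the uniform Morse bound of Remark~\ref{rem.uniformbound}) and that the Candel metrics, and hence the Busemann functions $b_L$, vary continuously in the $C^0$-transverse topology, which must be extracted from the proof of Candel's theorem.
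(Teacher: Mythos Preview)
Your approach is essentially the same as the paper's: smooth the weak stable foliation via Lemma~\ref{smoothstr}, apply Proposition~\ref{prop.topAnosovQG}, use Candel's metric to replace each quasigeodesic orbit by the geodesic with the same ideal points, and identify strong stable leaves with horocycles. The paper also verifies that $\phi^2_t$ is topological Anosov by noting that expansivity is preserved under orbit equivalence and then invoking Theorem~\ref{teo.expimpliesTAF}, which is slightly cleaner than checking Definition~\ref{def_TAF} directly.

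There is one genuine gap. Your construction of the orbit equivalence $h$ by ``pushing points along a level set of $b_L$'' need not be injective: the Busemann function $b_L$ centered at $\xi_L$ is not in general monotone along a quasigeodesic with forward endpoint $\xi_L$, so two distinct points on the same $\phi^1_t$-orbit can lie on the same horocycle and hence be sent to the same point of the corresponding geodesic. The paper avoids this issue entirely: it only records the $\pi_1(M)$-equivariant bijection $\tau$ between the orbit space of $\widetilde{\phi^1_t}$ and that of $\widetilde{\phi^2_t}$ (which you have), observes it is a homeomorphism, and then cites results of Haefliger, Ghys and Barbot (see \cite{Haefliger,Ghys,Barbot2}) to upgrade $\tau$ to an honest orbit equivalence on $M$. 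You could either cite the same result, or repair your map by an averaging argument in the spirit of \S\ref{averaging}: replace $b_L$ along each $\phi^1_t$-orbit by a sufficiently long running integral so that the resulting function becomes strictly monotone, and then invert.
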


\begin{proof}
By Lemma \ref{smoothstr} the flow $\phi^0_t$ is orbit equivalent
to a flow $\varphi_t$ in $M$ (that satisfies conditions \ref{item_TAF_weak_foliations} to \ref{item_TAF_backwards_expansivity} of Definition \ref{def_TAF}) for which the weak stable leaves
are smooth. Moreover, by Proposition \ref{prop.topAnosovQG} the leaves of $\cF^{ws}_{\varphi}$
are Gromov hyperbolic and the flow lines of $\varphi_t$ 
are uniform quasigeodesics in their leaves of $\cF^{ws}_{\varphi}$.

The only missing property is that $\varphi_t$ is tangent to a non vanishing vector field.

Now we will build another flow, $\phi^2_t$, that will be orbit equivalent to $\varphi_t$ (hence also to $\phi^0_t$), with the same weak stable leaves as $\varphi_t$, and tangent to a non vanishing vector field.

To simplify notation, in this proof, we let $\fol = 
\cF^{ws}_{\varphi}$.
Since the leaves of $\fol$ are smooth and Gromov
hyperbolic, Candel proved that there is a leafwise tensor metric
making each leaf a hyperbolic surface. More precisely, 
let $g$ denote the induced Riemannian metric in $T \fol$, the
tangent space of $\fol$. Then, there is a function $\eta(x)$ in $M$
so that $\eta(x) g$ induces a hyperbolic metric in each leaf of $\fol$.

When we restrict to a leaf $L$ of $\fol$ the function $\eta$ is just a uniformizing conformal function. In particular it is smooth when restricted to each leaf: See for example \cite[page 502]{Candel} or \cite[page 253]{Calegari} where there are explicit formulas for $\eta(x)$. These formulas show that $\eta(x)$ is smooth when restricted to leaves, but vary only continuously transversally to the leaves.

\vskip .1in
Now we construct the flow $\phi^2_t$.
Let $g_1 = \eta(x) g$ be Candel's leaf-wise smooth hyperbolic metric. Fix a leaf $L$ of $\fn$. By Proposition \ref{prop.topAnosovQG}, the flow lines of $\varphi_t$ in $L$ are uniform quasigeodesics in $L$ (for the metric $g$ and hence for the metric $g_1$), they all share the same forward ideal point, call it $z_L\in S^1(L)$ and their backwards ideal points are in one-to-one correspondence with $S^1(L)\smallsetminus\{z_L\}$.

Now we define $\wt \phi^2_t$ as the flow (with unit speed in the $g_1$-metric) such that its orbits on each leaf $L$ are $g_1$-geodesics between $y$ and $z_L$ for any $y\in S^1(L)\smallsetminus\{z_L\}$.

Since the flow lines of $\varphi_t$
are uniform quasigeodesics in leaves of $\fn$ (independent
of the leaf of $\fn$), it follows
that the forward ideal points of flow lines vary continuously
in $\mt$ (not just in leaves of $\fn$).

In particular, $\wt \phi^2_t$ is a continuous flow on $\mt$. As $g_1$ and $\fn$ are invariant under the action of deck transformations, the flow $\wt \phi^2_t$ projects to a flow $ \phi^2_t$ on $M$.

Now, thanks again to Proposition \ref{prop.topAnosovQG}, there is a one-to-one correspondence between the flow lines of $\widetilde \varphi_t$ in a given leaf $L\in \fn$ and the flow lines of $\wt\phi^2_t$. Thus we get a map $\tau$ from the orbit space of $\widetilde \varphi_t$
to the orbit space of $\widetilde \phi^2_t$. 

The map $\tau$ is a homeomorphism (thanks to the uniform quasigeodesic behavior of the orbits of $\varphi_t$), and clearly $\pi_1(M)$-equivariant. 
Under these conditions, it follows that $\varphi_t$ is orbit equivalent to $\phi^2_t$.
Work on this was first done by Haefliger \cite{Haefliger} who
showed that there is a homotopy equivalence sending orbits
of $\varphi_t$ 
to orbits of $\phi^2_t$ induced by $\tau$.
This was upgraded to a homeomorphism sending
orbits to orbits by Ghys \cite{Ghys} and Barbot
\cite{Barbot2} using averaging techniques.

By construction, the orbits of $\phi^2_t$ are smooth (because $g_1$ is a smooth Riemannian metric on each leaf), with constant speed, and their velocity varies continuously in $M$ (more precisely, the velocity vary smoothly on each leaf and continuously transversally, because, as mentioned previously, the forward ideal points vary continuously transversally) thus $\phi^2_t$ is tangent to a non vanishing continuous vector field. Since it is immediate that $\phi^2_t$ is expansive, (and preserves a foliation by construction), Theorem \ref{teo.expimpliesTAF} implies that $\phi_2^t$ is a topological Anosov flow.

Now we can just check all the conditions claimed in the statement of the proposition: $\phi_2^t$ is a topological Anosov flow that is orbit equivalent to $\phi^0_t$, its weak stable foliation is $\fn$, so has smooth leaves, the leaves are Gromov-hyperbolic and the flow lines are uniform quasigeodesics in each leaf. Finally, the strong stable leaves corresponds to horocycles of the hyperbolic metric $g_1$ in each leaf, hence are smooth.
\end{proof}

Along the way we proved the following result:

\begin{cor} Let $\phi^0_t$ be a topological Anosov flow
in a smooth $3$-manifold $M$. Then $\phi^0_t$ is orbit equivalent
to a topological Anosov flow $\phi^2_t$ such that the leaves of
the weak stable foliation
$\cF^{ws}_{\phi^2}$ are smooth, and that satisfy the following additional properties:
\begin{enumerate}[label = \arabic*)]
 \item There is a 
leafwise Riemannian metric $g_1$ in $T \cF^{ws}_{\phi^2}$,
conformal with the
induced metric from $M$, such that leaves of $\cF^{ws}_{\phi^2}$
are hyperbolic surfaces with the metric $g_1$;
\item  The flow lines
of $\phi^2_t$ are geodesics in the respective leaves
of $\cF^{ws}_{\phi^2}$ with the hyperbolic metrics given
by $g_1$; 
\item The strong stable leaves are projection of 
horocycles in the respective
leaves of $\cF^{ws}_{\phi^2}$ with the hyperbolic metrics
given by $g_1$.
\end{enumerate}
\end{cor}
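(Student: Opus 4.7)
The plan is to observe that this corollary is essentially a packaging of what was already established in the proof of Proposition \ref{prop.smoothness}, with one new assertion (item 3) about the strong stable leaves being horocycles. So I would simply re-trace the construction, pointing out that each of the three claimed properties either holds by definition or is immediate from the geometry in a hyperbolic surface.

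First I would invoke Proposition \ref{prop.smoothness} to produce $\phi^2_t$ orbit equivalent to $\phi^0_t$ and tangent to a nonvanishing vector field, with $\cF^{ws}_{\phi^2}$ having smooth Gromov-hyperbolic leaves, and with flow lines being uniform quasigeodesics in these leaves. The construction proceeds by: (a) applying Lemma \ref{smoothstr} to get an intermediate flow $\varphi_t$ with smooth weak stable leaves, (b) applying Candel's uniformization theorem to the foliation $\fol = \cF^{ws}_\varphi = \cF^{ws}_{\phi^2}$ to produce a leafwise conformal factor $\eta(x)$ so that $g_1 := \eta g$ is a leafwise hyperbolic metric (smooth along leaves, continuous transversely), (c) defining $\wt\phi^2_t$ as the unit-speed geodesic flow in each leaf along the geodesic connecting the common forward ideal point $z_L$ of the $\widetilde{\varphi_t}$-orbits on $L$ to each other point of $S^1(L)$. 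Item 1) is then nothing but the statement that $g_1$ is the Candel metric used in step (b), and item 2) is the defining property of $\phi^2_t$ in step (c).

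For item 3), which is the only genuinely new assertion, I would argue as follows. Fix a leaf $L$ of $\cF^{ws}_{\phi^2}$ and a lift $\widetilde L$ to $\mt$, on which $g_1$ is the hyperbolic metric and all lifted orbits of $\phi^2_t$ are geodesics sharing the forward ideal point $z_L \in S^1(\widetilde L)$. By Corollary \ref{orbfoliations}, $\phi^2_t$ admits a strong stable foliation inside $\cF^{ws}_{\phi^2}$, characterized by the forward asymptotic condition \ref{item_TAF_forward_asymptotic} of Definition \ref{def_TAF}. In a hyperbolic surface, two geodesics ending at the same ideal point $z_L$ are forward asymptotic in the sense that their distance decays exponentially if and only if the basepoints lie on a common horocycle centered at $z_L$; conversely, points on distinct horocycles have geodesic rays to $z_L$ whose distance stays bounded below. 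Thus the strong stable leaf through any $x \in \widetilde L$ must be contained in the horocycle through $x$ centered at $z_L$. The expansivity condition \ref{item_TAF_backwards_expansivity}, which prevents two distinct strong stable leaves from coalescing, combined with the fact that the horocycle foliation is one-dimensional and transverse to the flow, forces equality: the strong stable leaf of $x$ in $\widetilde L$ is exactly the horocycle at $z_L$ through $x$. Projecting back to $M$ yields item 3).

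The only step where one needs to be a bit careful is the orbit equivalence from $\varphi_t$ to $\phi^2_t$ in the construction, and the verification that $\phi^2_t$ is genuinely a topological Anosov flow; both of these have already been addressed inside the proof of Proposition \ref{prop.smoothness} via the averaging techniques of Haefliger, Ghys and Barbot, and via Theorem \ref{teo.expimpliesTAF} respectively. I do not expect any genuine obstacle beyond organizing these observations.
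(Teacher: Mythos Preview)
Your proposal is correct and matches the paper's approach exactly. The paper does not give a separate proof of this corollary at all; it simply introduces it with ``Along the way we proved the following result:'' and refers back to the proof of Proposition~\ref{prop.smoothness}, where items 1) and 2) are by construction (the Candel metric $g_1$ and the definition of $\phi^2_t$ as the leafwise geodesic flow toward the funnel point), and item 3) is dispatched in a single sentence (``the strong stable leaves corresponds to horocycles of the hyperbolic metric $g_1$ in each leaf, hence are smooth''). Your elaboration of item 3) via the Busemann-function/horocycle characterization of synchronous forward asymptoticity in a hyperbolic surface is more explicit than what the paper says, but it is the standard justification behind that one-line assertion.
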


As pointed out before, the metric $g_1$ varies only continuously in $M$.

\subsection{Discretized Anosov flows revisited}\label{ss.DAF}

Here we show that discretized Anosov flows defined in \cite{BFFP_part1} fit well with all the definitions of collapsed Anosov flows. 

\begin{defi}\label{def.DAF}
A partially hyperbolic diffeomorphism $f\colon M \to M$ is a \emph{discretized Anosov flow} if there exists a topological Anosov flow $\phi_t\colon M \to M$ and a continuous function $\tau\colon M \to \RR$ such that $f(x)=\phi_{\tau(x)}(x)$ for every $x \in M$. 
\end{defi}

In \cite[Appendix G]{BFFP_part1} we asked for the function $\tau$ to be positive, but this is unnecessary: 

\begin{prop}\label{p.daf1}
If $f$ is a discretized Anosov flow, then the function $\tau\colon M \to \RR$ cannot vanish. 
\end{prop}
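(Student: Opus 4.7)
The plan is to suppose for contradiction that $\tau(x_{0}) = 0$ for some $x_{0} \in M$ and derive a contradiction between the expansion of $f$ on the local unstable manifold at $x_{0}$ and the fact that $f$ preserves every orbit of $\phi_{t}$.

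If $\tau(x_{0})=0$ then $f(x_{0}) = \phi_{0}(x_{0})= x_{0}$, so $x_{0}$ is a fixed point of $f$. Moreover, since $f(x)=\phi_{\tau(x)}(x)$ maps each orbit of $\phi_{t}$ to itself, $Df$ preserves the one-dimensional line field $\RR X$ tangent to the flow; as $\|Df^{n}X(x)\|=\|X(f^{n}(x))\|$ is bounded above and below uniformly on the compact manifold $M$, the line field $\RR X$ is neither uniformly contracted nor uniformly expanded, so it must coincide with $E^{c}$. In particular, $E^{u}(x_{0})$ is transverse to $X(x_{0})$, with a transversality constant bounded below on a neighborhood of $x_{0}$ by continuity of the bundles.

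Next I would invoke partial hyperbolicity at the fixed point $x_{0}$. Applied to a unit vector in the one-dimensional space $E^{u}(x_{0})$, the definition of partial hyperbolicity gives $\|Df^{\ell}_{x_{0}}|_{E^{u}(x_{0})}\| > 2$, so the eigenvalue $\lambda_{u}$ of $Df_{x_{0}}$ on $E^{u}(x_{0})$ satisfies $|\lambda_{u}|>1$. The local strong unstable manifold $W^{u}_{\mathrm{loc}}(x_{0})$ is a $C^{1}$ arc tangent to $E^{u}$, is locally $f$-invariant, and $f|_{W^{u}_{\mathrm{loc}}(x_{0})}$ is a $C^{1}$ map with derivative $\lambda_{u}$ at $x_{0}$. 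Consequently $x_{0}$ is an \emph{isolated} fixed point of $f|_{W^{u}_{\mathrm{loc}}(x_{0})}$.

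The crux of the argument is to show that, on the contrary, $f$ is the identity on a full arc around $x_{0}$ in $W^{u}_{\mathrm{loc}}(x_{0})$. For $x \in W^{u}_{\mathrm{loc}}(x_{0})$ close to $x_{0}$, the point $f(x)$ lies in two $C^{1}$ curves passing through $x$: the arc $W^{u}_{\mathrm{loc}}(x_{0})$ (by invariance) and the orbit $\mathcal{O}(x)=\{\phi_{t}(x):t\in\RR\}$ (because $f(x) = \phi_{\tau(x)}(x)$, and orbits are $C^{1}$ by condition \ref{item_TAF_tangentVF} of Definition \ref{def_TAF}). These curves have respective tangents $E^{u}(x)$ and $\RR X(x)$ at $x$, which are transverse. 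A standard implicit function theorem argument in a local chart, applied to $F(s,t)=\alpha(s)-\beta(t)$ for $C^{1}$ parametrizations $\alpha,\beta$ of the two curves, yields a neighborhood $V_{x}$ of $x$ with $W^{u}_{\mathrm{loc}}(x_{0}) \cap \mathcal{O}(x) \cap V_{x} = \{x\}$; the radius of $V_{x}$ can be chosen uniform in $x$ near $x_{0}$ by continuity of the angle between $E^{u}$ and $\RR X$. Since $\tau$ is continuous with $\tau(x_{0}) = 0$ and the flow is jointly continuous in $(t,x)$, the difference $|f(x)-x|$ tends to $0$ as $x \to x_{0}$, so $f(x) \in V_{x}$ once $x$ is sufficiently close to $x_{0}$. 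This forces $f(x) = x$, contradicting the fact that $x_{0}$ is an isolated fixed point of $f|_{W^{u}_{\mathrm{loc}}(x_{0})}$. The main place where care is needed is the uniform transverse intersection step, but it requires only $C^{1}$ regularity of both curves together with a positive lower bound on the angle between their tangents, both of which are available in our setting.
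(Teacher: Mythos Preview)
Your strategy is different from the paper's and, modulo one gap, gives a nice self-contained argument. The gap is the claim $\|Df^{n}X(x)\|=\|X(f^{n}(x))\|$: this would require $f_*X=X$, i.e., that $f$ commute with the flow, but from $f(\phi_s(x))=\phi_{s+\tau(\phi_s(x))}(x)$ one sees this holds only when $\tau$ is constant along orbits. In general $Df_xX(x)=c(x)\,X(f(x))$ for some scalar $c(x)$, with no a priori control on the products $\prod_{k=0}^{n-1}c(f^k(x))$, so you cannot conclude $\RR X=E^c$ this way (nor does ``neither contracted nor expanded'' by itself force a $Df$-invariant line field to equal $E^c$).

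Fortunately the global statement $\RR X=E^c$ is unnecessary for your argument. At the fixed point $x_0$ the line $\RR X(x_0)$ is $Df_{x_0}$-invariant, and $Df_{x_0}$ has exactly three invariant lines $E^s(x_0),E^c(x_0),E^u(x_0)$ (real eigenvalues of pairwise distinct modulus by partial hyperbolicity), so $\RR X(x_0)$ is one of them. In particular it differs from at least one of $E^s(x_0)$, $E^u(x_0)$; if it differs from $E^u(x_0)$, continuity gives the transversality you need between $\mathcal{O}(x)$ and $W^u_{\mathrm{loc}}(x_0)$ near $x_0$, and your concluding transverse-intersection argument goes through unchanged. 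If instead $\RR X(x_0)=E^u(x_0)$, run the identical argument with $W^s_{\mathrm{loc}}(x_0)$ (now $|\lambda_s|<1$ makes $x_0$ an isolated fixed point of $f|_{W^s_{\mathrm{loc}}(x_0)}$, and $f$ maps $W^s_{\mathrm{loc}}(x_0)$ into itself).

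By contrast, the paper first proves $X\in E^c$ globally via a length argument (if $X$ were tangent to $E^s$ along an arc, backward iteration would contradict boundedness of $\tau$), then invokes dynamical coherence from \cite[Proposition G.2]{BFFP_part1} and the absence of fixed points for good lifts from \cite[Corollary 3.11]{BW} or \cite[Lemma 3.13]{BFFP_part1}. Your route, once patched as above, avoids both external citations and does not require dynamical coherence.
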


\begin{proof}
In \cite[Proposition G.2]{BFFP_part1}, we proved that if $f$ is a discretized Anosov flow, then $f$ must be dynamically coherent. The argument presented in \cite[Proposition G.2]{BFFP_part1} assumed that the map $\tau$ was positive, but we will show below how they can easily be modified in order not to use this assumption. Once we know that $f$ is dynamically coherent, we will directly deduce that $\tau$ cannot vanish.

First, we prove, that the vector field $X$ tangent to the flow $\phi_t$ needs to be in the center bundle of $f$:

If $X$ is tangent to $E^s$ at some point, then as in \cite[Proposition G.2]{BFFP_part1}, 
this implies that there is an interval along the flow direction totally tangent to $E^s$. Since $E^s$ is uniquely integrable, then this interval
in the flow direction is contained in a stable leaf.
Now, the function $\tau$ is bounded, so the length
along a flow line from $x$ to $f(x)$ is bounded.
Iterating negatively by $f$ increases the stable length exponentially,
so first we can assume that there is $x$ in $M$ such that the 
interval in the center leaf from $x$ to $f(x)$ is contained in
a stable leaf. Then again applying negative powers of $f$,
produces a contradiction to $\tau$ being bounded.
It follows that $X$ is never tangent to $E^s$. The symmetric argument implies that it is never tangent to $E^u$ either.
Then, as in the proof of \cite[Proposition G.2]{BFFP_part1} one
proves, that $X$ is always tangent to $E^c$ and that $f$ is 
dynamically coherent.

Since $f$ is dynamically coherent, we can consider the good lift $\wt f$ obtained via the lift of the natural homotopy along the flow lines of the lifted topological Anosov flow. This lift $\wt f$ cannot have fixed points (see, e.g., \cite[Corollary 3.11]{BW} or \cite[Lemma 3.13]{BFFP_part1}), thus $\tau$ cannot vanish.
\end{proof}

The following relates the notion of discretized Anosov flows and collapsed Anosov flows. 

\begin{prop}\label{p.daf2}
If $f$ is a discretized Anosov flow, then it is a
strong collapsed Anosov flow
with $h$ being a homeomorphism and $\beta$ being a trivial self orbit equivalence. Conversely, if $f$ verifies Definition \ref{defi1} with $\beta$ a trivial self orbit equivalence, then $f$ is a discretized Anosov flow. 
\end{prop}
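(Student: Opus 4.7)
For the forward direction, take $h$ to be the identity on $M$ and $\beta := f$. The hypothesis $f(x)=\phi_{\tau(x)}(x)$ combined with Proposition \ref{p.daf1} gives that $\tau$ is continuous and nowhere vanishing, hence of constant sign, so $\beta = f$ is a trivial self orbit equivalence of $\phi_t$. The proof of Proposition \ref{p.daf1} further establishes dynamical coherence with $\cF^{ws}_\phi$ and $\cF^{wu}_\phi$ equal to the (true) center-stable and center-unstable foliations of $f$, and that the vector field $X$ generating $\phi_t$ is tangent to $E^c$. Conditions (i) and (ii) of Definition \ref{defi1} then follow immediately; condition (iii) is trivial since $h$ is a homeomorphism; condition (iv) is tautological.

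For the converse, assume $\beta(x)=\phi_{\sigma(x)}(x)$ for some continuous $\sigma\colon M\to\RR$. By the unconditional direction of Theorem \ref{teo.main3} (``strong implies leaf space''), $f$ is a leaf space collapsed Anosov flow: there is a $\pi_1(M)$-equivariant homeomorphism $H\colon \cO_\phi \to \lc$ intertwining the actions of $\wt{\beta}$ and $\wt{f}$ on the respective leaf spaces. Since $\wt{\beta}$ acts as the identity on $\cO_\phi$, $\wt{f}$ acts as the identity on $\lc$: every center leaf of $f$ is $\wt{f}$-invariant.

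The main constructive step is then to build a topological Anosov flow $\psi_t$ on $M$ whose orbits coincide with the center leaves of $f$ and such that $f$ is a time-map of $\psi_t$. Condition (i) of Definition \ref{defi1} makes $\wt{h}$ a $C^1$ immersion along each orbit $\gamma$ of $\wt{\phi}_t$. Condition (iii), combined with the fact that the lifted weak leaves of $\phi_t$ and their images in $\wt M$ are simply connected planes (via Palmeira and Remark \ref{rem-novikov}) and that $\wt{h}$ is bounded distance from the identity (so images of orbits are unbounded simple curves homeomorphic to $\RR$), upgrades this to the statement that $\wt{h}|_\gamma$ is a homeomorphism onto the center leaf $\wt{h}(\gamma)$. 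Define $\wt{\psi}_t(\tilde y) := \wt{h}(\wt{\phi}_t(\wt{h}|_\gamma^{-1}(\tilde y)))$, where $\gamma$ is the orbit corresponding to the center leaf through $\tilde y$; this is a continuous non-singular flow on $\wt{M}$ that descends to a flow $\psi_t$ on $M$, orbit equivalent to $\phi_t$ via $h$. Corollary \ref{orbfoliations} supplies weak stable and unstable foliations for $\psi_t$, and Theorem \ref{teo.expimpliesTAF} confirms that $\psi_t$ is a topological Anosov flow. The equivariance $f\circ h = h\circ \beta$ then translates directly into $f(y)=\psi_{\tau(y)}(y)$ with $\tau(y) := \sigma(\wt{h}|_\gamma^{-1}(\tilde y))$, so $f$ is a discretized Anosov flow.

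The main obstacle is the well-definedness and continuity of $\wt{\psi}_t$ on all of $\wt{M}$ in the presence of possible branching of the center foliation of $f$: at a point $\tilde y$ lying on two distinct center leaves coming from distinct orbits $\gamma_1,\gamma_2$, one needs $\wt{h}(\wt{\phi}_t(\wt{h}|_{\gamma_1}^{-1}(\tilde y)))=\wt{h}(\wt{\phi}_t(\wt{h}|_{\gamma_2}^{-1}(\tilde y)))$ for every $t\in\RR$. Reconciling these parametrizations rests on combining the trivial action of $\wt{\beta}$ on $\cO_\phi$ with the bijection $H$ between $\cO_\phi$ and $\lc$ to show that distinct collapsed orbits passing through a common branching point must be parametrized consistently; equivalently, that branching of the center foliation is compatible with the flow structure inherited from $\phi_t$.
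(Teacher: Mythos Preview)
Your forward direction is fine and matches the paper's argument.

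For the converse, you correctly identify the central difficulty but do not resolve it. Your construction of $\wt\psi_t$ requires, at a point $\tilde y$ lying on two distinct center leaves $c_1=\wt h(\gamma_1)$ and $c_2=\wt h(\gamma_2)$, that $\wt h(\wt\phi_t(\wt h|_{\gamma_1}^{-1}(\tilde y)))=\wt h(\wt\phi_t(\wt h|_{\gamma_2}^{-1}(\tilde y)))$ for \emph{all} $t$. But if $c_1$ and $c_2$ genuinely branch apart, this simply fails: for $t$ large enough the two sides lie on different center leaves. The vague appeal to ``consistent parametrization'' does not help, since the problem is not a parametrization mismatch but an actual geometric divergence of the candidate orbits. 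So as written, the flow $\psi_t$ is not well-defined.

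The paper's route is to eliminate branching \emph{before} attempting any such construction. From $f\circ h=h\circ\beta$ with $\beta$ trivial one obtains a good lift $\wt f$ fixing every center leaf; by \cite[Corollary 3.11]{BW} or \cite[Lemma 3.13]{BFFP_part1} this lift has no fixed points; and then one is in the \emph{doubly invariant} situation of \cite[\S 7.2]{BFFP_part2}, which forces the branching foliations to be genuine foliations. Once dynamical coherence is established, the center leaves form an honest one-dimensional foliation and the discretized Anosov flow structure follows directly. Your construction would also go through at that point, but the essential content --- proving dynamical coherence --- is what you are missing.
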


\begin{proof}
To prove the direct assertion let us just take $h$ to be the identity. In \cite[Proposition G.2]{BFFP_part1} it is shown that the center stable and center unstable foliations of $f$ correspond to the weak stable and weak unstable foliations of the topological Anosov flow (in particular, these weak foliations whose leaves are a priori only $C^0$ have $C^1$-leaves). 
Then $\beta(x) = \phi_{\tau(x)}(x)$ which proves the result.

For the converse statement, notice first that since $f$ verifies
Definition \ref{defi1} then the 
image under $h$ of leaves of $\cF^{ws}_{\phi}$
provides a branching foliation tangent to $E^{cs}$, and likewise
for $\cF^{wu}_{\phi}$. Finally the image of any flow line is a curve
tangent to $E^c$, providing a branching center foliation.
Consider a good lift $\ft$ corresponding to lifting $\beta$ to a
homotopy along the flow lines, and using a lift of $h$ lifting
a homotopy to the identity. The equation 
$f \circ h(x) = h \circ \beta(x)$ then implies
that $\ft$ preserves every center 
leaf in $\mt$.
Again the argument of \cite[Corollary 3.11]{BW} (or \cite[Lemma 3.13]{BFFP_part1}) implies that the lift to the universal cover cannot have fixed points. Moreover, when lifted to the universal cover, one has the same situation as in the \emph{doubly invariant case}, studied in \cite[\S 7.2]{BFFP_part2}. 
Doubly invariant means that $\ft$ fixes every leaf of both
the center stable and the center unstable foliations
lifted to $\mt$.
This proves that the branching foliations are actual
foliations,  proving dynamical coherence of $f$. Now this immediately implies that $f$ is a discretized Anosov flow  (see also \cite[\S 6]{BFFP_part1}). 
\end{proof}

With what we proved so far, we can show that when the self orbit equivalence is trivial, then the two notions of collapsed Anosov flows (Definition \ref{defi1w}) and strong collapsed Anosov flows (Definition \ref{defi1}) coincide.

\begin{prop}
 Let $f$ be a collapsed Anosov flow such that the associated self orbit equivalence $\beta$ is trivial, then $f$ is a strong collapsed Anosov flows.
\end{prop}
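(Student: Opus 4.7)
The plan is to show that, under the hypothesis that $\beta$ is a trivial self orbit equivalence, the diffeomorphism $f$ is in fact a discretized Anosov flow in the sense of Definition \ref{def.DAF}. Once this is established, the first assertion of Proposition \ref{p.daf2} immediately gives that $f$ is a strong collapsed Anosov flow.

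First I would unpack the data: write $\beta(x) = \phi_{\tau(x)}(x)$ for a continuous $\tau\colon M \to \RR$, so that the semi-conjugacy $f \circ h = h \circ \beta$ reads $f(h(x)) = h(\phi_{\tau(x)}(x))$. Since $h$ is homotopic to the identity it has degree one and is in particular surjective, and by condition (i) of Definition \ref{defi1w}, $h$ restricts to a local diffeomorphism from each orbit of $\phi_t$ onto a center curve in $M$.

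The core of the argument is to build a topological Anosov flow $\psi_t\colon M \to M$ whose orbits are precisely the $h$-images of the orbits of $\phi_t$, with parametrization inherited from $\phi_t$ via $h$. Working in the universal cover $\mt$, and using the lift $\widetilde h$ that commutes with every deck transformation (arising from a lift of a homotopy of $h$ to the identity), I would set $\widetilde\psi_t(\widetilde h(\widetilde x)) := \widetilde h(\widetilde\phi_t(\widetilde x))$ and verify that this defines a continuous flow descending to $M$. The key technical point---and the main obstacle---is the fiberwise compatibility statement: if $\widetilde h(\widetilde x_1) = \widetilde h(\widetilde x_2)$, then $\widetilde h(\widetilde \phi_t(\widetilde x_1)) = \widetilde h(\widetilde \phi_t(\widetilde x_2))$ for every $t \in \RR$. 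The plan for this is to combine the lifted semi-conjugacy $\widetilde f \circ \widetilde h = \widetilde h \circ \widetilde\beta$ with the triviality of $\widetilde \beta$ (which fixes every $\widetilde \phi_t$-orbit setwise) and the local injectivity of $\widetilde h$ along orbits, using an iteration argument to rule out divergence of the $\widetilde h$-images of two orbits that share a collapsed point.

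Once $\psi_t$ is constructed, I would verify it is a topological Anosov flow via Theorem \ref{teo.expimpliesTAF}: it is tangent to a non-vanishing continuous vector field by Definition \ref{defi1w}(i); it preserves the (true) foliation obtained as the image of $\cF^{ws}_\phi$ under $h$; and its expansivity on $\mt$ follows from the expansivity of $\widetilde\phi_t$ together with the intertwining $\widetilde h \circ \widetilde\phi_t = \widetilde\psi_t \circ \widetilde h$ and Remark \ref{rem.expunivcov}. Substituting $\psi_t$ back into the semi-conjugacy gives $f(h(x)) = \psi_{\tau(x)}(h(x))$, and the same fiberwise compatibility lets $\tau$ descend through $h$ to a continuous function $\widetilde\tau\colon M \to \RR$ with $f(y) = \psi_{\widetilde\tau(y)}(y)$. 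Hence $f$ is a discretized Anosov flow, and Proposition \ref{p.daf2} concludes that $f$ is a strong collapsed Anosov flow.
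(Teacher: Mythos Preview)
Your overall strategy---reduce to showing that $f$ is a discretized Anosov flow and then invoke Proposition~\ref{p.daf2}---is exactly what the paper does. However, the route you propose to get there has a real gap, and it differs from the paper's argument in a way worth noting.

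The gap is precisely where you flag the ``main obstacle'': the fiberwise compatibility $\widetilde h(\widetilde x_1) = \widetilde h(\widetilde x_2) \Rightarrow \widetilde h(\widetilde\phi_t(\widetilde x_1)) = \widetilde h(\widetilde\phi_t(\widetilde x_2))$. You offer only a vague plan (``iteration argument to rule out divergence''), but this statement is essentially equivalent to saying that the $h$-images of distinct $\widetilde\phi_t$-orbits never branch---which is the whole content of the proposition. Relatedly, you assert that the image $h(\cF^{ws}_\phi)$ is a \emph{true} foliation; this is unjustified from Definition~\ref{defi1w} alone and is in fact exactly the kind of conclusion that separates collapsed Anosov flows from strong ones. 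Your expansivity argument for $\psi_t$ also presupposes that $\psi_t$ is already well-defined, so it cannot help establish the compatibility.

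The paper avoids constructing $\psi_t$ altogether. It observes that the $h$-images of flow lines form an $f$-invariant center \emph{branching} foliation, and that (since $\beta$ is trivial) a good lift $\widetilde f$ fixes every such center leaf. Then it imports two facts from \cite{BFFP_part2}: a bounded-displacement lemma (analogous to \cite[Lemma~7.3]{BFFP_part2}) showing $\widetilde f$ moves points a bounded distance within each center, and a non-branching argument (analogous to \cite[Lemma~7.4]{BFFP_part2}) forcing distinct center curves to be disjoint or coincide. This directly yields that the center curves form an honest one-dimensional foliation by flow lines of some topological Anosov flow, i.e.\ $f$ satisfies Definition~\ref{def.DAF}. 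So the paper's substance lies in those BFFP-style lemmas; your proposal would need an independent argument of comparable strength at the compatibility step to go through.
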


\begin{proof}
 Since $\beta$ is trivial, the image of the flow line foliation by $h$ is 
an $f$-invariant branching foliation,
whose leaves are tangent to $E^c$. This is a center branching foliation
in this case.
Moreover, a good lift $\ft$ leaves invariant every center leaf.

As in \cite[Lemma 7.3]{BFFP_part2} we know that $\ft$ moves point a bounded distance in each center. An argument similar to \cite[Lemma 7.4]{BFFP_part2} allows to show that center curves are disjoint or coincide. This implies that $f$ verifies Definition \ref{def.DAF}. 
\end{proof}

In view of the above, we can even wonder whether it would be sufficient for a definition of collapsed Anosov flow to only require the partially hyperbolic diffeomorphism to be semi-conjugate to a self orbit equivalence:
\begin{quest}
Let $f\colon M \to M$ be a partially hyperbolic diffeomorphism such that there exists a (topological) Anosov flow $\phi_t \colon M\to M$, a self orbit equivalence $\beta\colon M \to M$ and a map $h\colon M \to M$ continuous and homotopic to the identity such that $f \circ h = h \circ \beta$. Is $f$ a collapsed Anosov flow? 
\end{quest}


\section{Quasigeodesic behavior inside foliations}\label{s.QG}

In this section we study some properties of one dimensional foliations which subfoliate a two dimensional foliation with Gromov hyperbolic leaves. Then, we restrict to the partially hyperbolic setting and show Theorem \ref{teo.QG} that is the key step to obtain Theorem \ref{teo.main5} which will be shown in the next section. 

\subsection{One dimensional foliations inside two dimensional foliations} 
Let $\cF$ be a foliation on a 3-manifold. In this section, we will assume that there is a metric on $M$ that makes every leaf of $\cF$ negatively curved. Then we can even assume the metric on each leaf is constant curvature $-1$ by Candel's uniformization theorem. This assumption is verified whenever the foliation does not have a transverse invariant measure of zero Euler characteristic (by Candel's uniformization theorem, see \cite[\S I.12.6]{CanCon} or \cite[\S 8]{Calegari} for a precise statement). 

Consider a one dimensional foliation $\cG$ which subfoliates $\cF$ (i.e., leaves of $\cF$ are saturated by leaves of $\cG$). 
We assume that the leaves of $\cG$ have continuous parametrizations. In our case $\cG$ will be one of two types: Either a foliation with rectifiable leaves (in which case the parametrization can be chosen to be arclength), or a foliation by orbits of a continuous flow (where, while the orbits may not be rectifiable, the flow itself gives a continuous parametrization).

\begin{defi}\label{d.uQGfol}
The foliation $\cG$ is a \emph{uniform quasigeodesic} subfoliation of $\cF$ if every leaf $\ell \in \cG$ is a quasigeodesic in its corresponding leaf $L \in \cF$ with uniform constants. 
\end{defi}

Let us make precise what we mean by uniform constants in the above definition:

Call $\wF$ and $\wG$ the lifts of $\cF$ and $\cG$ respectively to
the universal cover. 
Let $\ell$ be a leaf of $\wG$ in a leaf
$L$ of $\wF$. Then $\ell$ is a $C$-\emph{quasigeodesic} if there is a constant $C>1$ such that for every $x,y\in \ell$ we have that $d_\ell(x,y)< C d_L(x,y) + C$. Here $d_L$ denotes 
the distance in $L$ given by a path metric in $L$ 
and $d_\ell$ denotes the distance in $\ell$ induced by the change
in the parameter in the respective leaf.

In Definition \ref{d.uQGfol}, we require that there exists a constant $C>1$ such that, for any $L \in  \wF$ and any $\ell \in \wG$, the leaf $\ell$ is a $C$-quasigeodesic.
Note that, by compactness of $M$, this definition does not depend on the choice of metric (see Proposition \ref{prop-GH}). 

\begin{remark}
 One can in fact prove, by adapting the proof of \cite[Lemma 10.20]{Calegari}, that if $\cG$ subfoliates $\cF$ with quasigeodesic leaves, then it is automatically a \emph{uniform} quasigeodesic foliation.
\end{remark}

By Remark \ref{rem.uniformbound} any $\ell$ leaf of $\wG$ in a leaf $L$
of $\wF$ is a uniformly bounded Hausdorff distance in $L$ from
a geodesic $\hat \ell$ in $L$.

Let $\wG|_L$ be the foliation $\wG$ when restricted to $L$.
As is the case for foliations by geodesics
\cite[Construction 5.5.4]{Cal-Promoting}, one can show 
that foliations by quasigeodesics of a hyperbolic plane are quite restrictive: 

\begin{prop}\label{prop.fandichotomy}
Given $L \in \wF$ we have that the leaf space $\cL_{\wG,L}=L/_{\wG}$ of the foliation $\wG$ is homeomorphic to $\RR$ and either there is a point $p \in S^1(L)$ such that every leaf of $\wG|_{L}$ has $p$ as one of its endpoints or there are exactly two points in $S^1(L)$ invariant under every isometry of $L$ preserving the foliation $\wG|_{L}$. 
If $r_n$ is a sequence of rays\footnote{A \emph{ray} of a leaf $\ell$ of $\wG$ in a leaf $L \in \Ft$ is the closure of a connected component of $\ell \smallsetminus \{x\}$ for some $x \in \ell$. Each ray has a well defined ideal point $r_\infty \in S^1(L)$ which coincides with the corresponding ideal point of $\ell$.}  in leaves of $\wG$ converging
to a ray $r$, then the ideal points of $r_n$ in $S^1(L)$ converge to the ideal point of $r$.
\end{prop}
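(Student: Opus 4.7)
The plan is to proceed in three stages: first establish that $\cL_{\wG,L} \cong \RR$, then prove the continuity statement about ideal endpoints of rays, and finally deduce the dichotomy.

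For the first stage, recall that since $\mt$ is simply connected and $\cF$ is a two-dimensional foliation, every leaf $L$ of $\wF$ is homeomorphic to $\RR^2$; moreover, $L$ carries a hyperbolic metric by Candel's uniformization. A one-dimensional foliation of the plane by proper lines has a leaf space that is a simply connected (possibly non-Hausdorff) 1-manifold. The Hausdorffness follows from Remark \ref{rem.uniformbound}: two distinct leaves $\ell_1 \neq \ell_2$ of $\wG|_L$ lie within uniform Hausdorff distance $k$ of distinct bi-infinite geodesics $\hat\ell_1 \neq \hat\ell_2$, which allows one to construct a transversal arc in $L$ separating them and hence an open neighborhood in $\cL_{\wG,L}$ separating the corresponding points. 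Consequently $\cL_{\wG,L} \cong \RR$.

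For the second stage, suppose $r_n$ is a sequence of rays in leaves of $\wG|_L$ converging on compact sets of $L$ to a ray $r$, and suppose for contradiction that the ideal points $r_n(\infty) \in S^1(L)$ do not converge to $r(\infty)$. Up to a subsequence, $r_{n_k}(\infty) \to q \neq r(\infty)$. By the Morse Lemma (Remark \ref{rem.uniformbound}), each $r_{n_k}$ lies within Hausdorff distance $k$ of the geodesic ray $\hat r_{n_k}$ from its basepoint to $r_{n_k}(\infty)$; these geodesic rays subconverge to a geodesic ray $\hat\rho$ ending at $q$. But then $r$ is within distance $k$ of both $\hat\rho$ and the geodesic ray with endpoint $r(\infty)$, which is impossible since $q \neq r(\infty)$.

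For the third stage, write the two ideal endpoints of a leaf $\ell$ of $\wG|_L$ as $\ell^+, \ell^-\in S^1(L)$. If there is a point $p\in S^1(L)$ that occurs as an endpoint of every leaf, we are in the fan case. Otherwise, consider a sequence $\ell_n$ of leaves exiting every compact set of $L$ in the $+\infty$ direction of $\cL_{\wG,L}\cong \RR$; up to subsequence, $\ell_n^+ \to q_1$ and $\ell_n^- \to q_2$ in $S^1(L)$. I would argue $q_1=q_2=:q^+$ as follows: if $q_1 \neq q_2$, the Morse Lemma forces the geodesics $\hat\ell_n$ to subconverge on compact sets of $L$ to the bi-infinite geodesic joining $q_1$ to $q_2$, contradicting that $\ell_n$ (and hence $\hat\ell_n$, up to a bounded distance) exit every compact set of $L$. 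Similarly define $q^-$ for the $-\infty$ end of $\cL_{\wG,L}$. One shows $q^+ \neq q^-$, for if they coincided then the continuity established in Stage 2, applied to any fixed transversal, would force every leaf to have $q^+=q^-$ as an endpoint, placing us back in the fan case. Finally, any isometry $\varphi$ of $L$ preserving $\wG|_L$ induces a homeomorphism $\varphi_*$ of $\cL_{\wG,L}\cong \RR$ that either preserves or reverses orientation, so $\{q^+,q^-\}$ is invariant as a set; a straightforward argument (using that an ideal point fixed by $\varphi$ must be a limit of leaf-endpoints under iteration) shows no other points of $S^1(L)$ can be fixed by every such isometry.

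The main obstacle will be the third stage: verifying that both endpoints of leaves exiting an end of $\cL_{\wG,L}$ necessarily collapse to a single ideal point $q^\pm$, and that these two points are then distinct in the non-fan case. Both rely sensitively on combining the uniformity of the quasigeodesic constants with the Morse Lemma; the rest of the argument is essentially formal once these limiting statements are in hand.
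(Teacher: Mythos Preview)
Your Stage~1 has a genuine gap. You assert that distinct leaves $\ell_1 \neq \ell_2$ of $\wG|_L$ have \emph{distinct} associated geodesics $\hat\ell_1 \neq \hat\ell_2$, but this is neither proved nor true in general: the proposition explicitly allows the weak quasigeodesic fan situation where many distinct leaves share both ideal endpoints (indeed, ruling this out is precisely the content of the later Proposition~\ref{prop.Unique}, which requires a separate and much harder argument). Even if the geodesics were distinct, the sentence ``construct a transversal arc in $L$ separating them and hence an open neighborhood in $\cL_{\wG,L}$ separating the corresponding points'' does not establish Hausdorffness: non-separated leaves in a plane foliation are always disjoint and can always be separated by an arc in $L$; what fails is separation in the \emph{leaf space}. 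The paper instead argues directly from the quasigeodesic inequality: if $\ell_n \to \ell$ and $\ell_n \to \ell'$ with $\ell \neq \ell'$, pick $x_n, y_n \in \ell_n$ with $x_n \to x \in \ell$ and $y_n \to y \in \ell'$; then $d_L(x_n,y_n)$ is bounded while $d_{\ell_n}(x_n,y_n) \to \infty$ (by the local product structure of the foliation), contradicting $d_{\ell_n}(x_n,y_n) \leq C\, d_L(x_n,y_n) + C$.

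Your Stages~2 and~3 are essentially correct and close to the paper. Your Stage~2 argument (trapping $r$ in the $k$-neighbourhoods of two geodesic rays with distinct ideal points, forcing $r$ to be bounded) is a legitimate alternative to the paper's, which instead uses the segments of $r_n$ between points near $r_\infty$ and near $s_\infty$ to produce a second limit leaf $\ell' \neq \ell$, contradicting the Hausdorffness just established. Your Stage~3 matches the paper's mechanism exactly: if the endpoints of $\ell_t$ as $t \to +\infty$ did not collapse to a single point, the geodesics $\hat\ell_t$ would subconverge to a fixed geodesic and the $\ell_t$ would be trapped in its $k$-neighbourhood, contradicting escape; the paper phrases this as weakly nested intervals shrinking. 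One small point: your final claim that ``no other points of $S^1(L)$ can be fixed by every such isometry'' is not part of the statement and is not proved in the paper; only invariance of the pair $\{q^+,q^-\}$ is asserted.
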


\begin{proof}
We first show that the leaf space $L/_{\wG}$ is Hausdorff. Suppose this is
not true and there are $\ell_n$ leaves in $\widetilde \cG$ converging
to two distinct leaves $\ell, \ell'$ of $\wG$.

 Let $x, y$ be
points in $\ell, \ell'$ respectively. Then there are $x_n, y_n$ in $\ell_n$
converging to $x, y$ respectively. Hence $d_L(x_n,y_n)$ is bounded.
We claim that $d_{\ell_n}(x_n,y_n)$ goes to infinity. Otherwise up
to subsequence we would have $d_{\ell_n}(x_n,y_n) \leq a_0$. But using the local product
structure of foliations, we would deduce that $y$ is in $\ell$, a contradiction.

Hence, $d_{\ell_n}(x_n,y_n)$ must converge to infinity. However, since $d_L(x_n,y_n)$ is bounded, this contradicts the uniform quasigeodesic behavior.
Therefore $L/_{\wG}$ is Hausdorff, and hence
homeomorphic to $\RR$.

We now show that the ideal points of rays of leaves of $\wG$
in $S^1(L)$ vary continuously. Let $x_n$ be a sequence in $L$ 
converging to $x$ in $L$, and $\ell_n, \ell$ the leaves of $\wG$
through $x_n$ and $x$ respectively. 

Let $r_n$ be rays in $\ell_n$ starting in $x_n$ converging to a
ray $r$ in $\ell$ starting in $x$. Let $r_{n,\infty}, r_{\infty}$ be the ideal points
of $r_n, r$ respectively. We want to show that $r_{n,\infty}$ converges to
$r_{\infty}$.

Suppose this is not the case, then, up to taking a subsequence, we can assume that $r_{n,\infty}$ converges to $s_{\infty} \neq r_{\infty}$.
Since $r_n$ converges to $r$ and $r$ has ideal point $r_{\infty}$,
then for any $n$ large enough, there exist points $u_n \in r_n$ very close to $r_{\infty}$ in the compactification $L \cup S^1(L)$. As $r_{n,\infty}$ converges to $s_{\infty}$, there are also points $v_n\in r_n$ very close to $s_{\infty}$ in $L \cup S^1(L)$. Hence, we can choose such points such that $u_n \rightarrow r_{\infty}$, $v_n \rightarrow s_{\infty}$.
The compact segments $I_n$ of $l_n$ from $u_n$ to $v_n$ are at most $k$ distant
in $L$ from the geodesic segment connecting them. Since $u_n$ is
very close to $r_{\infty}$ and $v_n$ is very close to $s_{\infty}$, then all of these geodesic segments intersect a fixed compact set of $L$. Hence, up to taking a further subsequence, the segments $I_n$ must converge to a leaf $\ell'$ of $\wG$. But this leaf is not $\ell$, contradicting that $L/_{\wG}$ is Hausdorff. 
Thus we proved that ideal points of ray vary continuously.

Identify the leaf space $L/_{\wG}$ with $\RR$, with parametrization
$\ell_t, t \in \RR$ 
 and consider a sequence $\ell_{t_n}$, $t_n  \to +\infty$.
Notice first that the endpoints $(\ell_{t_n})^\pm$ determine a weakly nested sequence of intervals in $S^1(L)$ which needs to shrink as $n\to \infty$.
If the geodesics $g_n$ with ideal points $(\ell_{t_n})^\pm$
do not shrink to a point, then $g_n$ limits
to a geodesic $g$ in $L$. But recall that
the $\ell_{t_n}$ are at distance at most $k$ from $g_n$ (cf.~Remark \ref{rem.uniformbound}): If the endpoints are trapped by the endpoints of $g$, then the leaves are
trapped by a neighborhood of size $k$ of $g$ and cannot escape in $L$,
contradiction.

Hence we get two points of $S^1(L)$ one for $t \to +\infty$ and one
for $t \to -\infty$.
If these two points coincide then we get that every leaf of $\wG$ must have that limit point as a limit point. Otherwise, we get the other condition. 
Obviously any isometry of $L$ leaving the foliation invariant
has to preserve this pair of ideal points.
\end{proof}

We now define some structures related to what follows from the previous proposition. 

\begin{defi}\label{defi-qgfan}
We say that a leaf $L \in \wF$ is a \emph{weak quasigeodesic fan} for the foliation $\wG$ if there is a point $p \in S^1(L)$ such that every leaf of $\wG|_L$ has $p$ as one of its limit points. In this case we call $p$ the \emph{funnel point} of $\wG|_L$.  The leaf $L \in \wF$ is a \emph{quasigeodesic fan} if moreover given a point $q \in S^1(L)\smallsetminus \{p\}$ there is a unique leaf of $\wG|_{L}$ whose endpoints are $p$ and $q$.  
We say that a leaf $A$ of $\cF$ is a quasigeodesic fan or a weak
quasigeodesic fan if a lift $L$ of it to $\widetilde M$ is
a quasigeodesic fan or a weak quasigeodesic fan respectively.
\end{defi}

\begin{remark}\label{rem.topAnosovQG}
Proposition \ref{prop.topAnosovQG} gives that the orbits of a topological Anosov flow make up a quasigeodesic fan in each weak (un)stable leaf.
\end{remark}

\begin{lemma} \label{rem.sweep}
If $\cG$ is a uniform quasigeodesic 
subfoliation of $\cF$ then, for every leaf $L \in \wF$ we have that there are at most two points in $S^1(L)$ which are not endpoints of any of the curves in $\wG|_{L}$. 

Moreover, if a leaf $L \in \wF$ is a weak quasigeodesic fan, then every point of $S^1(L)$ is the endpoint of a leaf of $\cG$. See Figure \ref{fig.QGfol}.
\end{lemma}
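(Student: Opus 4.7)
The plan is to parametrize the leaves of $\wG|_L$ as $\ell_t$, $t\in \RR$, using Proposition~\ref{prop.fandichotomy}, and to track how the endpoint pair $(a(t),b(t))\in S^1(L)\times S^1(L)$ behaves. The same proposition (more precisely, the argument at the end of its proof) shows that as $t\to +\infty$ both $a(t)$ and $b(t)$ converge to a single point $p_+\in S^1(L)$, and as $t\to -\infty$ they both converge to a single point $p_-\in S^1(L)$. In the weak quasigeodesic fan case $p_+=p_-=p$ is the funnel point; in general $p_+$ and $p_-$ may or may not coincide.

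The key step will be to show that every $q\in S^1(L)\smallsetminus\{p_+,p_-\}$ is realized as an endpoint of some leaf. I would pick a sequence $y_n\in L$ converging to $q$ in the compactification $\overline L = L\cup S^1(L)$, let $\ell_{t_n}$ be the leaf through $y_n$, and pass to a subsequence so that $t_n\to t_0\in[-\infty,+\infty]$. When $t_0\in\RR$, compact-open convergence of $\ell_{t_n}$ to $\ell_{t_0}$ together with $y_n\to q\notin L$ forces $y_n$ to escape along $\ell_{t_n}$; combining this with Remark~\ref{rem.uniformbound} (which provides a constant $k$ such that $\ell_{t_n}$ stays within Hausdorff distance $k$ of the geodesic $\hat\ell_{t_n}$ with the same endpoints), one finds points $z_n\in\hat\ell_{t_n}$ with $d_L(y_n,z_n)\leq k$ and $z_n$ tending to an ideal endpoint of $\hat\ell_{t_n}$; continuity of endpoints (Proposition~\ref{prop.fandichotomy}) then gives $q\in\{a(t_0),b(t_0)\}$. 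When $t_0=+\infty$, the endpoints of $\hat\ell_{t_n}$ both converge to $p_+$, so $\hat\ell_{t_n}$ Hausdorff-collapses to $\{p_+\}$ in $\overline L$, and because $\ell_{t_n}$ is within $k$ of $\hat\ell_{t_n}$, so does $\ell_{t_n}$, forcing $q=p_+$. The case $t_0=-\infty$ is symmetric and yields $q=p_-$. Since $q\notin\{p_+,p_-\}$, only the first alternative can occur, so $q$ is indeed an endpoint of a leaf.

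Both conclusions of the lemma will then follow immediately: the set of points of $S^1(L)$ missed by the endpoint map lies in $\{p_+,p_-\}$, hence has cardinality at most $2$; and in the weak quasigeodesic fan case $p_+=p_-=p$ is by definition already an endpoint of every leaf, so no point of $S^1(L)$ is missed. The main technical point is the Hausdorff collapse of $\hat\ell_t$ (and hence of $\ell_t$) to $\{p_\pm\}$ as $t\to\pm\infty$, which is where we genuinely use the uniform quasigeodesic hypothesis through Remark~\ref{rem.uniformbound} and the standard fact that a geodesic in a hyperbolic plane with both ideal endpoints near a boundary point is uniformly close to that point in the compactification. Everything else is bookkeeping with the continuity of the foliation and of the endpoint map already supplied by Proposition~\ref{prop.fandichotomy}.
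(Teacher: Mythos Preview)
Your argument is correct, but the paper's proof is shorter and takes a somewhat different route. Both start from the parametrization $\ell_t$ and the limit points $p_\pm$ furnished by Proposition~\ref{prop.fandichotomy}. From there, the paper observes that the endpoint pairs $(a(t),b(t))$ are \emph{weakly nested} in $S^1(L)$ (this was shown in the proof of Proposition~\ref{prop.fandichotomy}): when $p_+\neq p_-$, each $\ell_t$ has one endpoint in each of the two complementary arcs $I,J$ of $\{p_+,p_-\}$, and continuity of $t\mapsto(a(t),b(t))$ then gives a straight intermediate-value-theorem sweep of $I$ and $J$. Your proof instead argues pointwise via sequences in the compactification $\overline L$, invoking Remark~\ref{rem.uniformbound} to pass from $\ell_{t_n}$ to the companion geodesic $\hat\ell_{t_n}$ and using Hausdorff collapse of $\hat\ell_{t_n}$ to $\{p_\pm\}$ in the divergent case. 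The paper's approach is more economical and exploits the order structure already at hand; yours is more hands-on and makes the role of the uniform quasigeodesic constant $k$ explicit, which could be pedagogically useful but is not needed here.
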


\begin{figure}[ht]
\begin{center}
\includegraphics[scale=0.62]{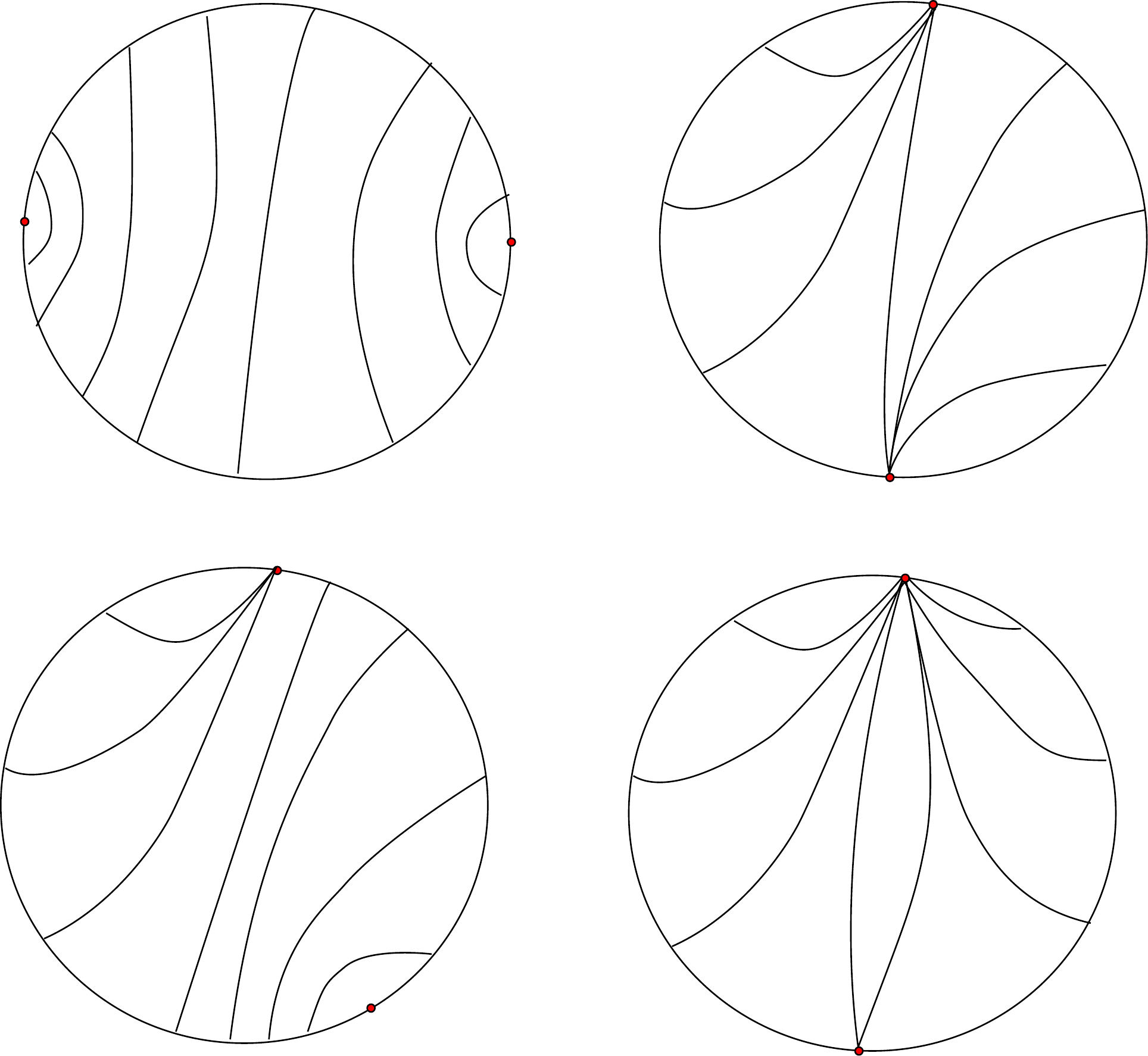}
\end{center}
\vspace{-0.3cm}
\caption{Some quasigeodesic foliations of the disk which are not quasigeodesic fans (the bottom right one is a weak quasigeodesic fan).}\label{fig.QGfol}
\end{figure}

\begin{proof}
This fact was essentially done in the proof of Proposition \ref{prop.fandichotomy}: Recall that we can consider $\{l_t\}_{t\in \R}$ a parametrization of the leaf space $L/_{\wt\cG }$, and we proved that the endpoints of $(l_t)$ must be weakly nested and converge to points $x, y$ in $S^1(L)$ as $t\to \pm\infty$.

Moreover, since the leaves are uniform quasigeodesic, the map that sends $t$ to the endpoints of $l_t$ is continuous. So, if $x=y$, we are in the case of a weak quasigeodesic fan.

If $x \neq y$, then, calling $I$ and $J$ the complementary
intervals of $x, y$ in $S^1(L)$, we must have that each $l_t$ has one endpoint in $I\cup\{x,y\}$ and the other in $J\cup\{x,y\}$ (because the endpoints are weakly nested). Thus, using continuity of endpoints again, we see that $x$ and $y$ are the only two points that may not be the endpoints of a leaf.
\end{proof}

From Proposition \ref{prop.fandichotomy} we deduce: 

\begin{cor}\label{cor.ciclic}
Suppose that a foliation $\cF$ 
by hyperbolic leaves of a $3$-manifold admits a 
uniformly quasigeodesic one dimensional
subfoliation $\cG$. Then every leaf of $\cF$ has cyclic fundamental group (thus a leaf is either a plane, an annulus or a M\"{o}bius band).
\end{cor}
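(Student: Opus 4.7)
The plan is to lift $A$ to a leaf $L\in\wt\cF$ and show that $\pi_1(A)=\mathrm{Stab}_{\pi_1(M)}(L)$ is cyclic; the dichotomy of Proposition \ref{prop.fandichotomy} will reduce this to the classical fact that a torsion-free discrete subgroup of $\mathrm{Isom}(\HH^2)$ fixing a point of $S^1_\infty$ is cyclic. First I would record that the standing assumption of this section, together with Candel's uniformization theorem, endows each leaf of $\wt\cF$ with an intrinsic leafwise hyperbolic metric, conformal to the induced Riemannian metric. Because this metric is defined purely from the foliated data, it is preserved by every deck transformation. Thus $\pi_1(A)$ acts on $L\cong\HH^2$ by orientation-preserving-or-reversing isometries, freely and properly discontinuously, and every element of $\pi_1(A)$ preserves the one-dimensional subfoliation $\wt\cG|_L$.

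Next I would split into the two cases of Proposition \ref{prop.fandichotomy}. In Case 1 (weak quasigeodesic fan with funnel point $p\in S^1(L)$), the point $p$ is intrinsically characterized as the unique common endpoint of every leaf of $\wt\cG|_L$, so any homeomorphism of $L$ preserving $\wt\cG|_L$, and in particular any element of $\pi_1(A)$, must fix $p$. A torsion-free discrete subgroup of $\mathrm{Isom}(\HH^2)$ fixing a point at infinity lies inside the stabilizer of that point (a Borel subgroup isomorphic to the affine group of the line) and, being discrete and torsion-free, is either trivial, infinite cyclic parabolic, or infinite cyclic hyperbolic. In Case 2, there is a canonical pair $\{p,q\}\subset S^1(L)$ preserved setwise by every foliation-preserving isometry (these two points arise in the proof of Proposition \ref{prop.fandichotomy} as the limits of the endpoints of $\ell_t$ as $t\to\pm\infty$, so they are intrinsic to $\wt\cG|_L$). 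Hence an index-at-most-two subgroup $H\leq\pi_1(A)$ fixes both $p$ and $q$ pointwise and therefore lies in the cyclic group of hyperbolic isometries with axis $\overline{pq}$. Thus $H$ is cyclic and $\pi_1(A)$ is virtually cyclic; a torsion-free virtually cyclic group is cyclic, so $\pi_1(A)$ is cyclic in this case as well.

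To conclude, I would invoke the classification of surfaces with cyclic fundamental group. Since $A$ carries a complete metric of curvature $-1$, it cannot be closed (the only closed surfaces with cyclic $\pi_1$ are $S^2$ and $\RR P^2$, both of positive Euler characteristic). Hence $A$ is an open surface with $\pi_1(A)$ either trivial or infinite cyclic; these are respectively the plane, the annulus, and the M\"obius band.

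There is no serious obstacle here; the only point requiring a bit of care is arguing in Case 2 that the distinguished pair of ideal points is actually canonical, i.e.\ preserved by any foliation-preserving homeomorphism of $L$. This is built into the statement of Proposition \ref{prop.fandichotomy} (which specifies invariance under every foliation-preserving isometry) and is what allows one to pass from ``$\pi_1(A)$ preserves $\wt\cG|_L$'' to ``$\pi_1(A)$ preserves $\{p,q\}$'', after which the virtually-cyclic-plus-torsion-free argument closes the proof.
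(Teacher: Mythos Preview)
Your proof is correct and follows essentially the same route as the paper: lift to $L$, use that deck transformations stabilizing $L$ act isometrically and preserve $\wG|_L$, then invoke the dichotomy of Proposition \ref{prop.fandichotomy} to obtain a canonical point or pair of points at infinity that the stabilizer must fix, and conclude cyclicity from standard facts about discrete isometry groups of $\HH^2$.

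The only notable difference is a shortcut the paper takes that you do not: since $M$ is compact, leaves of $\cF$ have a uniform lower bound on injectivity radius, so every nontrivial deck transformation fixing $L$ acts as a \emph{hyperbolic} isometry (parabolics are excluded). This lets the paper dispense with your separate treatment of the parabolic subcase in Case~1 and avoids the ``virtually cyclic plus torsion-free'' maneuver in Case~2, since hyperbolic isometries sharing a fixed point at infinity in a discrete group must share both fixed points and hence commute. Your argument is slightly longer but equally valid and arguably more self-contained; the paper's is terser because it exploits compactness of $M$ one extra time.
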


\begin{proof}
Deck transformations of $M$ act as isometries, so if a deck transformation fixes some leaf $L \in \wF$ then it is an isometry which preserves $\wG|_{L}$. Note that it must be a hyperbolic isometry since there is a uniform injectivity radius of leaves of $\cF$. Hyperbolic isometries that fix a given point or a pair of points at infinity commute, Proposition \ref{prop.fandichotomy} thus ensure that the $\pi_1$ of $L$ is at most cyclic. 
\end{proof}

From now on, we assume $\cG$ to be a uniform quasigeodesic subfoliation of $\cF$.
Our next goal will be to show that there are weak quasigeodesic fan 
leaves of $\cF$, and that the collection of such leaves forms a sublamination
of $\cF$. 
This is the analogue of \cite[Lemma 5.3.6]{Cal-Promoting} (see also \cite[Lemma 5.5.5]{Cal-Promoting}) which is done for the case of 
geodesic subfoliations in leaves of $\cF$, when $M$ is atoroidal
(see \cite[Lemma 5.5.5]{Cal-Promoting}).

We first need a technical result, which will be used repeatedly, that produces some weak quasigeodesic fan leaves from certain configurations.

\begin{lemma} \label{lem.existweakfan}
Suppose that $x_n$ is a sequence in $\mt$ such that there
are disks $D_n$ in the leaves $L_n \in \Ft$ centered
at $x_n$ with radius converging to infinity and satisfying
the following: There are disks $E_n$ in $L_n$ of bounded diameter, such that the distance in $L_n$ from $E_n$ to $D_n$ goes to infinity and such that any leaf of $\wG|_{L_n}$ intersecting
$D_n$ also intersects $E_n$. Then, given a sequence of deck transformations $\gamma_{n} \in \pi_1(M)$ such that for some subsequence $n_j \to \infty$,
we have $\gamma_{n_j} x_{n_j} \to x$, it follows
that the leaf through $x$ is a weak quasigeodesic fan. 
\end{lemma}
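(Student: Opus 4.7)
The plan is to show that the leaf $L$ of $\wF$ through $x$ is a weak quasigeodesic fan by producing a single ideal point $p\in S^1(L)$ that is an endpoint of every leaf of $\wG|_L$. The point $p$ will be obtained as the limit of the sets $\gamma_{n_j}E_{n_j}$, which have bounded diameter but escape to infinity inside the limit leaf $L$.

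First, up to a further subsequence, I would use the fact that $\gamma_{n_j} x_{n_j}\to x$ to pass to the limit inside $\mt$. Since the radii of $D_{n_j}$ (centered at $x_{n_j}$) go to infinity, the translated disks $\gamma_{n_j}D_{n_j}$ exhaust the leaf $L$ through $x$ in the sense that every compact set of $L$ is eventually contained in a neighborhood of $\gamma_{n_j}D_{n_j}$ for $j$ large. Now the sets $\gamma_{n_j}E_{n_j}$ have uniformly bounded diameter while $d_{\gamma_{n_j}L_{n_j}}(\gamma_{n_j}E_{n_j},\gamma_{n_j}x_{n_j})\to\infty$. Passing to a further subsequence and using the compactification $L\cup S^1(L)$, we may assume $\gamma_{n_j}E_{n_j}$ converges to a single point $p\in S^1(L)$.

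Next I would show every leaf of $\wG|_L$ has $p$ as an ideal endpoint. Pick a leaf $m$ of $\wG|_L$ and a point $y\in m$. For $j$ large, take $y_j\in \gamma_{n_j}L_{n_j}$ with $y_j\to y$ (possible since $\gamma_{n_j}L_{n_j}\to L$), and let $m_j$ be the leaf of $\wG|_{\gamma_{n_j}L_{n_j}}$ through $y_j$; eventually $y_j\in \gamma_{n_j}D_{n_j}$, so by hypothesis $m_j$ must intersect $\gamma_{n_j}E_{n_j}$, say at $z_j$. By the uniform quasigeodesic hypothesis (Remark \ref{rem.uniformbound}), the geodesic $\hat m_j$ in $\gamma_{n_j}L_{n_j}$ with the same endpoints as $m_j$ lies within Hausdorff distance $k$ of $m_j$, so it passes within $k$ of both $y_j$ and $z_j$. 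Since $y_j\to y$ and $z_j\to p\in S^1(L)$ in the compactification, the geodesics $\hat m_j$ converge (up to subsequence) to a geodesic $\hat m$ of $L$ from $y$ to $p$. Hence $m_j$ converges to a leaf of $\wG|_L$ passing through $y$ whose ideal endpoints are those of $\hat m$, i.e.\ one of them is $p$. By the Hausdorff property of $L/_{\wG}$ and the local product structure of $\wG$, this limit leaf must be $m$, so $m$ has $p$ as an ideal endpoint.

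The main technical subtlety is the last identification: I have to make sure the limit of the $m_j$ really is $m$ and not some other leaf through $y$. This should follow from the fact that $\wG$ is a foliation (so there is a unique leaf through any point) together with Proposition \ref{prop.fandichotomy}, which guarantees that the leaf space of $\wG|_L$ is Hausdorff under the quasigeodesic hypothesis and that ideal endpoints vary continuously with the leaf; these two facts together rule out jumps in the limit process. Once this is established, since the leaf $m$ was arbitrary, every leaf of $\wG|_L$ has $p$ as an ideal endpoint, which is exactly the definition of weak quasigeodesic fan.
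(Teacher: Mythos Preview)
Your direct approach has a genuine gap in how you handle objects living in different leaves of $\wF$. The sets $\gamma_{n_j}E_{n_j}$ are contained in the leaves $\gamma_{n_j}L_{n_j}$, not in $L$, so it does not make sense to say they converge in the compactification $L\cup S^1(L)$; likewise the geodesics $\hat m_j$ are geodesics in $\gamma_{n_j}L_{n_j}$, and their convergence to a geodesic of $L$ with a prescribed ideal point requires a transverse continuity statement for the circles at infinity that you have not established. Proposition~\ref{prop.fandichotomy}, which you invoke, only gives continuity of ideal points for rays varying \emph{within a single leaf} of $\wF$; it says nothing about sequences of rays in distinct leaves $L_{n_j}$ converging to a ray in $L$. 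Setting up a topology on the union of the ideal circles (the bundle $\cV$ used later in the paper) and proving such transverse continuity is possible, but it is nontrivial and is developed only after this lemma. Also, the subtlety you flag at the end is not the real obstacle: that the limit of the $m_j$ through $y_j\to y$ is the leaf $m$ through $y$ is automatic from the local product structure of $\cG$; the issue is rather why an ideal point of $m$ should equal the putative limit $p$.

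By contrast, the paper argues by contradiction: assume $L$ is not a weak quasigeodesic fan, pick two leaves $\ell,\ell'$ in $L$ with no common ideal point, and observe that they attain a minimum distance and then spread apart by an arbitrarily large amount on both sides. Transporting this picture \emph{locally} to $L_n$ for large $n$ (which only needs continuity of $\wG$ on compact sets, not any statement about ideal boundaries), one finds in $L_n$ two leaves of $\wG$ that are close near $x_n$, spread very far apart, and then---because both must hit the bounded-diameter set $E_n$---come back within bounded distance of each other. This ``close, far, close again'' pattern violates the uniform quasigeodesic property (the corresponding geodesic segments would have to stay close throughout). The contradiction is obtained entirely inside each $L_n$, so no cross-leaf boundary comparison is needed.
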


\begin{proof} We can assume without loss of generality that $x_n \to x$ up to
changing by deck transformations and taking a subsequence.

Call $a_1>0$ an upper bound of the diameters of the $E_n$.
Assume by contradiction that the leaf $L$ of $\wF$ through $x$ is not a weak quasigeodesic
fan. Then there is a pair of leaves 
$\ell, \ell'$  of $\wG|_L$  which do not share any ideal point in $S^1(L)$.
These curves are at most $k$ distant in $L$ from the
corresponding geodesics because of the uniform bound,
see Remark \ref{rem.uniformbound}.

Since $\ell, \ell'$ do not share ideal points, then
two properties follow: 
\begin{enumerate}
\item there are points $y, y'$ in $\ell, \ell'$ respectively
attaining the minimum distance $a_0$ between points in $\ell, \ell'$,

\item there is $t > 0$ such that if $z\in \ell$, and $z' \in \ell'$
then if both $d_L(z,y)$ and $d_L(z',y')$ are larger than $t$ then 
$$d_L(z,\ell'), d_L(z',\ell)  > 10 (a_0 + a_1 + k + 4).$$
\end{enumerate}

The points $x_n$ converge to $x$ in $L$. The distance from $x$ 
to $y, y'$ in $L$ is finite, so up to changing $x_n$ in $L_n$ by
a bounded distance (and choosing a subdisk of $D_n$ with
radius still going to infinity with $n$) we may assume that $x_n$
converges to $y$.
Since the foliations $\wG|_{L_n}$ converge to $\wG_L$
we see that the foliations in the
disk of radius $100(t + a_0 +1)$ (recall that $t, a_0$ 
are fixed) around $x_n$ converge
to the foliation in a disk of radius $100(t + a_0 +1)$ around $y$ in $L$.
In $L$ on both sides of $y, y'$ the leaves $\ell, \ell'$ 
spread more than $10(a_0 + a_1 +k + 4)$ from each other. 
So we see this in some of the leaves of $\wG|_{L_n}$ as well.
This is within fixed distance $t$. But the property of $E_n$
means that these leaves come back within $a_1$ of each
after a distance larger than $t$ if $n$ is big enough.

We now use that these curves are uniform quasigeodesics.
Recall their properties:
\begin{enumerate} \item they are within $a_0 + 1$ from each
other near $y, y'$; \item they are within $a_1  +1$ from each other when they both intersect $E_n$. 
\end{enumerate}
This implies that the geodesic segments connecting these pairs
of points are within $(a_0 + a_1 + 2)$ throughout.
By the uniform quasigeodesic property the segments in leaves
of $\wG|_{L_n}$ are within
$(a_0 + a_1 + 2) + 2k$ from each other throughout.
But we proved that they have points where the curves are 
more than $10(a_0 + a_1 + 2 +k +2)$ apart from each other
in between.

This contradiction shows that the limit leaf is a weak quasigeodesic
fan and finishes the proof of the lemma.
\end{proof}

\begin{prop}\label{prop.closedfans}
The set of leaves $L \in \wF$ which are weak quasigeodesic fans for $\wG$ 
is non empty, closed and $\pi_1(M)$-invariant. 
Hence it induces a sublamination of $\cF$ in $M$.
\end{prop}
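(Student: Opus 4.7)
I would prove the three assertions in order of increasing difficulty.

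Invariance under $\pi_1(M)$ is immediate: deck transformations act as isometries of $\mt$ preserving both $\wF$ and $\wG$, so they send every weak quasigeodesic fan $L$ with funnel point $p \in S^1(L)$ to the weak quasigeodesic fan $\gamma L$ with funnel point $\gamma p \in S^1(\gamma L)$. For closedness, suppose $L_n\to L$ in $\wF$, with each $L_n$ a weak quasigeodesic fan having funnel point $p_n \in S^1(L_n)$. Since $\wG$ is orientable on $\mt$, we may fix once and for all a consistent orientation of the leaves, so that the funnel point of each $L_n$ corresponds to the forward direction of $\wG$-leaves. Given any two points $y,y'\in L$ and approximating points $y_n,y'_n \in L_n$, by transverse continuity of $\wG$ the leaves $\ell_n, \ell'_n \in \wG|_{L_n}$ through $y_n, y'_n$ converge on compact sets to the leaves $\ell, \ell' \in \wG|_L$ through $y, y'$. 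Each $\ell_n$ and $\ell'_n$ has $p_n$ as its forward endpoint in $L_n$, so by Remark \ref{rem.uniformbound} their forward rays stay within Hausdorff distance $2k$ in $L_n$. Passing to the limit, the forward rays of $\ell$ and $\ell'$ are within Hausdorff distance $2k$ in $L$, and hence share a common endpoint $p \in S^1(L)$. Fixing $\ell$ to define $p$, every other leaf of $\wG|_L$ is seen to share the endpoint $p$ with $\ell$, so $L$ is a weak quasigeodesic fan with funnel point $p$.

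For non-emptiness, my plan is to apply Lemma \ref{lem.existweakfan}. Take any leaf $\ell \in \wG$ lying in some $L \in \wF$ and set $z_n = \ell(t_n)$ with $t_n\to\infty$. By compactness of $M$, one can find deck transformations $\gamma_n$ and $x \in \mt$ with $\gamma_n z_n \to x$ along a subsequence; set $L_n := \gamma_n L$. The plan is to build in each $L_n$ a disk $D_n$ centered at $\gamma_n z_n$ of radius $R_n \to \infty$, together with a bounded-diameter disk $E_n$ around $\gamma_n \ell(t_n + S_n)$ for a carefully chosen $S_n \to \infty$, with $d_{L_n}(D_n, E_n) \to \infty$, and such that every leaf of $\wG|_{L_n}$ meeting $D_n$ also meets $E_n$. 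The uniform quasigeodesic constants, combined with the recurrence of leaves of $\cG$ in the compact manifold $M$, are to be used to control how fast leaves of $\wG|_{L_n}$ can spread apart after leaving $D_n$, forcing them back close to $\gamma_n \ell$ near $\gamma_n \ell(t_n + S_n)$. Lemma \ref{lem.existweakfan} then gives that the leaf of $\wF$ through $x$ is a weak quasigeodesic fan, completing non-emptiness.

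The main obstacle is exactly the construction of the funnelling disks $E_n$ in the non-emptiness step: one must arrange $d_{L_n}(D_n, E_n) \to \infty$ while still capturing every leaf of $\wG|_{L_n}$ meeting the growing disk $D_n$ inside the bounded set $E_n$. This will require turning the recurrence of $\cG$ in $M$ into a genuine quantitative geometric bottleneck for the foliation $\wG|_{L_n}$, via a careful interplay between the uniform quasigeodesic control and compactness arguments in $M$.
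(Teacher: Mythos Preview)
Your $\pi_1(M)$-invariance is fine, and your closedness argument is a reasonable direct alternative to the paper's, with two fixable issues. First, the Hausdorff bound between the forward rays of $\ell_n$ and $\ell'_n$ is not $2k$ but rather $d_{L_n}(y_n,y'_n)+2k$ (asymptotic geodesic rays can start far apart); this is still uniformly bounded since $y_n\to y$, $y'_n\to y'$. Second, a global orientation of $\wG$ does not force the funnel point to always lie on the forward side; you should pass to a subsequence so that all funnel points are in the same direction. With these corrections your limit argument goes through. The paper instead proves the complement is open: if $L$ is not a weak quasigeodesic fan there are $\ell,\ell'\subset L$ sharing no ideal point, and the resulting configuration (a minimum-distance pair together with spreading beyond a fixed threshold on both sides) persists in nearby leaves.

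Your non-emptiness plan, however, has a real gap. You want every leaf of $\wG|_{L_n}$ meeting a growing disk $D_n$ to pass through a bounded set $E_n$ far away, but the only mechanism you propose is ``recurrence of $\cG$ in $M$'', and recurrence of a single leaf $\ell$ gives no control whatsoever over the neighbouring leaves crossing $D_n$. There is no reason those leaves should refocus near $\gamma_n\ell(t_n+S_n)$. The paper's construction is concrete and uses no recurrence: work entirely inside one leaf $L$. Pick any $\ell_1\in\wG|_L$, let $I$ be the closed interval of leaves sharing \emph{both} ideal points with $\ell_1$, let $\ell$ be a boundary leaf of $I$, and take $\ell'$ just outside $I$, joined to $\ell$ by a short transversal $\tau$. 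Then $\ell'$ has an ideal point $z'$ not shared with $\ell$; let $r,r'$ be the rays of $\ell,\ell'$ from the ends of $\tau$ toward that diverging side. Since the leaf space $L/\wG$ is $\RR$, every leaf meeting the region $V$ bounded by $r\cup\tau\cup r'$ must cross $\tau$, so $E_n=E\supset\tau$ is a fixed bounded bottleneck. Because $r,r'$ have distinct ideal points, $V$ contains disks $D_n$ of radius $\to\infty$ at distance $\to\infty$ from $E$; Lemma~\ref{lem.existweakfan} (with $L_n=L$) finishes. The idea you are missing is to manufacture the bottleneck from a pair of leaves diverging on one side, rather than from dynamics.
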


\begin{proof}
The $\pi_1(M)$ invariance property is obvious.

We first show that the set of weak quasigeodesic fan leaves
is non empty.
Let $L$ be a leaf of $\wF$. 
We will construct sets $D_n, E_n$
in $L$ satisfying the hypothesis of the previous lemma.
Let $\ell_1$ be a leaf of $\wG|_L$. Let $I$ be the closed 
interval of leaves of $\wG|_L$ all of which share both
endpoints with $\ell_1$. This could be a degenerate interval,
that is, $\ell_1$ itself. Let $\ell$ be a boundary leaf of
$I$. Now consider a leaf $\ell'$ sufficiently near $\ell$
intersecting a transversal $\tau$ from $x$ in $\ell$ to $x'$ in
$\ell'$. In addition assume that $\ell'$ is not in $I$.
Let $E$ be a disk containing $\tau$. Let $E_n = E$
of fixed diameter.

Since $\ell'$ is not in $I$,  it has at least one ideal point
$z'$ which is not an ideal point of $\ell$. Let $r'$
be the ray of $\ell'$ starting in $x'$ and with ideal point
$z'$. Let $r$ be the ray of $\ell$ starting in $x$ and
going in the same direction as $r'$.

Recall that the leaf space of $\wG|_L$ is the reals $\RR$.
Let $V$ be the complementary region of $r \cup \tau \cup r'$ which
only contains rays of leaves of $\wG|_L$ which intersect $\tau$.
Hence every leaf of $\wG|_L$ intersecting $V$ also intersects
the fixed set $E$.

Finally since $r'$ and $r$ do not have the same ideal points and are 
quasigeodesics we can find $D_n$ a set of diameter greater than $n$ contained
in $V$ and such that the distance in $L$ from $D_n$ to $E$ is greater than $n$.
Taking $L_n = L$ for any $n$, we can apply the previous lemma and get leaves of $\cF$
which are weak quasigeodesic fans. This proves the first
assertion of the proposition.

\vskip .1in
Now we prove that the set of leaves that are not weak quasigeodesic fan is open.

Let $L$ be a leaf that is not a weak quasigeodesic fan. Then there are 
leaves $\ell, \ell'$ which do not share any ideal points. 
As in the
previous lemma: 
\begin{enumerate}
\item there are points $y \in \ell, y' \in \ell'$
realizing the minimum distance $a_0$ between them, 
\item for any $a_2 > 0$,
there is $t > 0$ such that if distance along $\ell$ from $y$ to
$z$ is greater than $ t$ then $d_L(z,\ell') > a_2$ and vice versa for points
in $\ell'$.
\end{enumerate}
Hence once $a_1, a_2, t$ are fixed we obtain for any leaf $F$
sufficiently near $L$ that we have leaves $\ell_F, \ell'_F$ 
in $\wG|_F$ satisfying this property in $F$. Specifically this does not
hold for 
\emph{every} point $z$ in $\ell_F$ with distance in 
$\ell_F$ from a fixed point is $> t$, but for some points.
We choose $a_2 > a_0 + 100k$.
Fix this pair of leaves $\ell_F, \ell'_F$.

Now suppose that $F$ is a weak quasigeodesic fan. We will obtain
a contradiction.
For any two leaves $\zeta, \zeta'$ in $\wG|_F$
they have a common endpoint in some direction. If they share
both endpoints then they are within $2k$ of each other.
Since, by choice, $a_2 > 2k$, the pair
$\ell_F, \ell'_F$ cannot be $\zeta, \zeta'$.

Next, suppose that $\zeta, \zeta'$ share
one but not both ideal points.
The corresponding geodesics $\hat \zeta, \hat \zeta'$ of $F$ to $\zeta, \zeta'$
 are asymptotic, but
disjoint. By negative curvature in the direction where they
are asymptotic, the distance in $F$ between points $y_t$ in $\hat \zeta$
converging to the common ideal point  and
$\hat \zeta'$ is always decreasing, modulo a bounded error,
and converging to zero. Since $\zeta, \zeta'$ are $k$ distant
from $\hat \zeta, \hat \zeta'$ respectively, then the distance in $F$ between points
$y_t$ in $\zeta$ converging to the common ideal point and $\zeta'$
in $F$ is roughly decreasing modulo an error of at most $4k$.
But the leaves $\ell, \ell'$ have points very distant ( at least $ a_2 > a_0 + 100k$) from the other leaf,
then follow along to points roughly $a_0$ distant, then again
some points very distant ($> a_2$). Therefore $\ell, \ell'$ cannot be 
$\zeta, \zeta'$. 

This contradicts the existence of leaves $\ell, \ell'$ in $F$,
which have to be some pair $\zeta, \zeta'$.
This contradiction finishes the proof that the set of non
weak quasigeodesic fans is open.
This finishes the proof of the proposition.
\end{proof}

\subsection{Branching foliations}\label{branfol}
Now consider two transverse branching foliations $\cs$ and $\cu$ in $M$ (the names are given for obvious reasons) which determine a one dimensional branching foliation 
$\cW^c$ by intersection. We consider $\wcs$, $\wcu$ the lifts to the universal cover.
We assume that $\cs, \cu$ are transversely orientable.
Let $\fes, \feu$ be the approximating foliations from $\cs, \cu$
given by Theorem \ref{teo-openbranch} for some small $\eps>0$. 
Let $\wfes,\wfeu$ be their lifts to $\mt$.
These determine a foliation $\wfec$ which subfoliates both. 
The foliation $\wfec$ also projects to a one dimensional
foliation $\fec$ which subfoliates both $\fes, \feu$.
Since $\cF^{cs}_\eps, \cF^{cu}_\eps$ have $C^1$ smooth
leaves, then leaves of $\fec$ are $C^1$.

We can then copy the notions above to define: 

\begin{defi}\label{defi-branching}
We say that $\cW^c$ is by \emph{uniform quasigeodesics} in $\cs$ and $\cu$ if leaves of $\fes,\feu$ are Gromov hyperbolic and $\wfec$ is a foliation by uniform quasigeodesics in both $\wfes$ and $\wfeu$ as in Definition \ref{d.uQGfol}. Similarly we can define as in Definition \ref{defi-qgfan} leaves of $\wcs$ or $\wcu$ (or leaves of $\cs, \cu$)
 being \emph{(weak)-quasigeodesic fans} by the identification between the leaves of $\wfes$ and $\wcs$ (resp. $\wfeu$ and $\wcu$). 
\end{defi} 

The leaves of $\wcs, \wcu$ have their intrinsic geometry
induced from the Riemannian geometry of $\mt$.
These leaves are quasi-isometric to the corresponding leaves
of $\widetilde \cF^{cs}_\eps, \widetilde \cF^{cu}_\eps$.
In particular the notions above are independent of $\eps$.

\subsection{The partially hyperbolic setting} 

Here we state the main result of this section: 

\begin{teo}\label{teo.QG}
Let $f\colon M \to M$ be a partially hyperbolic diffeomorphism preserving branching foliations $\cs$ and $\cu$  such that the foliation $\cW^c$ is by uniform quasigeodesics in each leaf of
$\cW^{cs}$ (cf.~Definition \ref{defi-branching}). Then, the center leaves of $\cW^c$ form a quasigeodesic fan in each $\cs$ and $\cu$ leaf. 
\end{teo}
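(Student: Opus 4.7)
The plan is to prove the statement for $\wcs$ (the argument for $\wcu$ is identical), in two stages: first, that every leaf $L \in \wcs$ is a weak quasigeodesic fan, and second, that the funnel structure is actually a quasigeodesic fan (uniqueness of the center leaf through each non-funnel ideal point). The key extra ingredient beyond \S\ref{s.QG} so far is the partial-hyperbolic dynamics of $f$, which gives $f$-invariance of the sublamination of weak fans together with exponential contraction along $\cW^s \subset \cs$.

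For the first stage, I would start from Proposition \ref{prop.closedfans}, which produces a nonempty, closed, $\pi_1(M)$-invariant sublamination $\mathcal{S}\subset \cs$ of weak quasigeodesic fans. The diffeomorphism $\wt f$ sends a leaf $L$ of $\wcs$ onto $\wt f(L)$, takes leaves of $\wG|_L$ to leaves of $\wG|_{\wt f(L)}$, and (thanks to the uniform quasigeodesic hypothesis together with Gromov hyperbolicity of the leaves) induces a homeomorphism $S^1(L) \to S^1(\wt f(L))$ carrying a funnel point to a funnel point. Hence $\mathcal{S}$ is $f$-invariant; it descends to a closed, $f$-invariant sublamination of $\cs$ in $M$. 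Under the $f$-minimality assumption discussed in \S\ref{ss.simplifying_assumption}, this forces $\mathcal{S}=\cs$ immediately.

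In the general case, I would combine $\mathcal{S}$ with the stable dynamics in order to apply Lemma \ref{lem.existweakfan}. Suppose some $L \in \wcs$ is not a weak fan; then there exist two centers $c_0, c_1 \in \wG|_L$ with disjoint endpoints in $S^1(L)$, and a strong stable arc $\sigma \subset L$ of uniformly bounded length meeting both. Iterating by $\wt f^n$, the length of $\wt f^n(\sigma)$ shrinks to zero while (by completeness of leaves and the quasigeodesic property) large disks $D_n \subset \wt f^n(L)$ around $\wt f^n(c_0 \cap \sigma)$ can be chosen so that every center of $\wG|_{\wt f^n(L)}$ meeting $D_n$ must cross the tiny set $E_n = \wt f^n(\sigma)$. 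Pulling back by deck transformations $\gamma_n$ to a compact part of $\mt$ and extracting a convergent subsequence, Lemma \ref{lem.existweakfan} yields a limit leaf in $\mathcal{S}$; together with closedness and $\pi_1(M)$-invariance of $\mathcal{S}$, one derives a contradiction with $L \notin \mathcal{S}$, proving $\mathcal{S} = \wcs$. For the second stage (uniqueness), suppose two distinct centers $c, c' \in \wG|_L$ share both endpoints $\{p_L,q\}$. By Remark \ref{rem.uniformbound}, both are within Hausdorff distance $2k$ of the same geodesic in $L$, hence within $2k$ of each other throughout; a strong stable segment $\tau$ joining a point of $c$ to a point of $c'$ inside $L$ therefore has bounded $L$-length. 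Iterating backwards by $\wt f$ expands the $L$-length of $\tau$ (in the leaf metrics), yet the two endpoints of $\wt f^{-n}(\tau)$ must still lie on centers within the uniform tube of each other — a contradiction.

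The main obstacle I anticipate is the general (non $f$-minimal) case of the first stage: showing that the weak-fan property propagates to every leaf, not merely those in some closed invariant sublamination. This requires the careful trapping construction in the preceding paragraph, implemented through Lemma \ref{lem.existweakfan}, and is precisely the content the authors defer to \S\ref{ss.general_result_laminations}. Once this is settled, both the weak-fan-to-fan upgrade and the transfer to $\wcu$ are essentially soft consequences of Remark \ref{rem.uniformbound} and the symmetric role played by $\cW^u$ under $f^{-1}$.
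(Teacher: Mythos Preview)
Your outline has genuine gaps in both stages, and neither reduces to the ``trapping'' use of Lemma \ref{lem.existweakfan} in the way you suggest.

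\textbf{First stage.} In the general (non $f$-minimal) case your argument produces, via Lemma \ref{lem.existweakfan}, a \emph{limit} leaf that is a weak fan; but this is not a contradiction with $L\notin\mathcal S$. All you have shown is that deck translates of the iterates $\wt f^n(L)$ accumulate on $\mathcal S$, which is entirely compatible with $\mathcal S$ being a proper closed sublamination (closedness and $\pi_1(M)$-invariance of $\mathcal S$ do not push membership back from a limit to $L$). The paper's route is fundamentally different: it proves Proposition \ref{prop.laminations}, a structural fact about complementary regions of any $f$- and $\pi_1(M)$-invariant sublamination $\cP\supset P$, using $3$-manifold topology (octopus decompositions, Imanishi's theorem, the structure of abelian subgroups of $\pi_1(M)$, and a volume-versus-length argument from \cite{HaPS}). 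This is then applied twice---first to $\cP=P$ augmented by the annular leaves (Lemma \ref{l.addannuli}), then to $\cP=P$ itself---to force $P=\cL^{cs}$.

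\textbf{Second stage.} The step ``a strong stable segment $\tau$ joining a point of $c$ to a point of $c'$ therefore has bounded $L$-length'' does not follow from the $2k$ Hausdorff bound: the stable leaves in $L$ are not known to be quasigeodesics (they are horocycle-like in the Anosov case, but here nothing is assumed about them). Likewise, after iterating backwards, the endpoints of $\wt f^{-n}(\tau)$ lie on centers that are within $2k$ \emph{of each other in Hausdorff distance}, but the two specific endpoints need not be close, so no contradiction arises. The paper's proof of Proposition \ref{prop.Unique} again goes through Proposition \ref{prop.laminations} (and, as noted in the Remark following it, genuinely uses the quasigeodesic hypothesis in \emph{both} $\cs$ and $\cu$, whereas your argument only invokes the $\cs$ side). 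The kind of iterate-and-limit argument you sketch does appear in the paper, but only \emph{after} Proposition \ref{prop.Unique} is available---see Lemma \ref{l.semiflow}, which explicitly appeals to Proposition \ref{prop.Unique} to close the loop; run on its own it would be circular.
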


We will split the proof of Theorem \ref{teo.QG} into two parts.
Proposition \ref{prop.WQGF} shows that every leaf of $\wcs$ must be a weak quasigeodesic fan and Proposition \ref{prop.Unique} shows that different centers in 
a leaf of $\wcs$ do not have the same pair of points at infinity. Both proposition follow the same strategy, first we construct an invariant lamination of \emph{good leaves} where the property we want holds, and then we apply Proposition \ref{prop.laminations} below to show that every leaf is a good leaf. 

There are some very important situations where our proof can be  much simplified: If $f$ is transitive, or more generally if $\cW^{cs}$ (or $\cW^{cu}$) is $f$-minimal
then Proposition \ref{prop.laminations} is immediate as there are no $f$-invariant sublaminations of $\cW^{cs}$. There would also be some simplifications if $M$ was assumed to be hyperbolic or Seifert fibered.

Recall that we proved in Proposition \ref{prop.closedfans} that the
set $P$ of leaves of $\wcs$ which are weak quasigeodesic fans is non empty, $\pi_1(M)$-invariant, $\ft$-invariant, and closed.
We did that in the (non branching) foliations setting, but
subsection \ref{branfol} implies the result in the branching foliations
setting as well.
Let $\Lambda$ be the projection of the leaves in $P$ to $M$.
This is a closed, $f$-invariant set of $\cs$ leaves, that is
a sublamination of $\cs$.
We want to show that these are all the leaves of $\cs$.

\subsection{A result about invariant laminations}\label{ss.general_result_laminations} 

The following result is stated for $\cs$, but obviously works for $\cu$ as well. It is a statement that will be useful to show that all leaves are weak quasi-geodesic fans in the next subsection (but can be skipped if one is working in the $f$-minimal case).

\begin{prop}\label{prop.laminations}
Let $f\colon M \to M$ be a partially hyperbolic diffeomorphism preserving a branching foliation $\cs$ tangent to $E^{cs}$ and $\ft$ a lift to $\mt$. 
Suppose the foliation $\cW^c$ is by uniform quasigeodesics
in each leaf of $\cs$. 
Let $\cP \subset \cL^{cs}$ be a closed $\pi_1(M)$- and $\ft$-invariant subset of the leaf space of $\wcs$ containing $P$.
Then, for every connected component $\cN$ of $\cL^{cs} \smallsetminus \cP$ there is $\gamma \in \pi_1(M)$, a leaf $L \in \cN$ and a leaf $L' \in \cN$ such that $\gamma L = L$ but $\gamma L' \neq L'$. 
\end{prop}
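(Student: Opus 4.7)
My plan is to argue by contradiction: suppose there is a connected component $\cN$ of $\cL^{cs}\smallsetminus \cP$ for which no such $\gamma$ exists. Let $G := \operatorname{Stab}_{\pi_1(M)}(\cN)$ and let $K\trianglelefteq G$ be the subgroup of elements fixing every leaf of $\cN$. The failure of the conclusion means that every element of $G$ with a fixed leaf in $\cN$ lies in $K$, so the quotient $G/K$ acts freely on $\cN$. Set $\tilde U := \bigcup_{L\in \cN} L \subset \mt$ and $U := \tilde U/G \subset M$; since $\cs$ is Reebless (Remark \ref{rem-novikov}), $\tilde U$ is an open connected, simply connected subset of $\mt\cong \RR^3$, so $\pi_1(U)\cong G$.

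The first step is to produce some $\gamma_0\in G$ that fixes a leaf of $\cN$ (thus $\gamma_0\in K$). Since $\cP$ is $\ft$-invariant, $\ft$ permutes components of $\cL^{cs}\smallsetminus \cP$; because $M$ is compact, there are only finitely many $\pi_1(M)$-orbits of such components, so, up to replacing $f$ by an iterate, we may assume $\ft(\cN)=\cN$. Then $\ft$ descends to a homeomorphism of the quotient $\cN/G$, which embeds in the compact space $\cL^{cs}/\pi_1(M)$; the induced dynamics admits a periodic point, lifting to $L_0\in \cN$ and $\gamma_0\in G$ with $\gamma_0\ft^n L_0 = L_0$. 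Using the free $G/K$-action on $\cN$ (which forces $G/K$ to be a torsion-free subgroup of $\operatorname{Homeo}^+(\cN)$ acting properly, hence cyclic) one extracts a genuine element of $G$ fixing some leaf in $\cN$.

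The second step, which is the main obstacle, is to show that no such $\gamma_0$ can lie in $K$. Suppose for contradiction that $\gamma_0\in K$, i.e., $\gamma_0$ fixes every leaf of $\cN$. Then on each $L\in \cN$, $\gamma_0$ restricts to a non-trivial isometry (since $\pi_1(M)$ acts freely on $\mt$); as $L$ is not a weak quasigeodesic fan, by Proposition \ref{prop.fandichotomy} there is a distinguished pair of ideal points in $S^1(L)$, which $\gamma_0|_L$ must preserve, forcing $\gamma_0|_L$ to be hyperbolic with axis the geodesic between them. I would then use the strong stable foliation $\cW^s$, transverse to $\cs$ inside $\tilde U$, to construct a short transversal $\tau$ to $\cN$ through a point of $L_0$: the action of $\gamma_0$ along $\tau$ (which is well-defined because $\gamma_0$ permutes strong stables) must preserve each leaf of $\cN$ met by $\tau$. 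Combining this with the uniform contraction of $\ft^n$ along $\cW^s$ and the identity $\gamma_0\ft^n(L_0)=L_0$, one shows that $\gamma_0$ fixes every point of $\tau$, hence fixes an open subset of $\mt$, contradicting that $\gamma_0$ is a non-trivial deck transformation.

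The hard part of this plan is step 2: formalizing the interaction between the leafwise hyperbolic isometric action of $\gamma_0$, the transverse contraction of $\ft^n$ along $\cW^s$, and the uniform quasigeodesic structure of $\wcW^c$, especially in a complementary region $\tilde U$ that may be non-Hausdorff or whose leaves may fail to descend to compact surfaces in $M$. A useful technical device is to work inside an approximating foliation $\wfes$ given by Theorem \ref{teo-openbranch}, where holonomy along strong stables can be controlled leafwise and the no-crossing condition translates into a genuine product structure near $L_0$, making the contraction argument clean enough to close up.
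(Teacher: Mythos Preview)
Your proposal has a fundamental gap in Step~2, and Step~1 is also not sound as written.

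In Step~2 you assume $\gamma_0\in K$ fixes every leaf of $\cN$ and try to conclude that $\gamma_0$ fixes every point of a strong stable transversal $\tau$, hence an open set in $\mt$, contradicting freeness of the deck action. But a deck transformation that fixes each leaf of $\cN$ \emph{setwise} simply acts as a nontrivial hyperbolic isometry on each such leaf; there is no mechanism forcing it to fix any point of $\tau$, let alone all of them. The map $\gamma_0$ does not preserve $\tau$ --- it carries $\tau$ to a different strong stable arc $\gamma_0(\tau)$, which happens to meet the same collection of $\wcs$-leaves. The contraction of $\ft^n$ along $\cW^s$ is irrelevant here: that is a property of $\ft$, not of the isometry $\gamma_0$, and the relation $\gamma_0\ft^n L_0=L_0$ you carried over from Step~1 is a statement about leaves, not about points. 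So the contradiction you are aiming for simply does not arise. (Indeed, the situation you are ruling out --- a cyclic $K$ fixing every leaf of an interval --- is perfectly consistent locally; think of an $I$-bundle over an annulus.)

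Step~1 also does not go through: the leaf space $\cL^{cs}/\pi_1(M)$ is not compact, there is no reason for only finitely many $\pi_1(M)$-orbits of components, and even granting $\ft(\cN)=\cN$ (which the paper proves via an unstable-length argument, Claim~\ref{claim.nperiodic}), a relation $\gamma_0\ft^n L_0=L_0$ does not produce a nontrivial element of $G$ fixing a leaf. When $K=1$ (all leaves in $\cN$ project to planes) there may well be no such element, and your scheme gives no contradiction in that case.

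The paper's argument is of a completely different nature. It first shows $\fes|_U$ is $\RR$-covered (Lemma~\ref{lem.rcov}, via Imanishi's theorem and the octopus decomposition), so that $G$ has an abelian subgroup $G'$ of index at most~$2$, necessarily $0$, $\ZZ$, $\ZZ^2$ or $\ZZ^3$ by $3$-manifold topology. It then shows, again using the octopus decomposition and \cite{HHU-tori}, that $f_\ast$ has no eigenvalue of modulus~$>1$ on $G'$. The contradiction comes from a \emph{volume vs.\ length} argument (\cite[Proposition~5.2]{HaPS}): one finds a full strong unstable leaf at uniform distance from the boundary of the $f$-invariant open set $X$ corresponding to $\cN$, and the exponential growth of its length under $f^n$ forces an eigenvalue~$>1$. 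None of the ingredients you propose (strong stable holonomy, pointwise fixing) enter.
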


We can and will assume by taking finite covers and iterates that all the bundles are orientable and $f$ preserves orientation, this does not result inany loss of generality.

Recall that $P$ is the set of leaves of $\wcs$ which are weak quasi-geodesic fans and $\Lambda \subset M$ the projection onto $M$ consisting of the projection of leaves of $P$. The proposition can be restated as saying that every closed $f$-invariant lamination containing the weak quasi-geodesic fan leaves cannot have trivial holonomy in the complement.  Recall that every leaf projects into a plane or an annulus, so this says that in a complementary region of such a lamination there must be some annulus and not all leaves can be homotopic to it. We note that the proposition holds in manifolds with virtually solvable fundamental group thanks to the classification of such partially hyperbolic diffeomorphisms \cite{HPsurvey}, so we shall assume throughout this subsection that $\pi_1(M)$ is not virtually solvable. 

We first obtain a property of annular leaves of $\cs$. 

\begin{lemma}\label{lem.annuluslimit}
Let $A$ be an annular leaf of $\cs$.
Then $A$ only limits on points in $\Lambda$, that is, the closure of $A$ is contained in $\Lambda \cup A$. 
\end{lemma}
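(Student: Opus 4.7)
I would argue by contradiction. Suppose some $x \in \overline A \setminus A$ is not in $\Lambda$, and let $B$ be the leaf of $\cs$ through $x$. Since $x \notin A$ we have $B \neq A$, and by hypothesis no lift $\tilde B$ of $B$ is a weak quasigeodesic fan.

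Notice first that we may assume $A$ itself is not a weak quasigeodesic fan: if $A \subset \Lambda$, then the closedness of $\Lambda$ in $M$ (an immediate consequence of Proposition \ref{prop.closedfans}) would give $\overline A \subset \Lambda$, contradicting $x \in \overline A$ and $x \notin \Lambda$. Fix a lift $\tilde A$ of $A$ and let $\gamma$ generate its cyclic stabilizer in $\pi_1(M)$. By Proposition \ref{prop.fandichotomy}, the two fixed points $p, q \in \partial_\infty \tilde A$ of $\gamma$ are the only points on $S^1(\tilde A)$ invariant under every isometry of $\tilde A$ preserving the foliation $\widetilde{\cW^c}|_{\tilde A}$.

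Produce the right converging sequence: pick $y_n \in A$ with $y_n \to x$ and lift to $\tilde y_n \in \mt$ converging to a fixed lift $\tilde x \in \tilde B$. Each $\tilde y_n$ lies on some lift $\delta_n \tilde A$ with $\delta_n \in \pi_1(M)$; since $B \neq A$, eventually $\delta_n \tilde A \neq \tilde B$, and up to subsequence $\delta_n \tilde A \to \tilde B$ leafwise in $\mt$. Since $x \notin A$ and $A$ is non-compact (the compact case being trivial as $\overline A = A$), the sequence $y_n$ exits every compact subset of $A$. Translating the preimages $\delta_n^{-1} \tilde y_n \in \tilde A$ by suitable powers of $\gamma$, one arranges that they all lie in a fixed fundamental domain $F$ for $\gamma$ on $\tilde A$, and they must then converge to an ideal point $p_\ast \in \partial_\infty \tilde A \setminus \{p, q\}$ situated in one of the two ends of $F$.

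The plan is to apply Lemma \ref{lem.existweakfan} with $x_n = \tilde y_n$, $L_n = \delta_n \tilde A$ and $\gamma_n = \mathrm{id}$, to deduce that the leaf through $\tilde x$ --- namely $\tilde B$ --- is a weak quasigeodesic fan, contradicting our standing assumption. The disk $D_n$ is taken to be the intrinsic ball in $L_n$ of radius $R_n \to \infty$ centered at $x_n$, with $R_n$ chosen to grow sufficiently slowly; the disk $E_n = \delta_n E$ is the $\delta_n$-translate of a fixed bounded region $E \subset \tilde A$ placed near the axis of $\gamma$. The intrinsic distance $d_{L_n}(E_n, D_n)$ then tends to infinity automatically, because $\delta_n^{-1} \tilde y_n$ is escaping $F$ toward $p_\ast$ while $E$ is contained in a compact part of $\tilde A$.

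The main obstacle is verifying the last hypothesis of Lemma \ref{lem.existweakfan}: that every center leaf of $\widetilde{\cW^c}|_{L_n}$ meeting $D_n$ also meets $E_n$. This is where the annular (not merely non-fan) nature of $A$ plays the decisive role, through the cyclic $\gamma$-symmetry. Using the $\gamma$-equivariance of $\widetilde{\cW^c}|_{\tilde A}$ together with Proposition \ref{prop.fandichotomy} and Lemma \ref{rem.sweep}, one argues that, for $R_n$ calibrated against the $\gamma$-translation length, every center leaf in $\tilde A$ passing through the ball of radius $R_n$ about $\delta_n^{-1} \tilde y_n$ must, up to a bounded number of $\gamma$-translations, traverse a preselected compact region near the axis of $\gamma$ --- which is precisely the region we take as $E$. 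Carrying out this quantitative step rigorously, using the uniform quasigeodesic property of $\widetilde{\cW^c}|_{\tilde A}$ and the continuity of the endpoint map from the leaf space to $S^1(\tilde A)$, is the principal technical difficulty of the argument.
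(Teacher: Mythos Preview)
Your overall strategy matches the paper's: escape in a lift $L = \tilde A$ to an ideal point $\xi \in S^1(L) \smallsetminus \{p,q\}$, then apply Lemma~\ref{lem.existweakfan} to conclude that any limit leaf is a weak quasigeodesic fan, contradicting $x \notin \Lambda$. The setup (choosing a fundamental domain for $\gamma$ whose closure in $L \cup S^1(L)$ avoids $p,q$, so that the escaping representatives converge to some $\xi$ distinct from the fixed points) is exactly what the paper does.

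The gap is precisely where you flag it: verifying that every center through $D_n$ meets a fixed bounded $E_n$. Your sketch --- ``up to a bounded number of $\gamma$-translations, traverse a preselected compact region near the axis'' --- is not what Lemma~\ref{lem.existweakfan} requires, and placing $E$ ``near the axis'' does not work in general. The paper resolves this cleanly by picking a center leaf $\ell$ with one ideal point at $\xi$ (such a leaf exists by Lemma~\ref{rem.sweep} since $\xi \notin \{p,q\}$) and splitting into two cases according to the \emph{other} ideal point of $\ell$. If that other endpoint is a fixed point of $\gamma$, then $\gamma^{\pm 1}(\ell)$ share it too, and by continuity of endpoints every center in the interval $[\gamma^{-1}\ell, \gamma\ell]$ has this same forward ideal point; being uniform quasigeodesics with a common ideal point, they all pass through a single bounded region $E$ in that forward direction (which need not be near the axis at all). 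If instead the other endpoint of $\ell$ is not fixed by $\gamma$, then $\ell$ separates $p$ from $q$, hence so does every center between $\gamma^{-1}\ell$ and $\gamma\ell$; each such center must therefore cross the axis $\tilde\alpha$, and the crossings lie in the bounded segment of $\tilde\alpha$ between the crossings of $\gamma^{-1}\ell$ and $\gamma\ell$ --- this segment is $E$. In both cases the disks $D_n$ (balls around $z_n$ of radius $\to \infty$) eventually lie in the strip between $\gamma^{-1}\ell$ and $\gamma\ell$, since $z_n \to \xi$. This two-case analysis is the missing ingredient in your argument.
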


\begin{proof}
 Let $A$ be an annular leaf of $\cs$ in the complement of $\Lambda$. Let $\gamma$ be a generator of $\pi_1(A)$. Let $L$ be a lift of $A$ to $\mt$ invariant by $\gamma$.

By Proposition \ref{prop.fandichotomy} (and Lemma \ref{rem.sweep}) since $L$ is not a weak quasi-geodesic fan, it follows that points which are not fixed by $\gamma$ in the boundary of $L$ are endpoints of some rays of the center foliation. Consider then $x_n \in A$ accumulating in some point $y \in M \smallsetminus A$. Fix a fundamental domain of $\gamma$ in $L$ so that its closure in $L \cup \partial L$ is far from the fixed points of $\gamma$ and take $z_n$ points in this fundamental domain so that they project to $x_n$. We can choose a subsequence so that $z_n$ converges to some point $\xi$ in the boundary of $L$ and far from the fixed points of $\gamma$. 

Fix a orientation for centers in $L$. Let $\ell$ be a center leaf in $L$ with one endpoint in $\xi$ (say, oriented in the backward direction). Assume first that the other endpoint (in the forward direction) of $\ell$ is a fixed point of $\gamma$. Then, it follows that every center between $\gamma^{-1}(\ell)$ and $\gamma(\ell)$ has the same endpoint in the forward direction. Since $z_n \to \xi$, there exists $a_n\to \infty$ so that disks around $z_n$ of radius $a_n$ are contained between $\gamma^{-1}(\ell)$ and $\gamma(\ell)$. Centers through points between $\gamma^{-1}(\ell)$ and $\gamma(\ell)$ must remain close in the forward direction thus we can apply Lemma \ref{lem.existweakfan} to see that the points $x_n$ converge to $\Lambda$. 

Now, assume that the endpoint of $\ell$ (in the forward direction) is not a fixed point of $\gamma$. Since the center leaf space in $L$ is Hausdorff we get that $\ell$ separates $\gamma(\ell)$ from $\gamma^{-1}(\ell)$. This implies that all three curves separate the fixed points of $\gamma$. If one chooses a closed geodesic $\alpha \subset A$ and lifts it to $\tilde \alpha$ in $L$ this gives a geodesic joining the fixed points of $\gamma$. It follows that every center curve between $\gamma(\ell)$ and $\gamma^{-1}(\ell)$ must intersect $\tilde \alpha$ in  a fundamental domain of $\gamma$ which is a bounded length interval. Then, since there are arbitrarily large disks around $z_n$ between $\gamma(\ell)$ and $\gamma^{-1}(\ell)$ one can again apply Lemma \ref{lem.existweakfan} to see that the points $x_n$ converge to $\Lambda$. This concludes.

\end{proof}

We now begin the proof of Proposition \ref{prop.laminations}. 

The proof will be by contradiction, assuming that every leaf of $\cN$  is invariant by the same deck transformations. Recall that since every leaf has cyclic fundamental group, then either all are invariant under a fixed cyclic subgroup of $\pi_1(M)$ or they all project to plane leaves. 
 
Consider the approximating foliation $\fes$, with lift $\wfes$ and
leaf space $\lecs$, which is canonically equivariantly homeomorphic
to $\lcs$. Let $\cP_{\eps}$ be the closed set corresponding to $\cP$ and
$\cN_{\eps}$ the open set corresponding to $\cN$.
Then the set of leaves in $\cN_{\eps}$ projects to an open 
$\fes$ foliated set $U$ in $M$.
Let $\Lambda_\eps$ be the lamination of $\fes$ corresponding to
$\Lambda$.
Let also $\Lambda^*$ be the sublamination of $\cs$ corresponding
to $\cP$ and $\Lambda^*_\eps$ the similar sublamination of $\fes$.

The contradiction assumption means that every leaf in $\cN_{\eps}$ is invariant
by the same deck transformations. In particular the foliation
$\fes$ restricted to $U$ has trivial holonomy (the germ
of holonomy of every closed curve in a leaf of $\fes$ in $U$ is
trivial).

The strategy of the proof of Proposition \ref{prop.laminations} is
to control the topology of $\cN$ to be able to get a contradiction with a \emph{volume vs length} argument. In order to control the topology of $\cN$ it is useful to show that the leaf space of $\fes|_U$ is Hausdorff in the universal cover since then we will obtain a free action on the line and reduce the possible deck transformations that fix $\cN$ getting the desired control on its topology (abelian fundamental group). 

Since $\fes$ in $U$ has trivial holonomy we would like to
apply Sacksteder's theorem \cite[Theorem I.9.2.1]{CanCon}. 
But Sacksteder's theorem requires that the foliation
is $C^2$ to avoid exceptional leaves. However, we do not need the full power of
Sacksteder's theorem, what we want is to prove that the leaf space of
$\wfes$ in  each connected component of $\widetilde U$ is homeomorphic to $\RR$ (we say in this case that $\fes|_U$ is $\RR$-covered). 
For that we will instead use \cite[Theorem 3.1]{Imanishi}. 
This result basically says that if there is trivial holonomy, then one can
extend holonomy along paths with domains open intervals to
holonomy with domains being the closed intervals. 
In particular when lifted to the universal cover
the foliation has leaf space homeomorphic to $\RR$. 

We will use several times the \emph{octopus decomposition}: Let $U$ be the open set in $M$ which is the projection of the leaves
in $\cN_\eps$.
The completion $\hat U$ of $U$ has an octopus decomposition (cf.~\cite[Proposition I.5.2.14]{CanCon}) with a thin part $T$ and a core $K$ such that $K$ is compact and $\hat U$ retracts onto $K$  (this last fact is true because
leaves have fundamental group at most cyclic). In particular, we know that $\pi_1(\hat U)$
is finitely generated.

\begin{lemma}\label{lem.rcov}
The foliation $\fes |_U$ is $\RR$-covered.
\end{lemma}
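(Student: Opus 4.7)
The plan is to apply Imanishi's Theorem \cite[Theorem 3.1]{Imanishi} to the foliation $\fes|_U$. That theorem allows one, for a foliation with trivial germinal holonomy on a manifold whose leaves have finitely generated fundamental group, to extend local holonomy maps from open intervals to their closures; as a consequence, the lifted foliation to the universal cover has Hausdorff leaf space.

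First I would verify that the germinal holonomy of $\fes|_U$ is trivial. This is a direct consequence of the contradiction hypothesis. If $\alpha$ is a loop based at some $x$ in a leaf $F$ of $\fes|_U$, lifting $\alpha$ yields a deck transformation $\gamma$ fixing the lift $\wt F$ of $F$ (an element of $\cN_\eps$); by assumption, every leaf of $\wfes$ in $\cN_\eps$ is invariant under the same deck transformations, so $\gamma$ fixes all nearby leaves of $\wfes$ in $\cN_\eps$, i.e.~the holonomy of $\alpha$ is trivial.

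Next I would reduce to a compact setting via the octopus decomposition $\hat U = K \cup T$, where $K$ is the compact core with finitely generated fundamental group and $T$ is a finite union of thin arms each retracting onto a boundary leaf of $K$. On the core $K$, Imanishi's theorem applies directly and yields that the leaf space of $\wfes|_{\wt K}$ is Hausdorff for every connected component $\wt K$ of the preimage of $K$ in $\mt$. The thin arms only contribute I-bundles over boundary leaves of $K$ (these boundary leaves being annular or planar, as controlled by Lemma \ref{lem.annuluslimit} and Corollary \ref{cor.ciclic}), and are therefore trivially product-foliated; in particular they cannot create new non-Hausdorff branching in the leaf space. Hence the leaf space of $\wfes$ restricted to any connected component $\wt U$ of the preimage of $U$ in $\mt$ is Hausdorff.

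To conclude, since $\mt \cong \RR^3$ by Palmeira (using Reeblessness of $\fes$ from Remark \ref{rem-novikov}), the open set $\wt U$ is simply connected, and each lifted leaf of $\wfes$ is properly embedded. The leaf space of $\wfes|_{\wt U}$ is then a simply connected, Hausdorff one-dimensional manifold, which must be homeomorphic to $\RR$. The main obstacle I anticipate lies in the passage from the compact core $K$ to the full open set $U$: one needs to rule out any non-trivial holonomy occurring \emph{at infinity} in the thin arms, which is precisely where the combination of the octopus decomposition, Lemma \ref{lem.annuluslimit}, and the trivial germinal holonomy hypothesis becomes crucial.
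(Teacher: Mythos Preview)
Your overall strategy---trivial germinal holonomy plus Imanishi's extension theorem plus the octopus decomposition---is the same as the paper's, but the paper executes it differently and more precisely via a case split that you skip.

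The paper separates two situations. \emph{If $\fes|_U$ has an annular leaf $A$}: using Lemma \ref{lem.annuluslimit} (so $A$ limits only on $\Lambda_\eps$) together with trivial holonomy, one shows $A$ has a product-foliated neighborhood; then an open-and-closed argument on the set of leaves isotopic to $A$ shows $\fes|_U$ is globally a product, hence $\RR$-covered. \emph{If all leaves of $\fes|_U$ are planes}: the paper chooses a transverse flow, picks a short transversal $\phi(x,(t_1,t_2))$ in an arm, and applies \cite[Theorem 3.1]{Imanishi} to extend holonomy along arbitrary leafwise paths from this transversal---here the plane-leaf hypothesis is used explicitly. This extension forces Hausdorffness of the leaf space in $\wt U$.

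Your proposal applies Imanishi ``directly on the core $K$'' regardless of whether annular leaves are present. This is where your argument is vague: Imanishi's theorem is not stated for compact manifolds with mixed tangent/transverse boundary, and the paper's invocation of it depends on the leaves being planes. In the annular case the paper does \emph{not} use Imanishi at all. So the obstacle is not where you anticipate (the passage from $K$ to the arms is indeed harmless, since the arms are foliated $I$-bundles), but rather in making sense of Imanishi on $K$ when annular leaves exist. The case split avoids this entirely. Your final step, deducing that the leaf space is $\RR$ from Hausdorffness, is fine: $\cN_\eps$ is an open connected interval in the simply connected $1$-manifold $\cL^{cs}_\eps$, so once Hausdorff it is $\RR$ (you do not need to argue simple connectivity of $\wt U$ itself).
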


\begin{proof}
We will fix a connected component of $U$ which we will still call $U$ for simplicity. 

We consider first the case that $\fes |_U$ has an annular
leaf $A$.
Lemma \ref{lem.annuluslimit} shows that $A$ limits only
on $\Lambda_\eps$. 
We consider
the octopus decomposition $\hat U = K \cup T$.
The annular leaf $A$ intersects $K$ in a compact subannulus
$A_0$ and intersects $T$ into two half open annuli
$A_1, A_2$. Since $\fes$ has trivial holonomy in $U$,
then $A_0$ has an open neighborhood in $K$ which is product
foliated. The same happens for $A_1, A_2$ in $T$ so
it follows that $A$ has an open neighborhood in $U$
which is product foliated.

Let $Z$ be the union of the leaves in $U$ which are isotopic
to $A$ in $U$. We just proved that this set is open. 
In addition the intersection of any leaf in $Z$ with $K$ is
a compact annulus isotopic to $A_0$ in $K$. Now use that
the set of
compact leaves of $\fes |_K$ in $K$ is closed, and, in addition, that outside
of $K$ we have products. So now it follows that $Z$ is also
closed in $U$. As $U$ is connected, it follows that
$Z = U$. 

In other words we proved that $\fes |_U$ is
a product foliation, so the leaf space of $\fes |_U$ is
homeomorphic to $\RR$. The same happens in $\widetilde U$.
This proves the lemma in this case.

\vskip .1in
From now on suppose there are no annular leaves of $\fes$ in $U$.
Since $\fes$ has only plane or annular leaves in $M$ it follows
that $\fes$ has only plane leaves in $U$.

Fix a flow transverse to $\fes$ in $M$. To do so, just take a smooth vector field $X$ transverse to $E^{cs}$ (which is continuous) and integrate it to a flow $\phi: M \times \RR \to M$. Pick a point $x \in U$ in some arm of the octopus decomposition (i.e. far from the core of $U$) so that the flowline through $x$ intersects the boundary in both sides. That is, for some small $t_1 < 0 < t_2$ we have that $\phi(x,(t_1,t_2)) \subset U$ but $\phi(x,t_1)$ and $\phi(x,t_2)$ belong to $\partial U$. Using that all leaves in $U$ are planes, we can apply \cite[Theorem 3.1]{Imanishi} to deduce that for every curve $\gamma\colon [0,1] \to M$ such that $\gamma(0) =x$ and $\gamma(s) \in \fes(x)$ for all $s \in [0,1]$ we have a well defined holonomy from $\phi(x, (t_1,t_2))$ to the flow line $\phi(\gamma(s),\RR)$ through $\gamma(s)$. This implies that when we lift to the universal cover (where flowlines cannot intersect the same leaf twice) the leaf space of $\wfes$ in the lift of $U$ has to be homeomorphic to $\RR$ as we wanted to prove. 
\end{proof}

In other words this result implies
that the leaf space of $\fes$ in $\cN_\eps$ is homeomorphic to $\RR$.
In particular the same is
true for the leaf space of $\wcs$ in $\cN$.

Our assumption is that either every leaf in $\cN$ has trivial stabilizer,
 or that every leaf of $\cN$ has exactly the same stabilizer which is $\ZZ$.
Denote by $G < \pi_1(M)$ the subgroup of deck transformations fixing $\cN$. 
The group $G$ is the same group which fixes $\cN_{\eps}$.
We need the following property:

\begin{claim} Up to deck transformations $\cN$ is $\ft$-periodic.
\label{claim.nperiodic}
\end{claim}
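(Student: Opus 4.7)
The plan is to translate the claim into a statement in $M$ and argue by contradiction via compactness. First, observe that $\ft^n(\cN) = \gamma \cN$ for some $\gamma \in \pi_1(M)$ if and only if $f^n(U) = U$, where $U = \pi(\cN) \subset M$ is the projection of $\cN$ to $M$. So the claim is equivalent to showing that the $f$-orbit of $U$ in the set of connected components of $M \smallsetminus \Lambda^*$ is finite.

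I would suppose for contradiction that this orbit is infinite, so that $\{f^n(U)\}_{n \in \ZZ}$ is a collection of pairwise disjoint, pairwise distinct components of $M \smallsetminus \Lambda^*$. By the contradiction assumption of Proposition \ref{prop.laminations} together with Lemma \ref{lem.rcov} (and the product argument in its proof), each closure $\hat U_n$ is a product-foliated region bounded by two frontier leaves of $\cs$ lying in $\Lambda^*$. These frontier leaves are all of the same topological type --- annular if $G = \ZZ$, planar if $G$ is trivial --- and form an $f$-orbit of leaves inside $\Lambda^*$, since $f^n(\partial \hat U_0) = \partial \hat U_n$.

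The heart of the argument, and the main obstacle, is to extract a contradiction from the pairwise disjointness of infinitely many such product regions in the compact manifold $M$. The plan is to exploit compactness: choose points $p_n \in f^n(U)$; by compactness of $M$ they accumulate to some $p_\infty \in M$, and the corresponding frontier leaves $\partial \hat U_n$ must accumulate, in the pointed compact-open topology on complete surfaces, onto a leaf $L_\infty$ of $\cs$ through $p_\infty$, since $\cs$ is closed under such limits. In the annular case, $L_\infty$ is itself a limit of annular leaves, hence admits a closed curve in its closure, and by Lemma \ref{lem.annuluslimit} we deduce $L_\infty \subset \Lambda \subset \Lambda^*$; the planar case reduces to the annular one (or is handled analogously in the universal cover using cocompactness of the $\pi_1(M)$-action on $\mt$). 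But then a small transversal to $\fes$ through a point of $L_\infty$ would meet infinitely many distinct frontier leaves $\partial \hat U_n$ in points accumulating to the transversal's intersection with $L_\infty$; by the local Reeblessness of $\fes$ (Remark \ref{rem-novikov}) and the local product structure of $\fes$ in a foliation chart, only finitely many distinct leaves of $\fes$ can meet such a transversal near a fixed point, giving the desired contradiction.
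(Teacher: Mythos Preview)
Your argument breaks down at the final step. The assertion that ``only finitely many distinct leaves of $\fes$ can meet such a transversal near a fixed point'' is simply false: in any foliation, a short transversal through a point intersects uncountably many local plaques, and these can belong to infinitely (even uncountably) many distinct leaves. Reeblessness prevents a single leaf from meeting a transversal more than once in a foliated chart, but says nothing about how many \emph{different} leaves can accumulate. Indeed, a minimal Reebless foliation (say, the weak stable foliation of a transitive Anosov flow) has every leaf meeting every transversal, and there are uncountably many leaves. So having infinitely many distinct frontier leaves $\partial \hat U_n$ accumulating on $L_\infty$ along a transversal is not a contradiction; this is exactly what happens for the boundary leaves of the complementary regions of a Cantor lamination.

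There are smaller issues earlier as well. In step 3, being $\RR$-covered (Lemma \ref{lem.rcov}) does not by itself force $\hat U_n$ to be bounded by exactly two frontier leaves; the completion of a complementary region can have more boundary components. And in step 7, Lemma \ref{lem.annuluslimit} tells you where an annular leaf can \emph{limit}, not that a \emph{limit of} annular leaves lies in $\Lambda$; these are different statements.

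The paper's argument is far more direct and avoids any structural analysis of the $\hat U_n$. It just picks $x$ in the interior of $V$, uses compactness of $M$ to get an accumulation point of $\{f^n(x)\}$, and observes that since the interiors of distinct projected complementary components are disjoint (because $\cP$ is $\ft$- and $\pi_1(M)$-invariant and the leaves through a single point form an interval in $\cL^{cs}$), two iterates $f^{n_1}(V)$ and $f^{n_2}(V)$ must coincide. No convergence of leaves, no octopus decomposition, no appeal to Lemma \ref{lem.rcov} or Lemma \ref{lem.annuluslimit} is needed.
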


\begin{proof}
The projection $V$ of $\cN$ to $M$ may not be open if $\cs, \cu$ are not foliations
and rather branching foliations. Nevertheless $V$ is not a single
leaf and has non empty interior, hence contains an open unstable segment
$\tau$. Let $x$ in $\tau$.  Iterating positively by $f$ one gets
a limit point of the sequence $f^n(x)$. 
Since $\cP$ is $\ft$ and $\pi_1(M)$-invariant then 
for some fixed $n$,
$f^n(V)$ and $V$ intersect in their interiors. Hence $\ft^n(\cN)$ is
a deck translate of $\cN$.
\end{proof}

By the claim after taking an iterate of $f$ and perhaps a different
lift, we may assume that $\ft$ preserves $\cN$. 
We take such an iterate and lift.
We fixed an identification
of $\pi_1(M)$ as the group of deck translations of $\mt$, and $G$ is the
subgroup of deck transformations fixing $\cN$.  Then $f$ acts
on $G$ by $g \ \to \ \ft \circ g \circ (\ft)^{-1}$, this
action is denoted by $f_\ast$.
We will need some arguments from standard $3$-manifold topology.
If the stabilizer of leaves in $\cN_{\eps}$ is always trivial, 
then as it acts freely on $\RR$ it follows that $G$ is abelian.
By \cite[Theorem 9.13]{Hempel} we get that $G$ can be either $0$, $\ZZ$, $\ZZ^2$ or $\ZZ^3$. 
Suppose on the other hand that the subgroup stabilizing every
leaf of $\cN_{\eps}$ is infinite cyclic, generated by $\gamma$.
It is very easy to see that for any $\alpha$ in $G$ then
$\alpha \gamma \alpha^{-1} = \gamma^{\pm}$, hence $\langle \gamma \rangle$
is a normal subgroup. In addition $G/ \langle \gamma \rangle$ acts freely
on $\RR$ hence it is abelian. 
Since $\alpha \gamma \alpha^{-1} = \gamma^{\pm}$ it follows that
$G$ has a subgroup of index $2$ which is abelian. 
Again by \cite[Theorem 9.13]{Hempel} this subgroup $G'$ 
of index $2$
can only be $\ZZ, \ZZ^2, \ZZ^3$.
Notice that $f_\ast(\gamma) = \gamma^{\pm}$ so $f_\ast$ preserves
$G'$. So in any case $f_\ast$ preserves an abelian subgroup $G'$
of index at most $2$, which can only be $0, \ZZ, \ZZ^2, \ZZ^3$.
We need the following:

\begin{claim}
The action of $f_\ast$ in $G'$ does not have eigenvalues of modulus larger than $1$. 
\end{claim}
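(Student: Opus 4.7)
The strategy for proving the claim is by contradiction. Suppose that $A := f_\ast|_{G'}$, regarded as a linear map on $G' \otimes \RR$, has an eigenvalue $\lambda$ with $|\lambda|>1$. Since $\pi_1(M)$ is assumed not virtually solvable and a closed $3$-manifold with $\ZZ^3 \leq \pi_1$ must be virtually the $3$-torus, the possibility $G' = \ZZ^3$ is excluded, leaving $G' \in \{0,\ZZ,\ZZ^2\}$ from the list in the discussion preceding the claim. The cases $G' = 0$ and $G' = \ZZ$ are immediate since then $A \in GL(k,\ZZ) = \{\pm 1\}$, so my focus is on $G' \cong \ZZ^2$. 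I would split this along the same dichotomy already invoked to introduce $G'$.

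In the subcase where every leaf of $\cN$ has cyclic stabilizer $\langle\gamma\rangle$ in $G$, the element $\gamma$ lies in $G'$ and the relation $f_\ast(\gamma) = \gamma^{\pm 1}$ shows that $A$ preserves the rank-one direction spanned by $\gamma$ and acts by $\pm 1$ on it. The induced automorphism of the free part of $G'/\langle\gamma\rangle \cong \ZZ$ must also be $\pm 1$. In a basis extending $\gamma$, $A$ is therefore upper-triangular with $\pm 1$ on the diagonal, so all its eigenvalues have modulus $1$, contradicting the hypothesis.

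In the other subcase, every leaf of $\cN$ has trivial stabilizer in $G$; in particular $G' = G$ is abelian and acts freely on the leaf space of $\cN$, which is homeomorphic to $\RR$ by Lemma \ref{lem.rcov}. Hölder's theorem then conjugates this free action to one by translations, embedding $G'$ as a dense subgroup $\alpha\ZZ + \beta\ZZ \subset \RR$ (with $\alpha/\beta$ irrational because the action is faithful). Since $\ft$ preserves $\cN$, it descends to a homeomorphism of $\RR$ that conjugates translations to translations, and hence is affine, $x \mapsto cx + d$. Then $(\alpha,\beta)$ is an eigenvector of $A$ with eigenvalue $c$, while the second eigenvalue equals $\pm 1/c$ because $\det A = \pm 1$. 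The assumption $|\lambda|>1$ thus reduces to ruling out $|c|\neq 1$.

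This last step is the main obstacle of the plan. The idea is to combine the uniform bilipschitz control on $\ft$ (with constant depending only on $\|Df\|_\infty$) with the bounded topology provided by the octopus decomposition $\hat U = K \cup T$ from the preceding lemmas. A generator of $G'$ displaces transversals to $\cs$ by an amount matching its image in $\RR$; iterated conjugation by $\ft$ scales this displacement by $|c|^n$, while the same deck transformation has translation length in $\mt$ comparable to this transverse displacement. Exponential growth of transverse displacements is incompatible with both the finite diameter of the compact core $K$ and the product-foliation structure of the thin part $T$, where generating loops would eventually be forced outside of $\hat U$. This volume-versus-length balance, which is the overall strategy indicated for Proposition \ref{prop.laminations}, is what I expect to produce the final contradiction.
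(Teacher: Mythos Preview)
Your subcase (a) is correct and in fact captures the paper's core mechanism: exhibit a nonzero element of $G'$ sent to $\pm$ itself by $f_\ast$, which forces both eigenvalues of the integer matrix to be $\pm 1$. But subcase (b) has a genuine gap, as you acknowledge. The H\"older argument correctly shows that the induced map on the leaf space is affine with slope $c$, that $c$ is an eigenvalue of $f_\ast$ with irrational eigendirection, and that the other eigenvalue is $\pm c^{-1}$; however nothing you wrote rules out $|c|\neq 1$, i.e., $f_\ast$ being a hyperbolic element of $GL_2(\ZZ)$. Worse, your closing sketch about ``exponential growth of transverse displacements'' is exactly the volume-versus-length argument from \cite{HaPS} that the paper invokes \emph{after} the claim to produce the contradiction: that argument shows $f_\ast$ \emph{must} have an eigenvalue of modulus $>1$, which is then played against the claim. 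You cannot use the same mechanism to prove the claim itself.

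The paper closes your gap by a route you did not consider, using $3$-manifold topology rather than the dynamics on the leaf space. It first shows that no boundary leaf of $\hat U$ can be a plane (otherwise $\hat U \cong \RR^2 \times [0,1]$ with bounded unstable transversals, contradicting $f$-invariance of $\Lambda^*$ and unstable expansion), so the boundary components of the compact core $K$ are tori. When $G'\cong\ZZ^2$, \cite[Theorem 10.5]{Hempel} then forces $K$ to be $\TT^2\times[0,1]$ up to a double cover, and the leaves of $\cs$ on $\partial\hat U$ are infinite cylinders. The generator $\alpha$ of $\pi_1$ of such a boundary cylinder lies in $G$; since a power of $f$ preserves this boundary component, a power of $f_\ast$ fixes $\alpha$, yielding the required invariant direction with eigenvalue a root of unity. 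The point is that this works whether the interior leaves of $\cN$ are planes or annuli: the $f_\ast$-invariant element comes from the \emph{boundary} of $\hat U$ (which lies in $\cP$), not from leaf stabilizers inside $\cN$. Your dichotomy into subcases (a)/(b) is thus unnecessary once one has this topological input.
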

\begin{proof}
Recall that using the octopus decomposition we saw that $\pi_1(\hat U)$ the completion of $U$ is finitely generated. 

In the proof of Lemma \ref{lem.rcov} we analyzed the case that
a boundary component of $\hat U$ is a plane.
In that proof we showed that this implies that $\hat U$
is homeomorphic to $\RR^2 \times [0,1]$ and the length of
unstable segments between the boundary components of
$h_\eps(\hat U)$ is bounded.
But the difference here is that the lamination $\Lambda^*$ is
$f$-invariant. In particular the previous claim
showed that up to deck transformations $\cN$ is $\ft$ periodic.
This contradicts that lengths of unstable segments between
the boundary leaves is bounded, as $f$ increases unstable
lengths by a definite amount.

It follows that boundary components of $\hat U$ are either
tori or cylinders, this uses the orientability condition on $M$ and the transversal orientability. 
For each such cylinder component, the intersection with $K$
is a compact cylinder $A$. Then there is a cylinder in $\partial T$
connecting a boundary component of $A$ with another leaf
in $\partial \hat U$. This other leaf $Z$ in $\partial \hat U$ must
necessarily also be a cylinder and there is an associated compact
cylinder $Z \cap K$. One continues this process, after finitely
many steps one arrives back at $A$. This produces a torus.

We proved that all
boundary components of $K$ are tori. Since leaves of 
$\widetilde{\cs}$ are properly embedded in $\mt$ it follows that either $K$ is a solid torus 
or that all boundary components of $K$ are $\pi_1$-injective in $\pi_1(M)$ and hence $\pi_1(\hat U)$ injects in $\pi_1(M)$. Note that the image is exactly $G$. 

We proved before that $G'$ can be only $0, \ZZ, \ZZ^2, \ZZ^3$.

The claim is trivial if $G'$ is either $0$ or $\ZZ$. 
If $G'=\ZZ^3$, using that $M$ is prime we can apply \cite[Theorem 9.11]{Hempel} to deduce that $M$ has virtually abelian fundamental group contradicting that we have assumed that the fundamental group of $M$ is not virtually solvable. 

Finally, if $G'= \ZZ^2$ then \cite[Theorem 10.5]{Hempel} implies that $K$ is $\TT^2 \times [0,1]$ up to double cover. This case was dealt with in \cite{HHU-tori}. 
We explain the main steps: 
the leaves
of $\cs$ in the boundary of $\hat U$ are infinite cylinders.
Let $\alpha$ be a generator of the fundamental
group of one of these cylinders. Since $f$ preserves $\cN$ then
up to a power it preserves this boundary component of $\hat U$
and up to another power preserves $\alpha$. This implies that
one of the eigenvalues of $f_\ast$ has a power which is one.
This implies the result.
\end{proof}

Notice in particular that since $f_\ast$ is invertible, then
the above claim implies that all
eigenvalues of $f_\ast$ have modulus $1$.

We now complete the proof of Proposition \ref{prop.laminations}.
The contradiction will be given by a volume versus length argument that will imply that the action of $f_\ast$ on $G'$ must have an eigenvalue of modulus larger than one. More precisely, \cite[Proposition 5.2]{HaPS} implies that if there is an open $f$-invariant set $X \subset M$ such that the inclusion $\imath \colon X \en M$ verifies that $\imath_\ast (\pi_1(X))$ is abelian and there is a strong unstable manifold inside $X$ which is at distance $\geq \eps$ 
from the boundary of $X$, then $f_\ast$ must have an eigenvalue of modulus larger than $1$ in $\imath_\ast (\pi_1(X))$. 
The same proof applies if $i_\ast (\pi_1(X))$ has a subgroup
of index $2$ which is abelian and preserved by $f_\ast$.

We will apply this result from \cite{HaPS} to the following set.
Let $X$ be interior of the projection to $M$
of the closure of $N$. Here $N$ is the union of the leaves
which are in $\cN$. This is an open $f$-invariant set (after taking the iterate we considered before). 
Notice that 

$$\imath_\ast(\pi_1(X)) \ \subset \ \imath_\ast(\pi_1(\hat U)) \
= \  \imath_\ast(G).$$

Let $x$ be a point in $\partial X$.
Let $\ell_x \in [0,\infty]$ be the length of the 
open unstable segment inside $X$ whose boundaries are in $\partial X$ and one of them is $x$. 
This interval is possibly trivial giving $\ell_x=0$ or a complete ray giving $\ell_x = \infty$. 
It follows that the function $\ell_x$ of $x$
cannot be bounded in $\partial X$: consider forward iterates which increase
without bound the length of unstable segments.
Take a limit of a sequence of such unstable segments of lengths
converging to infinity to obtain a full
unstable curve completely contained in the interior of $X$. Moreover, the closure of such unstable leaf must be at positive distance of $\partial X$ because of local product structure. This completes the proof of Proposition \ref{prop.laminations}.

\subsection{Funnel leaves} 
Here we show: 

\begin{prop}\label{prop.WQGF} 
In the setting of Theorem \ref{teo.QG} we have that every leaf of $\cs$ and $\cu$ is a weak quasigeodesic fan for $\cF^c$. 
\end{prop}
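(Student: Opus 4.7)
My plan is to argue by contradiction. Starting from Proposition \ref{prop.closedfans}, the set $P \subset \cL^{cs}$ of weak quasigeodesic fan leaves of $\wcs$ is non-empty, closed, and $\pi_1(M)$-invariant. It is also $\ft$-invariant, since $f$ being a diffeomorphism preserves the uniform quasigeodesic property of centers inside center stable leaves. Its projection to $M$ is a sublamination $\Lambda$ of $\cs$. Suppose for contradiction that $P \neq \cL^{cs}$ and pick a connected component $\cN$ of $\cL^{cs} \smallsetminus P$.

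I then apply Proposition \ref{prop.laminations} with $\cP = P$ to obtain a deck transformation $\gamma \in \pi_1(M)$ together with leaves $L, L' \in \cN$ satisfying $\gamma L = L$ and $\gamma L' \neq L'$. Because $P$ is $\pi_1(M)$-invariant and $\gamma L \in \cN$, the component $\cN$ is preserved by $\gamma$. Since leaves of $\cs$ have at most cyclic fundamental group (Corollary \ref{cor.ciclic}), the stabilized leaf $L$ projects to an annular leaf $A$ of $\cs$, and $A$ is not in $\Lambda$ as $L \notin P$. Lemma \ref{lem.annuluslimit} then yields $\overline{A} \subset A \cup \Lambda$. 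I would then apply the symmetric statement to $\cu$ after establishing the result for $\cs$, since the hypotheses of Theorem \ref{teo.QG} give uniform quasigeodesic behavior inside both center stable and center unstable leaves.

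The remaining step, which is the main obstacle, is to use the coexistence in $\cN$ of the $\gamma$-fixed leaf $L$ with highly constrained limit set and the $\gamma$-moved leaf $L'$ to produce a contradiction. By Lemma \ref{lem.rcov} the component $\cN$ is homeomorphic to $\RR$, and $\gamma$ acts on it as an orientation-preserving homeomorphism with $L$ fixed and $L'$ moved. I would analyze the iterates $\gamma^n L'$: either they accumulate on $L$ (or on a boundary leaf of $\cN$ lying in $P$) or they escape to infinity in $\cN$. In each case the goal is to translate convergence in $\cL^{cs}$ into convergence of projections in $M$, using the approximating foliation $\fes$ from Theorem \ref{teo-openbranch} to identify leaf spaces, and thereby produce points in leaves of $\cs$ close to but outside $A$ whose presence contradicts $\overline{A} \subset A \cup \Lambda$ together with $\cN \cap P = \emptyset$. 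The delicate point is that $\cs$ is only a branching foliation, so distinct leaves of $\wcs$ can share points; one must therefore exploit the identification of $\cL^{cs}$ with the leaf space of $\wfes$ and the bounded $C^1$-approximation given by $h_\eps$ to rigorously pass from leaf-space proximity to pointwise proximity in $M$.
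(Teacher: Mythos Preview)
Your opening is fine: $P$ is nonempty, closed, $\pi_1(M)$- and $\ft$-invariant, and you correctly invoke Proposition~\ref{prop.laminations} with $\cP = P$ to produce $\gamma$, $L$, $L'$ in a complementary component $\cN$ with $\gamma L = L$ and $\gamma L' \neq L'$. You also correctly observe that $L$ projects to an annulus $A \not\subset \Lambda$ and that Lemma~\ref{lem.annuluslimit} gives $\overline{A} \subset A \cup \Lambda$.

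The gap is in the final step. The constraint $\overline{A} \subset A \cup \Lambda$ controls where $A$ can \emph{accumulate}, not what other leaves lie in the same leaf-space interval $\cN$. Your sketch analyzes iterates $\gamma^n L'$, but following that through only yields statements of the form ``points of $A' = \pi(L')$ accumulate on $A$'' (i.e.\ $A \subset \overline{A'}$), which is the wrong direction: it says something about $\overline{A'}$, not about $\overline{A}$. Since $L'$ may well project to a plane, Lemma~\ref{lem.annuluslimit} does not apply to $A'$, and no contradiction emerges. There is no mechanism in your outline that forces a point outside $A \cup \Lambda$ into $\overline{A}$.

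The paper closes this gap by a different organization. Rather than trying to squeeze a contradiction out of the single pair $(L,L')$, it \emph{enlarges} the lamination: it sets $\cQ = P \cup P_0$ where $P_0$ is the set of all leaves with nontrivial stabilizer, and proves (Lemma~\ref{l.addannuli}, using Lemma~\ref{lem.annuluslimit} plus the octopus decomposition) that $\cQ$ is still closed, $\ft$- and $\pi_1(M)$-invariant. Two cases remain. If some complementary component $U$ of $\Lambda$ consists entirely of annular leaves, a compactness argument on the core of the octopus shows they are all fixed by the \emph{same} $\gamma$, directly contradicting Proposition~\ref{prop.laminations}. Otherwise $\cQ \subsetneq \cL^{cs}$, and in any complementary component of $\cQ$ every leaf has trivial stabilizer; then Proposition~\ref{prop.laminations} applied to $\cP = \cQ$ is immediately contradictory, since the $\gamma$ it produces would have to be trivial. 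The key idea you are missing is this enlargement step: pass to a bigger invariant set so that complementary components have \emph{uniform} stabilizer, which is exactly what Proposition~\ref{prop.laminations} forbids.
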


Recall that we proved in Proposition \ref{prop.closedfans} that the
set $P$ of leaves of $\wcs$ which are weak quasigeodesic fans is non empty, $\pi_1(M)$-invariant, $\ft$-invariant, and closed.
We did that in the (non branching) foliations setting, but
subsection \ref{branfol} implies the result in the branching foliations
setting as well.
Let $\Lambda$ be the projection of the leaves in $P$ to $M$.
This is a closed, $f$-invariant set of $\cs$ leaves, that is
a sublamination of $\cs$.
We want to show that these are all the leaves of $\cs$.

Notice again that, if we assumed that the branching foliations are $f$-minimal (see \cite{BFFP_part1,BFFP_part2}), which happens for instance when $f$ is transitive, then (by definition of $f$-minimality) $\Lambda$ would automatically consist of all the leaves of $\cs$.

So the rest of this section will deal with the general case, and the reader only interested in the transitive case can skip this section.

In order to prove that $\Lambda$ covers all the leaves of $\cs$, we will first consider a slightly larger lamination such that the leaves in the complementary region are all planes. This will allow us to apply Proposition \ref{prop.laminations}. First we show that annular leaves which are not in $\Lambda$ can only accumulate on $\Lambda$.

We need the following lemma.

\begin{lemma}\label{l.addannuli}
Consider the set $P_0 \subset \cL^{cs}$ consisting of leaves invariant under some non trivial deck transformation. Then, the set $P  \cup P_0$ is a
closed set of leaves of $\lcs$ 
which is $\ft$- and $\pi_1(M)$-invariant.
In other words the set of leaves in $P \cup P_0$ projects
to an $f$-invariant lamination of $\cW^{cs}$.
\end{lemma}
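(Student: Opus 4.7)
Both invariance properties are immediate: Proposition~\ref{prop.closedfans} already gives the $\ft$- and $\pi_1(M)$-invariance of $P$; for $P_0$, if a non-trivial $\gamma \in \pi_1(M)$ stabilizes $L$ then $\alpha\gamma\alpha^{-1}$ stabilizes $\alpha L$ and $\ft\gamma\ft^{-1}$ stabilizes $\ft L$, and both of these are non-trivial deck transformations of $\mt$. So the content of the lemma is to show that $P \cup P_0$ is closed in $\lcs$.

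The plan is as follows. Take $L_n \in P \cup P_0$ converging to $L$ in $\lcs$, with basepoints $x_n \in L_n$ converging to $x \in L$ in $\mt$. Since $P$ is closed, we may pass to a subsequence and assume each $L_n \in P_0$, with a non-trivial stabilizer $\gamma_n$. If some subsequence of $\{\gamma_n\}$ is constant equal to $\gamma \neq e$, then $\gamma L_n = L_n \to L$ and also $\gamma L_n \to \gamma L$ by continuity of the $\gamma$-action on $\lcs$, so $\gamma$ stabilizes $L$ (or at worst a leaf non-separated from $L$, which must also lie in the closure), placing $L \in P_0$. Otherwise the $\gamma_n$ take infinitely many distinct values; then proper discontinuity of the $\pi_1(M)$-action on $\mt$, together with $x_n \to x$, forces $d_{\mt}(x_n, \gamma_n x_n) \to \infty$. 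The remaining claim is that, in this case, $L \in P$.

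To establish this last claim I would apply Lemma~\ref{lem.existweakfan}. Assume further that each $L_n \in P_0 \smallsetminus P$ (otherwise closedness of $P$ handles it). Proposition~\ref{prop.fandichotomy} then supplies two $\gamma_n$-fixed points $p_n^\pm \in S^1(L_n)$, namely the endpoints of the axis $a_n$ of $\gamma_n$, and every leaf of $\wG|_{L_n}$ is a quasigeodesic crossing $a_n$ in exactly one point, with endpoints in the two complementary arcs of $S^1(L_n) \smallsetminus \{p_n^\pm\}$. An Arzel\`a--Ascoli argument on the core loops $\alpha_n \subset A_n = L_n/\langle \gamma_n \rangle$, of length the translation length $\tau_n$ of $\gamma_n$, shows that $\tau_n \to \infty$: otherwise a subsequence of the $\alpha_n$ would converge to a closed loop in $M$, forcing the $\gamma_n$ to be eventually constant (after basepoint adjustment), contrary to assumption. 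The main obstacle is then to build the pair $(D_n, E_n)$ required by Lemma~\ref{lem.existweakfan}: expanding disks $D_n$ around suitably chosen basepoints (possibly replacing $x_n$ by a $\gamma_n^{k_n}$-translate to bring the basepoint within bounded distance of $a_n$) and bounded-diameter regions $E_n$, with $d_{L_n}(D_n, E_n) \to \infty$, such that every center meeting $D_n$ also meets $E_n$. The geometric input is that, modulo $\gamma_n$, all centers in $L_n$ are parametrized by their single intersection with a fundamental segment of $a_n$; combined with $\tau_n \to \infty$ and the quasigeodesic property, this lets one funnel all centers intersecting a large disk around the chosen basepoint through a bounded piece of $a_n$. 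Lemma~\ref{lem.existweakfan} then yields that $L$ is a weak quasigeodesic fan, so $L \in P$, completing the proof.
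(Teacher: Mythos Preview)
Your handling of invariance is fine, and the dichotomy you set up (constant subsequence of $\gamma_n$ versus infinitely many distinct $\gamma_n$) is reasonable. But the heart of your Case~2 argument has a genuine gap.

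You assert that, once $\tau_n \to \infty$, you can build the data $(D_n,E_n)$ required by Lemma~\ref{lem.existweakfan}: growing disks $D_n$ about (translates of) $x_n$ and \emph{bounded} regions $E_n$, far from $D_n$, through which every center meeting $D_n$ must pass. I do not see how to produce such $E_n$. In a non-fan leaf $L_n$ the centers are quasigeodesics each crossing the axis $a_n$ exactly once, but the crossing points of centers through a disk of radius $R_n$ spread over an interval of $a_n$ of length comparable to $R_n$ (think of the model where centers are geodesics orthogonal to $a_n$). The fact that $\tau_n \to \infty$ makes a fundamental segment of $a_n$ \emph{longer}, not shorter, so ``funnelling through a bounded piece of $a_n$'' is exactly what fails. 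Moreover, once you replace $x_n$ by $\gamma_n^{k_n} x_n$ to bring it near $a_n$, you have changed the sequence of basepoints; there is no reason the new sequence still converges to $x$, so Lemma~\ref{lem.existweakfan} would at best produce a weak fan leaf through some \emph{other} limit, not through $L$. (Your Case~1 also skates over the non-Hausdorff issue: from $L_n \to L$ and $\gamma L_n = L_n$ you only get that $\gamma L$ is non-separated from $L$, not that $\gamma L = L$.)

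The paper's argument avoids all of this by working downstairs with the approximating foliation $\fes$ and using Lemma~\ref{lem.annuluslimit}, which was proved \emph{before} this lemma and says that any annular leaf of $\cs$ outside $\Lambda$ accumulates only on $\Lambda$. One takes the octopus decomposition $\hat U = K \cup T$ of a complementary component of $\Lambda_\eps$; Lemma~\ref{lem.annuluslimit} forces each annular leaf $B$ in $U$ to meet the compact core $K$ in a compact subannulus, and then \cite[Theorem~I.6.1.1]{CanCon} (the set of compact leaves of a foliation restricted to a compact manifold is closed) gives closedness directly. This bypasses any attempt to control the limiting geometry of a sequence of distinct annuli in $\mt$.
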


\begin{proof}
Recall that $\Lambda$ is the projection of $P$ to $M$. We will also use the approximating foliation $\fes$ of $\cs$ (which can be acheived up to finite lifts, see Theorem \ref{teo-openbranch}) and denote by $\Lambda_\eps$ the lamination in $\fes$ induced by the blown up leaves of $\Lambda$. Since leaves with non trivial fundamental group are clearly $f$-invariant, 
the set $P \cup P_0$ is $\ft$- and $\pi_1(M)$ invariant.
We next show that $P \cup P_0$ is a closed subset of $\cL^{cs}$.

Consider the completion $\hat U$ of a connected component $U$ of $M \smallsetminus \Lambda_\eps$ and its octopus decomposition (cf.~\cite[Proposition I.5.2.14]{CanCon}) with a thin part $T$ and a core $K$ so that $K$ is compact and $T = T_1 \cup \ldots \cup T_m$ where each $T_i$ (an arm) is an $I$-bundle. 
By augmenting $K$ we may assume that $K$ is connected.

Lemma \ref{lem.annuluslimit} implies that every annulus leaf $B$ of $\fes$ in
$M \smallsetminus \Lambda_\eps$ accumulates only in
$\Lambda_\eps$.
Suppose that it is contained in the component $U$ as above.
Recall that $\hat U  = K \cup T$. 
We choose $K$ big enough so that each component $K \cap T$ (which
is also an annulus) is transverse to $\fes$.
Then except for a compact subannulus in $B$, the rest of $B$ 
is contained
in $T$. In particular since the foliation restricted to each 
component of $T$ is a foliated $I$-bundle, 
it follows that $B \cap K$ is a compact annulus $K_B$.
Using \cite[Theorem I.6.1.1]{CanCon} we know that the set of leaves 
of $\fes$ restricted to $K$ which are compact is a compact set.
Notice that the intersection of a leaf $B$ of $\fes$ in $U$ with
$K$ is compact if and only if $B$ is an annulus (the other
option is $B$ is a plane).
Hence the set of annuli leaves in $\fes|_K$ is a compact set.

This shows that $P \cup P_0$ is a closed subset, and shows
$P \cup P_0$ is a sublamination of $\cs$ which is
$f$-invariant. 
This proves the lemma.
\end{proof}

We now prove Proposition \ref{prop.WQGF}:

\begin{proof}[Proof of Proposition \ref{prop.WQGF}]
Corollary \ref{cor.ciclic} shows that every leaf of $\cW^{cs}$ is
either a plane or an annulus.
Suppose by way of contradiction that $P$ is not all of $\cL^{cs}$.

We use the setup of the previous lemma.
Let $U$ be a non empty 
connected component of $M \smallsetminus \Lambda_\eps$.

As in the previous lemma, we have that 
$\hat U = K \cup T$.

We consider first the case that
every leaf of $\cW^{cs}$ in $U$ is annulus.
In this case 
we show that 
that every leaf in $U$ is invariant under the same deck transformation. 
This will directly contradict Proposition \ref{prop.laminations}.
We first claim that 
since $K$ is compact, there is a finite set $\{\gamma_1, \ldots, \gamma_k\}$ in $\pi_1(K)$ such that every leaf in $U$ must be fixed by one of the $\gamma_i$. 
This is because any such annulus leaf is incompressible in $K$, and distinct
leaves are disjoint.  Hence
there are finitely many of these which 
are pairwise not isotopic \cite{Hempel}.
If they are isotopic then they correspond to the same deck
transformation.

This gives a partition of $K$ by disjoint compact sets each of which is fixed by some $\gamma_i$.
Since these sets are disjoint for distinct $i$, and $K$ is connected,
it follows that there is a single $\gamma_i$. In other words,
all leaves in this component $U$ are left invariant by the same
deck transformation.
Proposition \ref{prop.laminations} shows that this is impossible.

The other possibility is that not all leaves in $U$ are annuli.
In other words the set $\cQ:= P \cup P_0$ is not $\cL^{cs}$.
The previous lemma shows that the set of leaves in $\cQ$
projects to an $f$-invariant sublamination of $\cs$.
Let $\cN_1$ be a complementary component
of $\cQ$. Since we took out
all leaves which are annuli, it follows that all leaves of
$\wcs$ in $\cN_1$ have trivial stabilizer. 
Proposition \ref{prop.laminations}, now applied
to $\cP = \cQ$ shows that this is impossible.
We conclude that this case cannot happen either.

This contradiction shows that the assumption that $\cP$ is 
not $\cL^{cs}$ is impossible.
Since every leaf in $\cP$ is a weak quasigeodesic fan,
this finishes the proof of Proposition \ref{prop.WQGF}.
\end{proof}

\subsection{Unique centers for given limit points} 
Here we show the following, that together with Proposition \ref{prop.WQGF} completes the proof of Theorem \ref{teo.QG}: 

\begin{prop}\label{prop.Unique}
If every leaf of $\cs$ and $\cu$ is a weak quasigeodesic fan, then they all are quasigeodesic fans.
\end{prop}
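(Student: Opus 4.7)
My plan is to follow the two-step strategy announced for the proof of Theorem \ref{teo.QG}: first, construct an $\ft$- and $\pi_1(M)$-invariant sublamination of ``good'' leaves where centers are uniquely determined by their endpoint pair, then apply Proposition \ref{prop.laminations} to conclude this sublamination is all of $\cL^{cs}$. The argument for $\cu$ is symmetric.

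Let $\cQ \subset \cL^{cs}$ be the set of leaves of $\wcs$ that are quasigeodesic fans. Invariance of $\cQ$ under $\pi_1(M)$ and $\ft$ is immediate. By Proposition \ref{prop.WQGF}, each leaf $L \in \cL^{cs}$ is a weak quasigeodesic fan with a well-defined funnel point $p_L$. Combined with the non-crossing of centers in $L$ and the second paragraph of Proposition \ref{prop.fandichotomy}, the endpoint map from the center leaf space of $L$ to $S^1(L) \setminus \{p_L\}$ is continuous and monotonic. Hence $L$ fails to be in $\cQ$ if and only if some fiber of this endpoint map is a non-degenerate closed interval of centers, which I will call a \emph{bubble}.

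The main technical step is to show that $\cQ$ is closed. Suppose $L_n \to L$ with $L_n \in \cQ$ and, for contradiction, that $L$ contains a bubble $[c_-, c_+]$ of positive transverse width $\delta > 0$ in $L$ with common endpoints $p_L, q \in S^1(L)$. Choose $x_\pm \in c_\pm$ realizing transverse distance at least $\delta/2$ in $L$, and take approximating sequences $x_\pm^n \to x_\pm$ in $L_n$ with $c_\pm^n$ the centers through them. For $n$ large the centers $c_\pm^n$ are distinct, and by continuity of endpoints (Proposition \ref{prop.fandichotomy}) their non-funnel endpoints $q_\pm^n$ both converge to $q$. The strip between $c_-^n$ and $c_+^n$ in $L_n$ has uniformly bounded transverse width (from the Morse-type estimate of Remark \ref{rem.uniformbound}) but extends arbitrarily far along the direction of the common endpoint. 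Applying Lemma \ref{lem.existweakfan} to disks $D_n, E_n$ inside this strip (with $D_n$ of diverging radius deep along the common endpoint direction and $E_n$ a fixed bounded disk near the common base) forces approximating centers in $L_n$ to eventually collapse onto a common pair of endpoints, contradicting $L_n \in \cQ$.

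For non-emptiness of $\cQ$, I would adapt the construction of Proposition \ref{prop.closedfans}: starting from a center ray with a fixed endpoint in $S^1(L) \setminus \{p_L\}$ distinct from that of a nearby center ray and taking limits under deck transformations, I would produce a leaf on which the endpoint map is forced to be injective. Finally, to conclude $\cQ = \cL^{cs}$, I would enlarge $\cQ$ by adding the annular leaves of $\wcs$ (via Lemma \ref{l.addannuli}) to obtain a closed, $\ft$- and $\pi_1(M)$-invariant set $\cQ'$ containing $P = \cL^{cs}$. Applying Proposition \ref{prop.laminations} to $\cQ'$ yields a complementary component on which a deck transformation fixes some leaves but not others; a volume-versus-length argument analogous to that used at the end of the proof of Proposition \ref{prop.laminations} then rules out such a configuration, forcing $\cQ = \cL^{cs}$. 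I expect the closedness step to be the main obstacle, as it requires a quantitative use of the uniform quasigeodesic constants to prevent bubbles from appearing in the limit.
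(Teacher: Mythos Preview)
Your approach has a fundamental gap: it works entirely within $\cs$ and never uses that leaves of $\cu$ are weak quasigeodesic fans, an ingredient the paper explicitly flags as essential (see the remark following the proof). Concretely, your closedness step fails. Lemma \ref{lem.existweakfan} only concludes that a \emph{limit} leaf (after deck translating the $x_n$) is a \emph{weak} quasigeodesic fan, which is already the standing hypothesis here; it does not force centers in the approximating leaves $L_n$ to share both endpoints, so no contradiction with $L_n \in \cQ$ arises. Your final step is also circular: Proposition \ref{prop.laminations} as stated requires $\cP$ to contain the set $P$ of weak quasigeodesic fan leaves, but under the present hypothesis $P = \cL^{cs}$, so any admissible $\cQ'$ would already be everything and there is nothing to apply.

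The paper's argument proceeds differently, by exploiting the transverse foliation. One starts from a bubble in a leaf $V_0$ of $\wcu$: the centers in this bubble are $V_0 \cap L$ for $L$ ranging over an open interval $I \subset \cL^{cs}$, and they share both ideal points in $S^1(V_0)$. By continuity of the $\cs$-funnel direction, all these centers have their $\cs$-funnel rays pointing to the same ideal point of $V_0$; this yields a uniform bound on the Hausdorff distance in $\mt$ between funnel rays of any two centers lying in leaves $L_1, L_2$ in the same component of the $\ft$- and $\pi_1(M)$-saturation $Q$ of $I$. That bounded-distance property is the replacement for Lemma \ref{lem.annuluslimit}: it shows (Lemma \ref{lem.annuli}) that an annular $\cs$-leaf in the complement of $\Lambda$ (the projection of $\cL^{cs} \smallsetminus Q$) can only accumulate on $\Lambda$, since infinitely many lifts of a fixed closed center curve cannot all have funnel rays at bounded Hausdorff distance. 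From there the mechanism of Proposition \ref{prop.laminations}, run with this new $\Lambda$, forces $\Lambda$ to be all of $\cs$, contradicting $I \neq \emptyset$.
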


We are going to prove Proposition \ref{prop.Unique} by contradiction, dealing with leaves of $\cu$, the case of $\cs$ being symmetric.

By Lemma \ref{rem.sweep}, for any leaf $V$ of
$\wcu$ with funnel point $p \in S^1(V)$ and every point
$q$ in $S^1(V) \smallsetminus \{ p \}$, there is a center leaf in $V$ with 
ideal point $q$.
By contradiction, we will assume that there is a leaf $V_0$ of $\wcu$ which has more than one center curve with the same pair of limit points $p, q \in S^1(V_0)$. Since the leaf $V_0$ is a weak quasigeodesic fan, the set of center leaves that have $p$ and $q$ as limit points forms a non trivial closed interval. Let $I$ be the interior of the interval of leaves of $\wcs$ which intersects $V_0$ in some of those centers. We think of $I$ as an open interval of $\cL^{cs}$. 

\vskip .1in

We claim that
the funnel direction in leaves of $\wcs$ varies
continuously. 
In order to understand this we put a topology on the circle 
bundle  over $\cL^{cs}$, made up of $S^1(L)$ where $L \in \cL^{cs}$.
We denote this circle bundle by $\cV$. 
Consider an approximating foliation $\cF^{cs}_\eps$,
with an associated circle bundle $\cV_\eps$. There is a canonical
bijection between the two circle bundles. 
One can also put a Candel metric in $\cF^{cs}_\eps$ without
changing $\cV_\eps$. Let $\cL^{cs}_\eps$ be the leaf
space of $\widetilde{\cF^{cs}_\eps}$. The topology in $\cV_\eps$ 
is defined as follows: 
given a transversal $\tau$ to $\widetilde{\cF^{cs}_\eps}$
consider the topology in 

$$A \  = \ \bigcup \{ L \in \cL^{cs}_\eps,
L \cap \tau \not = \emptyset \}$$

\noindent
 induced by a natural bijection
between the unit tangent bundle of $T \widetilde{\cF^{cs}_\eps}$
restricted to $\tau$: For every $x$ in $\tau$ contained in $L$
leaf of $\widetilde{\cF^{cs}_\eps}$ and unit vector $v$ in $L$ at
$x$ it defines a unique geodesic ray $r$ in the hyperbolic metric
in $L$, so that $r$ starts in $x$ with direction $v$. 
The ideal point of $r$ is a point in $S^1(L)$ and it is
associated with $v$. For details we refer to \cite{Calegari}
where it is proved that this topology is independent of the
choice of $\tau$ and it is invariant under deck transformations.
In the same way it is not hard to prove that 
the topology induced in $\cV$
is independent of the approximation $\cF^{cs}_\eps$ and
it is also $\pi_1(M)$-invariant.

The claim is that the funnel direction is continuous as
a function of $L \in \cL^{cs}$.
This is because the funnel direction $x \in S^1(L)$ in a leaf $L$ of
$\wcs$ is the one where center leaves are eventually within $2k$ of
each other. In the other direction of the
center leaves some of them
diverge a lot from each other. 
So near $L$ one sees in directions close to $x$ 
in $\cV$, the center leaves
which are within $2k +1$ of each other for a long distance,
while in the opposite direction (with respect to centers)
they diverge substantially
from each other. This means that the funnel direction
in leaves near $L$ is close to the funnel direction in $L$
when seen in $\cV$.
\vskip .1in

For any $L$ in $I$ the funnel direction in $L$ defines a direction
in the center $L \cap V_0$. Since these vary continuously with
$L$, it follows that up to switching $p$ and $q$,
 the stable funnel direction for any $L$ in
$I$ is the direction in $L \cap V_0$ with ideal point $p$. 
This implies that for any $L$ in $I$,  the rays in the funnel direction
of $L \cap V_0$ are eventually $2k +1$ from each other.
This is the fundamental fact here.
We let 

$$ Q = \bigcup_{n \in \ZZ} \bigcup_{\gamma \in \pi_1(M)} \ft^n (\gamma I). $$

This is a non empty, open $\ft$ 
and $\pi_1(M)$-invariant subset of $\cL^{cs}$ and we consider $P = \cL^{cs} \smallsetminus Q$. 
Let $\Lambda$ be the lamination in $M$ obtained by projecting
the leaves in $P$ to $M$. 
We want to show that $P$ is everything, and therefore get a contradiction, since $I$ and hence $Q$ is not empty.
For this, we will again apply Proposition \ref{prop.laminations} to a lamination $\Lambda^*$ that contains $\Lambda$; to construct it we need some preliminary results. 

We will use the approximating foliation
setting. Let $\Lambda_\eps$ be the sublamination of $\fes$ associated
with $\Lambda$ and let $U$ be a connected component of $M \smallsetminus
\Lambda_\eps$.
We will need the following technical property:

\begin{claim} 
Let $L_1, L_2$ be two leaves in the same 
component of $Q$. Then, there is a constant $K=K(L_1,L_2)>0$ such that for every pair of center 
leaves $c_i \in L_i$, for $i = 1,2$, we have that there is a ray $r_1$ of $c_1$ and a ray $r_2$ of $c_2$ both in the 
funnel directions of $L_1$ and $L_2$ respectively,
such that the Hausdorff distance $d_H(r_1,r_2)$ in $\mt$ is
less than $K$.
\end{claim}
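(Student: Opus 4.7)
The plan is to first establish the claim when $L_1, L_2 \in I$, then extend to any single translate $\ft^n(\gamma I)$ using the $\pi_1(M)$- and $\ft$-equivariance of the setup, and finally treat the general case by chaining through a finite sequence of overlapping translates via the path-connectedness of the component of $Q$ containing $L_1, L_2$.

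For the base case, suppose $L_1, L_2 \in I$ and set $c_i^0 := L_i \cap V_0$. By the preceding discussion, the funnel directions of $L_1$ and $L_2$ correspond (through the continuity of the funnel direction in $\cV$) to the endpoint $p$ along $c_1^0$ and $c_2^0$ respectively, and the funnel-direction rays of $c_1^0, c_2^0$ are eventually within Hausdorff distance $2k + 1$ of each other in $V_0$, and hence in $\mt$. Moreover, in each leaf $L_i$, both $c_i^0$ and an arbitrary center $c_i \in L_i$ are centers of the weak quasigeodesic fan on $L_i$ sharing the common funnel endpoint $p_{L_i} \in S^1(L_i)$. By Gromov hyperbolicity of $L_i$ together with the uniform quasigeodesic constants, the corresponding funnel-direction rays admit subrays within Hausdorff distance $2k + 1$ in $L_i$ (cf.~Remark \ref{rem.uniformbound}). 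Combining these three estimates via the triangle inequality in $\mt$ yields funnel rays $r_1 \subset c_1, r_2 \subset c_2$ with $d_H(r_1, r_2) \le 6k + 3 =: K_0$, uniform in the choices of $L_i \in I$ and of $c_i$.

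For the equivariant step, if $L_1, L_2 \in \ft^n(\gamma I)$, conjugation by $\gamma \circ \ft^n$ maps centers to centers and funnel directions to funnel directions and distorts distances by at most $\mathrm{Lip}(\ft)^{|n|}$, so the base case yields a bound $K_n := K_0 \cdot \mathrm{Lip}(\ft)^{|n|}$. For the general case, the component $C$ of $Q$ containing $L_1, L_2$ is a path-connected open subset of the $1$-manifold $\lcs$; choose a path from $L_1$ to $L_2$ in $C$ and a finite subcover by translates $\ft^{n_1}(\gamma_1 I), \ldots, \ft^{n_m}(\gamma_m I)$, producing a chain $L_1 = M_0, M_1, \ldots, M_m = L_2$ with $M_{j-1}, M_j$ in a common translate. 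Given arbitrary $c_1 \in L_1$ and $c_2 \in L_2$, pick intermediate centers $c_j \in M_j$ (with $c_0 = c_1$, $c_m = c_2$) and apply the equivariant bound to each consecutive pair, obtaining funnel rays $\alpha_j \subset c_j$ and $\beta_j \subset c_{j+1}$ with $d_H(\alpha_j, \beta_j) \le K_{n_{j+1}}$. Since two funnel-direction rays $\beta_{j-1}, \alpha_j$ of the same center $c_j$ share a common tail, iteratively truncating and applying the triangle inequality produces subrays $r_1 \subset c_1, r_2 \subset c_2$ with $d_H(r_1, r_2) \le \sum_{j=1}^m K_{n_j} =: K(L_1, L_2)$, depending only on $L_1, L_2$ through the chosen chain.

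The main obstacle is the truncation step in the chaining: Hausdorff distance is not preserved by passing to subsets, so one must verify that when replacing $\beta_{j-1}$ and $\alpha_j$ by a common subray the resulting Hausdorff bounds continue to hold with the rays chosen at the adjacent steps. This is handled by noting that each of the quasigeodesic rays in play escapes every compact set of $\mt$, so for sufficiently deep tails, nearest-point projections between two Hausdorff-close rays send tails to tails; this lets one truncate both rays so that both inclusions in the definition of Hausdorff distance remain valid, after which the chaining reduces to a routine triangle inequality.
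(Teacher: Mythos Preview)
Your proof is correct and follows essentially the same approach as the paper: reduce to the base case $L_1,L_2\in I$ via a finite chain of translates $\ft^n(\gamma I)$, then in $I$ compare an arbitrary center $c_i$ to $L_i\cap V_0$ within $L_i$ and compare $L_1\cap V_0$ to $L_2\cap V_0$ within $V_0$, each step costing $2k+1$, for a total of $6k+3$. The paper's writeup is terser on the chaining (it simply notes that deck translates preserve geometry and $\ft$ distorts distances boundedly, so ``it is enough to prove this for leaves in $I$''), whereas you spell out the intermediate leaves and the truncation issue explicitly; but the content is the same.
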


\begin{proof} We can cover a path joining $L_1$ and $L_2$ by finitely many translates and iterates of $I$. Each translate is a 
deck translate of an $\ft$ iterate of $I$.
Deck translates do not change the geometry. The map $\ft$ has bounded
derivatives so distorts distances by a bounded multiplicative amount.
Hence it is enough to prove this for leaves in $I$.
Let then $L_1, L_2$ in $I$ and $r_1, r_2$ rays of centers
$c_i$ in $L_i$ such that $r_i$ is in the funnel direction in $L_i$.
Then in $L_i$ the center $c_i$ has in the funnel direction
the same ideal point as $V_0 \cap L_i$. Hence $r_i$ has
a subray with Hausdorff distance in $L_i$ less than $2k + 1$
from a subray of $V_0 \cap L_i$ in the funnel direction of $L_i$.
Then in $V_0$, the centers $V_0 \cap L_1$, $V_0 \cap L_2$ have subrays
which are less than $2k + 1$ in Hausdorff distance in $V_0$ from 
each other. Then the ray $r_1$ has a subray less than 
$2k+1$ in $L_1$ from a ray of $V_0 \cap L_1$, which in turn is less
than $2k+1$ in $V_0$ from a ray of $V_0 \cap L_2$ $-$ this is the
fundamental fact referred to above. Finally there is a subray
of $V_0 \cap L_2$ 
less than $2k+1$ in $L_2$ from a subray of $r_2$.
It  follows that $r_1, r_2$ have subrays which are
$6k + 3$ Hausdorff distant from each other in $\mt$.
This gives the desired bound. 
\end{proof}

The main property we need is the following:

\begin{lema}\label{lem.annuli}
Let $B$ be an annular leaf of $\fes$ in $U$. Then $B$ only
limits on points in $\Lambda_\eps$.
In particular this shows that $Q$ cannot be all of $\lcs$.
\end{lema}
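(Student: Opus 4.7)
The plan is to mimic the proof of Lemma \ref{lem.annuluslimit}, using the bounded Hausdorff distance for funnel rays from the claim just preceding this statement to replace the now vacuous role of Lemma \ref{lem.existweakfan}. Suppose, for a contradiction, that $B$ limits on a point $y \notin \Lambda_\eps$. Since $\Lambda_\eps$ is closed and $B \subset U$ with $U$ open, we must have $y \in U$, and $y$ lies on some leaf $F \subset U$ of $\fes$. Let $\gamma$ generate $\pi_1(B)$ and pick a lift $L$ of $B$ in $\wfes$ with $\gamma L = L$. By Proposition \ref{prop.WQGF}, $L$ is a weak quasigeodesic fan with funnel point $p_L \in \partial L$, which is one of the two boundary fixed points of $\gamma$; denote the other by $q_\gamma$. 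Lifting a sequence $x_n \in B$ with $x_n \to y$ to points $z_n \in L$ inside a fundamental domain $\Delta$ for $\gamma$ whose closure in $L \cup \partial L$ avoids $\{p_L, q_\gamma\}$, the hypothesis $y \notin B$ forces $z_n$ to escape every compact set of $L$, so up to subsequence $z_n$ converges to a point $\xi \in \partial L \smallsetminus \{p_L, q_\gamma\}$.

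Lemma \ref{rem.sweep} provides a center $\ell \subset L$ with non-funnel endpoint $\xi$, and, according to whether or not the funnel endpoint of $\ell$ is $q_\gamma$, we build, exactly as in Lemma \ref{lem.annuluslimit}, disks $D_n \subset L$ around $z_n$ with radii tending to infinity and auxiliary disks $E_n \subset L$ of bounded diameter such that every center meeting $D_n$ also meets $E_n$. Since every leaf is already a weak quasigeodesic fan, the conclusion of Lemma \ref{lem.existweakfan} is vacuous, and the required contradiction instead comes from the claim immediately preceding this lemma: if $L' \subset \wfes$ is a lift of $F$ accumulated on by $L$ in the same component of the preimage of $U$, then $L$ and $L'$ lie in the same component of $Q$, and their funnel rays are within a uniform Hausdorff distance in $\mt$. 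The accumulation of $L$ on $L'$ thus forces the doubled-center datum of $L$, inherited from $L$ belonging to the orbit of $I$ under $\ft$ and $\pi_1(M)$, to pass to $L'$; combined with the openness of $Q$ and the fact that $y$ lies on $F$, this produces an $\fes$-saturated neighborhood of $y$ contained in $U$ that is incompatible with $y$ being a limit of $B$ without lying in $\Lambda_\eps$.

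The hard part will be this last step, namely showing that the two distinct centers of $L$ realizing the shared-endpoint datum limit, along the sequence $z_n$, to two distinct centers of $L'$ rather than to a single one. This is where one needs continuity of endpoints (Proposition \ref{prop.fandichotomy}), topological transversality of $\wcs$ and $\wcu$, and an $\ft$-periodicity argument for $Q$ analogous to Claim \ref{claim.nperiodic}. The ``in particular'' assertion is then a direct consequence: if $Q$ were all of $\cL^{cs}$, then $\Lambda_\eps = \emptyset$ and the main claim of the lemma would force every annular leaf of $\fes$ to be closed in $M$, but such leaves exist (via periodic points of $f$ together with Corollary \ref{cor.ciclic}) and a closed annulus inside a compact $3$-manifold must be a torus, which contradicts the fact that leaves of $\fes$ are planes or annuli.
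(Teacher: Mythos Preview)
Your proposal has a genuine gap in the main argument and misses the paper's key idea entirely.

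You correctly identify that the strategy of Lemma \ref{lem.annuluslimit} cannot be copied verbatim, since its output (the limit leaf is a weak quasigeodesic fan) is now vacuous. But your replacement argument never becomes an actual proof. Phrases like ``forces the doubled-center datum of $L$ \ldots\ to pass to $L'$'' and ``produces an $\fes$-saturated neighborhood of $y$ \ldots\ that is incompatible with $y$ being a limit of $B$'' do not make sense: $y$ \emph{is} in $U$ by assumption, so there is no contradiction in having a saturated neighborhood of $y$ in $U$, and you never specify what two distinct centers in $L'$ you are producing or why they would share both ideal points. You explicitly flag ``the hard part will be this last step'' and list tools you hope will work (continuity of endpoints, an $\ft$-periodicity argument), but nothing is carried out. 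As written this is a plan, not a proof, and the plan does not point to a mechanism that yields a contradiction.

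The paper's argument is short and uses a completely different idea. Since $\gamma$ is a hyperbolic isometry of $L$ fixing the funnel point, it fixes a second ideal point and hence fixes a \emph{center leaf}, which projects to a closed center curve $e$ in $B$. If $B$ limits on a point of $U$, then infinitely many lifts $L_i$ of $B$ accumulate on a leaf $L$ lying in some translate of $I$; each $L_i$ carries a lift $c_i$ of the \emph{same} closed curve $e$. The preceding claim gives a single bound $K$ such that the funnel rays of all the $c_i$ are within Hausdorff distance $K$ of each other in $\mt$. But lifts of a fixed closed curve form a uniformly discrete set in $\mt$, so infinitely many of them cannot have rays all within a fixed distance of one another. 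That is the whole argument; you never discover the closed center curve $e$, which is what makes the claim applicable.

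For the ``in particular'' part, your reasoning that a closed leaf of $\fes$ in compact $M$ must be compact (hence cannot be an annulus) is essentially right, though the phrase ``a closed annulus must be a torus'' is garbled. However, your justification that annular leaves exist (``via periodic points of $f$ together with Corollary \ref{cor.ciclic}'') is a gap: Corollary \ref{cor.ciclic} only says leaves are planes or annuli, not that any annulus occurs, and periodic points of $f$ do not automatically produce deck transformations fixing a leaf. The paper instead invokes Gabai's theorem (a foliation entirely by planes forces $M \cong \TT^3$, hence solvable $\pi_1$) to guarantee an annular leaf exists.
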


\begin{proof}
Recall that $U$ is a component of $M \smallsetminus \Lambda_\eps$.
Let $A$ be the leaf in $\cs$ corresponding to $B$ under the map $h^{cs}$ given by Theorem \ref{teo-openbranch}. Since $B$ is an annulus, so is $A$ and we call again $\gamma$ a generator of $\pi_1(A)$.

Thanks to Proposition \ref{prop.WQGF}, every center leaf shares one ideal point (the funnel point), which is therefore a fixed point of $\gamma$. We explained before (cf. Corollary \ref{cor.ciclic}) that by compactness of $M$, $\gamma$ cannot act parabolically on $S^1(L)$, so it must fix two points on $S^1(L)$.
Hence, $\gamma$ fixes a center curve in $L$, which projects to a closed curve in $A$.

Let $e$ be the corresponding closed center curve in $B$.
Let $\widetilde U$ be a lift of $U$ to $\mt$.
Suppose that $B$ limits to a point in $U$. Hence there
are infinitely many lifts $L_i$ of $B$ contained in $\widetilde U$
and limit to $L$ leaf in $\widetilde U$.
Each such lift $L_i$ contains a lift $c_i$ of $e$. 
The leaf $L$ is contained in an image $\gamma \ft^n(I)$, so there is some $K>0$ as in the claim above that
works for any pair $E_1, E_2$ in $\gamma \ft^n(I)$.
The claim also works for the approximating foliations,
taking the intersections of leaves of $\wfes, \wfeu$.
Hence for any $i, j$ then $c_i, c_j$ have rays
a fixed bounded distance $K$ from each other in the funnel
direction in $L_i, L_j$. But every $c_i$ is a lift of
a fixed closed curve $e$. As the bound is the same, we get a contradiction, since the lifts of $e$ form a uniformly
discrete set in $\mt$.
This contradiction proves the first assertion of the lemma.

To prove the second assertion suppose that $Q = \lcs$.
First recall that $\cs$ has an annular leaf $A$. 
Otherwise all leaves of $\cs$ are planes. 
This implies that $M$ is the $3$-torus - this was proved by 
Gabai, see \cite[Corollary 1.2]{Li}.
In particular $\pi_1(M)$ is abelian, which we are assuming is not the case.
Hence $\cs$ has an annular leaf $A$. Since it is non compact it
limits somewhere. If $Q = \lcs$ the argument to prove the
first assertion leads to a contradiction. 
This shows that $Q$ is not $\lcs$.
\end{proof}

\begin{proof}[End of the proof of Proposition \ref{prop.Unique}]
From Lemma \ref{lem.annuli}, we deduce that $\Lambda$ is 
not empty.
Let $\Lambda'$ be the union of the annular leaves of $\cs$.
For any annular leaf $A$ not in $\Lambda$, the previous
lemma shows that it limits only on $\Lambda$.
This is the technical property that is needed to
deduce that $\Lambda \cup \Lambda'$ is a sublamination
of $\cs$ (as in the proof of the first assertion of Lemma \ref{l.addannuli}). 

Hence we can finish in exactly the same way as Proposition \ref{prop.WQGF}:
$\Lambda \cup \Lambda'$ is lamination such that the complement has no holonomy so we can apply Proposition \ref{prop.laminations}  to get that $\Lambda \cup \Lambda'$ is all of $\cs$. Now the proof of Proposition \ref{prop.WQGF} 
also applies here to deduce that $\Lambda$ is itself all of $\cs$. This contradicts the fact that $Q= \lcs\smallsetminus P$ is non empty, and thus ends the proof of Proposition \ref{prop.Unique}.
\end{proof}

\begin{remark} 
Notice that in order to obtain, in Proposition \ref{prop.WQGF}, that each leaf of $\cs$ is a \emph{weak} quasigeodesic fan, we only needed to use the fact that the center leaves were uniform quasigeodesic in $\cs$ (and vice versa for $\cu$). To get here that it is actually a quasigeodesic fan, we need to use the fact that center leaves are uniform quasigeodesic in \emph{both} $\cs$ and $\cu$.
\end{remark}

\section{A criterion. Proof of Theorem \ref{teo.main5}}\label{s.criteria}

In this section we prove Theorem \ref{teo.main5}.
We start by proving the converse direction, in \S\ref{ss.constructingexpansiveflow} and \S\ref{ss.deducingCAF}, and prove the direct implication in \S\ref{ss.converse}.

In particular, we consider $f\colon M \to M$ to be a partially hyperbolic diffeomorphism preserving branching foliations $\cs$ and $\cu$ whose leaves are Gromov hyperbolic with the induced metric. We assume that centers in each leaf of $\cs$ and $\cu$ are uniform quasigeodesics so that Theorem \ref{teo.QG} applies.  

To show that being quasigeodesic partially hyperbolic diffeomorphism
implies leaf space collapsed Anosov flow we will assume that the bundles $E^{s}$, $E^c$ and $E^u$ are orientable. (Note that orientability of $E^c$ is a consequence of the definition and Theorem \ref{teo.QG}.)

\subsection{Constructing an expansive flow}\label{ss.constructingexpansiveflow}

Let $f\colon M \to M$ be a quasigeodesic partially hyperbolic diffeomorphism. We assume that the bundles $E^{s}$, $E^c$ and $E^u$ are orientable. 

Let $\cs$ and $\cu$ be the center stable and center unstable branching foliations given by Definition \ref{defiQG}. Since the bundles are assumed to be orientable, we can apply Theorem \ref{teo-openbranch} to obtain approximating foliations $\fes$ and $\feu$ with maps $\hs$ and $\hu$. The intersection of $\fes$ and $\feu$ gives rise to an orientable foliation $\fec$ tangent to a vector field $X^{c}$. 

Note that Theorem \ref{teo.QG} shows that in each leaf of $\fes$ (resp. $\feu$) we have that the foliation $\fec$ is made of uniform quasigeodesics and that no two of them share both points at infinity. 
(In fact, Theorem \ref{teo.QG} implies that inside each leaf of $\fes$ (resp. $\feu$) the foliation $\fec$ is a quasigeodesic fan, but we will not need this in the following.) 

\begin{prop}\label{prop.flowisexpansive}
The flow $\phi^c_t\colon M\to M$ generated by $X^c$ is expansive and preserves the transverse foliations $\fes$ and $\feu$.
\end{prop}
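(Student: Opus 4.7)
The first assertion, that $\phi^c_t$ preserves both $\fes$ and $\feu$, is immediate from the construction of the flow: since $X^c$ is tangent to $T\fec = T\fes \cap T\feu$, every orbit of $\phi^c_t$ is entirely contained in a single leaf of $\fes$ and a single leaf of $\feu$, so $\phi^c_t$ maps the leaves of each foliation to themselves.

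For expansivity, I plan to lift to the universal cover and appeal to Remark \ref{rem.expunivcov}: the orbits of $\wt{\phi^c_t}$ are properly embedded in $\mt$ (being leaves of $\wfec$), so it suffices to produce $\delta>0$ such that any two distinct orbits of $\wt{\phi^c_t}$ in $\mt$ have Hausdorff distance at least $\delta$. Using the compactness of $M$ and the continuous topological transversality of $\fes$ and $\feu$, fix a uniform $\delta_0>0$ such that for every $p\in\mt$ the ball $B(p,\delta_0)$ admits a local product chart for the pair $(\wfes,\wfeu)$.

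The heart of the argument combines Theorem \ref{teo.QG} with the uniform Morse-type bound of Remark \ref{rem.uniformbound}. Inside each leaf $L$ of $\wfes$, the orbits of $\wt{\phi^c_t}$ form a quasigeodesic fan: they share a common forward funnel point in $S^1(L)$, and distinct orbits have distinct backward endpoints. Hence if two distinct orbits $\gamma_1,\gamma_2$ lie in the same $\wfes$-leaf $L$, the leafwise geodesics with the same endpoints as $\gamma_1$ and $\gamma_2$ diverge in the backward direction, and the uniform bound upgrades this to divergence of $\gamma_1,\gamma_2$ in the intrinsic metric of $L$. Since $L$ is a properly embedded plane in $\mt$, a standard compactness argument (translating back to a fundamental domain by deck transformations and using that $L$ has bounded local geometry in $\mt$) upgrades leafwise divergence to ambient divergence in $\mt$. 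This yields a uniform constant $\delta_1>0$ such that any two distinct orbits lying in a common leaf of $\wfes$ have Hausdorff distance at least $\delta_1$ in $\mt$; the symmetric argument, applied to forward rays inside leaves of $\wfeu$, produces a corresponding constant $\delta_2>0$.

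To treat orbits that lie in distinct leaves of both $\wfes$ and $\wfeu$, I would set $\delta$ much smaller than $\min(\delta_0,\delta_1,\delta_2)$ and argue by contradiction. If $\gamma_1\neq\gamma_2$ satisfy $d_H(\gamma_1,\gamma_2)<\delta$, pick $x\in\gamma_1$ and $y\in\gamma_2$ with $d(x,y)<\delta$ and form the unique nearby intersection point $z:=\wfes(x)\cap\wfeu(y)$ given by local product. The orbit $\gamma_3$ through $z$ lies in $\wfes(\gamma_1)\cap\wfeu(\gamma_2)$; if $\gamma_3$ coincides with $\gamma_1$ or $\gamma_2$, the two orbits in fact share a common leaf of $\wfeu$ or $\wfes$, reducing to the previous case. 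Otherwise, the leafwise bounds force $d_H(\gamma_1,\gamma_3)\geq\delta_1$ and $d_H(\gamma_3,\gamma_2)\geq\delta_2$, which, combined with the fact that $z$ lies within $O(\delta)$ of both $x\in\gamma_1$ and $y\in\gamma_2$, contradicts the standing assumption $d_H(\gamma_1,\gamma_2)<\delta$ after iterating the local product construction at other pairs of nearby points along the orbits.

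The main obstacle I anticipate is precisely the step of upgrading leafwise divergence of quasigeodesics (in the Gromov-hyperbolic metric of a leaf of $\wfes$ or $\wfeu$) to divergence in the ambient metric of $\mt$, which requires careful use of the proper embedding of leaves and a deck-transformation compactness argument; a secondary technical point is the bookkeeping needed to make the mixed case — two orbits that share neither a center-stable nor a center-unstable leaf — rigorous via repeated application of the local product structure together with the leafwise expansivity constants.
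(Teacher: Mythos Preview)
Your proposal is correct and follows essentially the same route as the paper: lift to $\mt$, use Theorem \ref{teo.QG} to see that two distinct orbits in a common leaf of $\wfes$ (or $\wfeu$) cannot share both ideal points and hence diverge in that leaf, and handle the mixed case by producing a third orbit $\gamma_3 \subset \wfes(\gamma_1)\cap\wfeu(\gamma_2)$ via local product.

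The two points you flag as obstacles are simpler than you anticipate, and the paper dispatches them directly. For the leafwise-to-ambient upgrade, no deck-transformation compactness argument is needed: one simply chooses $\delta_0>0$ so that every leaf of $\wfes$ and $\wfeu$ is properly embedded in its $\delta_0$-neighborhood in $\mt$ (a standard property of Reebless foliations, see e.g.\ \cite{Calegari}). Then leafwise divergence to infinity automatically forces ambient distance at least $\delta_0$ at some point, and this $\delta_0$ works uniformly for all leaves. For the mixed case, once $\delta<\delta_0$ is chosen so that $d(x,y)<\delta$ guarantees a local product point within $\delta_0$ of both, the hypothesis $d_H(\gamma_1,\gamma_2)<\delta$ immediately gives that $\gamma_3$ stays within $\delta_0$ of $\gamma_1$ everywhere (the local product point depends continuously on the pair and so traces out the single orbit $\gamma_3$); this contradicts the same-leaf case applied to $\gamma_1,\gamma_3\subset\wfes(\gamma_1)$, with no iteration required.
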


\begin{proof}
Recall (see \S\ref{branfol}) that since $f$ is a quasigeodesic partially hyperbolic diffeomorphism, the leaves of the approximating foliations $\fes$ and $\feu$ are also Gromov hyperbolic
(one can even choose these to be by hyperbolic 
surfaces \cite[Chapter 8]{Calegari}).
By hypothesis, the orbits of the flow $\phi^c_t$ are quasigeodesics in the leaves of each of the foliations. 

There is $\delta_0>0$ such that every leaf of $\wfes$ and $\wfeu$ is properly embedded in its $\delta_0$-neighborhood in $\mt$ (see, e.g., \cite{Calegari}). 

By that we mean that: 
\begin{enumerate}
\item any set of diameter less than $\delta_0$ is
contained in a foliated chart of each of these foliations; and 
\item if $p$ is in a leaf $L$ of $\wfes$ or $\wfeu$ then the ball of
radius $\delta_0$ around $p$ in $\mt$ intersects $L$ only in the local
sheet of $L$ through $p$.
\end{enumerate} 

Now choose $\delta < \delta_0$ so that if two points $x, y$ in $\mt$
are less than $\delta$ apart then $\wfes(x)$ intersects $\wfeu(y)$
in a point less than $\delta_0$ from both of them and similarly
for $\wfes(y) \cap \wfeu(x)$.
We will show that $\delta$ serves as an expansivity constant for the flow $\widetilde{\phi^c_t}$, and this implies that the flow $\phi^c_t$ is expansive too. Since there is no recurrence for the flow in $\mt$ the definition of expansivity is equivalent to showing that different orbits cannot remain a bounded Hausdorff distance apart, cf.~Remark \ref{rem.expunivcov}. 

Assume by contradiction that two different orbits $o_1$ and $o_2$ of $\widetilde{\phi^c_t}$ in $\mt$ are at Hausdorff distance less than $\delta$. These orbits correspond to leaves of the intersected foliation $\wfec$ between $\wfes$ and $\wfeu$.
Suppose first that they are in the same leaf of $L$ of
 $\wfes$ (or $\wfeu$).
Since they are not the same orbit, they cannot have both ideal
points the same in $S^1(L)$, by Theorem \ref{teo.QG}.
Hence they diverge from each other infinitely in $L$ in some
direction. By the choice of $\delta_0$ they diverge from each other
at least $\delta_0$ (and hence at least $\delta$)
in $\mt$ as well in that direction.
Suppose now that $o_1, o_2$ are not the same leaf of $\wfes$ or $\wfeu$.
Let then $o_3$ be the intersection of $\wfes(o_1) \cap \wfeu(o_2)$. 
Then $o_3$ is distinct from both $o_1, o_2$.
Since $o_1, o_2$  are always less than $\delta$ apart then $o_3$ is 
less than $\delta_0$ apart from either $o_1$ or $o_2$. 
Since $o_3, o_1$ are in the same $\wfes$ leaf the first argument
shows that this is a contradiction, that is $o_1, o_3$ have
to diverge from each other more than $\delta_0$.
This shows that $\delta$ works as an expansive constant for the flow.

It is obvious that the flow preserves the described foliations.
This finishes the proof of the proposition.
\end{proof}

\subsection{Deducing that the map is a collapsed Anosov flow} \label{ss.deducingCAF}
We can now show:

\begin{prop}\label{prop.transitivecase}
The flow $\phi^c_t$ is a topological Anosov flow and $f$ is a leaf space collapsed Anosov flow with respect to $\phi^c_t$. 
\end{prop}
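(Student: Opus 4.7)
The plan proceeds in three steps, using the work already done in Proposition \ref{prop.flowisexpansive}.

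First, I would invoke Theorem \ref{teo.expimpliesTAF} directly. We have established that $\phi^c_t$ is generated by the continuous, nonvanishing vector field $X^c$ obtained as the intersection of the (smooth) approximating foliations $\fes$ and $\feu$, that $\phi^c_t$ preserves these two foliations, and that $\phi^c_t$ is expansive. Theorem \ref{teo.expimpliesTAF} then immediately gives that $\phi^c_t$ is a topological Anosov flow. Moreover, since $\phi^c_t$ preserves the two \emph{transverse} foliations $\fes$ and $\feu$, Remark \ref{rem.expTAF} (equivalently Proposition \ref{p.nontransitiveAF}) forces, up to relabelling, $\fes = \cF^{ws}_{\phi^c}$ and $\feu = \cF^{wu}_{\phi^c}$.

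Second, I would identify the two leaf spaces. The orbit space $\cO_{\phi^c}$ of the lifted flow $\widetilde{\phi^c_t}$ on $\mt$ is, by definition, the leaf space of the one-dimensional foliation $\wfec = \wfes \cap \wfeu$. On the other hand, recall from \S\ref{s.branching} that the center leaf space $\lc$ of the branching foliations $\cs, \cu$ was defined precisely as the leaf space of $\wfec$, via the canonical bijections (provided by Theorem \ref{teo-openbranch}\ref{item.ApporxFol_uniqueness}) between leaves of $\wfes$ and $\wcs$ and between leaves of $\wfeu$ and $\wcu$. Let $H \colon \cO_{\phi^c} \to \lc$ be the resulting identification.

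Third, I would check that $H$ is a $\pi_1(M)$-equivariant homeomorphism. The maps $h^{cs}_\eps, h^{cu}_\eps$ of Theorem \ref{teo-openbranch} are $\eps$-close to the identity and the induced bijections of leaf spaces are independent of $\eps$ (see the discussion after Remark \ref{rem-novikov}); in particular the topology on $\lc$ coincides with the quotient topology coming from the $\widetilde{\phi^c_t}$-orbit decomposition, so $H$ is a homeomorphism. Equivariance follows from the naturality of this construction: every $\gamma \in \pi_1(M)$ preserves $\wfes$ and $\wfeu$, hence $\wfec$, and its action on the leaf spaces of these foliations is compatible with the bijections to the branching foliations' leaf spaces, by the equivariance in the proof of Theorem \ref{teo-openbranch}. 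This verifies Definition \ref{defiw2}, so $f$ is a leaf space collapsed Anosov flow with respect to $\phi^c_t$.

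The main point to be careful about is really just the identification in step two; there is no real obstacle here because we used the approximating foliations $\fes, \feu$ to \emph{define} $\lc$ in the first place, and the flow $\phi^c_t$ was in turn built directly from $\wfec$. The substantive content of the proposition has already been absorbed into Theorem \ref{teo.QG} (which produced the quasigeodesic fan structure) and Proposition \ref{prop.flowisexpansive} (which produced expansiveness); the present proposition is essentially a bookkeeping assembly of those results with Theorem \ref{teo.expimpliesTAF}.
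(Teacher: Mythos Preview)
Your proposal is correct and follows essentially the same approach as the paper: invoke Theorem~\ref{teo.expimpliesTAF} together with Proposition~\ref{prop.flowisexpansive} to get that $\phi^c_t$ is topologically Anosov, use Proposition~\ref{p.nontransitiveAF} to identify $\fes,\feu$ with the weak foliations, and then build $H$ from the canonical leaf-space bijections of Theorem~\ref{teo-openbranch}. The paper's proof is only slightly more explicit in describing $H$ (writing out how a center leaf $c = L\cap G$ corresponds to the unique nearby orbit in $L'\cap G'$ with $\tilde h^{cs}(L')=L$, $\tilde h^{cu}(G')=G$), but this is exactly the identification you invoke by appealing to the definition of $\lc$ in \S\ref{s.branching}.
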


\begin{proof}
Notice first that by Proposition \ref{prop.flowisexpansive} and Theorem \ref{teo.expimpliesTAF} we know that the flow $\phi^c_t$ is a topological Anosov flow. Moreover, by Proposition \ref{p.nontransitiveAF} we know that the foliations $\fes$ and $\feu$ correspond to the weak stable and unstable foliations respectively (maybe up to changing orientation of the vector field $X^c$). 

Using the maps $\hs$ and $\hu$ given by Theorem \ref{teo-openbranch} in the universal cover one can construct a $\pi_1(M)$-invariant homeomorphism $H$ from the orbit space of $\widetilde{\phi^c_t}$ to the center leaf space of $f$ as follows:
A center leaf in $\mt$ is a component $c$ of the intersection of a 
leaf $L$ of $\wcs$ and a leaf $G$ of $\wcu$. There are unique leaves
$$L' \ \in \ \wfes, \ \  G' \ \in \  \wfeu \ \ {\rm so \ that } \ \
\tild{h}_{cs}(L') = L, \ \ \tild{h}_{cu}(G') = G.$$

\noindent
There is a unique
component $\alpha$ of the intersection of $L'$ and $G'$ (that is,
an orbit of $\widetilde{\phi}_t$)
which is $\eps$ close to $c$. The map $H$ is the one that sends
this orbit $\alpha$ to $c$. 

This completes the proof. 
\end{proof}

\subsection{The quasigeodesic property}\label{ss.converse} 

Here we show: 

\begin{prop}\label{prop.convmain5}
Let $f\colon M \to M$ be a leaf space collapsed Anosov flow. Then, the $\cs$-foliation is by Gromov hyperbolic leaves and the center foliation inside each leaf of $\cs$ is a quasigeodesic fan. 
\end{prop}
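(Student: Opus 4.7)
The strategy is to use the $\pi_1(M)$-equivariant homeomorphism $H\colon\cO_\phi\to\lc$ to transfer the Gromov hyperbolicity and quasigeodesic-fan structure of weak stable leaves of a topological Anosov flow (Proposition \ref{prop.topAnosovQG}, combined with Proposition \ref{prop.smoothness}) to the center stable leaves of $\wcs$. First I would apply Proposition \ref{p.wcaf2impliescaf2} to assume, up to reversing the flow, that $H$ sends $\cO^{ws}_\phi$ to $\cO^{cs}_f$. This gives a $\pi_1(M)$-equivariant bijection between leaves of $\widetilde{\cF^{ws}_\phi}$ and leaves of $\wcs$; for corresponding leaves $L$ and $L'=H(L)$, the $\pi_1(M)$-stabilizer coincides and $H$ identifies the orbit space of $\widetilde{\phi_t}|_L$ with the center leaf space of $\wcW|_{L'}$.

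Next, for each pair of corresponding leaves $L, L'$, I would construct an equivariant homeomorphism $\Phi_L\colon L\to L'$ that sends orbits of $\widetilde{\phi_t}|_L$ to centers of $\wcW$ inside $L'$. The construction uses strong-stable transversals on both sides: in $L$ the strong stable foliation of $\widetilde{\phi_t}$ is transverse to orbits and crosses each orbit exactly once, while in $L'$ the restriction of $\wt{\cW^s}$ is one-dimensional, transverse to centers, and (by the Reebless property of $\cs$ (Remark \ref{rem-novikov}) together with partial hyperbolicity) meets each center in at most one point. Matching fixed strong-stable transversals in $L$ and $L'$ via $H$, and extending by the flow/center parametrizations inherited from the orbit-space correspondence, yields $\Phi_L$.

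The crucial step is then to show that $\Phi_L$ is a quasi-isometry with constants independent of $L$. Here I would exploit the interplay of contraction rates: $\tilde f$ contracts $\wt{\cW^s}$-lengths in $L'$ uniformly exponentially by partial hyperbolicity, while $\widetilde{\phi_t}$ contracts strong stables in $L$ uniformly exponentially by the topological Anosov property, and these contractions are compatible because $H$ intertwines the action of $\tilde f$ on $\lc$ with a self-orbit equivalence action of $\widetilde{\phi_t}$ on $\cO_\phi$ (cf.~Remark \ref{r.lscafsoe}). Combined with compactness of $M$ and $\pi_1(M)$-equivariance this yields uniform quasi-isometry constants. Since $L$ is Gromov hyperbolic and orbits of $\widetilde{\phi_t}|_L$ form a quasigeodesic fan by Propositions \ref{prop.topAnosovQG} and \ref{prop.smoothness}, $\Phi_L$ transports these properties to $L'$: leaves of $\cs$ are Gromov hyperbolic and centers form a quasigeodesic fan, with uniform constants.

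The main obstacle is precisely the construction of $\Phi_L$ and the quasi-isometry estimate. The homeomorphism $H$ lives only on leaf spaces, so upgrading it to leaf-level geometric data requires careful use of strong-stable transversals and of the contracting dynamics; moreover, since $\cs$ is only a branching foliation, one must verify that leaves of $\wt{\cW^s}$ inside $L'$ are well-behaved enough (properly embedded in $L'$, uniformly transverse to centers, and crossing each center at most once) to serve as reliable transversals. This should follow from partial hyperbolicity combined with the no-crossing condition of branching foliations, but the verification is technical.
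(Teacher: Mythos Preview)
Your approach has a genuine gap at the quasi-isometry step, and the paper takes a completely different route that avoids it.

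The problem with your strategy is that $H$ lives only on the orbit/center leaf spaces, so it carries no metric information whatsoever. Your proposed $\Phi_L$ would be built by matching strong-stable transversals and then extending along orbits/centers, but there is no mechanism forcing lengths along centers in $L'$ to be comparable to lengths along orbits in $L$. The ``contraction rates'' argument you sketch does not work: the self orbit equivalence $\beta$ is only a homeomorphism, so it gives no uniform control on how distances along strong stables of $\phi_t$ transform, and the intertwining $H$ provides is only at the level of leaf spaces, not pointwise. Even the preliminary claim that each strong stable leaf in $L'$ meets each center exactly once is essentially part of what you are trying to prove (it is close to the Hausdorff leaf space / fan structure). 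You correctly identify this as the main obstacle, but it is not a technicality: it is the whole content of the proposition.

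The paper's argument sidesteps all of this. It passes to a finite cover where the bundles are orientable, takes the approximating foliations $\fes,\feu$ of Theorem~\ref{teo-openbranch}, and lets $\cG=\fes\cap\feu$ be the resulting genuine one-dimensional foliation of $M$. Via $H$ and the leaf correspondence of Theorem~\ref{teo-openbranch}, the leaf space of $\widetilde{\cG}$ is $\pi_1(M)$-equivariantly homeomorphic to the orbit space $\cO_\phi$. A classical result of Haefliger--Ghys--Barbot then upgrades this to an honest homeomorphism of $M$ sending the flow foliation of $\phi_t$ to $\cG$; orienting $\cG$ gives a flow $\psi_t$ on $M$ orbit equivalent to $\phi_t$, hence expansive, hence topologically Anosov by Theorem~\ref{teo.expimpliesTAF}, with weak stable foliation $\fes$. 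Now Proposition~\ref{prop.topAnosovQG} applies \emph{directly to $\psi_t$}: its weak stable leaves are Gromov hyperbolic and its orbits form quasigeodesic fans. Since leaves of $\cs$ are uniformly quasi-isometric to the corresponding leaves of $\fes$, the conclusion transfers. The key idea you are missing is that one should realize the center foliation itself (after blowing up) as the orbit foliation of a topological Anosov flow on $M$, rather than trying to compare leaf geometries through a leaf-space map.
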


\begin{proof}
We do the proof for $\cs$, the same proof works for $\cu$.

Up to taking a finite cover and a lift of an iterate of $f$
we may assume that $E^s, E^c, E^u$ are orientable
and $f$ preserves the lifted foliation.
Since the quasigeodesic properties are verified in the universal
cover, this does not change the result.
In addition $f$ is still a leaf space collapsed Anosov flow
in the cover. Let $\phi_t$ be the Anosov flow associated to $f$.
Let $H\colon \oo_{\phi} \to \lc$ be the associated homeomorphism
between orbit space of $\tild{\phi}_t$ and center leaf space
in $\mt$.
Proposition \ref{p.wcaf2impliescaf2} implies that $H$  maps
$\oo^{ws}_\phi$ to $\oo^{cs}_f$ and $\oo^{wu}_\phi$ to $\oo^{cu}_f$.

Using Theorem \ref{teo-openbranch}, we can approximate $\cs$, $\cu$ by actual foliations $\fes, \feu$.
The intersection of $\fes, \feu$ is a one-dimensional
foliation $\cG$ in $M$, with lift $\widetilde{\cG}$.
Given any flow line $\alpha$ of $\tild{\phi}_t$ it is the
intersection of a stable leaf $L_0$ with an unstable leaf
$Z_0$. Under $H$ these leaves $L_0, Z_0$ map to leaves $L_1$ of $\wcs$ and
$Z_1$ of $\wcu$ respectively. 
Thanks to item \ref{item.ApporxFol_uniqueness} of Theorem \ref{teo-openbranch}, the leaves $L_1, Z_1$ are $\eps$ near some unique
leaves $L$ of $\wfes$ and $Z$ of $\wfeu$, respectively. 

Therefore $\alpha$
is associated with a unique leaf of $\widetilde{\cG}$ and
vice versa. This association is a homeomorphism from the
orbit space $\oo_\phi$ to the leaf space
of $\widetilde{\cG}$. This homeomorphism is
clearly $\pi_1(M)$ equivariant.

By results of Haefliger, Ghys and Barbot (see \cite[Prop.1.36]{BarbotHDR}),
it follows
that there is a homeomorphism $\eta$ from $M$ to $M$ sending
the flow foliation of $\phi_t$ to the foliation $\cG$.
We can then orient the foliation $\cG$ using this homeomorphism.
Hence this foliation becomes the flow foliation of a flow
$\psi_t$.
Since the flow $\phi_t$ is expansive then the flow $\psi_t$ is
also expansive. 
By Theorem \ref{teo.expimpliesTAF} it follows that
$\psi_t$ is a topological Anosov flow. By the equivalence of the flow foliations of $\phi_t$ and $\psi_t$
it now follows that 
the stable foliation
of $\psi_t$ is $\fes$. By Proposition \ref{prop.topAnosovQG}
it follows that
the foliation $\fes$ is by Gromov hyperbolic leaves and
the flow lines in leaves of $\fes$ are uniform quasigeodesics.

This implies that the leaves of $\cs$ are Gromov hyperbolic
and the center leaves in leaves of $\wcs$ are uniform quasigeodesics.

This finishes the proof of Proposition \ref{prop.convmain5}.
\end{proof}

This finishes the proof of Theorem \ref{teo.main5}.


\section{Strong implies leaf space collapsed Anosov flow}\label{s.strong_implies_leafspace} 
In this section we show that Definition \ref{defi1} implies Definition \ref{defiw2}. The main point is to construct the branching foliations from the map $h$ provided by Definition \ref{defi1}. The rest of the conditions will be rather direct.

\begin{prop}\label{prop-1implies2}
If $f$ is a strong collapsed Anosov flow, then it is a leaf space collapsed Anosov flow. 
\end{prop}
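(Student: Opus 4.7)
The plan is to produce both the branching foliations and the homeomorphism $H$ directly from the data provided by Definition \ref{defi1}. I would work in the universal cover $\mt$, fixing lifts: $\wt \phi_t$ of the flow, $\wt h$ of a homotopy from $h$ to the identity (so that $\wt h$ commutes with every deck transformation), and $\wt \beta$, $\wt f$ chosen so that $\wt f \circ \wt h = \wt h \circ \wt\beta$.

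First I would define $\wcs := \{\wt h(L) : L \in \wt \cF^{ws}_\phi\}$ and analogously $\wcu$. Property (2) of Definition \ref{defi1} ensures these are $C^1$ properly immersed planes tangent to $E^{cs}$ and $E^{cu}$ respectively. I would then verify the axioms of a branching foliation from Definition \ref{def.branching}: coverage of $\mt$ is immediate since $h$ sends orbits to center curves and weak leaves cover $\mt$; completeness under pointed $C^1$-limits follows because weak leaves of a topological Anosov flow are closed under limits and $\wt h$ is continuous, with minimality obtained by choosing one representative per reparametrization class. The no-topological-crossing axiom is where the transversally collapsing hypothesis (item (3) of Definition \ref{defi1}) enters decisively: a topological crossing of $\wt h(L_1)$ and $\wt h(L_2)$ would force a curve in one image to traverse both sides of the other, but any point of transverse intersection of their preimages involves flow orbits that, by the half-space-into-half-space condition, cannot give rise to such a curve. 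Equivariance of $\wt h$ under $\pi_1(M)$ makes $\wcs$ descend to a branching foliation $\cs$ of $M$, and $f \circ h = h \circ \beta$ together with the fact that the self orbit equivalence $\beta$ permutes weak stable leaves of $\phi_t$ yields $f$-invariance. The same construction gives $\cu$.

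Next I would construct $H \colon \cO_\phi \to \lc$. A point of $\cO_\phi$ is an orbit $\alpha$ of $\wt \phi_t$, which is a connected component of $L \cap U$ for unique leaves $L \in \wt \cF^{ws}_\phi$ and $U \in \wt \cF^{wu}_\phi$. I define $H(\alpha)$ as the connected component of $\wt h(L) \cap \wt h(U)$ containing $\wt h(\alpha)$, regarded as an element of $\lc$. Surjectivity is automatic since every leaf of $\wcs$ and of $\wcu$ is an image of a weak leaf, so every center leaf arises from some orbit. The $\pi_1(M)$-equivariance is inherited from that of $\wt h$. For injectivity, if $H(\alpha_1) = H(\alpha_2)$ with $\alpha_1 \neq \alpha_2$, then in the case where $\alpha_1, \alpha_2$ share a weak stable leaf $L$, the transversally collapsing condition says $\wt h(\alpha_1)$ separates $\wt h(L)$ in two, with the two open half-spaces of $L \smallsetminus \alpha_1$ mapping into disjoint closed half-spaces; since $\alpha_2$ meets only one such half-space, $\wt h(\alpha_2)$ lies in the interior of a half-space, hence in a different component than $\wt h(\alpha_1)$ of the center leaf intersection, a contradiction. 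The case where $\alpha_1, \alpha_2$ lie on different weak leaves reduces to the previous one after intersecting with a common weak unstable or stable leaf. Continuity of $H$ and its inverse follows from continuity of $\wt h$ transversally and along leaves, combined with the fact that both $\cO_\phi$ and $\lc$ carry the quotient topology from their respective (branching) foliated structures.

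The step I expect to be the main obstacle is the rigorous verification of the no-topological-crossing axiom for the family $\wcs$, together with the companion fact that the relation $H$ is well-defined and injective; both hinge on unpacking the transversally collapsing condition carefully when several weak leaves can share images or when an image leaf can meet several preimages. A secondary technical nuisance will be confirming that the family $\{\wt h(L)\}$ is already complete under leafwise limits (or, if not, that enlarging it to the closure preserves the non-crossing property), which is needed before one can honestly call $\wcs$ a branching foliation.
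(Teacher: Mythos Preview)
Your overall strategy is exactly the paper's: define $\wcs,\wcu$ as the $\wt h$-images of the weak foliations and then push $\wt h$ down to a map $H$ on leaf/orbit spaces. The place where your outline has a genuine gap is the injectivity of $H$ (equivalently, the minimality axiom for $\wcs$), and the missing ingredient is \emph{not} a more careful unpacking of the transversally collapsing condition.

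The transversally collapsing hypothesis in Definition~\ref{defi1}(iii) only controls what $\wt h$ does \emph{within} a single weak leaf: it says $\wt h(\gamma)$ separates $\wt h(F)$ and that open half-spaces go to \emph{closed} half-spaces. Since the target half-spaces are closed, this does not prevent $\wt h(\alpha_1)=\wt h(\alpha_2)$ for distinct orbits $\alpha_1,\alpha_2$ in the same weak leaf, and it says nothing at all about whether distinct weak stable leaves $L_1,L_2$ can have $\wt h(L_1)=\wt h(L_2)$. Your proposed reduction (``intersect with a common weak unstable or stable leaf'') just brings you back to two distinct orbits in one weak leaf whose images could still coincide, so it does not close the loop.

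The paper's argument (Lemma~\ref{lema-branch}) uses the \emph{pair} of transverse bundles, not just condition (iii). Suppose $\wt h(L_1)=\wt h(L_2)$ for distinct weak stable leaves meeting a common weak unstable leaf $F$. The no-crossing property forces $\wt h(L)=\wt h(L_1)$ for every weak stable leaf $L$ between $L_1$ and $L_2$; hence every orbit $\alpha_L=L\cap F$ has $\wt h(\alpha_L)\subset\wt h(L_1)$. Therefore the entire two-dimensional strip in $F$ between $\alpha_{L_1}$ and $\alpha_{L_2}$ is mapped by $\wt h$ into a surface tangent to $E^{cs}$. But by hypothesis~(ii) of Definition~\ref{defi1}, $\wt h(F)$ is also tangent to $E^{cu}$. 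Since $E^{cs}$ and $E^{cu}$ are transverse plane fields, a surface simultaneously tangent to both is one-dimensional, contradicting the fact that $\wt h$ is a bounded distance from the identity and the strip contains arbitrarily large disks. This $E^{cs}$/$E^{cu}$ tension is the idea you are missing; once you have it, your sketch for the remaining steps (bijectivity and continuity of $H$, $\pi_1$-equivariance) goes through along the lines the paper follows.
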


We first show the following lemma: 

\begin{lema}\label{lema-branch}
Let $f$ be a strong collapsed Anosov flow (Definition \ref{defi1}), then there are $f$-invariant branching foliations $\cW^{cs}$ and $\cW^{cu}$ tangent to $E^{cs}$ and $E^{cu}$ respectively such that the image of each of the leaves of $\cW^{cs}$ (resp. $\cW^{cu}$) coincides with $h(\cF^{ws}_\phi(x))$ (resp. $h(\cF^{wu}_\phi(x))$) for some $x \in M$. 
\end{lema}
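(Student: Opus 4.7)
The plan is to define the leaves of $\cW^{cs}$ (respectively $\cW^{cu}$) as the images under $h$ of the weak stable (respectively weak unstable) leaves of $\phi_t$, and then verify the four axioms of Definition \ref{def.branching} together with the $f$-invariance.

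More concretely, working in the universal cover, for each leaf $L$ of $\widetilde{\cF^{ws}_\phi}$, condition (ii) of Definition \ref{defi1} asserts that $\tilde h(L)$ is a $C^1$, properly embedded plane in $\mt$ tangent to $E^{cs}$. I would parametrize this plane by a simply connected domain $U_L\subset \RR^2$ and compose with the covering projection $\pi\colon \mt\to M$ to produce a $C^1$-immersion $\varphi_L\colon U_L\to M$ whose image is exactly $h(\pi(L))$. Then let $\cW^{cs}$ consist of all such $(\varphi_L,U_L)$, taken modulo reparametrization, and similarly for $\cW^{cu}$ using $\widetilde{\cF^{wu}_\phi}$.

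To verify the axioms of Definition \ref{def.branching}: the covering condition is immediate because $h$, being homotopic to the identity on the closed manifold $M$, has degree one and is surjective; minimality is achieved by the quotient by reparametrization. For completeness in the pointed compact open topology, if a sequence of $\tilde h(L_n)$ has a basepoint converging into $\mt$, I would first use deck translations to bring everything into a compact set and then use the fact that $\widetilde{\cF^{ws}_\phi}$ is a true foliation (so its leaf collection is closed in the pointed compact open topology) to extract a subsequence $L_n\to L \in \widetilde{\cF^{ws}_\phi}$; continuity of $\tilde h$ then forces $\tilde h(L_n)\to \tilde h(L)$. For $f$-invariance, condition (iv) of Definition \ref{defi1} gives $f\circ h = h\circ \beta$; since $\beta$ is a self orbit equivalence and two orbits lie on a common weak stable leaf precisely when they are forward asymptotic (a property invariant under any continuous monotone reparametrization), $\beta$ sends leaves of $\cF^{ws}_\phi$ to leaves of $\cF^{ws}_\phi$. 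Consequently $f(h(F))=h(\beta(F))$ belongs to the same collection, and analogously for $\cW^{cu}$ using that $\beta$ also sends backward-asymptotic orbits to backward-asymptotic orbits.

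The main obstacle will be the no-crossing condition. Two distinct leaves $L_1,L_2$ of $\widetilde{\cF^{ws}_\phi}$ are disjoint planes in $\mt$ but their images $\tilde h(L_1),\tilde h(L_2)$ are $C^1$-planes tangent to the same continuous distribution $E^{cs}$, so they may a priori touch and share sheets. The strategy is to use the strong unstable foliation $\cW^u$ of $f$ as a local transversal: since $E^{cs}$ is complementary to $E^u$ and $\cW^u$ has smooth curves transverse to any $C^1$-surface tangent to $E^{cs}$, each image surface is locally a graph over a small center-stable disk along $\cW^u$; at any hypothetical topological crossing both graphs would share the tangent plane at the touching point and therefore lie locally on the same side, ruling out a genuine crossing in the sense of Definition \ref{def.branching}. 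This is the same mechanism used in \cite[Section 4]{BI}, adapted to our setting where the candidate surfaces come from images of the weak Anosov foliations rather than from an approximation argument, and it will be the technical heart of the proof.
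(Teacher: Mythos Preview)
Your approach to the no-crossing condition has a genuine gap. The claim that ``at any hypothetical topological crossing both graphs would share the tangent plane at the touching point and therefore lie locally on the same side'' is false for surfaces tangent to a merely continuous distribution. The one-dimensional model already shows this: the line field generated by $(1,3y^{2/3})$ in the plane has both $y=0$ and $y=x^3$ as $C^\infty$ integral curves through the origin, sharing tangent direction there, yet they cross topologically. The mechanism in \cite[\S 4]{BI} you invoke does \emph{not} say that any two surfaces tangent to $E^{cs}$ avoid crossing; it works only because their surfaces are built as upper/lower envelopes, an extremality property that images $\tilde h(L)$ have no reason to satisfy.

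The paper handles this differently: it invokes condition (iii) of Definition \ref{defi1}, the \emph{transversally collapsing} hypothesis, which is precisely the assumption inserted to force non-crossing. Concretely, to compare $\tilde h(L_1)$ and $\tilde h(L_2)$ one takes a weak \emph{unstable} leaf $F$ meeting both; the orbits $\gamma_i=L_i\cap F$ lie in opposite half-planes of $F$, and condition (iii) applied to $F$ says $\tilde h$ sends each half-plane into the closure of a single complementary component of $\tilde h(F)\smallsetminus\tilde h(\gamma_i)$. This gives a consistent transverse ordering of $\tilde h(L_1)$ and $\tilde h(L_2)$ along the $E^{cu}$-surface $\tilde h(F)$, ruling out crossing. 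You never use condition (iii), which is why your argument cannot close.

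A secondary point: the weak stable leaves of a \emph{topological} Anosov flow are a priori only $C^0$, so one cannot directly pull back a Riemannian metric to them. The paper first replaces $\phi$ by an orbit equivalent flow with smooth weak stable leaves (Proposition \ref{prop.smoothness}) before setting up the parametrizations $\varphi_L$; your proposal glosses over this, though since you parametrize the \emph{image} $\tilde h(L)$ (which is $C^1$ by hypothesis) rather than $L$ itself, you may be able to sidestep it.
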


\begin{proof}
First note that, if the topological Anosov flow $\phi$ is orbit equivalent to 
a topological Anosov flow $\phi_1$, via a homeomorphism that is homotopic to identity, then, since orbit equivalences can always be made smooth along the orbits, $f$ is also a strong collapsed Anosov flow for the flow $\phi_1$, via a map $h_1$. 
Moreover, we have that $h_1(\cF^{ws}_{\phi_1}) = h(\cF^{ws}_{\phi})$.

Hence, to prove the conclusion of the lemma for $E^{cs}$ or $E^{cu}$ we may choose an appropriate Anosov flow orbit equivalent to $\phi$ (via a homeomorphism that is homotopic to identity).

We do it for $E^{cs}$ by taking an orbit equivalent Anosov flow that has smooth weak stable leaves, which exists thanks to Proposition 
\ref{prop.smoothness}.
Similarly, for $E^{cu}$, we would choose an Anosov flow with smooth center unstable leaves.

We only do the case of $E^{cs}$, the other one being analogous. Abusing notation, we assume that $\phi$ itself has smooth weak stable leaves.

Take the pull back of the ambient Riemannian
metric. For each leaf $L \in \cF^{ws}_\phi$ 
we define a continuous local homeomorphism 
onto its image
$\varphi_L\colon U_L \to M$ where $U_L$ is the universal cover of the leaf $L \subset M$ with this intrinsic Riemannian
metric. Note that $U_L$ is a complete metric space, homeomorphic to $\RR^2$. 

We work in the universal cover, and consider $\tilde h\colon \mt \to \mt$ a lift which is a bounded distance from the identity (it exists because $h$ is homotopic to the identity). Let $\tilde L$ be a lift of $L$ to $\mt$. Since $\tilde L$ is a properly embedded plane and $\tilde h$ is a bounded distance from the identity and maps $\tilde L$ to a $C^1$-surface tangent to $E^{cs}$ by assumptions we get that the image is also a properly embedded plane in $\mt$. This means that when lifted to the universal cover, there is $\hat \varphi_L\colon U_L \to \mt$ a $C^1$-proper embedding tangent to $E^{cs}$ such that its image coincides with $\tilde h \circ \hat \varphi_L\colon U_L \to \mt$. The bounded distance to the identity implies that the image of $\hat \varphi_L$ is complete with the metric induced by the embedding. The non topological crossing is ensured by the 'transversally collapsing' in the definition of strong collapsed Anosov flow which makes backtracking impossible. 

Finally, to show the minimality condition (i.e. item \ref{item.BF_minimal} in Definition \ref{def.branching}), we need to show that the image of two different leaves of $\widetilde{\cF^{ws}_\phi}$ by $\tilde h$ are different. 
Suppose then that $L_1, L_2$ are distinct leaves of 
$\widetilde{\cF^{ws}_\phi}$ which are mapped to the same
surface by $\tilde{h}$.
Suppose first that $L_1, L_2$ intersect a common unstable leaf $F$.
In particular, the set of leaves separating $L_1$ from $L_2$ is an interval.
If for some leaf $L$ in this interval we have $\tilde{h}(L) \not =
\tilde{h}(L_1)$, then since there is no topological crossing
between leaves, it also follows that $\tilde{h}(L_1) \not = 
\tilde{h}(L_2)$, which contradicts our assumption. Thus $\tilde{h}(L) = \tilde{h}(L_1)$.
For any such $L$, the intersection $L \cap F$ is a single flow
line $\alpha_L$. The above shows that $\tilde{h}(\alpha_L)$
is contained in $\tilde{h}(L_1)$. Therefore the region
in $F$ made up of the flow lines between $\alpha_{L_1}$ and
$\alpha_{L_2}$ is mapped into $\tilde{h}(L_2)$. Therefore
this is mapped into a region tangent to $E^{cs}$. This
contradicts the fact that $F$ is mapped to a surface
tangent to $E^{cu}$ because $h$ is close to the identity and the region between $\alpha_{L_1}$ and $\alpha_{L_2}$ contains arbitrarily large disks.

For general leaves $L_1, L_2$ the region between them is the
connected component of $\mt - (L_1 \cup L_2)$ which limits on
both of them. 
If $\tilde{h}(L_1) = \tilde{h}(L_2)$ then for any leaf $L$
between $L_1, L_2$ then $\tilde{h}(L) = \tilde{h}(L_1)$.
There is a leaf $L$ which is between $L_1$ and
$L_2$ and which intersects a common unstable $F$ with $L_1$.
By the first case $\tilde{h}(L_1) \not = \tilde{h}(L)$.
This leads to a contradiction.

This finishes the proof of the lemma.
\end{proof}

\begin{proof}[Proof of Proposition \ref{prop-1implies2}]
We just need to show that there is an equivariant homeomorphism between the leaf spaces. Consider the lift $\tilde h$ of $h$ to the universal cover obtained by lifting the homotopy of $h$ to the identity. Thanks to Lemma \ref{lema-branch}, the map $\tilde h$ sends the leaves of the weak un/stable foliations to leaves of the branching center un/stable foliations. 

We claim that $\tilde h$
induces a bijection between the orbit spaces of the flow $\phi_t$ and the center leaf space. 
Assume that there are distinct flow lines $o_1, o_2$ of $\widetilde \phi_t$
which are sent to the same center leaf $c_1$ by $\tilde h$.
Let $L_i = \widetilde{\cF^{ws}_\phi}(o_i)$ and $F_i = \tilde h(L_i)$,
which are leaves of $\wcs$.
In the proof of the previous lemma we showed that $F_1, F_2$ have to be distinct leaves.
Suppose first that $L_2$ intersects $U_1 := \widetilde{\cF^{wu}_\phi}(o_1)$.
Let $Z_t, 0 \leq t \leq 1$ be the interval of stable leaves
intersecting $U_1$ between $L_1$ and $L_2$.
Let $Y_t = \tilde h(Z_t)$, center stable leaves.
We consider $\ell_t := \tilde h(Z_t \cap U_1)$ which is contained in 
$\tilde h(U_1)$ a leaf of $\wcu$. We have that $\ell_0$ is already in
$F_2$. Since the $\tilde Z_t$ have to be between $F_1$ and $F_2$, it
follows that $\ell_t = c_1$ for all $t$. 
But $\ell_t = \tilde h(U_1) \cap \tilde h(Z_t)$ hence $\ell_t$
is a center leaf in $\tilde h(U_1)$.
This was proved impossible
in the previous lemma $-$ using the center unstable foliation
instead of the center stable foliation.

The remaining possibility is as follows:
Let $\cU$ be the open set in $\mt$ which is the union of leaves
of $\widetilde{\cF^{ws}_\phi}$ intersecting $U_1$. 
Since $L_2$ is disjoint from $\cU$ there is a unique leaf
$L$ in the boundary of $\cU$ which is either $L_2$ or
separates $L_1$ from $L_2$. In any case $c_1$ is contained
in $L$. There are two possibilities: 1) $L_1, L$ intersect
a common transversal, 2) $L_1, L$ are non separated from
each other in the leaf space of $\widetilde{\cF^{ws}_\phi}$.
In case 1) consider $G$ a stable leaf separating $L_1$ from $L$.
Then $G$ intersects $U_1$, and $c_1$ is contained in $\tilde h(G)$.
So the same proof as in the previous paragraph concludes.
In the case 2) consider $G$ a stable leaf intersecting
both $U_1$ and $U_2 := \widetilde{\cF^{wu}_\phi}(o_2)$.
The separation properties show that $\tilde h(G)$ has
to be between $F_1$ and $F_2$. But then $G \cap U_1$ and
$G \cap U_2$ are two distinct orbits that map to $c_1$.
This contradicts that $\tilde h(G)$ intersects a transversal
at most once.

We conclude that $\tilde h$ induces a bijection from the orbit
space of $\widetilde \phi_t$ to the center leaf space.
This bijection respects center stable and center unstable leaves and their ordering inside each of the foliations. Therefore the bijection
is a homeomorphism. This homeomorphism is clearly equivariant (since $\tilde h$ commutes with deck transformations).  
\end{proof}

\section{Leaf space implies strong collapsed Anosov flow} \label{s.leafspace_implies_strong}
In this section we will show that Definition \ref{defiw2} implies Definition \ref{defi1} under some orientability assumption. Together with  Proposition \ref{prop-1implies2} it completes the proof of Theorem \ref{teo.main3}. 

\begin{prop}\label{prop-2implies1}
If $f$ is a leaf space collapsed Anosov flow and $E^{cs}$ is transversally orientable, then it is a strong collapsed Anosov flow. 
\end{prop}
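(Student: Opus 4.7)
The plan is to upgrade the leaf-space identification $H$ to a continuous map $h\colon M \to M$ satisfying Definition \ref{defi1}, by combining a Haefliger--Ghys--Barbot averaging (as invoked in the proof of Proposition \ref{prop.convmain5}) with the collapsing maps from Theorem \ref{teo-openbranch}.

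First, I would invoke Proposition \ref{p.wcaf2impliescaf2} to conclude that, up to replacing $\phi_t$ by $\phi_{-t}$, the map $H$ sends $\cO^{ws}_\phi$ to $\cO^{cs}_f$ and $\cO^{wu}_\phi$ to $\cO^{cu}_f$. Using the transverse orientability of $E^{cs}$ (and passing to a finite cover, if necessary, so that $E^{cu}$ is also transversely orientable, then descending at the end), I would apply Theorem \ref{teo-openbranch} to produce approximating foliations $\fes,\feu$ together with collapsing maps $\hs,\hu\colon M \to M$. Set $\fec := \fes \cap \feu$. The canonical bijection from Theorem \ref{teo-openbranch}(ii) between the leaves of $\wfes$ (resp.\ $\wfeu$) and of $\wcs$ (resp.\ $\wcu$) then translates $H$ into a $\pi_1(M)$-equivariant homeomorphism $\widetilde H$ from $\cO_\phi$ to the leaf space of $\widetilde{\fec}$, mapping the weak (un)stable foliations of $\widetilde\phi_t$ to $\wfes$ and $\wfeu$ respectively.

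Next, I would apply the averaging of Haefliger, Ghys, and Barbot (cf.~\cite[Prop.~1.36]{BarbotHDR}, as used at the end of the proof of Proposition \ref{prop.convmain5}): the equivariant leaf-space identification $\widetilde H$, together with the orientations of both one-dimensional foliations $\widetilde \phi_t$ and $\widetilde{\fec}$, integrates to a $\pi_1(M)$-equivariant homeomorphism $\eta\colon \mt \to \mt$, isotopic to the identity, smooth along the orbits of $\widetilde\phi_t$, sending orbits to leaves of $\widetilde{\fec}$, and sending weak stable and unstable leaves to $\wfes$- and $\wfeu$-leaves. I would then define $\widetilde h := \whs \circ \eta$, which descends to $h\colon M \to M$. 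By Theorem \ref{teo-openbranch}(ii), the image of a weak stable leaf is a $C^1$-surface tangent to $E^{cs}$; by the compatibility of $\hs$ and $\hu$ (both $\eps$-close to the identity and identified at the leaf-space level via $\widetilde H$), the image of a weak unstable leaf coincides with the associated $\wcu$-leaf and is therefore a $C^1$-surface tangent to $E^{cu}$. The image of each orbit of $\widetilde\phi_t$ lies in the intersection of two such surfaces, yielding a $C^1$-center curve tangent to $E^c$ along which the pushforward of the flow vector field is nonzero; the transversaly collapsing property follows because $\whs$ is $\eps$-close to the identity and $\eta$ respects the foliation structures on both sides. The self orbit equivalence $\beta$ is produced from the equivariant action of a lift $\ft$ of $f$ on $\lc$, conjugated to $\cO_\phi$ via $H^{-1}$ and then averaged to a homeomorphism of $M$ by the standard argument of \cite[Theorem 3.4]{Barbot}; the equation $f\circ h = h\circ \beta$ holds at the leaf-space level by construction and hence on $M$.

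The main obstacle will be verifying that $h(\cF^{wu}_\phi(x))$ is genuinely a $C^1$-surface tangent to $E^{cu}$, since the composition $\whs\circ \eta$ is natural for the center stable direction but a priori not for the center unstable one. Overcoming this should rely on the fact that the double identification provided by $\widetilde H$ and by Theorem \ref{teo-openbranch} forces $\hs$ and $\hu$ to induce the same leaf-level correspondence between $\wfeu$-leaves and $\wcu$-leaves; if this is not enough, one may need a secondary leafwise averaging along the strong stable direction inside each $\wcs$-leaf to realign the image center curves with the target center leaves of $\cW^c$ and ensure both tangency conditions simultaneously.
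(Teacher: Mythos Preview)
Your approach has a genuine gap in the center unstable direction, and the suggested fixes do not close it. The map $\whs$ is constructed in Theorem \ref{teo-openbranch} by sliding points along a vector field transverse to $E^{cs}$; it is a leafwise $C^1$ diffeomorphism from $\wfes$-leaves to $\wcs$-leaves, but it has no reason to carry $\wfeu$-leaves to $\wcu$-leaves, nor $\wfec$-leaves to center leaves of $\cW^c$. Consequently the image $\whs(\eta(o_x))$ is only some $C^1$ curve in a $\wcs$-leaf, in general neither tangent to $E^c$ nor contained in a $\wcu$-leaf. So with your definition of $h$, already condition (i) of Definition \ref{defi1} fails, not just condition (ii) for $\cF^{wu}_\phi$. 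The observation that $\hs$ and $\hu$ induce the same bijection at the leaf-space level does not help: that is a statement about which leaf is associated to which, not about where $\hs$ sends points inside a given $\feu$-leaf. Your proposed secondary averaging along stable leaves would have to simultaneously preserve tangency to $E^{cs}$ and produce tangency to $E^{cu}$, which is exactly the difficulty you are trying to solve.

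The paper proceeds differently. It replaces $\phi_t$ by an orbit equivalent flow $\psi_t$ whose orbits are the leafwise geodesics in $\wfes$ toward the funnel points (this uses Proposition \ref{prop.convmain5} and the quasigeodesic fan structure). Then, rather than composing with $\hs$, it constructs $h$ so that $h(x)$ lands \emph{directly on the center leaf} $\ell_x := H_0(o_x)\in\cW^c$: one orthogonally projects $\ell_x$ onto the geodesic $\tilde h^{cs}(o_x)$ inside the $\wcs$-leaf, averages this projection (a Fuller-type argument) to obtain a $C^1$ diffeomorphism $\ell_x\to o_x$, and inverts. Because $h(x)\in\ell_x$ by construction, the image of each orbit is a genuine center curve, so both the $E^{cs}$ and $E^{cu}$ tangency conditions hold simultaneously via Proposition \ref{p.wcaf2impliescaf2}. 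Finally, $\beta$ is \emph{not} obtained by a separate Barbot averaging: it is defined pointwise by $\beta(x)=(h|_{f(\ell_x)})^{-1}(f(h(x)))$, using that $h$ is injective along each center leaf. Your assertion that the commutation ``holds at the leaf-space level and hence on $M$'' is not valid without this step, since leaf-space agreement only determines $\beta$ up to motion along orbits.
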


The strategy is quite simple, we wish to map each orbit of the Anosov flow to the corresponding center curve given by Definition \ref{defiw2}. The difficulty in implementing the strategy has to do with the fact that we only have a map at the level of leaf spaces, so we first need to construct an actual map of the manifold which realizes this equivalence, for this, we first construct a specific realization of the (topological) Anosov flow that allows us to get this map in a natural way. Once this is done, a standard averaging argument achieves the local injectivity along orbits of the flow.

\subsection{Constructing a convenient realization of the Anosov flow} 
Let $f$ be a leaf space collapsed Anosov flow with $E^{cs}$ transversally orientable. We consider $\phi_t\colon M \to M$ the topological Anosov flow and $H\colon \cO_\phi \to \cL^c$ given by Definition \ref{defiw2}.

We will start by applying Theorem \ref{teo-openbranch} to $\cs$ to get an approximating foliation $\fes$. We denote by $\wcs$ and $\wfes$ the lifts to $\mt$. As explained in \S \ref{ss.Candelmetric} we can consider a metric on $M$ that makes leaves of $\fes$ negatively curved. 
In Proposition \ref{prop.convmain5} we proved that the center leaves inside each leaf of $\wcs$ form a quasigeodesic fan.
Then we can pull them back to each leaf of $\wfes$ to get a funnel point $p(L) \in S^1(L)$ in each leaf $L \in \fes$.

We consider the flow $\widetilde{ \psi_t} \colon \mt \to \mt$ defined as follows: For a point $x \in L \in \wfes$ we consider $\widetilde{ \psi_t}(x)$ to be moving along the geodesic through $x$ with endpoint the funnel point of $L$ at unit speed. This definition is clearly $\pi_1(M)$-invariant, and this flow descends to $M$ and we denote it by $\psi_t$. 

In Proposition \ref{prop.flowisexpansive} we proved the following:

\begin{prop}\label{prop.hatphiisTAF}
The flow $\psi_t$ is topologically Anosov and orbit equivalent to $\phi_t$ by an orbit equivalence homotopic to the identity.  
\end{prop}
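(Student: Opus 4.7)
The plan is to mirror the constructions used in Proposition \ref{prop.smoothness} and in the proof of Proposition \ref{prop.flowisexpansive}, extracting from the already-established quasigeodesic fan structure (Theorem \ref{teo.QG}, via Proposition \ref{prop.convmain5}) an orbit equivalence with $\phi_t$. First I would check that $\widetilde{\psi_t}$ is a continuous flow on $\mt$ tangent to a continuous, non-vanishing vector field, and that $\psi_t$ preserves $\fes$. Tangency to leaves of $\wfes$ is by definition, so the only point is continuity of the initial velocity transversally to the foliation. This reduces to continuity of the funnel point $p(L) \in S^1(L)$ as $L$ varies in $\wfes$: The funnel point is the common endpoint, in each leaf $L$ of $\wfes$, of the quasigeodesic fan formed by the intersections of $L$ with the leaves of $\wfeu$ (equivalently, via $\hs$ and $\hu$, of the center curves in the corresponding leaf of $\wcs$). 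The continuity of these ideal points as $L$ varies then follows as in the proof of Proposition \ref{prop.topAnosovQG} from the uniform quasigeodesic property, the Morse lemma, and the continuity of ideal points of curves in the circle bundle over $\lcs$ introduced in the proof of Proposition \ref{prop.Unique}.

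Second, I would construct the orbit equivalence between $\psi_t$ and $\phi_t$. In each leaf $L \in \wfes$, Theorem \ref{teo.QG} (as invoked through the construction preceding this proposition) gives that the center curves of $f$ pulled back to $L$ form a quasigeodesic fan with funnel point $p(L)$; and the orbits of $\widetilde{\psi_t}$ in $L$ are by definition the geodesics with one endpoint at $p(L)$. By the Morse lemma with uniform constants (Remark \ref{rem.uniformbound}) each pulled-back center curve is at bounded Hausdorff distance in $L$ from exactly one orbit of $\widetilde{\psi_t}$, namely the geodesic sharing its ideal endpoints. This yields a $\pi_1(M)$-equivariant homeomorphism from the orbit space of $\widetilde{\psi_t}$ to the center leaf space $\lc$, which composed with $H^{-1}\colon \lc \to \cO_\phi$ gives a $\pi_1(M)$-equivariant homeomorphism between the orbit spaces of $\widetilde{\psi_t}$ and $\widetilde{\phi_t}$. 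By the results of Haefliger, Ghys and Barbot (cf.\ \cite[Prop.~1.36]{BarbotHDR}), this equivariant homeomorphism is induced by an actual orbit equivalence $\eta\colon M \to M$ from $\phi_t$ to $\psi_t$.

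Third, I would upgrade $\eta$ to be homotopic to the identity. The standard averaging construction used by Ghys and Barbot produces $\eta$ by averaging along flow lines of $\widetilde{\phi_t}$, using the transverse identification of orbit spaces; the key observation is that, for any orbit $o$ of $\widetilde{\phi_t}$ in the leaf $F \in \wfes$, the corresponding orbit of $\widetilde{\psi_t}$ lies in the same leaf $F$, and is the geodesic of $F$ with the same pair of endpoints as the image of $o$ in the corresponding center leaf. Both $o$ and its image orbit lie within uniformly bounded distance of this center leaf. Hence a lift $\tilde \eta$ of $\eta$ can be chosen uniformly close to the identity on $\mt$, and therefore $\eta$ is homotopic to the identity.

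Finally, since $\phi_t$ is expansive, its orbit equivalent flow $\psi_t$ is expansive as well. Because $\psi_t$ is tangent to a continuous non-vanishing vector field and preserves a (true) foliation $\fes$, Theorem \ref{teo.expimpliesTAF} gives that $\psi_t$ is a topological Anosov flow, completing the proof. The main obstacle in this scheme is the transverse continuity of the funnel direction on $\mt$, which is essential both for the regularity of $\psi_t$ and for the Ghys--Barbot averaging argument; however this is exactly what the uniform quasigeodesic fan structure obtained in Section \ref{s.QG} is designed to provide.
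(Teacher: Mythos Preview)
Your strategy is correct and is precisely the argument the paper intends by its terse back-reference (the construction of $\psi_t$ and its orbit equivalence with $\phi_t$ parallel the construction of $\phi^2_t$ in Proposition \ref{prop.smoothness} and the orbit-space argument in Proposition \ref{prop.convmain5}). Steps 1, 2, and 4 are fine.

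There is, however, a genuine slip in step 3. You assert that an orbit $o$ of $\widetilde{\phi_t}$ lies ``in the leaf $F \in \wfes$'' and then infer bounded distance from the corresponding center leaf. But $\phi_t$ is the \emph{given} topological Anosov flow from Definition \ref{defiw2}; its orbits live in leaves of its own weak stable foliation $\widetilde{\cF^{ws}_\phi}$, which bears no direct relation to $\wfes$ (the latter approximates the branching foliation $\cs$ of the partially hyperbolic diffeomorphism $f$, not $\cF^{ws}_\phi$). Your bounded-distance claim therefore applies only to the intersection flow $\phi^c_t$ of $\fes$ and $\feu$ --- the flow actually treated in Proposition \ref{prop.flowisexpansive} --- not to $\phi_t$ itself.

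The conclusion of step 3 nevertheless holds, by a simpler route: the Ghys--Barbot averaging, applied to a $\pi_1(M)$-equivariant homeomorphism of orbit spaces, produces a lift $\widetilde\eta\colon \mt \to \mt$ that commutes with every deck transformation; since $M$ is aspherical this forces $\eta$ to be homotopic to the identity. Alternatively, factor through $\phi^c_t$: your argument gives an orbit equivalence $\psi_t \leftrightarrow \phi^c_t$ homotopic to the identity (both flows have orbits in leaves of $\wfes$, at bounded Hausdorff distance by the Morse lemma), and the orbit equivalence $\phi^c_t \leftrightarrow \phi_t$ homotopic to the identity is what Proposition \ref{prop.convmain5} provides.
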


\subsection{Averaging to construct the map} 
\label{averaging}

We will now construct a map $h_0\colon M \to M$ which maps orbits of $\psi_t$ (cf.~Proposition \ref{prop.hatphiisTAF}) to curves tangent to the center. Later we will modify this map and construct the self orbit equivalence to verify Definition \ref{defi1}. Denote by $H_0\colon \cO_\psi \to \cL^c$ the $\pi_1(M)$-invariant homeomorphism between leaf spaces. Recall that Proposition \ref{p.wcaf2impliescaf2} implies that $H_0$ maps the weak-stable/unstable foliations of $\psi_t$ to the center stable and unstable branching foliations of $f$. 

\medskip

\noindent {\bf Construction of a map:} For a fixed small $\eps>0$, we denote by $h^{cs}\colon M \to M$ the collapsing map from $\fes$ to $\cs$ given by Theorem \ref{teo-openbranch}. 

Pick a point $x \in \mt$ and let $\ell_x$ be the center leaf $H_0(o_x)$ where $o_x$ is the orbit of $x$ by $\tild{\psi}$.  Note that $o_x$ is a geodesic in a negatively curved surface, and we can push the Riemannian
metric in $L_x:=\tild{\cF^{ws}_\psi}(x) = \tild{\fes}(x)$ to $\tild{h}^{cs}(L_x)$ which is a leaf of $\wcs$. 
We can push the metric 
because $h^{cs}$ is a local diffeomorphism between respective
leaves of $\fes$ and $\cs$, and this lifts to diffeomorphisms
between respective leaves of $\wfes$ and $\wcs$.
With this metric $\tilde h^{cs}(L_x)$ is negatively curved,
$\tilde h(o_x)$ is a geodesic in $\tilde h(L_x)$ and $\ell_x$ is a quasigeodesic in $\tilde h(L_x)$ with the same endpoints.

We can then define a map $p_x\colon \ell_x \to \tild{h}^{cs}(o_x)$ by orthogonal projection in $L_x$.
Since $L_x$ is negatively curved the orthogonal projection is
a uniquely defined function and it is continuous.

\begin{lema}\label{l.convergencep} 
The map $p_x$ is proper, in particular extends continuously (as the identity) to the compactification of $\ell_x$ and $\tild{h}^{cs}(o_x)$. 
\end{lema}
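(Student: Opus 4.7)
The plan is to use the quasigeodesic behavior of $\ell_x$ inside the negatively curved surface $\tilde{h}^{cs}(L_x)$ together with the Morse Lemma and basic properties of nearest-point projection in CAT($-\kappa$) spaces.

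First, I would record the relevant geometric setup. Since the metric on $\tilde{h}^{cs}(L_x)$ is defined as the pushforward of the metric on $L_x$ via $\tilde{h}^{cs}$ (which is a local diffeomorphism between the corresponding leaves of $\wfes$ and $\wcs$), the map $\tilde{h}^{cs}\colon L_x\to \tilde{h}^{cs}(L_x)$ is by construction a local isometry. Since $o_x$ is a geodesic in $L_x$ (it is an orbit of $\tilde{\psi}_t$, which was built to move along geodesics in the weak stable leaves of the Candel metric), its image $\tilde{h}^{cs}(o_x)$ is a geodesic in $\tilde{h}^{cs}(L_x)$. On the other hand, by Proposition \ref{prop.convmain5} (applied to the partially hyperbolic diffeomorphism $f$, which is a leaf space collapsed Anosov flow), center leaves such as $\ell_x=H_0(o_x)$ are uniform quasigeodesics inside their center stable leaf, and by construction $\ell_x$ and $\tilde{h}^{cs}(o_x)$ share the same pair of endpoints $\xi^\pm \in S^1(\tilde{h}^{cs}(L_x))$.

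Next, I would apply the Morse Lemma in the negatively curved (hence Gromov-hyperbolic) surface $\tilde{h}^{cs}(L_x)$. Because $\ell_x$ is a quasigeodesic and $\tilde{h}^{cs}(o_x)$ is a geodesic with the same endpoints, there exists a constant $K>0$ such that the Hausdorff distance in $\tilde{h}^{cs}(L_x)$ between them is at most $K$. In particular, for every $y\in \ell_x$, we have $d(y, \tilde{h}^{cs}(o_x))\leq K$. Since $\tilde{h}^{cs}(L_x)$ is a simply connected surface of negative curvature (equivalently a CAT($-\kappa$) space for some $\kappa>0$), the nearest-point projection onto the complete geodesic $\tilde{h}^{cs}(o_x)$ is well-defined and $1$-Lipschitz, so $p_x$ is continuous and $d(y,p_x(y))\leq K$ for every $y\in \ell_x$.

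From this the properness is immediate: if $y_n$ leaves every compact subset of $\ell_x$, then it leaves every compact subset of $\tilde{h}^{cs}(L_x)$ (as $\ell_x$ is properly embedded), and since $p_x(y_n)$ stays within distance $K$ of $y_n$, the image $p_x(y_n)$ also leaves every compact subset of $\tilde{h}^{cs}(o_x)$. To extend $p_x$ continuously to the endpoints as the identity, suppose $y_n\in \ell_x$ converges to $\xi^+\in S^1(\tilde{h}^{cs}(L_x))$. The points $p_x(y_n)\in \tilde{h}^{cs}(o_x)$ lie uniformly within distance $K$ of $y_n$, so any accumulation point of $(p_x(y_n))$ in the Gromov compactification must coincide with $\xi^+$ (two sequences at bounded distance in a Gromov-hyperbolic space converge to the same boundary point); since $\tilde{h}^{cs}(o_x)$ has only $\xi^+$ and $\xi^-$ as boundary points and escapes to infinity along $p_x(y_n)$, the limit must be $\xi^+$. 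The same argument applies at $\xi^-$, giving the continuous extension as the identity on $\{\xi^+,\xi^-\}$. No essential obstacle is expected here; the only point requiring some care is verifying that the pushforward metric on $\tilde{h}^{cs}(L_x)$ is genuinely negatively curved (so that nearest-point projection behaves well), but this is precisely what was arranged before Lemma \ref{l.convergencep}.
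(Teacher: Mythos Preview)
Your argument is correct and follows the same approach as the paper: the paper's proof is a single sentence observing that $\ell_x$ is a quasigeodesic with the same endpoints as the geodesic $\tilde{h}^{cs}(o_x)$, and your proposal simply unpacks why that observation suffices (Morse Lemma gives a uniform bound $d(y,p_x(y))\leq K$, from which properness and the continuous extension follow). There is no genuine difference in method, only in the level of detail.
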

\begin{proof}
This follows directly from the fact that $\ell_x$ is a quasigeodesic with the same endpoints as the geodesic $\tild{h}^{cs}(o_x)$ with respect to the chosen metric. 
\end{proof}

In principle, the map $p_x$ can fail to be injective, so one cannot define an inverse. But there is a standard procedure of averaging going back at least to \cite{Fuller} (see also \cite[Section 8]{HPsurvey} for discussion) which allows to find a natural way to invert $p_x$. 

We can define from $p_x$ a map $\hat p_x\colon \ell_x \to \RR$ by identifying $\tild{h}^{cs}(o_x)$ with $\RR$ via the map $b_x\colon \tild{h}^{cs}(o_x) \to \RR$ such that  $b_x(\tild{h}^{cs}(\psi_t(x))=t$. 

For $y,z \in \ell_x$ we denote by $[y,z]$ the segment of $\ell_x$ between $y$ and $z$. For any $t \in \RR$ we denote by $y+t$ the point in $\ell_x$ at oriented length $t$ from $y$. Lemma \ref{l.convergencep} then implies that if we choose an appropriate orientation along $E^c$ we have that the map $\hat p_x$ verifies that for every $y \in \ell_x$ we have $\lim_{t \to \pm \infty} \hat p_x(y+t) = \pm \infty$.

Let $p_x^T\colon \ell_x \to \RR$ be the map defined by

$$p_x^T(y) = \int_{[y,y+T]} \hat p_x (z) dz. $$

Lemma \ref{l.convergencep} implies that for $T>0$ large enough we have that not only $p_x^T$ is $C^1$ along $\ell_x$ but also its derivative does not vanish. Indeed, since $\hat p_x$ is continuous, we have 

\begin{multline*}
p_x^T(y+t) - p_x^T(y) = \int_{[y+T,y+T+t]} \hat p_x(z) dz - \int_{[y, y+t]} \hat p_x(z) dz  \\ \sim  t (\hat p_x(y+T) - \hat p_x(y)).  
\end{multline*}

That is, if $u_x(t)= p_x^T(y+ t)$ for some $y \in \ell_x$ then $u_x'(t)>0$ everywhere. 
In addition, the dependence of $T$ on $x$ is lower semicontinuous: if $T$ works for $x$ then $T+\eps$ works for $y$ sufficiently close to $x$.
Therefore there is $T > 0$ that works for all $x \in \mt$.

It follows that, for any fixed $x$, we can define an inverse map $q_x\colon o_x \to \ell_x$, which is a $C^1$-diffeomorphism preserving orientation, as the inverse of $p_x^T$ precomposed with $\tilde h^{cs}$. We collect some properties of $q_x$ in the following statement: 

\begin{lema}\label{l.propsq}
The map $x \mapsto q_x$ varies continuously in the $C^1$-topology in compact parts and is $\pi_1(M)$-invariant in the sense that for $\gamma \in \pi_1(M)$ we have that $q_{\gamma x} (t) = \gamma q_x(t)$. Moreover, there is a $C^1$ increasing diffeomorphism $u_x\colon \RR \to \RR$ such that $u_x(0)=0$ and if $x_t = \psi_t(x)$ then $q_{x_t}(0) = q_x(u_x(t))$. 
\end{lema}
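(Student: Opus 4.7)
My plan is to verify the three properties by tracking how each ingredient of the averaging construction behaves under the relevant operation. Recall that $q_x$ is built as the composition $(p_x^T)^{-1} \circ b_x \circ \tild{h}^{cs}|_{o_x} \colon o_x \to \ell_x$, where $p_x^T(y) = \int_{[y, y+T]} \hat p_x(z)\,dz$ is the averaged orthogonal projection from $\ell_x$ to $\RR$, with $T>0$ chosen in the preceding discussion so that $(p_x^T)'$ is bounded below uniformly in $x$.

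For the continuity claim, I would observe that each ingredient varies continuously in the $C^1$-topology on compact sets as $x$ varies: the leaf $L_x = \wfes(x)$ together with its pushed-forward negatively curved metric via $\tild{h}^{cs}$; the orbit $o_x$ of $\tild{\psi}_t$ and its image geodesic $\tild{h}^{cs}(o_x)$; the center leaf $\ell_x = H_0(o_x)$, which uses continuity of $H_0$; the orthogonal projection $p_x \colon \ell_x \to \tild{h}^{cs}(o_x)$, which depends continuously on the metric of the ambient negatively curved surface; and the arc-length parametrization $b_x$. Integration over a fixed interval of length $T$ preserves $C^1$-continuity, and because $(p_x^T)'$ is uniformly bounded below, the inversion operation also respects continuity on compact parts.

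For $\pi_1(M)$-equivariance, each ingredient in the construction is equivariant: $H_0$ is equivariant by Definition \ref{defiw2}; $\tild{h}^{cs}$ is equivariant as a lift of $h^{cs}$; the Candel metric on leaves of $\wfes$ is the pullback from $M$, so deck transformations act by leafwise isometries and in particular commute with orthogonal projection; and $b_{\gamma x} \circ \gamma = b_x$ on $\tild{h}^{cs}(o_x)$ since $\psi_t$ commutes with deck transformations. A change of variables in the defining integral then gives $p_{\gamma x}^T(\gamma y) = p_x^T(y)$ for $y \in \ell_x$, and inverting yields $q_{\gamma x}(t) = \gamma\, q_x(t)$.

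For the reparametrization formula, the key observation is that replacing $x$ by $x_t = \psi_t(x)$ changes none of the geometric objects $o_x$, $\ell_x$, or $p_x$, since these depend only on the orbit through $x$; only the arc-length origin shifts, via $b_{x_t}(p) = b_x(p) - t$ for every $p \in \tild{h}^{cs}(o_x)$. Therefore $\hat p_{x_t} = \hat p_x - t$ pointwise on $\ell_x$, and
\[
p_{x_t}^T(y) \;=\; \int_{[y,\,y+T]} \bigl(\hat p_x(z) - t\bigr)\,dz \;=\; p_x^T(y) - tT.
\]
Taking inverses, $(p_{x_t}^T)^{-1}(0) = (p_x^T)^{-1}(tT)$, which translates through the domain parametrization to $q_{x_t}(0) = q_x(u_x(t))$ with $u_x(t) := tT$, a linear (hence $C^1$) increasing diffeomorphism of $\RR$ fixing $0$. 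The argument is essentially formal; the only mildly delicate point is the uniformity needed in the continuity claim, which is secured by the uniform lower bound on $(p_x^T)'$ established when $T$ was chosen.
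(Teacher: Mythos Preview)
Your proof is correct and follows the same approach as the paper's, which simply remarks that all the objects entering the construction depend on continuous and $\pi_1(M)$-invariant choices, and that the last property follows because $\ell_{x_t}=\ell_x$. Your version is a careful unpacking of that sentence; in particular, your explicit computation $u_x(t)=tT$ (linear, hence visibly a $C^1$ increasing diffeomorphism fixing $0$) is more precise than what the paper states, and is indeed what the construction yields.
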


\begin{proof}
Notice that we used the same $T$ for all $x \in \mt$.
All the objects we considered depend on continuous and $\pi_1(M)$-invariant choices.  The last property just follows from the way we defined $q_x$ and the fact that $q_{x_t}$ also has $\ell_x$ as target since $\ell_x = \ell_{x_t}$. 
\end{proof}

Now we can define the map $h\colon M \to M$. For $x \in \mt$ we define $\tild{h}(x)$ to be $q_x(0) \in \ell_x$, since this is continuous and $\pi_1(M)$-invariant it induces a continuous map $h$ in $M$ homotopic to the identity. 

\medskip

\noindent {\bf Verifying the properties.} 
We will now verify the sought properties of $h$. 

\begin{lema}\label{l.prop1h} 
The map $h\colon M \to M$ is smooth along the orbits of $\psi_t$ and the derivative maps the vector field to a (positively oriented) non-zero vector tangent to $E^c$. That is, $h$ verifies condition ($i$) in Definition \ref{defi1}. 
\end{lema}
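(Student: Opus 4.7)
The plan is to derive both properties directly from the key formula established in Lemma \ref{l.propsq}, namely that for $x_t = \psi_t(x)$,
\[
q_{x_t}(0) \;=\; q_x(u_x(t)),
\]
where $u_x\colon \RR \to \RR$ is an increasing $C^1$-diffeomorphism with $u_x(0) = 0$ and $q_x\colon o_x \to \ell_x$ is a $C^1$-diffeomorphism. Working in the universal cover, this formula exactly says that
\[
\widetilde h(\widetilde \psi_t(x)) \;=\; q_x(u_x(t)),
\]
so the restriction of $\widetilde h$ to the orbit $o_x$ is the composition of two $C^1$-diffeomorphisms, hence $C^1$. Since the construction was $\pi_1(M)$-equivariant, this smoothness descends to $h$ along orbits of $\psi_t$ in $M$.

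Next I would differentiate at $t = 0$. Let $X_\psi$ be the (continuous) vector field generating $\psi_t$. The chain rule gives
\[
D_x\widetilde h \bigl(X_\psi(x)\bigr) \;=\; u_x'(0) \cdot \partial_s q_x(s)\bigl|_{s=0}.
\]
Two facts make this a non-zero vector tangent to $E^c$: first, $u_x'(0) > 0$ because $u_x$ is an increasing $C^1$-diffeomorphism of $\RR$; second, $q_x$ is a $C^1$-diffeomorphism onto the center leaf $\ell_x$, so its derivative is everywhere non-zero and takes values in $T\ell_x \subset E^c$. Multiplying these gives a non-zero vector in $E^c$.

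For the orientation claim, I would appeal to the orientation choices made when defining $\hat p_x$: after fixing an orientation on $\ell_x$ (and hence on $E^c$, which is orientable because it is part of the center leaf of an Anosov flow pulled through $H_0$ and we have the transverse orientability of $E^{cs}$) we arranged that $\hat p_x$ was increasing, hence so is its averaged inverse defining $q_x$. Since $u_x$ is also increasing, the image $D_x\widetilde h(X_\psi(x))$ points in the positively oriented direction of $E^c$.

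I expect no genuine obstacle here: the lemma is essentially an unpacking of how $\widetilde h$ was constructed via averaging. The only subtlety worth double-checking is that the continuity of $x \mapsto q_x$ in the $C^1$-topology (also from Lemma \ref{l.propsq}) is enough to ensure that the resulting vector field $D\widetilde h(X_\psi)$ varies continuously in $x$, so that the pushforward is genuinely a continuous non-vanishing section of $E^c$ along orbits. This is immediate from the $C^1$-continuity in compact parts.
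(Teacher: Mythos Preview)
Your proof is correct and follows the same approach as the paper's. The paper's proof is terser---it simply observes that $h$ maps $o_x$ into $\ell_x$ and then appeals directly to Lemma \ref{l.propsq}---but your explicit unwinding via the formula $\widetilde h(\widetilde\psi_t(x)) = q_x(u_x(t))$ and the chain rule is exactly the computation that justifies that appeal.
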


\begin{proof}
Fix an orbit $o_x$ of $\tild{\psi_t}$ and we get that by definition for every $y \in o_x$ we have that $\ell_x = \ell_y$. Therefore, the map $h$ will map $o_x$ to $\ell_x$. By Lemma \ref{l.propsq} we deduce that the image by $h$ of the vector field is a positively oriented vector in $E^c$.  
\end{proof}

We can now proceed to prove Proposition \ref{prop-2implies1}.

\begin{proof}[Proof of Proposition \ref{prop-2implies1}]
Since the partially hyperbolic diffeomorphism is a leaf
space collapsed Anosov flow it preserves branching foliations
$\cs$ and $\cu$.
 The fact that $h$ maps every weak stable leaf into a surfaces tangent to $E^{cs}$ is direct from its construction since it maps leaves of $\tild{\cF^{ws}_{\psi}}$ to surfaces tangent to $E^{cs}$. 
 
Now, by Proposition \ref{p.wcaf2impliescaf2} we also get that the weak unstable leaves map to surfaces tangent to $E^{cu}$. The lift $\tilde h$ of $h$ to $\mt$ maps every weak stable/unstable by construction into a properly embedded surface in $\mt$ respecting the orientation (which implies the transverse collapsing property). 
 
We now need to construct the self orbit equivalence $\beta\colon M \to M$ which makes the commutation $f \circ h = h \circ \beta$ work. For this, given $x \in M$ consider $y = f \circ h(x)$. Note that $y$ may belong to several center curves. But since $h$ is injective along orbits of the flow $\psi_t$ it makes sense to consider its inverse restricted to the center curve $\ell_y:=f(\ell_x)$ which is also the image of an orbit of $\psi_t$. Then, one can define $\beta(x) = (h|_{\ell_y})^{-1}(y)$. 

One gets that $\beta(x)$ is continuous by construction and continuity of $h$ (as well as continuity of the maps $q_x$, cf.~Lemma \ref{l.propsq}). Moreover, $\beta$ is injective since it is injective along orbits as well as maps different orbits to different orbits. Finally, $\beta$ is surjective since the equation $f \circ h = h \circ \beta$ implies that $\beta$ has degree one as a map. This implies that $\beta$ is a homeomorphism which clearly preserves orbits and its orientation thus a self orbit equivalence for $\psi_t$. 
\end{proof}

The averaging method gives several ways on which a given collapsed Anosov flow can be realized (different choices of $h$ that affect the choice of $\beta$). The following remark should also be taken into account  if one wants to formulate uniqueness properties for collapsed Anosov flows. 

\begin{remark}\label{rem.changeSOE}
Let $f\colon M \to M$ be a collapsed Anosov flow with respect to a topological Anosov flow $\phi_t\colon M \to M$ and the self orbit equivalence $\beta\colon M \to M$. In particular, there exists $h\colon M \to M$ homotopic to the identity such that $f \circ h = h \circ \beta$. 

Assume that $\alpha\colon M \to M$ is another self orbit equivalence of $\phi_t$. Then, it follows that taking $\hat h = h \circ \alpha$ and $\hat \beta= \alpha^{-1} \circ \beta \circ \alpha$ we get that $f\circ \hat h = \hat h \circ \hat \beta$. Thus, if $\alpha$ is homotopic to the identity, then $f$ is also a collapsed Anosov flow associated with the Anosov flow $\phi_t$ via the collapsing map $\hat h$ and the self orbit equivalence $\hat \beta$.

Similarly, if $\psi_t\colon M \to M$ is a topological Anosov flow conjugate to $\phi_t$ by a homeomorphism $g\colon M \to M$, that is, $\psi_t = g^{-1} \circ \phi_t \circ g$. Then, if $h\colon M \to M$ is the map homotopic to the identity such that $f \circ h= h \circ \beta$ then one has that if $\hat h = h \circ g$ and $\hat \beta = g^{-1} \circ \beta \circ g$ then $\hat \beta$ is a self orbit equivalence of $\psi_t$ and $f \circ \hat h= \hat h \circ \hat \beta$.  Thus, if $g$ is homotopic to the identity, then $f$ is also a collapsed Anosov flow associated with the Anosov flow $\psi_t$ via the collapsing map $\hat h$ and the self orbit equivalence $\hat \beta$.
\end{remark}

\section{On the examples obtained via $\varphi$-transversality}\label{s.examples}

\subsection{Proof of Theorem \ref{teo.main1}}

In order to prove Theorem \ref{teo.main1}, we first collect some facts that are easily extracted from \cite{BGHP}.

\begin{prop}\label{p.BGHP}
Let $\phi_s\colon M \to M$ be an Anosov flow generated by a vector field $X$ and $\varphi\colon M \to M$ a diffeomorphism such that $\phi_s$ is $\varphi$-transverse to itself. Then, there exists $t_0>0$  and a function $\delta\colon [t_0,\infty) \to \RR_{>0}$ with $\delta(t)\to 0$ as $t \to \infty$ such that for every $t>t_0$ one has that the diffeomorphism $f_t = \phi_t \circ \varphi \circ \phi_t$ verifies:
\begin{enumerate}[label = (\roman*)]
\item\label{it.BGHP1} $f_t$ is partially hyperbolic and the bundles $E^s_t, E^c_t$ and $E^u_t$ of $f_t$ make an angle less than $\delta(t)$ with the bundles $E^s_\phi$, $\RR X$ and $E^u_\phi$ respectively;
\item\label{it.BGHP2} for every immersed\footnote{We require the speed $c'$ of $c$ to be uniformly away from $0$ and $\infty$.} curve $c\colon \RR \to M$ everywhere tangent to $E^c_t$ there exists $x \in M$ and a homeomorphism $u\colon \RR \to \RR$ such that $d(c(u(s)), \phi_s(x)) < \delta(t)$ for every $s \in \RR$, moreover, the point $x$ is unique in that if $y$ verifies the same, then $y = \phi_s(x)$ for some $s \in \RR$;
\item\label{it.BGHP3} for every $x \in M$ there is an immersed curve $c\colon \RR \to M$ everywhere tangent to $E^c_t$ such that $d(c(s), \phi_s(x)) < \delta(t)$ for every $s \in \RR$. 
\end{enumerate}
\end{prop}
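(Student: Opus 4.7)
\medskip

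\noindent\emph{Proof proposal.}
Part \ref{it.BGHP1} is essentially \cite[Proposition 2.4]{BGHP}, enhanced with an angle estimate. The plan is to apply the cone criterion explicitly: by $\varphi$-transversality, $D\varphi(E^u_\phi)$ is transverse to $E^s_\phi\oplus \R X$ and $D\varphi^{-1}(E^s_\phi)$ is transverse to $\R X\oplus E^u_\phi$, so we can choose narrow cone fields $\cC^u$ around $E^u_\phi$ and $\cC^s$ around $E^s_\phi$ such that $D\varphi(\cC^u)$ remains disjoint from some neighborhood of $E^s_\phi\oplus \R X$, and symmetrically for $\cC^s$. The key observation is that $D\phi_t$ contracts any cone transverse to $E^s_\phi\oplus\R X$ exponentially fast (in the Grassmannian) toward $E^u_\phi$, with rate $e^{-at}$ for some $a>0$. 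Applying this to $Df_t=D\phi_t\circ D\varphi\circ D\phi_t$, one sees that $Df_t(\cC^u)$ is contained in a cone of aperture $O(e^{-at})$ around $E^u_\phi$; this produces an $Df_t$-invariant cone family, and the strong bundle $E^u_{f_t}$ obtained as the intersection $\bigcap_n Df_t^n(\cC^u)$ makes an angle $O(e^{-at})$ with $E^u_\phi$. The symmetric argument gives $E^s_{f_t}$ close to $E^s_\phi$, and then $E^c_{f_t}$ (the transverse intersection of the invariant center-stable and center-unstable cones) is close to $\R X$. Setting $\delta(t)=Ce^{-at}$ yields the bound.

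For \ref{it.BGHP2} and \ref{it.BGHP3}, I pass to the universal cover $\wt M$, where the lifted flow $\wt\phi_s$ has properly embedded orbits, so expansivity reduces to the Hausdorff separation criterion of Remark \ref{rem.expunivcov}. Given $c\colon \R\to M$ a $C^1$ immersion tangent to $E^c_{f_t}$, lift it to $\tilde c$ in $\wt M$, and reparameterize by requiring $\tilde c'(s)\in E^c_{f_t}(\tilde c(s))$ to have the same norm as $\wt X(\tilde c(s))$, with the correct orientation. Since the angle between $E^c_{f_t}$ and $\R X$ is at most $\delta(t)$, we get $\|\tilde c'(s)-\wt X(\tilde c(s))\|\leq K\delta(t)$ pointwise. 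A Gronwall comparison then shows that on any time interval of length $L$ the orbit $\wt\phi_s(\tilde c(0))$ stays within $O(\delta(t)\,e^{\lambda L})$ of $\tilde c$, where $\lambda$ is a Lipschitz constant for $\wt X$. To promote this \emph{local} shadowing to a \emph{global} one, I would use that $\widetilde{f_t}$ preserves the center curve $\tilde c$ and that its action on $\tilde c$ corresponds, via the graph-transform / cone argument of part \ref{it.BGHP1}, to shifting by flow time approximately $2t$ composed with $\widetilde\varphi$. Iterating $\widetilde{f_t}^n$ forward and backward, I obtain a sequence of $O(\delta(t))$-pseudo-orbits of $\wt\phi_s$ along $\tilde c$. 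The classical shadowing argument for (topological) Anosov flows then produces a unique $\wt\phi_s$-orbit within Hausdorff distance $O(\delta(t))$ of $\tilde c$. Uniqueness follows because any two orbits within $2\delta(t)$ of each other for all time would, for $\delta(t)$ smaller than the expansivity constant of $\wt\phi_s$, have to coincide on the flow line through a common starting point.

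For \ref{it.BGHP3}, I would argue by a local product argument: given $x\in M$, part \ref{it.BGHP1} implies that $E^{cs}_{f_t}$ makes small angle with $E^s_\phi\oplus\R X=T\cF^{ws}_\phi$, so through $x$ there exists a $C^1$-disk $D^{cs}(x)$ tangent to $E^{cs}_{f_t}$ that is a graph over the weak stable leaf $\cF^{ws}_\phi(x)$; symmetrically for $E^{cu}_{f_t}$. The intersection $D^{cs}(x)\cap D^{cu}(x)$ contains a curve through $x$ tangent to $E^c_{f_t}$, which can be extended to a complete center curve $c$; by \ref{it.BGHP2} applied to $c$, it shadows a unique flow orbit, and by the graph-over-$\cF^{ws}_\phi(x)$ property this orbit must be $\{\phi_s(x)\}_{s\in\R}$ up to time shift.

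The main obstacle will be to make the global shadowing in step \ref{it.BGHP2} rigorous: the Gronwall estimate degrades in long time, and passing from local to global control genuinely uses the hyperbolic structure of $\phi_s$. My plan is to phrase this as shadowing of discrete pseudo-orbits obtained by sampling $\tilde c$ at iterates of $\widetilde{f_t}$, reducing the problem to a standard Anosov shadowing statement in the universal cover, where expansivity is a clean Hausdorff-distance statement.
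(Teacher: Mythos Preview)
Your treatment of \ref{it.BGHP1} is fine and essentially what the paper does (it simply cites \cite[Proposition 2.4 and Remark 2.6]{BGHP}); you have just unpacked the cone argument.

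For \ref{it.BGHP2} there is a genuine gap. You assert that $\widetilde{f_t}$ preserves the center curve $\tilde c$, but the statement concerns an \emph{arbitrary} immersed curve tangent to $E^c_t$; since $E^c_t$ need not be uniquely integrable and no invariant center foliation has been constructed yet, there is no reason $f_t(c)$ should equal $c$. Your whole scheme of sampling $\tilde c$ at iterates of $\widetilde{f_t}$ to manufacture pseudo-orbits therefore does not get off the ground. The fix, and what the paper does, is much simpler: after the reparameterization you describe, the pointwise bound $\|\tilde c'(s)-\widetilde X(\tilde c(s))\|\le K\delta(t)$ already says that $\tilde c$ \emph{is} a $K\delta(t)$-pseudo-orbit of $\phi_s$ in the standard sense. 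The shadowing lemma for Anosov flows (\cite[Theorem 5.3]{BGHP}) then gives the unique shadowing orbit directly; no Gronwall, no $f_t$. Your Gronwall detour is precisely what shadowing is designed to avoid.

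For \ref{it.BGHP3} your local-product sketch is headed in a reasonable direction but is incomplete, and the paper takes a different and more robust route. Two issues with your sketch: first, you need complete $C^1$ surfaces tangent to $E^{cs}_{f_t}$ and $E^{cu}_{f_t}$ through every $x$, and their intersection must contain a \emph{complete} center curve; local integrability (which is all you invoke) does not give this. Second, even granting existence, your argument that the shadowed orbit is the orbit of $x$ is hand-waved. The paper instead passes to a finite cover and iterate so that Burago--Ivanov (Theorem~\ref{BI}) produces branching foliations, uses Theorem~\ref{teo-openbranch} to get approximating true foliations and hence an honest flow whose orbits are $\eps$-close to center curves, and applies the \emph{global} shadowing theorem \cite[Theorem 5.5]{BGHP} to that flow (this is the trick of \cite[Proposition 5.11]{BGHP}). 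This buys exactly the global existence of a complete center curve near each $\phi$-orbit that your local argument does not supply.
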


\begin{proof}
Item \ref{it.BGHP1} is a direct consequence of \cite[Proposition 2.4]{BGHP} and \cite[Remark 2.6]{BGHP}.

Item \ref{it.BGHP2} follows from the standard shadowing lemma for Anosov flows (see e.g, \cite[Theorem 5.3]{BGHP}) and item \ref{it.BGHP3} from its global version (cf.~\cite[Theorem 5.5]{BGHP}). Note that \cite[Theorem 5.5]{BGHP} is stated for flows, so to do this we apply the trick in \cite[Proposition 5.11]{BGHP}: we lift to a finite cover, take an iterate, so that we can apply Theorem \ref{BI} to get branching foliations. Using Theorem \ref{teo-openbranch} we construct a flow whose orbits are arbitrarily close to curves tangent to the center, so we can apply \cite[Theorem 5.5]{BGHP} to this flow to get the center curve which then projects to $M$. 
\end{proof}

The strong information we get with the previous proposition allows us to show a result which answers positively Question \ref{q.branchingunique} in the setting of the examples of \cite{BGHP}. We first need the following: 

\begin{lemma}\label{claim.10.4}
 Consider $t> t_0$ and $f=f_t$ as in Proposition \ref{p.BGHP}. Let $c_s, s \in [0,1]$ be a continuous one parameter family of complete curves tangent to $E^c=E^c_t$ in $\mt$ such that the Hausdorff distance between any of them is finite and bounded and that all curves are
contained in a single leaf of either $\widetilde{\cW^{cs}}$
or $\widetilde{\cW^{cu}}$.
Then all curves $c_s$ coincide.
\end{lemma}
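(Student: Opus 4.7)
The plan is to lift the situation to the universal cover $\mt$ and proceed in three main steps. First, I would apply Proposition \ref{p.BGHP}\ref{it.BGHP2} in the lifted setting: for each $s$, the center curve $c_s$ admits a unique orbit $\tilde o_s$ of the lifted flow $\widetilde{\phi_t}$ that it $\delta(t)$-shadows, with a uniform shadowing constant $\delta(t)$ independent of $s$. Continuity of the family $\{c_s\}$ then easily implies that $s\mapsto \tilde o_s$ is continuous into the orbit space $\cO_\phi$.

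Second, I would argue that the orbit $\tilde o_s$ is in fact independent of $s$. Since each $c_s$ lies within Hausdorff distance $\delta(t)$ of $\tilde o_s$, and the $c_s$ are at uniformly bounded mutual Hausdorff distance by hypothesis, the orbits $\tilde o_s$ are themselves at bounded mutual Hausdorff distance in $\mt$. But expansivity of the topological Anosov flow $\phi_t$ on the compact manifold $M$ (cf.~Definition \ref{def.expflow} and Remark \ref{rem.expunivcov}) rules out distinct orbits of $\widetilde{\phi_t}$ in $\mt$ remaining at bounded Hausdorff distance: two such orbits would project to orbits of $\phi_t$ in $M$ staying arbitrarily close, under a suitable reparametrization, for all time, contradicting expansivity. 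Thus all $\tilde o_s$ coincide with a common orbit $\tilde o$.

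Third, I would conclude that the $c_s$ themselves coincide. All $c_s$ lie in a single leaf $L$ of $\widetilde{\cW^{cs}}$ (say) and $\delta(t)$-shadow the same orbit $\tilde o$. Suppose for contradiction $c_0\neq c_1$. Inside $L$, whose tangent bundle splits as $E^s\oplus E^c$, the strong stable foliation of $f$ restricts to a 1-dimensional foliation transverse to the center direction. After restricting the parameter range so that the family $c_s$ sweeps a non-degenerate region in $L$, one finds a strong stable arc in $L$ joining a point of $c_0$ to a point of $c_1$ of some definite length $\ell_0>0$. Iterating backwards by $f^{-n}$, this arc expands exponentially. On the other hand, applying Proposition \ref{p.BGHP}\ref{it.BGHP2} to the iterated center curves, $f^{-n}(c_0)$ and $f^{-n}(c_1)$ still shadow a common orbit (the appropriate image of $\tilde o$ under the associated self orbit equivalence), hence their Hausdorff distance in $\mt$ remains bounded by $2\delta(t)$.

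The main obstacle is precisely this final step: deriving a contradiction from an exponentially growing strong stable arc whose endpoints lie on curves at uniformly bounded Hausdorff distance in $\mt$. This requires a geometric fact about strong stable leaves in $\mt$ (namely, that arbitrarily long strong stable arcs must have endpoints whose $\mt$-distance tends to infinity), which follows from proper embedding of strong stable leaves together with compactness of $M$ but needs careful justification. An alternative, cleaner route, once one has Theorem \ref{teo.QG} available for the examples, is to observe that $L$ is Gromov hyperbolic and the center curves are uniform quasi-geodesics forming a quasi-geodesic fan; then the $c_s$, being at bounded Hausdorff distance, share the same pair of endpoints at infinity in $S^1(L)$, and the uniqueness property of the fan directly forces them to coincide.
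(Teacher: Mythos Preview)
Your overall strategy matches the paper's (shadow each $c_s$ by a flow orbit, show the orbit is constant in $s$, iterate backward). The substantive gap is in step two. Bounded Hausdorff distance between two orbits of $\widetilde{\phi_t}$ in $\mt$ does not directly contradict expansivity as you claim: expansivity yields a small constant below which orbits must coincide, whereas here the Hausdorff bound may be large, and your projection-to-$M$ argument does not manufacture a pair of orbits staying $\delta$-close for all time under a reparametrization. The paper handles this step by a different, necessary argument. Assuming the $\nu_s$ are not all in a single weak unstable leaf, one intersects $\widetilde{\cF^{wu}_\phi}(\nu_s)$ with the fixed weak stable leaf $\widetilde{\cF^{ws}_\phi}(\nu_{s_0})$ to obtain orbits $\mu_s$ lying in one weak stable leaf whose backward rays are mutually at bounded Hausdorff distance; the quasigeodesic-fan property of flow lines inside weak stable leaves (Proposition~\ref{prop.topAnosovQG}) then forces all $\mu_s$ to coincide, a contradiction. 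The symmetric argument handles the weak stable direction. This use of Proposition~\ref{prop.topAnosovQG} is the missing idea in your step two.

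Two further issues. Your alternative route via Theorem~\ref{teo.QG} is circular: Lemma~\ref{claim.10.4} is invoked inside the proof of Proposition~\ref{prop.uniquebranchingBGHP}, \emph{before} $f_t$ is known to be a quasigeodesic partially hyperbolic diffeomorphism (that only follows afterwards via Theorem~\ref{teo.main5}); at this point only the fan structure of \emph{flow} lines is available. And in step three, your reference to ``the associated self orbit equivalence'' is premature for the same reason---no such equivalence has yet been built. The paper instead simply reruns the (correctly proved) step two on the iterated family $\{f^{-j}(c_s)\}$, which is still continuous, still in a single center stable leaf, and still at bounded mutual Hausdorff distance, concluding that it too shadows a single orbit; then once the stable separation between two curves exceeds $3\delta$, Proposition~\ref{p.BGHP}\ref{it.BGHP2} forces distinct shadowing orbits, giving the contradiction.
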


\begin{proof}
The proof is by contradiction, assuming that the curves do not
all coincide.
We do the proof for the case of all curves contained in a single
center stable leaf.
From Proposition \ref{p.BGHP} we see that for each $s$ there is 
a unique orbit $\nu_s$ of $\widetilde \phi_t$ which is $\delta=\delta_t$
close to $c_s$. The orbits $\nu_s$ vary continuously with $s$.
In addition the orbits $\nu_s$ are all a bounded Hausdorff
distance from each other.

We first prove that the flow lines $\nu_s$ are in fact a single flow line.
Suppose that this is not true. Then either $\nu_s$ are not contained
in a single leaf of $\widetilde{\cF^{ws}_\phi}$ or
not contained in a single leaf of $\widetilde{\cF^{wu}_\phi}$. 
Assume that the $\nu_s$ are not 
in a single leaf of $\widetilde{\cF^{wu}_\phi}$.
In particular, for any open neighborhood $J$ of $s_0$ in
$[0,1]$ and $s \in J$ the curves $\nu_s$ are not all in $\widetilde{\cF^{wu}_\phi}(\nu_{s_0})$. For each $s$ in $J$ let 
$$\mu_s \  = \ \widetilde{\cF^{wu}_\phi}(\nu_s) 
\cap \widetilde{\cF^{ws}_\phi}(\nu_{s_0}).$$

\noindent
By hypothesis
not all $\mu_s$ are equal to $\mu_{s_0} = \nu_{s_0}$.
Fix $a_0 > 0$ and  a basepoint $x$ in $\mu_{s_0}$. 
Choosing $J$ sufficiently
small we can assume that for all $s \in J$,
there are backward rays of $\mu_s$ starting near $x$ which
are  all $a_0$ near corresponding backward rays of $\nu_s$.
Hence all backward rays of $\mu_s$ for $s$ in $J$ are a bounded
Hausdorff distance from each other. Since they are also in the same weak stable leaf, they have the same endpoint on the Gromov boundary of $\widetilde{\cF^{ws}_\phi}(\nu_{s_0})$. Hence, by Proposition \ref{prop.topAnosovQG}, all the $\mu_s$ must coincide, which contradicts the assumption.

Similarly, if the $\nu_s$ are not in a single leaf of 
$\widetilde{\cF^{ws}_\phi}$, then the same argument as above, switching stable and unstable, applies.
Thus the $\nu_s$ are all the same orbit of $\widetilde \phi_t$.

Now apply iterates of $f^{-1}$: The family $\{f^{-j}(c_s), s \in [0,1]\}$ is still continuous in $s$, in a single center stable leaf, and a bounded Hausdorff distance from each other. Thus, applying the preceding argument to that new family show that it must also be associated with a single orbit of $\wt \phi_t$.

However, since not all curves $c_s$ coincide, for $j$ big enough, the stable length between some curves of $\{f^{-j}(c_s), s \in [0,1]\}$ must be greater than $3\delta$. So item \ref{it.BGHP2} of Proposition \ref{p.BGHP} implies that these curves must be associated with distinct orbits of $\wt\phi_t$, contradicting the above.
\end{proof}

\begin{prop}\label{prop.uniquebranchingBGHP}
Let $\phi_s\colon M \to M$ be an Anosov flow generated by a vector field $X$ and $\varphi\colon M \to M$ a diffeomorphism such that $\phi_s$ is $\varphi$-transverse to itself. Let $f_t=\phi_t \circ \varphi \circ \phi_t$. Assume that the invariant bundles of $f_t$ are orientable and that $t$ is sufficiently large. Then, $f_t$ is a leaf space collapsed Anosov flow and there is a unique $f_t$-invariant branching foliation tangent to $E^{cs}_t$ and a unique $f_t$-invariant branching foliation tangent to $E^{cu}_t$. 
\end{prop}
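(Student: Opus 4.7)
The plan is to first show $f_t$ is a leaf space collapsed Anosov flow (Definition \ref{defiw2}), and then to leverage the shadowing dictionary of Proposition \ref{p.BGHP} to deduce uniqueness of the invariant branching foliations. By the orientability hypothesis and Burago--Ivanov (Theorem \ref{BI}), $f_t$ admits invariant branching foliations $\cs, \cu$ tangent to $E^{cs}_t, E^{cu}_t$, whose intersection gives a center branching foliation $\cW^c$. For $t$ large, item (ii) of Proposition \ref{p.BGHP} assigns to each leaf of $\widetilde{\cW^c}$ a unique orbit of $\widetilde{\phi}_s$ that $\delta$-shadows it, and item (iii), realized through the construction of \cite[Theorem 5.5, Proposition 5.11]{BGHP}, produces for each orbit of $\widetilde{\phi}_s$ a shadowing center curve which can be taken to be a leaf of $\widetilde{\cW^c}$.

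To verify Definition \ref{defiw2}, I would define $H\colon \cO_\phi \to \lc$ as the map sending each orbit of $\widetilde{\phi}_s$ to its shadowing center leaf, and show it is a $\pi_1(M)$-equivariant homeomorphism. Equivariance and continuity follow immediately from the uniformity of the shadowing estimates. The delicate step is well-definedness: two distinct leaves $c_1, c_2$ of $\widetilde{\cW^c}$ cannot both shadow the same orbit $\nu$. Indeed, if they did they would lie a bounded Hausdorff distance apart, and one uses item (i) of Proposition \ref{p.BGHP} --- the uniform closeness of $E^{cs}_t, E^{cu}_t$ to $E^{ws}_\phi, E^{wu}_\phi$ as $t \to \infty$ --- to force $c_1$ and $c_2$ into a common leaf of $\wcs$ or $\wcu$; Lemma \ref{claim.10.4} then yields $c_1 = c_2$.

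For the uniqueness claim, suppose $\cW^{cs}_1, \cW^{cs}_2$ are two $f_t$-invariant branching foliations tangent to $E^{cs}_t$. Fix $L_1 \in \widetilde{\cW^{cs}_1}$ and $x \in L_1$, and let $L_2 \in \widetilde{\cW^{cs}_2}$ pass through $x$. Using the center branching foliations $\widetilde{\cW^{cs}_j} \cap \wcu$ and the shadowing map, both $L_1$ and $L_2$ are naturally identified with the single weak stable leaf $\widetilde{\cF^{ws}_\phi}(\nu_x)$ of $\widetilde{\phi}_s$: the assignment is well-defined because a continuous family of center curves in $L_j$ produces a continuous family of shadowing orbits that must remain in one weak stable leaf, again by item (i) of Proposition \ref{p.BGHP}. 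Then for each $y \in L_1$, the center of $\widetilde{\cW^{cs}_1} \cap \wcu$ through $y$ shadows some $\nu_y \in \widetilde{\cF^{ws}_\phi}(\nu_x)$, while the center of $\widetilde{\cW^{cs}_2} \cap \wcu$ shadowing the same $\nu_y$ lies in $L_2$; by the injectivity argument from the first stage these two centers coincide, so $y \in L_2$. By symmetry $L_1 = L_2$, and the argument for $\cu$ is identical.

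The main obstacle I anticipate is the injectivity step in the construction of $H$: showing that two distinct center leaves shadowing the same orbit must be forced into a common $\wcs$- or $\wcu$-leaf, so that Lemma \ref{claim.10.4} can be applied. The geometric content is that for $t$ large, $\wcs$-leaves (resp.~$\wcu$-leaves) passing through a bounded neighborhood of an orbit $\nu$ of $\widetilde{\phi}_s$ are uniformly close to $\widetilde{\cF^{ws}_\phi}(\nu)$ (resp.~$\widetilde{\cF^{wu}_\phi}(\nu)$), so a single pair of $\wcs$- and $\wcu$-leaves is singled out in a tubular region around $\nu$; extracting this rigidity cleanly from Proposition \ref{p.BGHP} together with the transversality of $\cs$ and $\cu$ is the technical heart of the proof, and the same rigidity is what ultimately drives the uniqueness assertion as well.
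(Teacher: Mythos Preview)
Your overall plan---build $H$ via shadowing, invoke Lemma~\ref{claim.10.4} for both injectivity and uniqueness---matches the paper, but the mechanism you propose for getting into the hypotheses of Lemma~\ref{claim.10.4} is wrong in both places. For injectivity of $H$: item~(i) of Proposition~\ref{p.BGHP} (closeness of bundles) does \emph{not} force two center leaves $c_1, c_2$ shadowing the same orbit into a common $\wcs$- or $\wcu$-leaf; indeed with $c_1=L_1\cap F_1$ and $c_2=L_2\cap F_2$ one generically has $L_1\neq L_2$ and $F_1\neq F_2$. The paper instead passes to the \emph{cross-intersection} $c''=L_1\cap F_2$, which \emph{is} in the same $\wcs$-leaf $L_1$ as $c_1$, and observes that the interval of $(\cs,\cu)$-center leaves in $L_1$ between $c_1$ and $c''$ is a continuous one-parameter family at bounded Hausdorff distance, to which Lemma~\ref{claim.10.4} applies.

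The more serious gap is in your uniqueness argument. You write that the center of $L_1$ through $y$ and the center of $L_2$ shadowing the same $\nu_y$ ``coincide by the injectivity argument from the first stage''---but that injectivity was proved only for center leaves of a \emph{single} pair $(\cs,\cu)$, where the cross-intersection trick is available. Here the two centers live in different $\wcs$-leaves by hypothesis, and there is no reason they share a $\wcu$-leaf either, so Lemma~\ref{claim.10.4} has no grip. The paper's argument is genuinely different: take $c_0=L_1\cap U$ and the corresponding $c^*\subset L_2$, project $c^*$ along strong stable leaves into $U$ to obtain a curve $c\subset U$ tangent to $E^c$ (not a center leaf of any structure), then build a continuous family in $U$ by setting $c^+=\sup(c_0,c)$, $c^-=\inf(c_0,c)$ and interpolating via $d_s=\inf(c_s,c^+)$ (resp.\ $\sup(c_s,c^-)$) against the $(\cs,\cu)$-center leaves $c_s$ in $U$. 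This $\sup/\inf$ construction is what produces a nonconstant continuous family of $E^c$-tangent curves in a single $\wcu$-leaf, and it is the key idea your proposal is missing.
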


\begin{proof}
By taking $g = f_t^k$ we can assume that $g$ preserves the orientation of the bundles. So, by Theorem \ref{BI}, there are $g$-invariant branching foliations $\cs$ and $\cu$ tangent respectively to $E^{cs}_t$ and $E^{cu}_t$.

By Proposition \ref{p.BGHP} \ref{it.BGHP3} for any orbit
$o_x$ of $\wt \phi_t$
there is a center leaf $c_x$ which is $\delta$-close
to $o_x$. To show uniqueness of $c_x$, suppose that there
are two distinct $c_x, c'_x$ which are $\delta$ close to $o_x$.
Let $c_x = L_x \cap F_x$ and $c'_x = L'_x \cap F'_x$,
with $L_x, L'_x \in \wt{\cW^{cs}}$, 
$F_x, F'_x \in \wt{\cW^{cu}}$.
We want to show that $L_x \cap F'_x = L'_x \cap F_x = c_x$
which implies that $c_x = c'_x$. We argue for $L_x \cap F'_x$
since the other case is similar. Suppose then that 
$L_x \cap F'_x = c^{''}_x \not = c_x$. Then the center leaves
in $L_x$ between $c_x$ and $c^{''}_x$ form an interval of curves
tangent to $E^c$ in $L_x$ which vary continuously and are
all a bounded distance from each other in $L_x$.
This contradicts Lemma \ref{claim.10.4}.
This shows that there is a bijection between the orbit
space of $\wt \phi_t$ and the center leaf space of $g$.

The invariance of the bijection under the action of $\pi_1(M)$ comes from the fact that Proposition \ref{p.BGHP} \ref{it.BGHP3} is done in $M$ and the continuity follows from the fact that the leaves of the foliations vary continuously in compact sets. This shows that $g$ is a leaf space collapsed Anosov flow (and therefore is also quasigeodesic partially hyperbolic by Theorem \ref{teo.main5}). 

To complete the proof we must show that the ($g$-invariant) branching foliations tangent to $E^{cs}_t$ and $E^{cu}_t$ respectively are unique. Note that, if we show that these are the unique $g$-invariant branching foliations for $g$, then (since $f_t(\cs)$ and $f_t(\cu)$ are $g$-invariant branching foliations) we will deduce that $f_t(\cs)=\cs$ and $f_t(\cu)= \cu$. 

We deal with $E^{cs}_t$. Assume there is a pair of $g$-invariant branching foliations $\cs_1$ and $\cs_2$ tangent to $E^{cs}_t$ and let $\cu$ be one $g$-invariant branching foliation tangent to $E^{cu}_t$.

Note that applying the constructions above to the pairs $(\cs_1,\cu)$ and $(\cs_2,\cu)$ we get the structure of a leaf space collapsed Anosov flow for $g$ in two different ways. If $\cs_1$ and $\cs_2$ are not equal the following
happens: there is a leaf $L_1$ of $\wt{\cW^{cs}_1}$ which is not a leaf of 
$\wt{\cW^{cs}_2}$. We work with the maps between the leaf spaces
and the orbit space of $\tilde{\phi_s}$, because of the leaf
space collapsed Anosov flow structure.
By Proposition \ref{p.wcaf2impliescaf2}, $L_1$ is associated with
a weak stable leaf $E$ of $\tilde{\phi_s}$. By the same proposition
the weak stable 
$E$ is associated with a leaf $L_2$ of $\wt{\cW^{cs}_2}$. 
Since $L_1$ is not a leaf of $\wt{\cW^{cs}_2}$ there is a center leaf
$c_0$ in $L_1$ such that the corresponding center leaf $c^*$ under
these identifications is not contained in $L_1$. In
other words $c_0, c^*$ are distinct curves tangent
to the center bundle, but associated with the same orbit
$o_x$ of $\widetilde{\phi_s}$.
If this is the case, we still know by Proposition \ref{p.BGHP} \ref{it.BGHP2} that $c_0, c^*$ are at distance less than $2\delta$ from each other.

Let $U$ be a center unstable leaf so that $U \cap L_1 = c_0$.
Then the stable saturation of $c^*$ is not contained in $L_1$ since
otherwise using that
$c^*$ is $2 \delta$ close to $c_0$ we would get that $c^*$ is
contained in $L_1$.
Then the stable saturation of $c^*$ intersects $U$ in a curve
$c$ which is less than say $4 \delta$ from $c_0$.
Notice that $c_0, c$ are tangent to the center bundle
and distinct.

We want to produce a continuous collection of curves
tangent to the center bundle, which are a bounded Hausdorff distance
from each other, so that we can apply Lemma \ref{claim.10.4} and
derive a contradiction. 
We fix a transversal orientation to $c_0$ in $U$.
This induces a transversal orientation to the center bundle
in $U$.
First we define two curves with respect to this
transverse orientation: 
$$c^+:= \sup(c_0,c), \quad c^-:= \inf(c_0,c).$$

\noindent where the supremum and infimum are taken with respect to this chosen transverse orientation where the curves are locally graphs.

If $c_0, c$ never intersect, then
$c$ is in a complementary component of $c_0$ in $U$. If say
$c$ is in the plus component, then $c^+ = c, \ c^- = c_0$,
if $c$ is in the minus component, then $c^+ = c_0, \ c^- = c$.
If $c_0, c$ intersect, notice that at each point of intersection
they are tangent, as they are both tangent to the center bundle.
For each component of $c \smallsetminus c_0$, it is either in the plus 
complementary component of $c_0$ or in the minus complementary
component, and we define the parts of $c^+, c^-$ in those
parts as in the situation where the curves are disjoint.

Note the following important facts: $c^+, c^-$ are a bounded
Hausdorff distance $4 \delta$ from each other and from $c_0$;
$c^+$ is contained in the closure of the plus complementary
component of $c_0$; and $c^-$ is contained in the closure of the
minus complementary component of $c_0$. This in particular implies
that $c^+, c^-$ do not topologically cross.

We are going to define the continuous family of curves tangent
to $E^c$ in $U$. First parametrize the center leaves of $\cW^{cs} \cap U$
as $c_s$, where $c_0$ was already defined, and $s > 0$ is on the
closure of the plus side of $c_0$ in $U$.
For each $s \geq 0$, define 

$$d_s := \inf(c_s,c^+)$$

\noindent
Notice that for each $s \geq 0$, we have that $c_s, c^+$ are both contained
in the closure of the plus complementary component of $c_0$.
Since $c_s$ varies continuously with $t$, then so does $d_s$ for $s \geq
0$. Also  $d_0 = c_0$. And finally all curves $d_s$ are
between $c^+$ and $c_0$. So they are all a bounded distance
from $d_0$.
Similarly for $s \leq 0$ define $d_s := \sup(c_s, c^-)$.
They have the same properties as $d_s$ for $s \geq 0$.
Notice also that $c_s$ escapes in $U$ for $|s| \to \infty$, that is, for every compact set $C \subset U$ for large $s$ the curve $c_s$ is not contained in $C$. 
By construction $c^+, c^-$ are distinct curves.
It now follows that $d_s$ cannot be constant with
$s$. So we have a continuous family of curves tangent
to $E^c$ in a center unstable leaf, which are all
a bounded Hausdorff distance from each other.
This contradicts Lemma \ref{claim.10.4}.

This finishes the proof of Proposition \ref{prop.uniquebranchingBGHP}.
\end{proof}

Now we can use the previous proposition to deduce Theorem \ref{teo.main1}.

\begin{proof}[Proof of Theorem \ref{teo.main1}]
Consider $t>0$ large enough so that both Proposition \ref{p.BGHP} and Proposition \ref{prop.uniquebranchingBGHP} hold.

We can choose a finite normal cover $P: \hat M \to M$ such that the lifts of all bundles are orientable. An iterate of $f_t$ lifts to $\hat M$ and we can consider a lift $g$ of a possibly further iterate so that $g$
preserves the orientation of the bundles. Applying Proposition \ref{prop.uniquebranchingBGHP} to $g$ we get that $g$ is a leaf space collapsed Anosov flow and that it admits a unique pair of $g$-invariant
branching foliations $\cs_0$ and $\cu_0$ tangent to $E^{cs}$ and $E^{cu}$ respectively. 

As explained in Remark \ref{r.specialbranch} using the uniqueness
of branching foliations, we obtain that $\cs_0$ (and $\cu_0$) must coincide with the uppermost and lowermost branching foliations constructed in \cite{BI}. More specifically the uppermost center stable foliation is
the same as the lowermost center stable foliation.
This implies that these branching foliations project to $M$ since given $\gamma$ a deck transformation of $\hat M$ with respect to the cover $P$ we get that it preserves the bundles, so it verifies that $\gamma \cs_0$ and $\gamma \cu_0$ are branching foliations tangent to $E^{cs}$ and $E^{cu}$ and depending on how $\gamma$ acts on the orientation it preserves the uppermost branching foliation or it maps it into the lowermost one. Since these are equal by Proposition \ref{prop.uniquebranchingBGHP} we deduce $\gamma \cs_0 =\cs_0$ and $\gamma \cu_0 = \cu_0$.

Now denote by $\cs, \cu$ the projection of these branching foliations to $M$. 
Let $\cB$ be the lift of $f_t(\cs)$ to $\hat M$.
Since $f^k_t$ lifts to $g$ in $\hat M$, it follows that
$f^k_t(\cs) = \cs$. The foliation $g(\cB)$ projects to 
$f^k_t \circ f_t(\cs)$,
which is then equal to $f_t(\cs)$, so $g(\cB) = \cB$.
Then the uniqueness of branching foliations in $\hat M$ implies
that $\cB = \cs_0$, and we finally conclude that
$f_t(\cs) = \cs$.

Hence $f_t$ preserves branching foliations $\cs, \cu$ and $f_t$
is also a leaf space collapsed Anosov flow. 
By Theorem \ref{teo.main5} we get that $f_t$ is also a quasigeodesic
partially hyperbolic diffeomorphism.

\vskip .1in
To show that $f_t$ is a strong collapsed Anosov flow, we point out to the proof of Theorem \ref{teo.main3} in \S\ref{s.leafspace_implies_strong}.

Theorem \ref{teo.main1} states that $f_t$ is also a strong collapsed Anosov flow. However, since we do not assume that the bundles are orientable, we cannot use Theorem \ref{teo.main3} directly to deduce this. Instead, we redo and adapt some of the steps of the proof of Theorem \ref{teo.main3} in \S\ref{s.leafspace_implies_strong} to these particular examples.

 In \S\ref{s.leafspace_implies_strong} we constructed a map which sent an orbit of an Anosov flow $\psi_t$ which was orbit equivalent to the original Anosov flow $\hat \phi_t$ to a curve tangent to $E^c$. This worked fine under orientability assumptions, so we get a map $\hat h\colon \hat M \to \hat M$ with these properties. Our goal is to show that we can project that map to $M$.
 
 We consider an orbit equivalence $\hat k \colon \hat M \to \hat M$ from the flow $\hat{\phi}_t \colon \hat M \to \hat M$ (the lift of $\phi_t$ to $\hat M$)  to the flow $\psi_t$ constructed in 
\S \ref{s.leafspace_implies_strong}.
We let $\hat h_0 = \hat h \circ \hat k^{-1}$ which maps orbits of $\hat \phi_t$ to curves tangent to the centers. If we consider a deck transformation $\gamma$ with respect to $P\colon \hat M \to M$ and an orbit $o_x$ of $\hat \phi_t$ we claim that 
$$\hat h_0 (\gamma o_x) =\gamma \hat h_0 (o_x).$$

Indeed, by construction, for any orbit $o$, $\hat h_0 (o)$ is the unique curve tangent to $E^c$ which is $\delta$ near $o$. Now $\gamma \hat h_0(o_x)$ is a curve tangent to $E^c$ which is $\delta$ near the orbit $\gamma (o_x)$. Hence, the above formula must hold.

Using this we can prove that we can make a quotient map of $\hat h_0$
to $M$. 
Given a center leaf $c$ in $M$ we say that $c$ is closed
if given a lift $\tilde{c}$ in $\widetilde M$, there is
a non trivial deck transformation $\alpha$ such that
$\alpha(\tilde{c}) = \tilde c$. We have already proved that
$f_t$ is a leaf space collapsed Anosov flow,
which implies that $c$ is closed
if and only if it is associated with a closed orbit of $\phi_s$. 
Let $\gamma_1, \ldots, \gamma_n$ be the deck transformations of
the cover $\hat M \to M$. Given $y$ a point in a  non periodic 
orbit of $\phi_s$, let $x_1, \ldots, x_n$ be the lifts of $y$ 
to $\hat M$,
which are related by the $\{ \gamma_i \}$.
We consider the center leaves in $M$ or $\hat M$ which are
not closed, or equivalently the non periodic orbits of
$\phi_s$ or $\hat \phi_s$. So given $y$, there are finitely
many $x_i$. For each $x_i$, we compute $\hat h_0(x_i)$,
which by the formula above projects by $P$ to the same
center leaf in $M$.
In this center leaf there is an induced metric given by 
length along the centers. This metric induces an identification
with $\mathbb{R}$. Using this identification, we can compute
the average of $P(\hat h_0(x_i))$ for $1 \leq i \leq n$.
Let $h_0(y)$ be this average.
Note that we have used that the center leaf is not closed, as otherwise it is more complicated to take averages.

Now we use the following properties: There are finitely many $\gamma_i$, the length along center leaves varies continuously, and $\hat h_0$ is continuous on the non
periodic center leaves. These properties imply that 
this function extends to a continuous
function in all of $M$.

We now obtained the collapsing function $h_0$ sending orbits
of $\phi_s$ to curves tangent to $E^c$ in $M$. Finally we need
to construct the self orbit equivalence $\beta$ to satisfy
$f_t \circ h_0 = h_0 \circ \beta$.
The construction is now exactly as in the end of the proof of Proposition \ref{prop-2implies1} since no orientation is needed then. 
This shows that $f_t$ is a strong collapsed Anosov flow.
\end{proof}

\begin{remark} \label{rem:thmA}
Notice that, in the proof of Theorem \ref{teo.main1} (more precisely, in Proposition \ref{prop.uniquebranchingBGHP}), the time $t_1$ we require to have so that $f_t$ is leaf space collapsed Anosov flow for all $t>t_1$ may be greater than the time $t_0$ required so that $f_t$ is a partially hyperbolic diffeomorphism for all $t>t_0$.

Hence, Theorem \ref{teo.main1} does not directly say that \emph{all} the examples \emph{\`a la} \cite{BGHP} (meaning all examples proven to be partially hyperbolic using Proposition \ref{prop.BGHP_example}) are (leaf space) collapsed Anosov flows.

However, since $f_t$ is partially hyperbolic for all $t>t_0$ and leaf space collapsed Anosov flow for all $t>t_1\geq t_0$, Theorem  \ref{teo.main6} implies that all $f_t$, $t>t_0$ are indeed leaf space collapsed Anosov flows.
\end{remark}

\subsection{Uniqueness of curves tangent to the center bundle}\label{ss.uniqueness_center_curves}
In this section we show that under some uniqueness properties of the branching foliations like the ones obtained in Proposition \ref{prop.uniquebranchingBGHP} we can deduce a stronger form of uniqueness of integrability of the center bundle. This also motivates Question \ref{q.branchingunique} as a way to understand finer geometric properties of the center bundle beyond the fact that it can help to remove orientability assumptions in our results. 

We first prove a general fact about quasigeodesic partially hyperbolic diffeomorphisms that may be of interest and which essentially states that the center direction inside center stable (or center unstable) leaves is a semi-flow (i.e., it can only branch in one direction). 

\begin{lemma}\label{l.semiflow} Suppose that $f$ is a quasigeodesic partially hyperbolic diffeomorphism with branching foliations $\cs$ and $\cu$.
Given $L$ a leaf of $\wcs$ suppose that
two center leaves $c_1, c_2$ in $L$ intersect in $x$.
Then $c_1, c_2$ coincide in the ray from $x$ to the funnel
point in $L$. The symmetric statement holds for leaves in $\wcu$. 
\end{lemma}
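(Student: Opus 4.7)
The plan is to argue by contradiction. Suppose $c_1, c_2$ are distinct center leaves in the leaf $L \in \wcs$ both containing $x$, and assume the rays $r_1 \subset c_1$ and $r_2 \subset c_2$ from $x$ to the funnel point $p \in S^1(L)$ do not coincide. By Theorem \ref{teo.QG}, the leaf $L$ is a quasigeodesic fan for $\wfc|_L$ with apex $p$, and every center leaf in $L$ is uniquely determined by its ``other endpoint'' in $S^1(L) \smallsetminus \{p\}$. Hence $c_1, c_2$ have distinct other endpoints $q_1 \neq q_2$. Let $r_i^- \subset c_i$ denote the complementary ray from $x$ to $q_i$.

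Since both $c_1, c_2$ are tangent to the center line field $E^c$ at the common point $x$, they share the tangent direction $E^c(x)$ there. Orienting $E^c$ locally, the rays $r_1, r_2$ leave $x$ in the ``funnel'' orientation while $r_1^-, r_2^-$ leave in the opposite one. The first main step is to form the concatenated curve $c^\ast := r_1 \cup \{x\} \cup r_2^-$; by the matching of tangent directions at $x$, $c^\ast$ is a $C^1$ integral curve of $E^c$ in $L$ going from $q_2$ through $x$ to $p$. A standard argument in $\delta$-hyperbolic surfaces (using that $L$ is Gromov hyperbolic and each piece is a uniform quasigeodesic ray asymptotic to distinct ideal points of a geodesic meeting at $x$ in opposite tangent directions) shows that $c^\ast$ is itself a uniform quasigeodesic in $L$.

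The crux (and main obstacle) is to promote $c^\ast$ to an actual leaf of the branching center foliation $\wfc|_L$; once this is done, uniqueness of the leaf with endpoints $(p,q_2)$ in the quasigeodesic fan forces $c^\ast = c_2$, and therefore $r_1 = r_2$, contradicting our assumption. To carry out this promotion I would pass to the approximating foliation provided by Theorem \ref{teo-openbranch}: take $\hat L \in \wfes$ corresponding to $L$, and inside $\hat L$ use the true foliation $\wfec|_{\hat L}$, whose leaves are disjoint quasigeodesics and whose leaf space is in bijection with the center branching leaves in $L$ (through the map $\tilde{h}^{cs}$ together with $\tilde{h}^{cu}$). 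The branching $\{c_1, c_2 \ni x\}$ lifts to a configuration in $\hat L$ where two disjoint approximating leaves $\hat c_1, \hat c_2$ both project (via the collapse maps) to curves through $x$. The no-topological-crossing condition for the branching foliation, combined with the disjointness of $\hat c_1, \hat c_2$ in $\hat L$ and the local product structure of $\wfec$ near $x$, forces the concatenated curve $c^\ast$ to agree with the pushforward of one of these approximating leaves, and therefore to coincide with the unique branching leaf having endpoints $(p, q_2)$.

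Finally, having $c^\ast = c_2$ as leaves gives $r_1 \cup r_2^- = r_2 \cup r_2^-$, hence $r_1 = r_2$, the desired contradiction. The symmetric statement for leaves in $\wcu$ is obtained by the same argument with $\wcs$ and $\wcu$ interchanged. I expect the bulk of the technical work to go into justifying the promotion of $c^\ast$ to a branching leaf; the construction of $c^\ast$ and its quasigeodesic nature are straightforward from Theorem \ref{teo.QG}.
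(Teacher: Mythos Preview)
Your approach has a genuine gap at exactly the point you flag as the crux: the promotion of the concatenated curve $c^\ast = r_1 \cup \{x\} \cup r_2^-$ to a leaf of the branching center foliation. A center leaf in $L$ is, by definition, a connected component of $L \cap U$ for some $U \in \wcu$; being a $C^1$ curve tangent to $E^c$ is not the same thing, and the quasigeodesic-fan uniqueness of Theorem \ref{teo.QG} applies only to center leaves, not to arbitrary integral curves of $E^c$. Your argument via the approximating foliation does not close this gap: the collapse map $\tilde h^{cs}$ (together with $\tilde h^{cu}$) sends the approximating leaf $\hat c_1$ to $c_1$ in its entirety, and $\hat c_2$ to $c_2$; neither is sent to $c^\ast$. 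Nothing in the no-crossing condition or the local product structure forces some approximating leaf to project onto the hybrid curve $c^\ast$. In fact, the result in the paper that \emph{would} let you promote an arbitrary $E^c$-curve to a center leaf is Proposition \ref{prop.uniquecenters}, but that proposition requires uniqueness of the branching foliations as a hypothesis and, more to the point, its proof \emph{uses} Lemma \ref{l.semiflow}. So this route is circular.

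The paper's proof is entirely different and dynamical. It distinguishes two cases: either the funnel rays $r_1, r_2$ separate and rejoin, bounding a finite bigon $B$, or they separate and never rejoin, bounding an infinite bigon. In either case one applies negative iterates $f^{-n}$: stable lengths blow up, so the diameter of $f^{-n}(B)$ tends to infinity, yet the boundary arcs $f^{-n}(\ell_1), f^{-n}(\ell_2)$ remain uniform quasigeodesics with the same endpoints (or same ideal point in the infinite case), hence stay a uniformly bounded distance apart. Picking points deep inside $f^{-n}(B)$ and passing to a subsequence modulo deck transformations produces two distinct center leaves in some $\wcs$-leaf sharing both ideal points, contradicting Proposition \ref{prop.Unique}. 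The key idea you are missing is to exploit the $f$-invariance and the expansion of stable distances under $f^{-1}$, rather than trying to work statically inside a single leaf.
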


\begin{proof} Suppose this is not the case.
There are two options: 
\begin{enumerate}[label=(\roman*)] 
\item\label{finitebigon} There are $y, z$ in the
ray of $c_1$ to the funnel point, so that both
belong to the intersection of $c_1, c_2$ but no
point in the segment of $c_1$ between them is in
$c_2$. This is called a finite bigon; \ or
\item\label{infinitebigon} There is $y$ in $c_1 \cap c_2$ so that the
ray in $c_1$ from $y$ to the funnel point is
disjoint from $c_2$. This is called an infinite bigon.
\end{enumerate}

We first show that option \ref{finitebigon} cannot happen. Let $B$ be the bigon
formed by the segments in $c_1 \cup c_2$ bounded
by $y, z$. Let $\ell_i$ be the segment in $c_i$ 
from $y$ to $z$. Consider the negative iterate by $f$ of $B$: Since the stable lengths
converge to infinity, the diameters of $f^{-n}(B)$ goes to infinity as $n \to +\infty$. 
The curves $f^{-n}(\ell_i)$ are uniform quasigeodesics arcs
with same pair of endpoints, hence they are a uniform
bounded distance from each other. Consider points midway
in $f^{-n}(B)$: Up to subsequences and deck transformations
the two boundary center rays converge to distinct center
leaves in the same center stable leaf, and which have the
same ideal points. This is disallowed by Proposition \ref{prop.Unique}.

A similar argument rules out option \ref{infinitebigon} by considering the infinite bigon $B$ and taking points at increasing distance from the point where they intersect in the direction where they converge to the same point.  The same argument gives two center leaves which have the same ideal points. 

This proves the lemma.
\end{proof}

We can use this to get a precise description of curves tangent to $E^c$ assuming uniqueness of branching foliations. 

\begin{prop}\label{prop.uniquecenters}
Suppose that $f$ is a quasigeodesic partially hyperbolic
diffeomorphism such that all the bundles are
orientable and $f$ preserves the orientations.
Suppose that there is a unique pair of center stable and center unstable branching foliations that are invariant by $f$.
Then any curve in $M$ which is  tangent to $E^c$ is 
the 
intersection of a center stable and a center unstable leaf.
\end{prop}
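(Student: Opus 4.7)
The plan is to use the uniqueness hypothesis to show that any curve tangent to $E^c$ can be ``integrated'' into leaves of both $\cs$ and $\cu$, and then conclude by intersecting. We focus on the $\cs$-direction, the $\cu$-direction being symmetric.

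First, let $\alpha$ be a curve tangent to $E^c$ and lift it to $\tilde\alpha \subset \mt$. Since $E^s$ is uniquely integrable and $\alpha$ is tangent to $E^c$, the strong stable saturation $\tilde S := \bigcup_{x \in \tilde\alpha} \widetilde{\cW^s}(x)$ is, locally near every point of $\tilde\alpha$, an immersed $C^1$-surface tangent to $E^{cs} = E^s \oplus E^c$. Passing to the universal cover produces a $C^1$ immersion $\varphi\colon V \to \mt$ of a simply connected domain $V$, complete for the pullback metric, with image containing $\tilde\alpha$.

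Second, I would enlarge the branching foliation $\wcs$ by adding the orbit of $(\varphi,V)$ under both $\pi_1(M)$ and the lift $\ft$. Concretely, let
\[
\wcs^* := \wcs \cup \{\gamma \cdot \ft^n(\varphi,V) : \gamma \in \pi_1(M), \ n \in \ZZ\}
\]
(reduced modulo reparametrization). The goal is to verify that this is a branching foliation in the sense of Definition \ref{def.branching}, tangent to $E^{cs}$, $\pi_1(M)$-invariant, and $\ft$-invariant, so that it descends to an $f$-invariant branching foliation $\cs^*$ on $M$ tangent to $E^{cs}$. The uniqueness hypothesis then forces $\cs^* = \cs$, so $(\varphi,V)$ is, up to reparametrization, a leaf of $\cs$, and in particular $\alpha$ lies in a leaf $L \in \cs$. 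The analogous argument using strong unstable saturation produces a leaf $U \in \cu$ with $\alpha \subset U$. Since $\alpha$ is a connected curve tangent to $E^c$ contained in $L \cap U$, it is contained in a single connected component of $L \cap U$, i.e., in a center leaf.

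The main obstacle is the no-topological-crossing condition between $\tilde S$ and leaves of $\wcs$ (all other conditions, including $\ft$-invariance and completeness, follow fairly directly from the construction and the fact that $\ft$ preserves $\wcs$ and $E^{cs}$). The key point is that if $\tilde S$ meets a leaf $L \in \wcs$ at a point $x$, then by unique integrability of $E^s$ the entire strong stable leaf $\widetilde{\cW^s}(x)$ is contained both in $\tilde S$ and in $L$. Therefore any intersection set $\tilde S \cap L$ is strong-stable-saturated, and the question of whether $\tilde S$ crosses $L$ reduces, after projecting along $\widetilde{\cW^s}$ to a local transversal, to the question of whether two integral curves of the center direction can cross. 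This is where Lemma \ref{l.semiflow} is used: inside any center stable leaf, two center leaves that meet can only branch in the funnel direction, not cross transversely. The same semiflow picture, transplanted via the stable holonomy, shows that $\tilde S$ and $L$ can at worst branch along stable leaves but cannot topologically cross. Once this verification is complete, the uniqueness hypothesis closes the argument as described.
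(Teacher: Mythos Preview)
Your approach has a structural gap at the step where you invoke uniqueness. A branching foliation in this paper (Definition~\ref{def.branching}) is required to be \emph{minimal}: no surface can be removed while still satisfying (i)--(iii). If the stable saturation $\tilde S$ of $\tilde\alpha$ is \emph{not} already (up to reparametrization) a leaf of $\wcs$, then your enlarged collection $\wcs^*$ strictly contains $\wcs$, and since $\wcs$ already satisfies (i)--(iii), the added surfaces can all be removed---so $\wcs^*$ fails minimality and is not a branching foliation. Hence the uniqueness hypothesis gives you nothing: the only $f$-invariant branching foliation sitting inside $\wcs^*$ might just be $\wcs$ itself, which does not force $(\varphi,V)\in\wcs$. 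There is also a second issue in your no-crossing step: Lemma~\ref{l.semiflow} concerns \emph{center leaves}, i.e.\ components of $L\cap U$ with $L\in\wcs$, $U\in\wcu$, and its proof uses iteration by $f$ together with the uniform quasigeodesic fan structure of those curves. Your curve $\tilde\alpha$ is merely tangent to $E^c$ and is not known to be a center leaf, a quasigeodesic, or $f$-related to other center curves, so the lemma does not apply to it.

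The paper's proof extracts a sharper consequence of uniqueness that you are not using: by Proposition~\ref{topmost} and Remark~\ref{r.specialbranch}, the uniqueness hypothesis forces the \emph{uppermost} and \emph{lowermost} Burago--Ivanov branching foliations to coincide, so through every point $x$ the highest and lowest local leaves of $\wcu$ (say $U_1,U_2$) are actual leaves of the unique foliation, and \emph{any} local $E^{cu}$-surface through $x$---in particular the local unstable saturation of your curve $c$---is trapped between $U_1$ and $U_2$. One then intersects $U_1,U_2$ with a center stable leaf $L$ to obtain genuine center leaves $c_1,c_2$ through $x$, applies Lemma~\ref{l.semiflow} to \emph{those} (which is legitimate), and concludes $U_1=U_2$ locally in the funnel direction, forcing $r\subset U_1$. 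Iterating with a uniform step $\eps_0$ propagates this along the ray, and a compactness-of-leaves-through-a-point argument handles the opposite direction. The essential point your argument misses is that uniqueness is used via the uppermost/lowermost squeeze, not by enlarging the foliation.
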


\begin{proof}
Let $\cs, \cu$ be branching foliations given
by Theorem \ref{BI}.
As explained in Proposition \ref{topmost}, two natural $f$-invariant 
branching foliations tangent to
$E^{cs}$ are constructed in \cite{BI}: The lowermost one in the positive center direction,
and the uppermost one. By hypothesis, these two branching foliations must coincide.

Orient the center bundle to be positive in the center
stable funnel direction.
Now suppose that $c$ is a curve in $\mt$ tangent to $E^c$. 
Let $x$ be a point in $c$. Consider a ray $r$ in $c$ starting at $x$ and 
in the positive direction.
Suppose that $x$ is in a center stable leaf $U$.

\begin{claim}\label{claim.10.7}
The ray $r$ is contained in $U$.
\end{claim}
\begin{proof}
Consider $U_1, U_2$ the  uppermost and lowermost center
unstable leaves  of $\wcu$ through $x$.
Since we assumed that center stable and center unstable branching foliations are unique, $\cu$ is both the lowermost and uppermost branching foliation of \cite{BI} (see Appendix \ref{app.branching}). In particular, this implies that $U_2$ is the lowermost
local center unstable surface from $x$ in the positive
center direction as constructed by Burago--Ivanov
in \cite{BI}. Similarly $U_1$ is the uppermost local
center unstable surface through $x$.
If one does a local saturation $S$  of $c$ through stable 
leaves, then \cite[Lemma 3.1]{BI} shows that $S$ is a
$C^1$ surface tangent to $E^{cu}$. In particular $S$
is locally between $U_1$ and $U_2$.
Let $L$ be a center stable leaf containing $c$. Let 
$c_i = U_i \cap L$. 

Now $c_i$ are center leaves in $L$
both through $x$. Lemma \ref{l.semiflow} shows that
the rays of $c_i$ starting at $x$ and in the positive 
direction coincide. In particular $U_1, U_2$ coincide
locally near $x$ and so does $U$. Hence $r$ is locally
contained in $U$.

This situation has a uniformity: there is fixed $\eps_0 > 0$
so that one can always get a segment of length $\eps_0$ in
$r$ contained in $U$.
This yields a point $x_1$ in $r$ at least $\eps_0$ 
along $r$ from $x$. 
Notice that $x_1$ is in every center unstable leaf
in $[U_1,U_2]$.
Now restart with $x_1$. Get $U^1_1, U^1_2$ the uppermost
and lowermost leaves of $\wcs$ through $x_1$.  Notice that
the intervals 
$[U_1,U_2] \subset [U^1_1,U^1_2]$.
Apply the same argument for a length $\geq \eps_0$ along $r$
to get second segment in $r$ now contained in every leaf
in $[U^1_1,U^1_2]$ and hence in $U$.
Then iterate, obtaining 
points $x_j$  in $r$ escaping in $r$.
This proves the claim.
\end{proof}

Now we prove that there is a $\wcu$ leaf that contains all of $c$.
Let $p_0 = x$. For each $i$, we choose a point $p_i$ in $c$ that is a distance along $c$ at least $1$ from $p_{i-1}$. We choose the sequence so that the $p_i$ escapes in the  direction opposite to the funnel. This direction is opposite to where the points $x_j$ were.
Let $U_i$ a $\wcu$ leaf with $p_i$ in
$U_i$. Let $r_i = [p_i,+\infty)$ be the ray of $c$ starting in $p_i$
and going in the direction of the funnel. 
By Claim \ref{claim.10.7} the entire ray
$r_i$ is contained in $U_i$. All $U_i$ contain $p_0$. 
The set of $\wcu$ leaves through $p_0$ is a compact interval.
Up to taking a subsequence, assume that $U_i$ converges to a
leaf $V$ as $i \to \infty$.
Then since all $U_i$ for $i \geq j$ contain $p_j$ then
$V$ contains $p_j$. Hence $V$ contains all the $p_i$'s.
By the claim then
$V$ contains the entire curve $c$.

By the same arguments $c$ is contained
in a  leaf $E$ of $\wcs$. 
This finishes the proof of the proposition.
\end{proof}

\begin{remark}
Note that in the case of the examples obtained via Theorem \ref{teo.main1} we are able to get that for large enough $t>0$ the diffeomorphism $f_t$ when lifted to a finite cover satisfies
the hypothesis of Proposition \ref{prop.uniquebranchingBGHP}. Hence,  
Proposition \ref{prop.uniquecenters}
can be applied to deduce that every curve tangent to $E^c$ is obtained (in $\mt$) as the intersection of a center stable and a center unstable leaf of the branching foliations.
This is a form of unique integrability of the center
bundle, even if different center curves may merge.
Note in particular that if $f_t$ is dynamically coherent, this implies that $E^c$ is uniquely integrable as a bundle. 
In particular notice the difference: one can prove that
$f_t$ is partially hyperbolic for all $t \geq t_1$, but
to get the unique integrability of the center bundle as above
one needs $t \geq t_0$, where in theory $t_0 > t_1$.

We also note that the property of not having unique $f$-invariant center stable or center unstable branching foliations is an open property among partially hyperbolic diffeomorphisms thanks to Theorem \ref{teo.graphtransform}. The closed property may fail because in the limit different branching 
foliations may collapse to a single branching foliation.

However as a direct consequence of Theorem \ref{teo.main6} we get
the following: in the connected component of partially hyperbolic diffeomorphisms containing some $f_t$ we have that $f_t$ has to be a collapsed Anosov flow with respect to the same flow and same self orbit equivalence of
the flow (same in terms of the action on the orbit spaces), for every pair of branching foliations it may have\footnote{Technically to get this one needs to show that having branching foliations for which $f$ is not a collapsed Anosov flow is also an open and closed property, but this follows directly from Theorem \ref{teo.graphtransform}.}. 
\end{remark}

\begin{remark}
The previous remark applies very well to the case of partially hyperbolic diffeomorphisms in the connected component of the time one map of an Anosov flow. Here, by Theorem \ref{teo.main6} the whole connected component of partially hyperbolic diffeomorphisms 
consists of discretized Anosov flows. 
This uses the last part of the previous remark as well as
Proposition \ref{p.daf2}.
Moreover, since the Anosov flow is generated by a $C^1$ vector field, the center direction of its time one map is uniquely integrable. It follows that in the whole connected component of partially hyperbolic diffeomorphisms, if there were more than one pair of branching foliations, these should correspond to discretized Anosov flows $-$ again by the last part
of the previous remark. 
But in \cite[Lemma 7.6]{BFFP_part2} using that they are discretized Anosov flows,
 we showed that this implies that there is a unique pair of branching foliations. As a consequence we obtain that the center direction is uniquely integrable (since in addition
it integrates to a foliation) in the whole connected component of partially hyperbolic diffeomorphisms containing the time one map of an Anosov flow\footnote{Or, maybe more generally, a discretized Anosov flow for which the center direction is uniquely integrable.}. 
\end{remark}

\subsection{$C^1$ self orbit equivalences and collapsed Anosov flows} \label{ss.C1_soes}

Thanks to the concept of $\varphi$-transversality of \cite{BGHP} and Theorem \ref{teo.main1}, we can readily obtain many collapsed Anosov flows. However, finding a map $\varphi$ for which a flow is $\varphi$-transverse to itself is generally not easy (see \cite{BGHP}). But one instance when it \emph{is} easy is when one has a map $\beta$, which is a (at least) $C^1$ self orbit equivalence of a smooth Anosov flow $\phi$. Indeed, since $\beta$ preserves the weak stable and unstable directions and preserves the flow direction, the flow is trivially $\beta$-transverse to itself (see Definition \ref{def.phi-transversality}).

Hence, for such a $\beta$, the map $\phi_t\circ \beta \circ \phi_t$ is a collapsed Anosov flow of $\phi$ thanks to Theorem \ref{teo.main1}, and it is clearly dynamically coherent as it preserves the weak stable and weak unstable foliations of $\phi$.

The first such examples were constructed in \cite{BW}, but these examples are such that a power is a discretized Anosov flow.

One can wonder whether different smooth self orbit equivalences could lead to genuinely new collapsed Anosov flows (that is, ones such that no power is a discretized Anosov flow).
It turns out that, at least when the Anosov flow is transitive, this is not the case, as we observe a form of smooth rigidity:
\begin{prop}\label{prop.C1soe}
 Let $\beta$ be a $C^1$ self orbit equivalence of a smooth (at least $C^1$) transitive Anosov flow. Then there exists $k$ such that $\beta^k$ is a trivial self orbit equivalence. (Moreover there is an upper bound for $k$ that only depends on the flow and the manifold.)
\end{prop}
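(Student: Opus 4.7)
The strategy is to use the $C^1$ hypothesis to reduce $\beta$, modulo a trivial self orbit equivalence, to an honest smooth symmetry of $\phi_t$, and then exploit rigidity of the symmetry group of a transitive Anosov flow.

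First, because $\beta$ sends orbits of $\phi_t$ to orbits of $\phi_t$ preserving orientation, it preserves both weak foliations $\cF^{ws}_\phi$ and $\cF^{wu}_\phi$, which are determined by the oriented orbit foliation alone via Definition \ref{def_TAF}\ref{item_TAF_forward_asymptotic}. Their intersection being $\RR X$, we get $D\beta(X) = (u\circ\beta^{-1})\cdot X\circ\beta$ for some continuous $u\colon M\to \RR_+$. Hence $\psi_t := \beta\circ\phi_t\circ\beta^{-1}$ is a $C^1$ Anosov flow with exactly the same oriented orbits as $\phi_t$, generated by the time-changed vector field $uX$.

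The key step is to show that some bounded iterate $\beta^n$ preserves the $\phi$-period of every periodic orbit. Iterating the conjugation, $\beta^n$ sends $\phi_t$ to the time-change generated by $u^{(n)} X$, where $u^{(n)} := \prod_{i=0}^{n-1} u\circ\beta^{-i}$; the $C^1$-hypothesis bounds the distortion of periods by $\|D\beta\|_{C^0}^{\pm 1}$ per iteration. Once $\beta^n$ preserves all $\phi$-periods, Livsic's theorem for the transitive Anosov flow $\phi_t$ furnishes $h\colon M\to\RR$ with $(u^{(n)})^{-1} - 1 = X(h)$, and the trivial self orbit equivalence $g(x) := \phi_{h(x)}(x)$ then conjugates the time-changed flow back to $\phi_t$, so that $\tilde\beta := g^{-1}\circ \beta^n$ commutes with $\phi_t$ exactly. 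Since $g$ is trivial, $\tilde\beta$ represents the same self orbit equivalence class as $\beta^n$, so it suffices to bound the order of $\tilde\beta$ modulo trivial self orbit equivalences.

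It remains to analyse the group $\mathrm{Sym}^1(\phi_t)$ of $C^1$-diffeomorphisms commuting with $\phi_t$. Transitivity of $\phi_t$ together with uniqueness of the Anosov splitting $TM = E^s\oplus\RR X\oplus E^u$ forces the identity component of this Lie group to coincide with the one-parameter subgroup $\phi_\RR$ itself, which consists of trivial self orbit equivalences. The component group $\mathrm{Sym}^1(\phi_t)/\phi_\RR$ acts faithfully on a finite $\phi_t$-invariant set $-$ for instance, on the collection of periodic orbits of some isolated minimal period $-$ and hence is finite with order bounded by data of $(M,\phi_t)$. Composing this bound with the $n$ obtained in the previous step yields the required uniform bound on $k$. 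The main obstacle will be the second paragraph: promoting the $C^1$-bound on per-iteration period distortion, together with topological-entropy matching across iterates (via Abramov's formula), into exact period preservation by a uniformly bounded power $\beta^n$, since a single entropy equality only controls an average of $\log u$ rather than each periodic integral.
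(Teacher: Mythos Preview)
Your proposal has a genuine gap precisely where you flag it: there is no mechanism to force a \emph{bounded} iterate $\beta^n$ to preserve the $\phi$-period of every periodic orbit. The $C^1$ bound $\|D\beta\|_{C^0}^{\pm 1}$ only controls the multiplicative distortion of periods per iteration, and entropy equalities (Abramov) only pin down the integral of $\log u$ against the measure of maximal entropy, not against every periodic measure. There is no reason the set of periodic integrals of $\log u$ should be rational multiples of one another, and without that, no finite power of $\beta$ will make all of them vanish simultaneously. So the reduction from ``orbit equivalence'' to ``flow symmetry'' as you have set it up does not go through.

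The paper's argument sidesteps this entirely: instead of trying to make $\beta^n$ commute with the \emph{given} flow $\phi_t$, one invokes a construction of Barbot (Proposition 6.6 in \cite{BarbotHDR}, using the projectivized tangent bundle of the orbit space) to produce a \emph{time-change} $\psi_t$ of $\phi_t$ such that $\beta$ itself (or $\beta^2$ if $\beta$ reverses the flow direction) already lies in the centralizer of $\psi_t$. No period matching is required; the $C^1$ regularity is what makes Barbot's construction work. Once $\beta$ is in the centralizer of a transitive Anosov flow, \cite[Lemma~1.4]{BarthelmeGogolev_centralizers} shows this centralizer modulo trivial self orbit equivalences is finite, with order bounded in terms of $(M,\phi)$. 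Your final paragraph is gesturing at exactly this last fact, but note that the assertion ``the identity component of $\mathrm{Sym}^1(\phi_t)$ is $\phi_{\RR}$'' is itself the substantive content of that centralizer lemma and is not obvious from uniqueness of the splitting alone.
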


\begin{proof}
In the proof of \cite[Proposition 6.6]{BarbotHDR}, Barbot shows that if a map $\wt\beta_{\mathcal O}$ on the orbit space of a smooth Anosov flow $\phi$ is a $\pi_1$-equivariant, $C^1$ diffeomorphism, then there exist a time-change $\psi$ of $\phi$ such that $\beta$ is a conjugation of $\psi$ with itself, where $\beta \colon M \to M$ is a $C^1$-map such that its lift to the orbit space is $\wt\beta_{\mathcal O}$ (or $\wt\beta_{\mathcal O}^2$ if $\wt\beta_{\mathcal O}$ reverses the direction of the flow). (The flow $\psi$ is build on the projectivized bundle of the orbit space, see also \cite{BarbotFenley_new_contact_Anosov}).

In other words, $\beta$ is in the centralizer of $\psi$. By \cite[Lemma 1.4]{BarthelmeGogolev_centralizers}, the centralizer of $\psi$ quotiented out by the elements of the centralizer that act as the identity on the orbit space is finite. Hence, there exists $k$, which can be chosen depending only on the flow and the manifold, such that $\beta^k$ is trivial.
\end{proof}

We end this section with some comments regarding question \ref{q.realization_soe}: If a self orbit equivalence $\beta$ of an Anosov flow $\phi_t$ is smooth, then it follows that one can construct a collapsed Anosov flow with the technique of \cite{BGHP} by taking $\phi_t \circ \beta \circ \phi_t$ with large $t$. In general, the previous proposition indicates that we cannot usually expect the self orbit equivalence to be smooth, therefore, we cannot apply this technique directly. However, it is reasonable to expect that self orbit equivalences can be smoothed in order to preserve some transversality between bundles which would give a way to attack question \ref{q.realization_soe}.

\section{Some classification results}
\label{s.classification_results} 

In this section we will present some relatively direct results giving settings where one can use self orbit equivalences to classify all collapsed Anosov flows or vice-versa. The three settings we will describe are: Collapsed Anosov flows that are homotopic to the identity, Collapsed Anosov flows on $T^1S$, the unit tangent bundle of a hyperbolic surface, and Collapsed Anosov flows associated with the Franks--Williams example.

Those are not the only cases where one can obtain such a complete understanding, but they are among the easiest and nicely showcase the type of tools one has to prove such results.

We emphasize that the result below gives a complete picture of self orbit equivalences of certain Anosov flows, but only a classification up to isotopy for collapsed Anosov flows, as we do not yet know how different two collapsed Anosov flows associated with the same self orbit equivalence can be.

\subsection{The homotopic to the identity case}

In \cite{BaG}, self orbit equivalences of transitive Anosov flows that are homotopic to the identity were completely classified.
Thus, we can translate \cite[Theorem 1.1]{BaG}, using Proposition \ref{p.daf2}, in terms of collapsed Anosov flow to obtain the following.

\begin{teo}\label{teo.CAF_homotopic_to_identity}
If $f$ is a strong collapsed Anosov flow homotopic to the identity associated to a transitive Anosov flow $\phi^t$, then $f$ is either a discretized Anosov flow or a \emph{double translation} in the sense of \cite{BFFP_part2}.

Moreover, if the associated Anosov flow $\phi_t$ is either not $\RR$-covered, or has non transversely-orientable weak foliations, then $f$ must be a discretized Anosov flow.
\end{teo}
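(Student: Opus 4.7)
The plan is to use the semi-conjugacy furnished by the strong collapsed Anosov flow structure to transfer the homotopy-to-identity property from $f$ to the associated self orbit equivalence $\beta$, then invoke the classification result \cite[Theorem 1.1]{BaG}, and finally translate the resulting dichotomy back into information about $f$ via Proposition \ref{p.daf2} and the leaf space language of \cite{BFFP_part2}.

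First I would set up the data. Definition \ref{defi1} provides a continuous map $h\colon M\to M$ homotopic to the identity and a self orbit equivalence $\beta$ of $\phi_t$ satisfying $f\circ h = h\circ \beta$. Combining Proposition \ref{prop-1implies2} with Proposition \ref{p.wcaf2impliescaf2} promotes this into a $\pi_1(M)$-equivariant homeomorphism $H\colon \cO_\phi\to \lc$ that sends $\cO^{ws}_\phi$ to $\cO^{cs}_f$ and $\cO^{wu}_\phi$ to $\cO^{cu}_f$, and that intertwines the natural action of a suitable lift $\tilde\beta$ on $\cO_\phi$ with the action of a lift $\tilde f$ on $\lc$ (cf.~Remark \ref{r.lscafsoe}). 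Since $f$ is homotopic to the identity I would choose $\tilde f$ to commute with every deck transformation; the equivariance of $H$ then forces $\tilde\beta$ to commute with every deck transformation as well, which shows that $\beta$ is homotopic to the identity.

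Next I would apply \cite[Theorem 1.1]{BaG}, whose classification asserts that, for a transitive Anosov flow, any self orbit equivalence homotopic to the identity is either trivial, or else the flow is $\RR$-covered with transversely orientable weak foliations and the self orbit equivalence is equivalent to a canonical ``translation'' acting non-trivially on both the weak stable and weak unstable leaf spaces. In the trivial case, Proposition \ref{p.daf2} immediately gives that $f$ is a discretized Anosov flow. In the translation case, transferring the action of $\tilde\beta$ across $H$ shows that $\tilde f$ acts as a non-trivial translation on both $\lcs$ and $\lcu$, which is precisely the definition of a double translation from \cite{BFFP_part2}.

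The moreover statement is then immediate: if $\phi_t$ is not $\RR$-covered, or if its weak foliations are not transversely orientable, the translation alternative of \cite[Theorem 1.1]{BaG} is ruled out, so $\beta$ must be trivial and $f$ must be a discretized Anosov flow. The only mildly delicate step I foresee is checking that the lift $\tilde\beta$ produced from the equation $\tilde f\circ\tilde h=\tilde h\circ\tilde\beta$ is indeed the lift commuting with deck transformations (rather than merely some conjugate of it), but this follows cleanly from the $\pi_1(M)$-equivariance of $H$ together with Remark \ref{r.lscafsoe}.
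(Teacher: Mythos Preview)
Your proposal is correct and follows essentially the same approach as the paper: reduce to showing $\beta$ is homotopic to the identity, apply \cite[Theorem 1.1]{BaG}, and then translate the dichotomy back via Proposition \ref{p.daf2} and the semi-conjugacy on leaf spaces. The only cosmetic difference is that the paper deduces $\beta$ homotopic to the identity directly from the equation $f\circ h = h\circ\beta$ (since both $f$ and $h$ are homotopic to the identity), whereas you route through the $\pi_1(M)$-equivariance of $H$ and the good lift $\tilde f$; both arguments are valid and amount to the same thing.
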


Note that it is still unknown whether double translations exists or not outside of Seifert manifolds, but in \cite{FP-2} the second and third authors show that any double translation on a hyperbolic manifold must be a collapsed Anosov flow associated with the ``one-step up'' self orbit equivalence of an $\RR$-covered Anosov flow.

\begin{proof}
 Let $f$ be a strong collapsed Anosov flow that is homotopic to the identity, associated with a transitive Anosov flow $\phi_t$. Let $h$ and $\beta$ be the associated collapsing map and self orbit equivalence. Since $f \circ h = h \circ \beta$ and both $f$ and $h$ are homotopic to the identity, we deduce that $\beta$ is also homotopic to the identity. Thus we can apply \cite[Theorem 1.1]{BaG}.
 
 If the flow $\phi_t$ is not $\RR$-covered or has non transversely-orientable weak foliations, then item (1) and (3), respectively, of \cite[Theorem 1.1]{BaG} implies that $\beta$ is trivial, thus $f$ is a discretized Anosov flow thanks to Proposition \ref{p.daf2}.
 
 If $\phi_t$ is $\RR$-covered, then item (4) of \cite[Theorem 1.1]{BaG} gives that either $\beta$ is trivial, which gives that $f$ is a discretized Anosov flow, or that $\beta$ is a power of the ``one-step up'' self orbit equivalence $\eta$. We will not recall what $\eta$ is exactly, just that a good lift of it acts as a translation on both leaf spaces of $\phi_t$. Let $\wt f$ be a lift of $f$ to the universal cover obtained from lifting an homotopy to the identity. Since $f$ is a strong collapsed Anosov flow, it admits center stable and center unstable branching foliations that are the images by $h$ of the weak stable and weak unstable foliations of $\phi_t$. Hence, a lift $\wt h$ realizes a semi-conjugacy between the action of $\wt\beta$ and the action of $\wt f$ on the respective leaves spaces. Since $\wt \beta$ acts as a translation, so does $\wt f$. So $f$ is a double translation in the sense of \cite{BFFP_part2}. 
\end{proof}

\subsection{Unit tangent bundle of surfaces}

When considering unit tangent bundle of surfaces, it is also possible to give a complete picture of collapsed Anosov flows, at least up to isotopy.

\begin{teo}\label{teo.CAFonT1S}
 Let $T^1S$ be the unit tangent bundle of a hyperbolic surface $S$.
    
 Let $f$ be a collapsed Anosov flow on $T^1S$ with associated flow $\phi$. 
 Then the isotopy class of $f$ is in a lift of $\mathrm{MCG}(S)$ to $\mathrm{MCG}(T^1S)$.
 More precisely, let $g^t$ be the geodesic flow on $T^1S$ for a fixed hyperbolic metric and $e\colon T^1S \to T^1S$ an orbit equivalence between $\phi$ and $g^t$. Let $\widehat{\mathrm{MCG}(S)} \subset \mathrm{MCG}(T^1S)$ be the lift of $\mathrm{MCG}(S)$ given by taking the derivative. Then the isotopy class of $f$ is inside $i_{[e]}\widehat{\mathrm{MCG}(S)}$, the conjugation of $\widehat{\mathrm{MCG}(S)}$ by the isotopy class of $e$.
 
 Moreover, any isotopy class in $i_{[e]}\widehat{\mathrm{MCG}(S)}$ admits a collapsed Anosov flow.
 
 The same statements hold for self orbit equivalences of $\phi$. 
\end{teo}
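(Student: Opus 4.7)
The plan is to reduce both statements to the study of self orbit equivalences of a fixed geodesic flow $g^t$, classify such self orbit equivalences up to isotopy, and then realize each isotopy class via a collapsed Anosov flow. The equation $f\circ h = h\circ\beta$ with $h$ homotopic to the identity gives that $f$ and the self orbit equivalence $\beta$ are homotopic, so the isotopy class of the collapsed Anosov flow equals that of $\beta$. By Ghys' theorem every Anosov flow on $T^1S$ (with $S$ a closed hyperbolic surface) is orbit equivalent to $g^t$, and fixing such an orbit equivalence $e$ identifies self orbit equivalences of $\phi$ with self orbit equivalences of $g^t$, transporting isotopy classes by conjugation by $[e]$. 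The problem thus reduces to (a) showing every self orbit equivalence of $g^t$ is isotopic to $\widehat\psi$ for some $\psi\in\mathrm{Diff}(S)$, and (b) realizing every isotopy class $[\widehat\psi]$ both by a collapsed Anosov flow and by a self orbit equivalence of $g^t$.

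For (a), I would use that a self orbit equivalence $\beta_0$ of $g^t$ preserves the weak-stable foliation, whose leaf space in the universal cover is canonically identified with $\partial\HH^2=S^1$; hence $\beta_0$ induces a $\pi_1(S)$-equivariant homeomorphism of $S^1$ (up to an outer automorphism coming from the $\pi_1$-action). By Dehn--Nielsen--Baer this equivariant boundary map is induced by a diffeomorphism $\psi$ of $S$ unique up to isotopy, and the derivative $\widehat\psi$ produces the same boundary action. Two such self orbit equivalences of $g^t$ then differ by an element of the kernel of $\mathrm{MCG}(T^1S)\to\mathrm{MCG}(S)$, which is generated by a vertical Dehn twist along the fiber; one rules this out by showing that a self orbit equivalence of $g^t$ inducing the identity on $\pi_1(S)$ must in fact act trivially on the orbit space of $\widetilde{g^t}$, hence be a trivial self orbit equivalence by the classification, contained in \cite{BaG}, of self orbit equivalences homotopic to the identity for $\RR$-covered Anosov flows.

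For (b), I would combine Theorem \ref{teo.main1} with the $\varphi$-transversality construction of \cite{BGHP}. Given $\psi\in\mathrm{Diff}(S)$, the derivative $\widehat\psi$ preserves the vertical fibers, while the weak-stable, weak-unstable and flow bundles of $g^t$ are transverse to the fibers everywhere; this suffices to check that $g^t$ is $\widehat\psi$-transverse to itself (this is essentially the content of the construction of partially hyperbolic examples on $T^1S$ in \cite{BGHP}). Theorem \ref{teo.main1} then yields that $g^t\circ\widehat\psi\circ g^t$ is a strong collapsed Anosov flow for $g^t$ for all large $t$, isotopic to $\widehat\psi$ since $g^t$ is isotopic to the identity; conjugating by $e$ gives the desired collapsed Anosov flow for $\phi$ in the class $i_{[e]}[\widehat\psi]$. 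For realization as an actual self orbit equivalence, I would note that $\widehat\psi\circ g^t\circ\widehat\psi^{-1}$ is the geodesic flow of the pullback metric $\psi^*g_0$; connecting $\psi^*g_0$ to $g_0$ through a path of negatively curved metrics and applying structural stability of Anosov flows along this path produces an orbit equivalence $e_1$ isotopic to the identity between the two geodesic flows, so that $e_1\circ\widehat\psi$ is the required self orbit equivalence of $g^t$ isotopic to $\widehat\psi$.

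The main obstacle is the final uniqueness part of step (a): showing that any self orbit equivalence of $g^t$ acting trivially on the outer automorphism of $\pi_1(S)$ is isotopic to the identity, thereby ruling out a residual vertical Dehn twist undetected by the orbit-space action. This requires a careful analysis of how the fiber direction interacts with self orbit equivalences together with an application of the $\RR$-covered classification from \cite{BaG}. A secondary subtlety is arranging, in the realization step for self orbit equivalences, that the path of metrics from $\psi^*g_0$ to $g_0$ can be kept within the negatively curved regime so that structural stability produces an orbit equivalence isotopic to the identity uniformly along the deformation.
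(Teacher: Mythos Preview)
Your overall strategy matches the paper's: reduce to self orbit equivalences of the geodesic flow via the orbit equivalence $e$, use Theorem~\ref{teo.main1} together with the $\varphi$-transversality examples of \cite{BGHP} for realization, and use a boundary-circle/Dehn--Nielsen--Baer argument for the constraint. Two points deserve comment.

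First, your realization of self orbit equivalences via a path of negatively curved metrics and structural stability is correct but unnecessarily elaborate. The paper observes that once $g^t\circ\widehat\psi\circ g^t$ is a (strong) collapsed Anosov flow by Theorem~\ref{teo.main1}, the very definition hands you a self orbit equivalence $\beta$ of $g^t$ with $f\circ h=h\circ\beta$ and $h$ homotopic to the identity; hence $\beta$ is in the isotopy class $[\widehat\psi]$. This avoids any deformation argument. Your secondary worry about keeping the path inside negatively curved metrics is in any case harmless: for closed surfaces of genus $\geq 2$ one may take a path of hyperbolic metrics (Teichm\"uller space is contractible), so the geodesic flows stay Anosov throughout.

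Second, your handling of the vertical Dehn twist is slightly off. From ``identity on $\pi_1(S)$'' you cannot conclude ``trivial action on the orbit space'' (the one-step-up map $\eta$ of an $\RR$-covered flow acts nontrivially there yet is homotopic to the identity), so invoking \cite{BaG} for that purpose is misplaced. What one actually shows---and this is the content of \cite[Proposition 3.6]{Matsumoto} or \cite[Theorem 3.6]{BGHP} that the paper cites---is that a self orbit equivalence of $g^t$ inducing the identity on $\mathrm{Out}(\pi_1(S))$ already induces the identity on $\pi_1(T^1S)$ (the induced circle map must commute with the $\pi_1(S)$-action on $\partial\HH^2$, hence is the identity, and then equivariance forces the cocycle $n\colon\pi_1(S)\to\ZZ$ measuring the fiber twist to vanish). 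This gives homotopic to the identity directly, which is all that is needed; no appeal to \cite{BaG} is required at this step.
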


\begin{remark}
 Note that one can choose the orbit equivalence $e$ above such that it induces a map homotopic to the identity on $S$.
\end{remark}

\begin{remark}
 In \cite{BGHP} it is shown that the only partially hyperbolic diffeomorphisms on $T^1S$ that induce a map homotopic to the identity on $S$ must be homotopic to the identity on $T^1S$ too (that is, there is no non-trivial gauge transformations of $T^1S$ that admits a partially hyperbolic representative).
 
 As noted in \cite{BGHP}, this also implies that the isotopy classes of partially hyperbolic diffeomorphisms on $T^1S$ do not form a subgroup of $\mathrm{MCG}(T^1S)$.
 
 However, as we see here, this lack of a group structure is only because there are many Anosov flows on $T^1S$ that are orbit equivalent to the geodesic flow, but not via an orbit equivalence that is homotopic to identity. 
 Indeed, once an Anosov flow $\phi$ is fixed, the isotopy classes of collapsed Anosov flow associated with $\phi$ form a subgroup of $\mathrm{MCG}(T^1S)$.
\end{remark}

\begin{remark}
 In \cite{FP-2}, the second and third authors show that any partially hyperbolic diffeomorphism $f$ on $T^1 S$ that induces a map on $S$ that is homotopic to a pseudo-Anosov diffeomorphism is a strong collapsed Anosov flow (in fact a, quasigeodesic partially hyperbolic diffeomorphism). So the only cases that are not yet known to be collapsed Anosov flows on $T^1S$ are partially hyperbolic diffeomorphisms that act reducibly (but not trivially) on the base $S$.
\end{remark}

\begin{proof}

Let $\phi$ be an Anosov flow on $T^1M$ and $e\colon M \to M$ an homeomorphism such that $e^{-1} \circ \phi^t \circ e$ is a time-change of $g^t$.

In \cite[Theorem 1.2]{BGHP}, it was shown that any isotopy class in $\widehat{\mathrm{MCG}(S)}$ admits a partially hyperbolic diffeomorphism which is, according to Theorem \ref{teo.main1}, a collapsed Anosov flow associated with $g^t$. Hence, for any class in $\widehat{\mathrm{MCG}(S)}$, there exists a self orbit equivalence $\beta$ of $g^t$. Thus, $e\circ \beta \circ e^{-1}$ is a self orbit equivalence of $\phi^t$ and such self orbit equivalences will cover all of $i_{[e]}\widehat{\mathrm{MCG}(S)}$.

We can also build a collapsed Anosov flow using the same method, but we would need to require smoothness of $e$ which does not a priori hold. So instead, we let $\bar e$ be a diffeomorphism in the same isotopy class as $e$ and define $\bar\phi^t = \bar e \circ g^t \circ \bar e^{-1}$. Then, for any collapsed Anosov flow $f$ associated with $g^t$, the map $\bar e \circ f \circ \bar e^{-1}$ is a collapsed Anosov flow of $\bar\phi$ and the isotopy classes of such collapsed Anosov flow cover all of $i_{[e]}\widehat{\mathrm{MCG}(S)}$.

The other direction can be proven for instance as in \cite{Matsumoto}:
We can show that the isotopy class of a self orbit equivalence of $\phi$ is necessarily in $i_{[e]}\widehat{\mathrm{MCG}(S)}$. This will imply that the isotopy class of a collapsed Anosov flow must also necessarily be in $i_{[e]}\widehat{\mathrm{MCG}(S)}$.

If $\beta$ is a self orbit equivalence of $\phi$, then up to conjugation by $e$, we can assume that $\beta$ is a self orbit equivalence of $g^t$, and we have to show that $[\beta]\in \widehat{\mathrm{MCG}(S)}$. This follows as in the proof of \cite[Proposition 3.6]{Matsumoto} (see also \cite[Theorem 3.6]{BGHP} or \cite{BarbotFenley_new_contact_Anosov}). 
\end{proof}

\subsection{Collapsed Anosov flows of the Franks--Williams example}

The Franks--Williams \cite{FranksWilliams} example is the first, most famous and simplest non-transitive Anosov flow on a $3$-manifolds. We denote the Franks--Williams flow by $\phi_{FW}$ and by $M_{FW}$ the manifold supporting that flow. Note that $\phi_{FW}$ is the only non-transitive Anosov flow up to orbit equivalence on $M_{FW}$ (see \cite{YangYu}).
We will not recall the construction of $\phi_{FW}$ (see \cite{FranksWilliams} or, e.g., \cite{BBY}), but instead list the properties that we will use:
\begin{enumerate}[label = (\roman*)]
 \item \label{item.FW_2pieces} The manifold $M_{FW}$ decomposes into two atoroidal pieces separated by a torus $T$ transverse to $\phi_{FW}$ (which is unique up to isotopy along the flow lines). In particular, a consequence of Mostow's rigidity theorem is that the mapping class group of $M_{FW}$ is up to finite index generated by Dehn twists along the transverse tori (\cite[Corollary 27.6]{Johannson})\footnote{see e.g., \cite{BGHP} for the definition of a Dehn twist on a torus in a $3$-manifold}.
 \item \label{item.FW_torus} The stable and unstable foliations restrict to two transverse foliations on the transverse torus with four closed leaves (two stable and two unstable leaves) and Reeb components in between. We denote by $\alpha$ the element of $\pi_1(T)$ representing the closed leaves;
 \item \label{item.FW_periodic_orbits} Each periodic orbit of $\phi_{FW}$ is unique in its free homotopy class, except for the four periodic orbits (two in each atoroidal pieces) associated with the closed leaves of $T$ which are pairwise freely homotopic.
\end{enumerate}

\begin{teo}\label{teo-SOE_on_FranksWilliams}
 Up to finite power, any self orbit equivalence or collapsed Anosov flow of $\phi_{FW}$ is in the isotopy class of the power of a Dehn twist of $T$ in the direction of $\alpha$.
 Moreover, up to a finite power, two self orbit equivalences of $\phi_{FW}$ in the same isotopy class are equivalent.

 Conversely, any such isotopy classes can be realized by a collapsed Anosov flow or self orbit equivalence of $\phi_{FW}$.
\end{teo}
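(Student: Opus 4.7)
The plan is to reduce the classification of collapsed Anosov flows to that of self orbit equivalences, since a CAF $f$ with associated SOE $\beta$ and collapsing map $h$ satisfies $f \circ h = h \circ \beta$ with $h$ homotopic to the identity, so $f$ and $\beta$ represent the same isotopy class in $\mathrm{MCG}(M_{FW})$. Thus I focus on classifying SOE of $\phi_{FW}$.

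For the first direction (any SOE is, up to a power, isotopic to some $T_\alpha^k$), I would proceed in two steps. First, by property \ref{item.FW_2pieces} and Johannson's characteristic submanifold theorem (together with Mostow rigidity on the two atoroidal pieces, whose interiors are hyperbolic), the group $\mathrm{MCG}(M_{FW})$ is virtually generated by Dehn twists along $T$. Hence, up to some power $n\geq 1$, the class $[\beta^n]$ is represented by a Dehn twist $T_\gamma$ for some $\gamma \in H_1(T;\ZZ)\cong \ZZ^2$. Second, I identify $\gamma$ as proportional to $\alpha$. Since $\beta$ is an orbit equivalence, it preserves the weak stable and weak unstable foliations of $\phi_{FW}$, and in particular $\beta|_T$ preserves the pair of transverse 1-dimensional foliations from property \ref{item.FW_torus}. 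Up to a further power of $\beta$, each of the four closed leaves on $T$ is fixed setwise, so $\beta_\ast\alpha = \pm\alpha$ in $H_1(T;\ZZ)$. Combining with the formula $T_\gamma(\delta) = \delta + (\delta\cdot\gamma)\,\gamma$ for the action of $T_\gamma$ on homology forces $\gamma$ to be a multiple of $\alpha$, so $[\beta^n] = [T_\alpha^k]$.

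For the realization direction, I would apply Theorem \ref{teo.main1}. Given $k \in \ZZ$, choose a smooth representative $\varphi$ of $T_\alpha^k$ supported in a tubular neighborhood of $T$ and shearing in the $\alpha$-direction on $T$. Because $\alpha$ is the direction of the flow on the four closed orbits crossing $T$ (the closed leaves of property \ref{item.FW_torus} coming from the common direction of stable and unstable closed leaves), one can arrange $\varphi$ so that $\phi_{FW}$ is $\varphi$-transverse to itself in the sense of Definition \ref{def.phi-transversality}. Proposition \ref{prop.BGHP_example} and Theorem \ref{teo.main1} then yield, for $t$ large, a strong collapsed Anosov flow $f_t = \phi^{FW}_t \circ \varphi \circ \phi^{FW}_t$ whose associated SOE is isotopic to $\varphi$, hence realizes the class $[T_\alpha^k]$.

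The main obstacle is the equivalence statement: given two SOE $\beta_1,\beta_2$ with $[\beta_1]=[\beta_2]$, one must show that $\beta = \beta_1 \beta_2^{-1}$ becomes trivial after a finite power. Since $\phi_{FW}$ is non-transitive, the classification of \cite{BaG} does not directly apply. The plan is to exploit the decomposition from property \ref{item.FW_2pieces}: the SOE $\beta$ preserves both the attractor and the repeller, and hence restricts to a SOE on each atoroidal piece. By Mostow rigidity on the hyperbolic interior of each piece, $\beta$ on each piece is isotopic to a symmetry lying in a finite group, so a power of $\beta$ is isotopic to the identity on each piece while fixing every periodic orbit (using property \ref{item.FW_periodic_orbits} to distinguish periodic orbits by their free homotopy classes). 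A standard shadowing/expansivity argument then promotes such a SOE, fixing all periodic orbits, to a trivial SOE. This last step, which requires controlling behavior both along the attractor/repeller and across the transverse torus $T$ in a manner compatible with the isotopy class already being trivial, is the most delicate part of the argument.
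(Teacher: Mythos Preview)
Your overall strategy matches the paper's, and the realization direction via \cite[Theorem 1.3]{BGHP} and Theorem~\ref{teo.main1} is exactly what the paper does. Two points deserve comment.

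For the first direction, your phrasing ``$\beta|_T$ preserves the pair of transverse 1-dimensional foliations'' is not literally correct: a self orbit equivalence need not send $T$ to itself, so $\beta|_T$ is not a self-map of $T$. The paper bypasses this by arguing entirely through periodic orbits: once a power $\beta^k$ is isotopic to the identity on each atoroidal piece, it sends each periodic orbit to a freely homotopic one, and property~\ref{item.FW_periodic_orbits} then forces $\beta^{2k}$ to fix every periodic orbit; in particular it fixes the conjugacy class of $\alpha$, which pins down the Dehn twist direction. Your homological argument with the formula $T_\gamma(\delta)=\delta+(\delta\cdot\gamma)\gamma$ can be made to work, but you need to justify why $\beta^n_\ast$ acts on $H_1(T)$ in a well-defined way before invoking it.

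For the equivalence statement your proposed route is more complicated than necessary and vague at the crucial step. Restricting $\beta$ to the atoroidal pieces is awkward: those are manifolds with boundary, the restricted flow is not Anosov, and ``SOE on a piece'' requires care. The paper's argument is shorter and more direct: lift $\beta$ (assumed homotopic to the identity) to $\wt\beta$ on $\mt$ via the homotopy. By property~\ref{item.FW_periodic_orbits}, $\wt\beta$ fixes every lift of every periodic orbit except possibly the four exceptional ones, and $\wt\beta^2$ preserves the half-leaves of each lifted periodic weak stable/unstable leaf. Hence $\wt\beta^2$ fixes every orbit of $\wt{\phi_{FW}}$ arising as an intersection of a stable and an unstable leaf of periodic orbits. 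Such orbits are dense in the orbit space (this is the point where the paper cites \cite{FranksAnosovDiffeo}), so by continuity $\wt\beta^2$ acts trivially on the orbit space. Your ``standard shadowing/expansivity argument'' is presumably gesturing at this density-plus-continuity step, but it is not shadowing in the usual sense, and naming the actual mechanism (density of intersections of periodic invariant manifolds) is what makes the argument go through.
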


\begin{remark}
One can show that two self orbit equivalences in the same isotopy class are equivalent without taking a finite power, but the proof is easiest when allowing finite powers and we leave the more precise statement for a future general study of self orbit equivalences.
\end{remark}

\begin{remark}
 A cosmetic adaptation of the following proof allows to more generally classify collapsed Anosov flows and self orbit equivalences of Anosov flows that are obtained in the following way: Create any number of hyperbolic plugs (in the sense of \cite{BBY}) by doing a derived from Anosov construction on finitely many orbits of a suspension of an Anosov diffeomorphism of the torus. Glue the hyperbolic plugs together in any of the ways allowed to get a (transitive or non-transitive) Anosov flow (see \cite{BBY}).
 
 Such Anosov flows will satisfy a version of each of the items \ref{item.FW_2pieces}, \ref{item.FW_torus}, and \ref{item.FW_periodic_orbits} above. That is, the JSJ decomposition of the manifold has only atoroidal pieces, each torus is transverse to the flow with two closed center leaves and Reeb components for each of the weak foliations restricted to the torus, and every periodic orbits aside from finitely many will be alone in their free homotopy class.\footnote{To show that a periodic orbit $\gamma$ crossing one transverse torus $T$ is also alone in its free homotopy class, remark that, otherwise, it would have to be freely homotopic to the \emph{inverse} of another periodic $\gamma'$ (see, e.g., \cite{Fen:QGAF}), but that would imply that $\gamma'$ has to cross $T$ in the opposite direction as $\gamma$, contradicting the transversality of $T$.}
\end{remark}

\begin{proof}
 We start by proving the converse part of the theorem: Since $\alpha\in \pi_1(T)$ represents the free homotopy class of the closed leaves of the weak stable and weak unstable foliations restricted to $T$, by \cite[Theorem 1.3]{BGHP}, the isotopy class of any Dehn twist in the direction of $\alpha$ admits a partially hyperbolic diffeomorphism. This diffeomorphism is a collapsed Anosov flow by Theorem \ref{teo.main1}.
 
 Now, suppose that $\beta$ is a self orbit equivalence of $\phi_{FW}$. Since the Dehn twists on $T$ generate a finite index subgroup of  mapping class group of $M_{FW}$ (see, e.g., \cite[Corollary 27.6]{Johannson}), up to taking a finite power, say $k$, of $\beta$, we can assume that $\beta^k$ preserves both pieces and is isotopic to identity in each pieces.

 So $\beta^k$ must send each periodic orbit to one freely homotopic to it. By construction of the Franks--Williams example (see item \ref{item.FW_periodic_orbits} above), $\beta^{2k}$ will then fix every periodic orbit of $\phi_{FW}$. In particular, the isotopy class of $\beta^{2k}$ must preserve the conjugacy class of $\alpha$, the element of $\pi_1(T)$ that is freely homotopic to the exceptional periodic orbits of $\phi_{FW}$. Therefore, the isotopy class of $\beta^{2k}$ must be generated by the Dehn twist on $T$ in the direction of $\alpha$.
 
 So all we have left to do is show that if two self orbit equivalences are in the same isotopy class, then they are equivalent. Equivalently, it suffices to show that if $\beta$ is homotopic to the identity, then it fixes every orbit of $\phi^t$.
 
 Let $\wt\beta$ be a lift of $\beta$ to the universal cover obtained by lifting the homotopy to identity.  
Recall that by item \ref{item.FW_periodic_orbits} above, $\wt\beta$ must fix all the lifts of periodic orbits, except possibly the lift of the four exceptional periodic orbits. Moreover, $\wt\beta^2$ must preserve each half-leaves of lifted periodic orbits. Hence, any orbit obtained as an intersection of a weak stable and weak unstable leaf of a periodic orbit is fixed by $\wt\beta^2$. That set is dense in $\wt M$ \cite{FranksAnosovDiffeo}. Therefore, by continuity, $\wt\beta^2$ acts as the identity on the orbit space of $\phi_{FW}$, which ends the proof. 
\end{proof}

\appendix
\section{Branching foliations and prefoliations revisited}\label{app.branching}

In this section we obtain more information about
branching foliations. We will assume some familiarity with the constructions in \cite{BI} and repeatedly refer to statements or proofs in that paper.

\subsection{Uniqueness of approximating leaves}

The constructions of Burago and Ivanov have a
lot of inherent redundancy. What we mean
is that there are a lot of surfaces $S\colon dom(S) \rightarrow M$ with
the same image. Since these are not embeddings one has to be
more careful with the meaning of ``same image". We follow 
Burago--Ivanov and say that two surfaces $S_1, S_2$ are 
equivalent if there is a homeomorphism $g\colon dom(S_1) \rightarrow dom(S_2)$
such that $S_1 = S_2 \circ g$.
More generally if this works for subsets of the domains
we say this is a change of parameter of the subsurfaces.
This is another reason to consider $dom(F)$ to be a plane.

What we call by ``leaves" of the branching foliation, are the 
equivalence classes of these identifications.
With this understanding one can prove:

\begin{proposition}\label{prop-uniqueleaf}
Let $\cA$ be a branching foliation. Let $\cB_{\eps}$ be the 
approximating foliations constructed by Burago--Ivanov.
There is a one to one correspondence between the leaves of $\cA$ 
and the leaves of $\cB_{\eps}$ for any $\epsilon > 0$.
\end{proposition}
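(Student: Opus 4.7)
The plan is to make explicit the bijection suggested by item \ref{item.ApporxFol_uniqueness} of Theorem \ref{teo-openbranch}. Item \ref{item.ApporxFol_uniqueness} already provides in one direction a well-defined map $\Psi\colon \cA \to \cB_\eps$ sending a surface $(\varphi, U)\in \cA$ to the unique leaf $L$ of $\cB_\eps$ along which $h_\eps$ is a local $C^1$-diffeomorphism onto the image of $\varphi$. So the content of the proposition is that $\Psi$ is a bijection, and the proof splits naturally into an injectivity step and a surjectivity step.

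For injectivity, I would argue as follows. Suppose $(\varphi_1,U_1)$ and $(\varphi_2,U_2)$ are distinct (i.e., inequivalent) leaves of $\cA$ that are sent by $\Psi$ to the same leaf $L$ of $\cB_\eps$. Then $h_\eps|_L$ is a local $C^1$-diffeomorphism onto both the image of $\varphi_1$ and the image of $\varphi_2$, simultaneously. Since $L$ is a connected complete surface parametrized by a simply connected domain, the two factorizations $\varphi_i^{-1}\circ h_\eps|_L\colon L\to U_i$ would give a homeomorphism between $(\varphi_1,U_1)$ and $(\varphi_2,U_2)$ intertwining $\varphi_1$ and $\varphi_2$, i.e., they would be equivalent up to reparametrization. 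This contradicts the minimality condition \ref{item.BF_minimal} in Definition \ref{def.branching}.

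For surjectivity, the plan is to trace through the Burago--Ivanov construction (see \cite[Section 7]{BI}) and observe that each leaf of $\cB_\eps$ is by construction obtained from a specific leaf of $\cA$. More abstractly, given $L\in \cB_\eps$ with parametrization $\psi_L\colon V\to M$, consider the composition $\Theta_L := h_\eps\circ \psi_L$. By item \ref{item.ApporxFol_smallangle} of Theorem \ref{teo-openbranch}, the tangent plane of $L$ is within $\eps$ of $E$, and the map $h_\eps$ is a displacement of size less than $\eps$ that is explicitly constructed in \cite{BI} to flow points along segments transverse to $E$, so $\Theta_L$ is a complete immersion with image tangent to $E$ (in the limit, as in the proof of \cite[Lemma 7.1]{BI}). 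The completeness condition \ref{item.BF_complete} in Definition \ref{def.branching} then guarantees that $\Theta_L$ is equivalent (up to reparametrization) to some leaf $(\varphi,U)\in\cA$, since $\cA$ contains all limits of its surfaces in the pointed compact open topology, and $\Theta_L$ arises precisely as such a limit of approximating surfaces during the construction. Finally, $\Psi(\varphi,U)=L$ by the uniqueness statement of item \ref{item.ApporxFol_uniqueness}.

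The main obstacle will be the surjectivity step: one must rule out the possibility that the blow-up procedure inserts ``new'' leaves in $\cB_\eps$ that do not come from leaves of $\cA$. The cleanest way to address this is to revisit the construction in \cite[Section 7]{BI}, where leaves of $\cB_\eps$ are produced as honest smoothings of the (possibly merging) approximating plaques of leaves of $\cA$ over a uniform scale; in particular, each leaf of $\cB_\eps$ inherits a canonical ``source'' leaf of $\cA$ by simply reading off which surface its approximating plaques came from. Once this is established, both injectivity and surjectivity follow formally, and the proposition is proved.
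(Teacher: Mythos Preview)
Your argument is circular. Proposition~\ref{prop-uniqueleaf} is precisely what the paper invokes to \emph{justify} item~\ref{item.ApporxFol_uniqueness} of Theorem~\ref{teo-openbranch} (see the sentence immediately preceding that theorem: ``item~\ref{item.ApporxFol_uniqueness} below is not stated in \cite[Theorem 7.2]{BI}, but we explain in Proposition~\ref{prop-uniqueleaf} of the appendix how it follows from its proof''). So you cannot start by defining $\Psi$ via item~\ref{item.ApporxFol_uniqueness}; that uniqueness statement is the content to be proved, not a tool you have available.

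The paper's proof avoids this by going directly into the Burago--Ivanov construction. The key mechanism is that the push-off function $F$ is built from local order-isomorphisms $\theta_i\colon \cA_i \to (0,1)$, where $\cA_i$ consists of pairs $(S,x)$ with $S$ a leaf of $\cA$. The crucial point is that $\theta_i$ depends on the \emph{leaf} $S$, not merely on the point $S(x)$: two distinct branching leaves through the same point receive different $\theta_i$-values because they sit at different positions in the total order. Hence a single leaf $G\in\cA$ determines all the $\theta_i$ along it, which determines the $F_i$ and thus the push-off uniquely. This gives at most one leaf of $\cB_\eps$ associated to $G$; since $\cB_\eps$ is by construction the collection of pushed-off leaves of $\cA$, every leaf of $\cB_\eps$ arises this way, and the correspondence is a bijection. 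Your surjectivity sketch (``trace through the construction'') is pointing in the right direction, but the injectivity has to be argued from the $\theta_i$ as well, not from item~\ref{item.ApporxFol_uniqueness}.
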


\begin{proof}
We will use the notations and terminology of the proof of 
\cite[Theorem 7.2]{BI}. They construct a ``push off" function
$F$ which pushes different branching leaves through a point
apart. Then given any $\alpha > 0$ they construct a foliation
$\cA_{\alpha F}$ such that as $\alpha \rightarrow 0$ the
tangent planes to the leaves of $\cA_{\alpha F}$ converge to
the bundle $E$. So $\cB_{\eps}$ is $\cA_{\eta(\eps) F}$ for
some function $\eta$ which converges to $0$ as $\eps$ converges
to $0$.

We review the important points to construct $F$. They consider 
a smooth vector field $W$ which is almost  perpendicular to the bundle
$E$. Let $\phi$ be the flow generated by $W$.
They consider
a finite cover $\{ U_i \}$ of $M$ by foliated boxes of $W$ and
with coordinates $(x_i, y_i, z_i)$ such that $E$ is almost horizontal
(i.e., close to the $(x,y)$ directions)
and $W$ is almost vertical (close to the $z$ direction) in each $U_i$. 

In each $U_i$ they consider $\cA_i$ the set of pairs 
$(S,x)$ where $S$ is an element of $\cA$, $x \in dom(S)$ and
$S(x) \in U_i$.
They define a non strict total order $\geq_i$ on $\cA_i$ as
follows: Choose $A_1, A_2 \in \cA_i$, $A_1 = (S_1,x_1),
A_2 = (S_2, x_2)$. There is an intrinsic ball 
$D = B_r(x_1) \subset dom(S_1)$ such that a piece of $A_2$ is
the graph of a $C^1$ function $f\colon D \rightarrow R$ 
as follows: The surface $S^f_1\colon D \rightarrow M$ given by

$$S^f_1(x) \ = \ \phi^{f(x)}(S_1(x)),   \ \ \ x \in D$$

\noindent
coincides, up to a change of parameter sending $x_1$ to $x_2$,
with a region in $S_2$. Let $r$ be the maximum radius of such
a ball (possibly $r = \infty$). Since the surfaces have
no topological crossing, the function $f$ does not change
sign. We set $A_2 \geq_i A_1$ if $f \geq 0$ and 
$A_1 \geq_i A_2$ if $f \leq 0$.

Burago and Ivanov remark that it is possible that both
inequalities $A_1 \geq_i A_2$ and $A_2 \geq_i A_1$ hold.
This means that $S_1$ and $S_2$ coincide up to a parameter
change, which sends $x_1$ to $x_2$, in which case they
write $A_1 \cong A_2$.

We remark that we identified surfaces of $\cA$ if they
have the same image up to parameter change.
Under this identification 
$\geq_i$ is a total order in $\cA_i$, which is 
denoted by $>_i$. So the
set of equivalence classes of $\cA_i$ is the same as
$\cA_i$.

Then \cite[Lemma 7.2]{BI} shows that $(\cA_i, >_i)$ is
order isomorphic to an open interval and they pick
a homeomorphism $\theta_i\colon \cA_i \rightarrow (0,1)$.
The important point to understand here is that $\theta_i$
is different for different branching leaves $B_1, B_2$: 
even if the leaves
$B_1, B_2$ pass through a common point $y$ in $U_i$,
and even if they coincide on a path through $U_i$.
But if $B_1, B_2$ are not the same leaf globally,
then $\theta_i(B_1) \neq \theta_i(B_2)$.
That is $\theta_i$ differentiates different branching leaves,
even if locally (which can be a big set) they have the same
image.

They use the functions $\theta_i$ to define functions 
$F_i$ which are meant to ``push" leaves of $\cA$ inside
the foliated boxes $U_i$. The push off is done along
flow lines of $\phi$. The functions $F_i$ are averaged to produce
a function $F = 1/k \sum_{i=1}^k F_i$. 
Given $\alpha > 0$, Burago--Ivanov push leaves of $\cA$ 
using the function $\alpha F$ and they show the pushed
off leaves form an actual foliation (that is, with
no branching).
The map $h$ in the statement of the Burago--Ivanov theorem,
which sends leaves of $\cA_{\alpha F}$ to leaves
of $\cA$ is just the opposite of the push off map: The
map $h$ slides points back along flow lines of $\phi$.

Now we come to the property we want to prove. Suppose
that two leaves $B, C$ of $\cA_{\alpha F}$ project to 
the same leaf $G$ of $\cA$. Since it is the same leaf 
$G$, then, by the discussion above, all the functions
$\theta_i$ are specified. Hence the functions $F_i$ are
specified along $G$ and there is only one push off leaf
in $\cA_{\alpha F}$ associated to $G$. This shows that $B, C$
are the same leaf of $\cA_{\alpha F}$.

This finishes the proof of the proposition.
\end{proof}

\subsection{Properties of some branching foliations} 

Now we go back to the specific branching foliations associated
with partially hyperbolic diffeomorphisms, as constructed
by Burago and Ivanov.

The next property we want to consider is the local 
``highest" and ``lowest" leaves from a point.
The construction of Burago and Ivanov of the branching foliations
for partially hyperbolic diffeomorphisms 
starts as follows.
Consider a point $p$ in $M$.
They fix a smooth disk $D$ through $p$, transversal
to the stable foliation, so that $E^c$ is almost tangent to $D$. 
The disk is small to be contained
in foliated boxes of all the bundles $E^c, E^s, E^u$.
The $E^{cs}$ bundle intersects
the tangent bundle to $D$ in a one dimensional bundle,
call it $G$. They consider all $C^1$ curves in $D$ tangent
to $G$. Among all these tangent curves passing through $p$ there 
is a lowest curve in the forward direction. 
The forward direction is the one given by the orientation
on $E^c$ which is almost tangent to $D$.
See \cite[\S 5]{BI}. The local saturation of this is a $C^1$ surface
(see  \cite[Proposition 3.1]{BI}). Locally it is the ``lowest" 
surface tangent to $E^{cs}$ through $p$ in the positive 
direction. This lowest surface is in fact independent of $D$.

We prove the following:

\begin{prop} \label{topmost}
One can do the construction of the branching foliations
of \cite{BI} such that through every point $p$ there is
a branching leaf which is the lowest locally in the positive
$E^c$ direction. More specifically there is a fixed size $\delta > 0$,
so  for every $p$ in $M$ the locally lowest forward surface for $p$
containing a half disk of radius at least $\delta$ centered at 
$p$ is in a leaf of the branching foliation. 
In addition for the same foliation for every $p$ 
there is also a branching leaf
which is the highest locally in the negative $E^c$ direction.
\end{prop}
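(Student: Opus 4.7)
\medskip

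\noindent\textbf{Proof plan for Proposition \ref{topmost}.} The approach is to directly seed the Burago--Ivanov construction with the locally extremal surfaces, then verify that they survive the completion procedure and assemble into a single branching foliation.

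First I would recall from \cite[\S 5]{BI} that for each $p\in M$ one can pick a smooth disk $D_p$ through $p$ transverse to $E^s$ with $E^c$ almost tangent to $D_p$. Let $G_p$ be the $1$-dimensional bundle $TD_p \cap E^{cs}$. Among the $C^1$-curves in $D_p$ through $p$ tangent to $G_p$ one selects the \emph{lowest in the positive center direction}, and its strong-stable saturation gives a local $C^1$ surface $L_p^+$ tangent to $E^{cs}$, independent of $D_p$. By compactness of $M$ and uniform continuity of the splitting $E^s \oplus E^c \oplus E^u$, there is a \emph{uniform} $\delta>0$ so that $L_p^+$ contains a half disk of radius $\delta$ centered at $p$. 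The symmetric choice (highest in the negative center direction) gives a local surface $L_p^-$ with a half disk of radius $\delta$. Each $L_p^\pm$ extends in a canonical way (by iterating the local lowest/highest construction along strong-stable leaves and along the center direction) to a complete immersed surface $S_p^\pm\colon U \to M$ tangent to $E^{cs}$, exactly as in \cite[\S 5]{BI}.

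Next I would form the family $\mathcal{A}_0 := \{S_p^+\}_{p\in M} \cup \{S_p^-\}_{p\in M}$ and take its closure $\mathcal{A}$ under limits in the pointed $C^1$-topology on complete surfaces tangent to $E^{cs}$, exactly as BI do to ensure completeness of the family. The crucial step is to verify the no-topological-crossing condition (item (ii) of Definition \ref{def.branching}) for $\mathcal{A}_0$: if $S_p^+$ and $S_q^+$ crossed topologically, then flowing along a transverse arc witnessing the crossing, one side of $S_p^+$ would lie below the other, contradicting that $S_p^+$ is the locally lowest forward surface through $p$. The same reasoning with the roles of lowest/highest reversed handles pairs of $S_p^-$'s. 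A mixed pair $S_p^+$ and $S_q^-$ is slightly subtler: a topological crossing would force either $S_p^+$ to pass strictly below another lowest forward surface (a translate of $S_q^-$ modified near the crossing) or $S_q^-$ to pass strictly above a highest backward surface, which is again impossible. The non-crossing property passes to $\mathcal{A}$ by a standard limit argument (topological crossing is stable under $C^1$-small perturbations of the witnessing arc). Finally, take equivalence classes of $\mathcal{A}$ modulo the minimality relation of Definition \ref{def.branching}(iv) to obtain a branching foliation $\cs$ tangent to $E^{cs}$. By construction, each point $p$ has $S_p^+$ (hence a leaf of $\cs$ with this image) passing through it containing a lowest forward half-disk of radius $\delta$, and likewise $S_p^-$ gives a highest backward half-disk.

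The invariance by $f$ is then automatic: $f$ preserves $E^{cs}$ and the orientation of $E^c$, so $f(S_p^+) = S_{f(p)}^+$ and $f(S_p^-) = S_{f(p)}^-$ up to reparametrization; hence $f(\mathcal{A}) = \mathcal{A}$, and $\cs$ is $f$-invariant. The $E^{cu}$ case is symmetric, switching the roles of the stable and unstable directions in the saturation step.

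The main obstacle is the no-crossing verification for the mixed pairs $\{S_p^+, S_q^-\}$. Unlike the pure lowest/lowest (or highest/highest) case, where the extremality directly forbids crossing by the same-direction comparison argument of \cite[\S 5]{BI}, here one must leverage both extremality properties simultaneously. A clean way to carry this out is to reduce to the BI local setting by passing to a small flow box of the almost-vertical transverse vector field $W$ used by BI, where lowest forward and highest backward surfaces can be compared via the push-off function $F$ of the proof of Proposition \ref{prop-uniqueleaf}; any topological crossing there translates to an order contradiction for the total order $\ge_i$ of \cite[Lemma 7.2]{BI}.
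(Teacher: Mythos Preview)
Your approach is genuinely different from the paper's, and there is a real gap.

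The paper does \emph{not} build a new branching foliation from the extremal patches. Instead it argues that the branching foliation that Burago--Ivanov actually construct already contains both types of extremal surfaces, by tracing through BI's inductive process. BI build the foliation by alternately going forward and backward along $E^c$ and, at each step, inserting through every point the ``upper enveloping surface'' relative to the current partial foliation and a chosen \emph{section}. The first step starts with the empty partial foliation, so the section is empty and \cite[Definition~5.2, item~(2)]{BI} forces the inserted surface to be the absolutely lowest forward one. The second step reverses both $E^c$ and the transverse orientation, so ``lowest forward'' becomes ``highest backward'' in the original orientation; again the section is taken empty and one obtains the highest backward surface. Both families of patches therefore live in the \emph{same} BI foliation, and the no-crossing between them is already part of BI's theorem (their Proposition~4.13 and Lemma~6.11). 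Nothing needs to be re-proved.

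Your route, by contrast, has two problems. First, the passage from the local half-disk $L_p^\pm$ to a complete immersed surface $S_p^\pm$ is precisely the delicate content of \cite[\S\S 5--6]{BI}: the ``canonical'' extension by iterating lowest/highest is not well-defined without the machinery of sections and upper envelopes, and you cannot just assert it. Second, and more seriously, your proposed fix for the mixed no-crossing (pairs $S_p^+, S_q^-$) is circular. The push-off function $F$ and the total order $\ge_i$ of \cite[Lemma~7.2]{BI} are defined \emph{for an already-given branching foliation}; you cannot invoke them to prove that your family $\mathcal{A}$ \emph{is} a branching foliation. Your earlier heuristic (``lowest through $p$ forbids crossing'') also does not go through for mixed pairs, since $S_p^+$ is extremal only at $p$ in the forward direction while $S_q^-$ is extremal only at $q$ in the backward direction, and a crossing far from both points is not directly ruled out by either property. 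The paper's approach sidesteps this entirely by working inside BI's inductive construction, where the no-crossing is guaranteed step by step.
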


\begin{proof}
In fact we prove that the branching foliations that 
Burago and Ivanov construct satisfy the conclusions of the
Lemma..
The main result we need is  \cite[Proposition 4.13]{BI}.
This result concerns ``partial" branching foliations, 
which satisfy only the non topological crossing condition
of branching foliations. Proposition 4.13 extends this
partial foliation in a particular way. 
This result is proved in \cite[\S 6]{BI} using
results in dimension 2 developed in  \cite[\S 5]{BI}.

In particular since there may be many leaves through a
given point, one has to keep track of which leaves are
``above" other leaves. They introduce a total order
in the set of leaves through a point $p$ and this has to
be preserved when one moves along paths common to both leaves
being considered.
To introduce a new leaf, they have to specify where it should
be located with respect to already existing order in the 
set of leaves through $p$. A location is given by what they
call a ``section" of the leaves through $p$, which corresponds
to a cut in the ordering of all the already existing
leaves through $p$. The constructed surfaces are called 
``upper enveloping surfaces", see \cite[Definition 6.1]{BI}.
They show that for any section at $p$ one can construct a
new partial branched surface through $p$ that fits exactly
in that section and that does not cross topologically any
of the already existing surfaces.

The beginning step of the induction process is with the empty
set. Through every point the section is empty. In this case
(empty section)
the upper enveloping surface (in the positive $E^c$ direction)
through a point $p$  is locally the lowest surface through
$p$. This is because the surface has to be what is called an
upper enveloping surface. These surfaces are locally obtained
as stable saturations of curves tangent to the $E^c$ bundle,
which are called upper envelope curves  see \cite[page 558]{BI}. The upper envelope is the supremum of descending
curves, see \cite[page 558]{BI} and the definition of
descending curves, cf.~\cite[Definitions 5.2 and 5.4]{BI}. 
In particular in the initial step there are no surfaces
so the sections are the empty sections. In this case 
\cite[Definition 5.2, item 2)]{BI} says that the initial
step is the lowest forward integral curve from the point.

The local stable saturation of the lowest forward integral
curve is the lowest local surface in the forward $E^c$
direction, tangent to $E^{cs}$ and through the point $p$.
In addition this surface is a ``patch": the edges of the
surface are separated by at least a fixed size $\delta > 0$,
see  \cite[Definition 4.7]{BI}.

This proves the first assertion of the proposition.

\vskip .1in
To prove the second assertion, one has to
go in the negative direction of $E^c$. In the construction
of the branching foliations in \cite{BI} they go alternatively
forward and backward, constructing patches of surfaces starting
at the points.

So the initial step puts in the lowest surface tangent to $E^{cs}$
through any $p$ in $M$ and going in the forward direction.
In the second step the orientations are reversed, so going
forward now corresponds to going backwards in the 
original $E^c$ direction, and lowest
is highest in the original partially constructed branching foliation.

This step is done after we already have some partial 
surfaces and sections through points. So given a point
$p$ consider the empty section of all surfaces through $p$.
Then \cite[Lemma 6.11]{BI} shows that there is a forward
envelope surface with $p$ in the boundary and the section
is the empty section at $p$. 
Since the section at $p$ is empty, then the initial step
is labeled by the empty set again (see \cite[Definition 5.2, item (3)]{BI} 
for descending curves). This means that locally
this is the lowest forward surface through $p$. 
But recall that we switched orientations, so
forward means backwards from $p$ in the original orientation,
and lowest means highest
in the original orientation.

This proves the second property of the proposition thus finishes its proof.
\end{proof}

\begin{remark}\label{r.specialbranch} In the same way we could have switched 
the orientation of $E^u$ in the beginning $-$ but not of
$E^c$. Doing the construction in \cite{BI} produces a 
branching foliation containing the highest local surface
tangent to $E^{cs}$ in the forward center direction
and the lowest local surface tangent to $E^{cs}$ in the
backwards direction. In particular, we see that every curve tangent to $E^{c}$ must be locally contained between both branching foliations. 
The reason for this is that if $c$ is any such local center curve 
in the forward direction through the point, then its local
stable saturation is a $C^1$ surface through the point and
tangent to $E^{cs}$. As proved in \cite{BI} this surface
is locally ``above'' the lowest surface through the point.
\end{remark}

\begin{remark}
In general we cannot have both lowest and highest local
surfaces (of fixed size) and in both forward and backwards
directions for all $p$ in $M$ as part of leaves of the
foliation. Here is an example one dimension lower in the
plane. Consider the differential equation $\frac{dy}{dx} = 3 y^{2/3}$.
It generates a vector field in the direction $(1,3 y^{2/3})$.
This vector field is not uniquely integrable along the 
$x$-axis. General solutions are made up of pieces of curves $y = (x+c)^3$
or segments in the $x$-axis.
Outside the $x$-axis this is uniquely integrable producing 
segments of curves $y = (x+c)^3$.

Consider first the curves that are highest forward. For any 
point $p$ in the plane the highest forward curve through that
point is contained in the curve $y = (x+c)^3$ through $p$.
Since the requirement is that one has to have a fixed sized $\delta > 0$
of highest forward for every point, then if $p$ is below the $x$
axis, but sufficiently close to the $x$-axis, the $\delta$ size
highest forward curve through $p$ is a part of the cubic which
crosses the $x$-axis. 
But the highest backward curve of every point in the $x$ is
the ray of the $x$-axis ending negatively at that point. 
One has to have at least a size $\delta$ for every point in the
$x$-axis.
These two sets of curves cross topologically, so cannot be
part of the same branching foliation.
\end{remark}

\subsection{Smooth approximation and Candel metrics}\label{ss.Candelmetric} 
The following states that the coarse nature of leaves of the branching foliations with the metric induced by the manifold is good enough. We refer the reader to \cite[\S III.H]{BH} for the basic notions about Gromov hyperbolic metric spaces.

\begin{prop}\label{prop-GH}
Let $\cF$ be a branching foliation well approximated by foliations $\cF_\eps$ such that $\cF_\eps$ are by hyperbolic leaves (recall that the approximating foliation can be chosen to have smooth leaves). Then, for every Riemannian metric in $M$ the pullback of the metric to the leaves of $\cF$ makes them Gromov hyperbolic. 
\end{prop}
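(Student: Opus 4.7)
The plan is to use the map $h_\eps$ provided by Theorem \ref{teo-openbranch} as a uniform bi-Lipschitz identification between each leaf of the approximating foliation $\cF_\eps$ and the corresponding surface of the branching foliation $\cF$, and then to conclude by quasi-isometry invariance of Gromov hyperbolicity. Since $M$ is compact, all Riemannian metrics on $M$ are pairwise bi-Lipschitz equivalent, so it suffices to work with a fixed background metric $g$.

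I would pass to the universal cover $\mt$. For small $\eps>0$ the foliation $\cF_\eps$ is Reebless (Remark \ref{rem-novikov}), so each leaf $\wt L$ of $\widetilde{\cF_\eps}$ is a properly embedded $C^1$-plane in $\mt$, Gromov hyperbolic by hypothesis (the Gromov hyperbolicity of leaves of $\cF_\eps$ passes to the universal covers of those leaves via Candel uniformization together with the bi-Lipschitz equivalence, guaranteed by compactness of $M$, between the induced and Candel metrics). Given a surface $(\varphi,U)\in\cF$, lift it to a proper $C^1$-embedding $\tilde\varphi\colon U\to\mt$; properness and injectivity follow from the absence of topological crossings and from simple connectivity of $U$. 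By item \ref{item.ApporxFol_uniqueness} of Theorem \ref{teo-openbranch} (see also Proposition \ref{prop-uniqueleaf}), there exist a unique leaf $\wt L$ and a deck-equivariant lift $\tilde h_\eps$ of $h_\eps$ such that $\tilde h_\eps|_{\wt L}\colon \wt L \to \tilde\varphi(U)$ is a local $C^1$-diffeomorphism.

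The key step is to promote $\tilde h_\eps|_{\wt L}$ to a uniform bi-Lipschitz homeomorphism with respect to the path metrics induced from $\mt$. Global bijectivity is a properness argument: since $d_{\mt}(\tilde h_\eps(x),x)<\eps$ and both $\wt L$ and $\tilde\varphi(U)$ are properly embedded planes in $\mt$, any sequence escaping compact sets of $\wt L$ is sent to a sequence escaping compact sets of $\tilde\varphi(U)$, so $\tilde h_\eps|_{\wt L}$ is proper, and a proper local diffeomorphism between simply connected planes is a homeomorphism. For the bi-Lipschitz bound on the derivative, I would use that at every point both $T\wt L$ and $T\tilde\varphi(U)$ lie within angle $\eps$ of the lift of the bundle $E$, and that $h_\eps$ is produced in \cite{BI} by an explicit $C^1$-controlled push-off along an almost-transverse smooth vector field; consequently $D(\tilde h_\eps|_{\wt L})$ differs from a linear isometric identification of two $\eps$-close planes by an error of size $O(\eps)$, and compactness of $M$ renders these bounds uniform in the leaf.

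Once this uniform bi-Lipschitz homeomorphism is in hand, the path metric on $\tilde\varphi(U)$ $-$ which coincides with the pullback $\varphi^*g$ on $U$, as $\tilde\varphi$ is an embedding and the lifted metric projects to $g$ $-$ is bi-Lipschitz to the Gromov hyperbolic path metric on $\wt L$, and quasi-isometry invariance of Gromov hyperbolicity concludes the proof. The main technical obstacle I foresee is precisely the uniform bi-Lipschitz estimate on $D(\tilde h_\eps|_{\wt L})$: one has to unpack the construction of $h_\eps$ in \cite{BI} carefully enough to see that its tangent map along leaves of $\cF_\eps$ is uniformly bounded above and uniformly bounded away from degenerate, independently of the leaf, which ultimately relies on the compactness of $M$ and on the $\eps$-smallness of angles provided by item \ref{item.ApporxFol_smallangle} of Theorem \ref{teo-openbranch}.
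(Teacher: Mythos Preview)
Your approach is correct and takes a genuinely different route from the paper. You transport Gromov hyperbolicity from a leaf $\wt L$ of $\wt{\cF_\eps}$ to the corresponding surface of $\wt\cF$ via the leafwise map $\tilde h_\eps|_{\wt L}$, after promoting it to a uniform bi-Lipschitz diffeomorphism. The paper never uses $h_\eps$ for this: instead it builds a single continuous Riemannian metric on $M$ whose restriction to $T\cF_\eps$ is Candel's hyperbolic metric, with the transverse direction declared orthonormal; since each leaf of $\cF$ is tangent to a plane field $\eps$-close to $T\cF_\eps$, its induced metric for this ambient metric is $C^0$-close to one of curvature $-1$, and a local $\mathrm{CAT}(\kappa)$ check (with $\kappa<0$) applies directly to the leaves of $\cF$. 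Compactness of $M$ then allows passage to any Riemannian metric. Your route has the advantage of producing an explicit leafwise quasi-isometry, which can be reused; the paper's route is shorter because it sidesteps any analysis of $h_\eps$.

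One remark on your ``main technical obstacle'': it is milder than you suggest, and does not require unpacking the Burago--Ivanov push-off functions. In a local chart straightening the smooth transverse flow, both $\wt L$ and $\tilde\varphi(U)$ are $C^1$ graphs $z=u(x,y)$ and $z=v(x,y)$ over a common base, with $|\nabla u|,|\nabla v|=O(\eps)$ by item~\ref{item.ApporxFol_smallangle}. In these coordinates $\tilde h_\eps|_{\wt L}$ reads as the identity on the base, so the two pulled-back metrics differ by a factor $1+O(\eps)$, uniformly over $M$ by compactness. Thus only smoothness of the transverse vector field and the $\eps$-angle bound are needed, not any regularity of the push-off amount itself.
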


\begin{proof}
For this, we choose $\eps$ small enough so that we have nice local product structure neighborhoods and take a (continuous) Riemannian metric on $M$ by considering the Candel metric (cf.~\cite{Candel}) on $\cF_\eps$ on $T\cF_\eps$ and taking a fixed vector field transverse to an $\eps$-cone around $T\cF$ to complete an orthonormal basis. For this metric, it is possible to verify in these local product structure neighborhoods the $\mathrm{CAT}(\kappa)$ condition for some $\kappa<0$ which is a local condition (see \cite[\S II.2]{BH}). 

Now, since being Gromov hyperbolic is invariant under quasi-isometries and $M$ is compact, we have that changing the metric does not change the fact that leaves are Gromov hyperbolic (see \cite[\S III.H]{BH}). 
\end{proof}

Note that in \cite[\S 4]{Thurston} it is claimed that one can choose a smooth metric in $M$ which makes every leaf of $\cF_\eps$ to have curvature arbitrarily close to $-1$. For smooth foliations this is proved in \cite[Theorem B]{AY} and attributed to Ghys (see also \cite[Remark 6.2]{AY}). In our case, leaves of $\cF$ may be just $C^1$, so it is more delicate to talk about curvature but still we only look at coarse geometric properties, so our statement suffices.  

\begin{remark}
This implies that there is a well defined notion of complete geodesics in leaves, and that through each tangent vector $v \in T_xL$ in a leaf $L \in \cF$ there is a unique geodesic in the leaf through $x$ with velocity $v$. In particular, one can compactify each leaf with a circle and consider a visual metric in this circle in a natural way. See also \cite[\S III.H.3]{BH} for definitions valid for general metric spaces. 
\end{remark}

\section{Graph transform method}\label{app.graphtransform}

Here we revisit the results in \cite{HPS} to get Theorem \ref{teo.graphtransform}. Then we comment on Theorem \ref{teo.uniformHPS} which is similar. For convenience of the reader, we recall the statement: 

 \begin{teo}[Theorem \ref{teo.graphtransform}]\label{teo.graphtransformap}
 Let $f_0\colon M \to M$ be a partially hyperbolic diffeomorphism of a closed 3-manifold $M$. There exists $\cU$ an open neighborhood of $f_0$ in the $C^1$ topology and $\eps>0$ with the property that every $g \in \cU$ is partially hyperbolic and if $\cs_g$ is a branching foliation tangent to $E^{cs}_g$ and invariant under $g$, then, for every $g' \in \cU$ there is a branching foliations $\cs_{g'}$, invariant under $g'$ and $\eps$-equivalent to $\cs_g$. 
 \end{teo}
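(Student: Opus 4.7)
The plan is to adapt the graph transform machinery of \cite[\S 6]{HPS} to the branching foliation setting, while making all estimates depend only on uniform data along the neighborhood $\cU$ rather than on a specific diffeomorphism. First, I would shrink $\cU$ around $f_0$ in the $C^1$ topology so that every $g\in\cU$ is partially hyperbolic and admits common cone families $\cC^s,\cC^{cs},\cC^{cu},\cC^u$, together with uniform bounds on the $C^1$-norm of $g^{\pm 1}$, on the angles between the bundles, and on the hyperbolicity rates. Choose also a uniform plaque size $\eps_0>0$ so that any $C^1$-disk of radius $\eps_0$ centered at a point with tangent planes in $\cC^{cs}$ is a local graph over $E^{cs}_g$ for every $g\in\cU$. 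The key observation is that all constants appearing in the HPS contraction estimates depend only on these data, not on the diffeomorphism.

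Next, given $g\in\cU$ with invariant branching foliation $\cs_g$ and given $g'\in\cU$, I would work in $\mt$ with a fixed pair of lifts $\ft,\ft'$ and, for each leaf $(\varphi,U)\in\wcs_g$, construct the candidate leaf of $\wcs_{g'}$ as the limit of a graph-transform iteration. Concretely, consider the surface $\ft'^{n}\circ\ft^{-n}(\varphi,U)$ locally as a graph over the $E^{cs}_{g'}$-direction, using the common cone $\cC^{cs}$ to ensure this is well-defined on uniform-size plaques. The standard HPS contraction argument, applied to the graph transform between $C^1$-sections of this local trivialization, converges in the $C^1$ topology as $n\to\infty$ to a $C^1$-leaf immersion $(\psi,U)$ tangent to $E^{cs}_{g'}$. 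The uniform contraction constants from the previous step guarantee that the limit exists, is close to $(\varphi,U)$ in the $C^1$ sense required by Definition \ref{defi-epseq}\ref{it.closeleafuc}, and that this closeness can be made as small as desired by shrinking $\cU$. This step — pushing the uniformity through the HPS argument and controlling the $C^1$-closeness — is the main obstacle, but it essentially amounts to repeating the HPS contraction proof and noting that every constant appearing in it is bounded uniformly on $\cU$.

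With the map $L\mapsto L_{g'}$ in hand, the remaining work is structural and follows from the uniformity. The collection $\{L_{g'}\}$ covers $\mt$ and is $\pi_1(M)$-invariant because the graph transform is canonical and commutes with deck transformations, which all lie in the isometry group of $\mt$ and preserve the cone families. It is also $\ft'$-invariant since $\ft'(L_{g'})$ is obtained by the same limiting procedure starting from $\ft(L)$, hence equals $(\ft L)_{g'}$. The non-crossing condition for $\wcs_{g'}$ is inherited from $\wcs_g$: a topological crossing between $L_{g'}$ and $L'_{g'}$, iterated backwards by $\ft'^{-n}$, would force, via the uniform hyperbolic behavior transverse to $E^{cs}_{g'}$ and the $C^1$-closeness of the graph transform, a topological crossing between $L$ and $L'$ in $\wcs_g$, a contradiction. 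Completeness of the family follows from $C^1$-convergence of the graph transform, and minimality comes from the fact that the construction applied to $g'$ and $g$ in the reverse direction produces an inverse assignment, so the correspondence $L\mapsto L_{g'}$ is a bijection on equivalence classes of surfaces.

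Finally, I would assemble the homeomorphism $H\colon \cL^{cs}_g \to \cL^{cs}_{g'}$ from this bijection. Continuity in both directions is automatic because $C^1$-convergence of leaves implies convergence in the leaf space topology on both sides; equivariance under $\pi_1(M)$ and intertwining of $\ft$ and $\ft'$ were already verified above; and the uniform $\eps$-$C^1$-closeness required by Definition \ref{defi-epseq}\ref{it.closeleafuc} is exactly what the graph transform estimate delivers, with $\eps\to 0$ as the diameter of $\cU$ shrinks. Thus $\cs_g$ and $\cs_{g'}$ are $\eps$-equivalent, which is the conclusion of the theorem. Theorem \ref{teo.uniformHPS} is proven by the same strategy, applied to $C^1$-leaf immersions tangent to the one-dimensional center bundle instead of the two-dimensional center-stable bundle, where the graph transform is simpler since no non-crossing condition is required.
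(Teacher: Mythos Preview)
Your overall strategy matches the paper's: set up uniform cone fields and plaque sizes on $\cU$, run the HPS graph transform, and observe that all contraction constants depend only on data that are uniform over $\cU$. However, your explicit iteration $\ft'^{\,n}\circ\ft^{-n}(\varphi,U)$ runs in the wrong direction for a center-\emph{stable} foliation. The direction transverse to $E^{cs}$ is $E^u$, which is \emph{expanded} under forward iteration of $g'$; hence $D(g')$ does not map the common $cs$-cone $\cC^{cs}$ into itself, and the tangent planes of $\ft'^{\,n}\circ\ft^{-n}(\varphi,U)$ drift toward $E^u_{g'}$ rather than toward $E^{cs}_{g'}$. The operator you wrote is therefore an expansion in the space of sections, not a contraction, and the sequence does not converge (nor do the surfaces stay graphs over uniform-size plaques). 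The paper iterates in the opposite direction, taking $(g')^{-n}\circ g^{n}(\varphi,U)$: each $g^n(\varphi,U)$ is again a leaf of $\cs_g$, hence tangent to $E^{cs}_g\subset\cC^{cs}$, and $D(g')^{-1}$ strictly contracts $\cC^{cs}$ toward $E^{cs}_{g'}$, which is precisely what makes the graph transform a contraction.

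Once the direction is reversed, the remainder of your outline is correct and essentially identical to the paper's proof, which implements the same idea via plaquations in $M$ rather than working directly with leaves in $\mt$ (the paper even mentions the universal-cover approach as an alternative). In particular the non-crossing argument then reads: the graph transform $(g')_\sharp$ preserves the transverse order of Lipschitz sections at each step, so iterating the non-crossing family $\cs_g$ never produces a crossing and the limit $\cs_{g'}$ inherits the property. Your backward-pullback version of the non-crossing argument is also tied to the wrong iteration direction and should be adjusted accordingly.
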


If we assume that instead of $g$ it is $f_0$ that posses an invariant branching foliation, this result follows immediately from \cite[\S 6]{HPS} (except from the part of the non-crossing of the branching foliation which does not make sense in their setting). The main difference is therefore the uniformity of the statement. The key observation is that the method of proof of \cite{HPS} provides estimates on the size of the neighborhood on which their result hold that depend \emph{only} on certain properties of the partially hyperbolic diffeomorphism and which are uniform in a neighborhood of it. Namely, it depends on the $C^1$-size of the map, the angle between the bundles, and the strength of the contraction/expansion on them. We will overview some of the main arguments to convey the fact that these are the only aspects of the diffeomorphism needed to show the stability result. (We note that in this specific setting, as remarked by a referee, there are some possible shortcuts. In particular, since $g$ preserves a branching foliation we could use the approximating foliation to get a `simpler' coordinate system which can be useful to understand the argument in a more direct way. However, we chose to follow the arguments as they are presented in \cite{HPS} to be able to refer directly to it in some places.)

Let $f\colon M \to M$ be a partially hyperbolic diffeomorphism of a closed 3-manifold $M$. By considering a different Riemannian metric, we can assume that the bundles $E^{s}$, $E^c$ and $E^u$ are almost pairwise orthogonal and that expansion, contraction and domination is seen in one iterate (see \cite[\S 2]{CP}).  We can choose a neighborhood $\cU$ of $f$ so that every $g \in \cU$ is partially hyperbolic and the invariant bundles of $g$ have the same property with respect to the same Riemannian metric. 

We can also choose $E$ a smooth one-dimensional subbundle of $TM$ which is transverse (and almost orthogonal) to $E^{cs}_g$ for every $g \in \cU$. There exists $\eps_0>0$ such that if $0< \eps < \eps_0$ we have that the exponential mapping is a smooth embedding from $E(\eps)$ to $M$, meaning that for every $x \in M$, if we consider $E(x,\eps)$ to be the $\eps$-neighborhood of $0$ in the space $E(x) \subset T_xM$ then the exponential map $\mathrm{exp}_x \colon E(x,\eps) \to M$ is an embedding with derivative close to $1$. 

These are the choices of $\cU$ and $\eps$ that one needs to make, and if one follows the proof in \cite[Pages 94-107]{HPS} one can see that Theorem \ref{teo.graphtransform} follows. For the convenience of the reader, we will indicate the main points of the proof sketching some of the key arguments. 

\begin{proof}[Proof of Theorem \ref{teo.graphtransform}]
Consider $g \in \cU$ admitting an invariant branching foliation $\cs_g$ tangent to $E^{cs}_g$. Whenever we need to fix some constants, we will argue as for why the constants we choose only depend on properties that are constant in a neighborhood of $g$ which is why the arguments will produce a uniform neighborhood. To avoid confusions, we will use Notation \ref{not.branching}. 

We can consider that the collection of immersions $(\varphi,U) \in \cs_g$ as a unique immersion $\imath\colon V \to M$ where $V$ is an uncountable union of complete simply connected surfaces, each connected component corresponding to a leaf of $\cs_g$. The immersion $\imath$ is clearly a $C^1$-\emph{leaf immersion} which is normally expanded with respect to $g$ (\cite[\S 6]{HPS}), that is: 
\begin{enumerate}
\item the connected components of $V$ with the metric induced by $\imath$ by pullback are complete,
\item there is a map $\imath_\ast g \colon V \to V$ such that $g \circ \imath =
\imath_\ast g\circ \imath$, 
\item for every $x \in V$ we have that $D_x\imath T_x V = E^{cs}_g(\imath(x))$.  
\end{enumerate}

The only point which needs some justification is ($ii$) but this follows rather easily by considering the lift of $\cs_g$ to the universal cover where leaves are properly embedded planes and therefore it is easy to induce a map from leaf to leaf even when these may not be injectively immersed in $M$. The existence of such an immersion is the hypothesis of \cite[Theorem 6.8]{HPS} which shows its stability.

Now we want to produce a natural environment where to apply the graph transform argument. Ideally, we would consider neighborhoods of each leaf on which we can consider other leaves tangent to bundles close to $E^{cs}_g$ as graphs over the original leaf. One way to do this, would be to work in the universal cover and use the fact that the leaves $\cs_g$ are properly embedded there, so we can take a normal neighborhood using the bundle $E$ to produce such coordinates. Since our manifolds are in general non-compact (besides being an uncountable union of leaves), and we want to avoid using information about the structure of our leaves in the universal cover, we choose some kind of `local covering spaces' where the same argument can be made. This is achieved by cutting the leaf into pieces and seeing our submanifolds as gluings of many patches of leaves. This is the purpose of \emph{plaquations}.  

As in \cite[(6.2)]{HPS} we can define a \emph{plaquation} of $\imath$ consisting of embeddings $\{\rho\colon D \to V\}_{\rho\in \cP}$ of the unit disk $D= \{ v \in \RR^2 \ : \ \|v\|\leq 1\}$ such that the interiors of $\rho(B)$ as $\rho\in \cP$ cover $V$ and such that the family $\{\imath \circ \rho\}_{\rho \in \cP}$ is precompact in $\mathrm{Emb}^1(D,M)$. Precompactness means that one can extract converging subsequences to embeddings of $D$ in $M$ which are tangent to $E^{cs}$ which may not belong to the family (but will then be covered by elements of the family). 

\begin{claim}\label{claim.plaques}
We can choose the plaquation $\cP$ with the following additional properties: 
\begin{enumerate}
\item For every $x \in V$ there is a unique plaque $\rho \in \cP$ centered at $x$, this means, $\rho(0)=x$. 
\item If one considers the vector bundle $E_\rho$ induced by $E$ over the image of $\rho$ (which is a trivial bundle), we have that the exponential map $\mathrm{exp}\colon E_\rho(\eps) \to M$ is an embedding for every $\eps< \eps_0$.  
\end{enumerate}
\end{claim}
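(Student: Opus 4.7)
My plan is to construct the plaquation leafwise, using a uniform scale that works throughout $V$, and then verify the two extra properties using the uniform transversality between $E$ and $E^{cs}_g$.

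First, I would choose a uniform scale $r > 0$ depending only on properties that are constant over $\cU$: the $C^1$-norm of $g$ (which controls the variation of $E^{cs}_g$ and hence the $C^1$-geometry of leaves of $\cs_g$), the lower bound $\theta_0 > 0$ on the angle between $E$ and $E^{cs}_g$ for $g \in \cU$, and the constant $\eps_0$ from the exponential map on $E(\eps_0)$. Since $E^{cs}_g$ is continuous and $M$ is compact, leaves of $\cs_g$ admit a uniform modulus of continuity for their tangent planes, independent of the particular leaf; this gives, for some uniform $r > 0$, a leafwise exponential map $\mathrm{exp}^V_x : T_x V(r) \to V$ that is a $C^1$-embedding on the $r$-ball of each tangent space. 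For each $x \in V$, I identify $T_x V$ with $\RR^2$ by an orthonormal frame (chosen measurably), then define $\rho_x : D \to V$ by $\rho_x(v) = \mathrm{exp}^V_x(r v)$. This gives property (1) immediately: $\rho_x(0) = x$, and $\rho_x$ is the unique plaque of $\cP := \{\rho_x\}_{x\in V}$ centered at $x$.

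Next, I would verify that $\cP$ is a plaquation in the sense of \cite[(6.2)]{HPS}. The interiors of $\rho_x(D)$ cover $V$ trivially since every point is a center. Precompactness of $\{\imath \circ \rho\}_{\rho \in \cP}$ in $\mathrm{Emb}^1(D,M)$ follows because the maps $\imath \circ \rho_x$ have uniformly bounded $C^1$-norms (their images are disks in leaves of $\cs_g$ with tangent planes in the continuous field $E^{cs}_g$) and uniformly bounded derivatives from below (the parametrization has fixed radius $r$); any sequence $\imath \circ \rho_{x_n}$ has, up to subsequence, $x_n \to x \in M$ and tangent planes converging to $E^{cs}_g(x)$, so by Arzel\`a--Ascoli the sequence converges in $\mathrm{Emb}^1(D,M)$ to a $C^1$-immersion tangent to $E^{cs}_g$.

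For property (2), I would shrink $r$ further if necessary so that the embedded disk $\imath \circ \rho_x(D)$ behaves essentially like a plane on the relevant scale. More precisely, the restriction of the exponential map $\mathrm{exp} : TM \to M$ to $E_\rho(\eps) \subset TM$ is the composition of fiberwise displacement along $E$ starting from points of $\imath \circ \rho_x(D)$. Because the angle between $E$ and $E^{cs}_g$ is bounded below by $\theta_0$ and the map $\mathrm{exp}$ on $E(\eps_0)$ is already a smooth embedding, the only possible failure of injectivity on $E_\rho(\eps)$ comes from two points of $\imath \circ \rho_x(D)$ being connected by a segment along $E$ of length less than $\eps$. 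A straightforward transversality estimate, using the uniform angle bound $\theta_0$ and the uniform $C^1$-size of $\imath \circ \rho_x$, shows that choosing $r$ small enough in terms of $\theta_0, \eps_0$ (quantities constant over $\cU$) rules this out for all $\eps < \eps_0$.

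The main obstacle, as usual in this circle of ideas, is making the constants genuinely uniform over the neighborhood $\cU$: one must verify that the scale $r$, the angle bound $\theta_0$, and the precompactness modulus depend only on data controlled continuously by $g$ in the $C^1$-topology, not on $g$ itself. This is where one uses that $\cU$ can be taken small enough that $E^{cs}_g$, the angle between $E$ and $E^{cs}_g$, and the leafwise tangent field are all uniformly close to those of $f_0$, yielding the same $r$ and $\eps_0$ for every $g \in \cU$.
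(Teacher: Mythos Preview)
Your argument is correct and follows essentially the same approach as the paper: choose a uniform small scale for the plaques so that the transversality between $E$ and $E^{cs}_g$ forces the tubular exponential map to be an embedding, and obtain item~(1) by picking one plaque per point. The paper argues item~(2) first (picking $\delta$ so that any disk of radius $<\delta$ tangent to a plane making a definite angle with $E$ has the embedding property) and then obtains item~(1) trivially, while you do it in the opposite order and give an explicit parametrization via the leafwise exponential map; the content is the same.
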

\begin{proof}
For the second item, notice that since $\exp_x \colon E(x,\eps) \to M$ is an embedding tangent to $E(x)$ at $x$ there is $\delta>0$ such that if a disk is tangent to a subbundle making a definite angle with $E$ and the disk has maximal radius smaller than $\delta$ then the exponential map will be an embedding from the bundle $E$ restricted to the disk for vectors of norm less than $\eps$. Now, we can choose a covering of $V$ by disks around every point which are mapped by $\imath$ into disks of maximal radius smaller than $\delta$ but minimal radius larger than $\delta/10$. This family will map by $\imath$ to something tangent to $E^{cs}_g$ which makes a uniform angle with $E$ and it will be clearly precompact in the space of embeddings (see \cite[(6.2)]{HPS}). 

Now, to obtain the first point it is enough to choose one plaque of the family at each point and the property will be verified. 
\end{proof}

Now, for each $g' \in \cU$ we want to construct using a graph transform argument a $C^1$-leaf immersion $\imath_{g'}\colon V \to M$ producing a branching\footnote{The non topological crossing condition is not discussed in \cite{HPS} since they work in higher codimension, but will follow rather directly from the construction in our case.} foliation $\cs_{g'}$ with the same dynamics as the one of $g$ on $\cs_g$. 

To describe the strategy, let $(\varphi, U)$ be a leaf of $\cs_g$, we want to construct a new surface $(\varphi_{g'},U)$ which will be part of the branching foliation $\cs_{g'}$. The surface $(\varphi_{g'},U)$ will be defined as $\lim_n (g')^{-n}(g^n(\varphi,U))$. We need to explain what we mean by this, and this is why the plaquations play a role in the proof: to be able to define a coordinate system on which to make sense of this limit. The proof follows the same strategy as \cite{HPS} and we will emphasize the points where the size of the neighborhood $\cU$ plays a role in the proof since this is the point on which our statement is more general than its statement in \cite{HPS}. 

As in \cite{HPS} we will work directly with $\imath$ and construct $\imath_{g'}$  since it allows to treat all leaves of $\cs_g$ simultaneously. The plaquation, as well as the transverse bundle $E$ will allow to cover each leaf by local coordinates where we can see surfaces tangent to bundles close to $E^{cs}_{g}$ as graphs over the plaque. These local coordinates of each plaques (which cover tubular neighborhoods around each plaque), will be the place where the graph transform will be applied. 

Let us construct what we mean by the graph transform. Consider $\cP$ to be the plaquation of $V$ as in Claim \ref{claim.plaques}. Using the bounds on the derivative of $g$ along $E^{cs}_g$ we can define another plaquation $\hat \cP$ which consists on the restrictions of the plaques $\rho \in \cP$ to $\hat D = \{ v \in \RR^2 : \| v \| < \hat \delta \}$ where $\hat \delta$ is chosen so that the image of the plaque $\hat \rho \in \hat \cP$ centered at $x \in M$ by $g$ is contained in the interior of the plaque $\rho' \in \cP$ centered at $\imath_\ast g(x)$. We will denote by $\rho_x$ and $\hat \rho_x$ the plaques from $\cP$ and $\hat \cP$ respectively which are centered at $x$. Since we chose the plaque families so that there is a unique plaque centered at each point, we get that to each $\hat \rho_x \in \hat \cP$ we associate a (unique) element $\rho_{g(x)} \in \cP$ and it verifies that the image by $g$ of $\hat \rho_x(\hat D)$ is contained in $\rho_{g(x)}(D)$. By restricting $\hat \delta$ a bit more if necessary, we can assume that the image by $g'$ of $\imath \circ \hat \rho_x(\hat D)$ is contained in a small neighborhood of $\imath \circ \rho_{g(x)}(D)$ for every $g' \in \cU$.

Denote $E_x$ (resp. $\hat E_{x}$) to be the vector bundle over $D$ (resp. $\hat D$) induced by $E$ via the map  $\imath \circ \rho_x$ (resp. $\imath \circ \hat \rho_x$). As before, for $z \in D$ (resp. $z \in \hat D$) we denote by $E_x(z,\delta)$ (resp. $\hat E_x(z,\delta)$) the interval of length $2\delta$ centered at $0$ on the fiber of $E_x$ (resp. $\hat E_x$) over $z$. (Note that we could make the vector bundles to be over $\imath \circ \rho_x(D)$ for each $x$, but we chose to construct them all over the same base $D$.)

Given a section $\xi$ of the bundle $E_x(\eps)$ or $\hat E_x(\eps)$ (that is, a continuous map from $D$ to $E_x(\eps)$, or $\hat D$ to $\hat E_x(\eps)$, such that $\xi(z) \in E_x(z,\eps)$ or $\xi(z) \in \hat E_x(z,\eps)$) we can define its graph as $\mathrm{graph}(\xi) \subset M$ to be the image under the exponential map of the image of $\xi$. By the choice of $\eps$ this is a topologically embedded disk. We fix a cone-field $\cC$ around the bundle $E^{cs}_g$ and transverse to $E^u_g$ such that for every $g' \in \cU$ we have that $(Dg')^{-1}$ maps $\cC$ strictly in its interior (in particular, $E^u_{g'}$ is transverse to $\cC$ everywhere), and we say that a section $\xi$ is \emph{Lipschitz} if $\mathrm{graph}(\xi)$ is everywhere tangent to $\cC$. 

By our choices of $D$ and $\hat D$ one can check:

\begin{claim}\label{claim.graphtransform1}
Let $\xi$ be a Lipschitz section of the bundle $E_{x}(\eps)$ then, for every $g' \in \cU$ there is a well defined
Lipschitz section $(g')_\ast \xi$ of the bundle $\hat E_{y}$ where $y = (\imath_\ast g)^{-1}(x)$ such that the image by $g'$ of $\mathrm{graph}((g')_\ast \xi)$ is contained in $\mathrm{graph}(\xi)$. 
\end{claim}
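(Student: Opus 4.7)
The plan is to carry out a standard graph transform argument in the local coordinates provided by the plaquation and the transverse bundle $E$, pulling back $\mathrm{graph}(\xi)$ by $g'$ onto the plaque centered at $y$. The key observation is that although we parametrize the transverse direction by the smooth bundle $E$, the cone $\cC$ around $E^{cs}_g$ (which, by choice of $\cU$, is strictly invariant under $(Dg')^{-1}$ for every $g' \in \cU$) controls both existence, uniqueness and Lipschitz regularity of the section we are constructing.

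First I would fix $z \in \hat D$ and consider the smooth transverse arc
\[
 \tau_z \ := \ \exp_{\imath \circ \hat\rho_y(z)}\bigl(\hat E_y(z,\eps)\bigr),
\]
which is almost tangent to $E$ (and hence uniformly transverse to $\cC$) at its basepoint. Since $g(\imath\circ\hat\rho_y(\hat D))$ is contained in the \emph{interior} of $\imath\circ\rho_x(D)$, and $g'$ is $C^1$-close to $g$ on $\cU$, the image $g'(\tau_z)$ is an arc based at a point $g'(\imath\circ\hat\rho_y(z))$ lying in a fixed small neighborhood of $\imath\circ\rho_x(D)$; its tangent directions, being in $Dg'(E)$, remain uniformly transverse to $\cC$ (and to any Lipschitz graph tangent to $\cC$). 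Because $\mathrm{graph}(\xi)$ is a disk tangent to $\cC$ lying within distance $\eps$ of $\imath\circ\rho_x(D)$, the arc $g'(\tau_z)$ meets $\mathrm{graph}(\xi)$ transversaly in exactly one point $p(z)$, provided $\eps$ was chosen small enough (a choice depending only on the angles between the bundles and the $C^1$-size of diffeomorphisms in $\cU$, hence uniform). Define $(g')_\ast\xi(z) \in \hat E_y(z,\eps)$ to be the unique vector with $\exp_{\imath\circ\hat\rho_y(z)}\bigl((g')_\ast\xi(z)\bigr) = (g')^{-1}(p(z))$; by construction $g'(\mathrm{graph}((g')_\ast\xi)) \subset \mathrm{graph}(\xi)$.

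Next I would verify that $(g')_\ast\xi$ is a well-defined Lipschitz section whose fiber values stay inside $\hat E_y(\eps)$. Continuity and the graph property follow from the implicit function argument above (the intersection varies continuously in $z$ because $\mathrm{graph}(\xi)$, the exponential map and $g'$ are continuous in their data, and the intersection is transverse). The Lipschitz bound on the section---equivalently, that $\mathrm{graph}((g')_\ast\xi)$ is tangent to $\cC$---is exactly the invariance $(Dg')^{-1}(\cC) \subset \mathrm{int}(\cC)$: the smooth surface $(g')^{-1}(\mathrm{graph}(\xi))$ is tangent to $(Dg')^{-1}(\cC)$, which is in $\cC$. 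Finally, the size bound $(g')_\ast\xi(z) \in \hat E_y(z,\eps)$ follows from the fact that $Dg'$ expands the $E$-direction (because $E$ is almost tangent to $E^u_{g'}$ for every $g' \in \cU$), so $(g')^{-1}$ contracts the distance from the central plaque measured in the $E$-fiber by a factor strictly less than $1$; thus an $\eps$-graph is pulled back to a strictly-less-than-$\eps$-graph. Both constants in these estimates are uniform in $g' \in \cU$ because they depend only on angles between bundles, the $C^1$-norm, and the expansion/contraction rates, all of which are controlled throughout $\cU$.

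The main obstacle, conceptually, is not the existence of the section but making transparent that every estimate is uniform across $\cU$ and across plaques of $\cP$: the precompactness built into the definition of plaquation, together with the fact that $\cC$ is a fixed cone strictly invariant under $(Dg')^{-1}$ for every $g' \in \cU$, are precisely what is needed, and this is the only place where the uniform neighborhood $\cU$ (as opposed to a neighborhood that depends on $g$) enters in an essential way. Once this local graph transform is in place, the usual iteration over plaques and contraction-mapping argument on the space of sections---which is where the rest of the proof of Theorem \ref{teo.graphtransformap} follows \cite[\S 6]{HPS}---produces the invariant branching foliation $\cs_{g'}$, and the non-topological-crossing property is inherited from $\cs_g$ because the graph transform is a fiberwise intersection and therefore preserves the order of nearby sections.
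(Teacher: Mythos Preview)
Your argument is correct and follows essentially the same approach as the paper's own proof: both pull back $\mathrm{graph}(\xi)$ by $(g')^{-1}$, use the cone invariance $(Dg')^{-1}(\cC)\subset\cC$ to ensure the preimage is again a Lipschitz graph, and use the expansion in the transverse ($E$-almost-$E^u$) direction together with the choice of $\hat D\subset D$ to guarantee that the resulting section is defined over all of $\hat D$ with values in $\hat E_y(\eps)$. The paper phrases the existence step slightly differently---noting that the preimage of the box $\exp(E_x(\eps))$ contains $\exp(\hat E_y(\lambda\eps))$ for a uniform $\lambda<1$ and invoking transversality to $E^u_{g'}$---whereas you argue pointwise by intersecting each pushed-forward $E$-fiber with $\mathrm{graph}(\xi)$; these are two descriptions of the same construction. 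Your final paragraph on uniformity and the preservation of non-crossing goes beyond the claim itself but is consistent with how the paper uses the claim afterwards.
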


\begin{proof}
A section of the bundle $E_x(\eps)$ consists of a continuous map $\xi: D \to E_x(\eps)$ such that for every $z \in D$ we have that $\xi(z) \in E_x(z,\eps)$. 

Fix $\hat \rho_{y} \in \hat \cP$ where $y  =(\imath_\ast g)^{-1}(x)$. What we need to show is that there is a well defined section $\hat \xi\colon \hat D \to E_y(\eps)$ such that $g'(\mathrm{graph}(\hat \xi)) \subset \mathrm{graph}(\xi)$. But this follows from the fact that since $\mathrm{graph}(\xi)$ is tangent to $\cC$ we have that it is transverse to the strong unstable foliation of both $g$ and $g'$. So, when we apply $(g')^{-1}$ we have that the preimage is still transverse to $E^u_{g'}$ where it makes sense. In fact, the choice of $\hat D$ and $D$ ensure that the preimage by $g'$ of the exponential of $E_x(\eps)$ contains the image of the exponential of $\hat E_y( \lambda \eps)$ for some $\lambda<1$ (which by the choices we made is independent on the point or the diffeomorphism). So, it follows that we can write the preimage by $g'$ of $\mathrm{graph}(\xi)$ as the graph of some section of a subset of $D$ which contains $\hat D$ and which is Lipschitz (because the cone-field is contacted by $(Dg')^{-1}$). This is what we wanted to show. 
\end{proof}

We note that we are not yet claiming that this graph transform is well defined for \emph{global} manifolds, just that this work at each plaque (and it is to avoid ambiguity in this sense that we chose the plaquation to have a unique plaque centered at each point). The rest of the proof consists in two relevant steps: 

\begin{enumerate}
\item Show that if you start with a global manifold partitioned in plaques (we will call this partition a \emph{coherent family of sections}) then the image by the graph transform is still a coherent family of sections. This is the way to deal with the boundary behavior after cutting the surface in pieces given by the plaques.  
\item Show that the action of the graph transform is a contraction in an appropriate space. This is rather standard (see for instance \cite[\S 4.2]{CP} for a modern treatment). 
\end{enumerate}

We say that a family of Lipschitz sections $\{\xi_x\}_{x \in V}$ such that each $\xi_x$ is a section of $E_x(\eps)$ is a \emph{coherent family of sections of } $\cP$ if whenever the images of $\rho_x$ and $\rho_y$ intersect it follows that $\mathrm{graph}(\xi_x)$ and $\mathrm{graph}(\xi_y)$ coincide in the image under the exponential map of the restriction of the section $\xi_x$ to $\rho_x^{-1}(\rho_y(D) \cap \rho_x(D))$ (notice that this is the same as saying that they intersect in the image under the exponential map of the restriction of the section $\xi_y$ to $\rho_y^{-1}(\rho_y(D) \cap \rho_x(D))$).  Similarly, one can define a coherent family of sections of $\hat \cP$. 

Using Claim~\ref{claim.graphtransform1}, we will construct the graph transform of a coherent family of sections $\{\xi_x\}_{x \in V}$ by gluing together enough images under $(g')_\ast$ of plaques. There will be a unique fixed point of this graph transform which will provide the new branching foliation for $g'$ with the desired properties.

Given a coherent family of sections $\{\xi_x\}_{x\in V}$ of $\cP$ one can define a coherent family of sections of $\hat \cP$ by restriction. Similarly, since every plaque of $\cP$ is covered by plaques of $\hat \cP$, the coherent property allows to obtain, from a coherent family of sections $\{\hat \xi_x\}_{x\in V}$ of $\hat \cP$ a coherent family of sections $\{\xi_x\}_{x\in V}$ by gluing the sections in a cover of the image of $\rho_x(D)$ by $\{\hat \rho_{y_i} (\hat D)\}_{i}$; this is independent of the choice of the covering. 

We thus get: 

\begin{claim}\label{claim.graphtransform2}
Given a coherent family of sections $\{\xi_x\}_{x\in V}$  of $\cP$ one can define a new coherent family of sections $\{\zeta_x\}_{x\in V}= (g')_{\sharp} \{\xi_x\}_{x \in V}$ by gluing the coherent family of sections $\{(g')_\ast \xi_x\}_{x \in V}$ over $\hat \cP$. 
\end{claim}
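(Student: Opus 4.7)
The plan is to establish the two assertions contained in the claim in sequence. First, we show that $\{(g')_\ast \xi_x\}_{x\in V}$ is a coherent family of sections over $\hat\cP$; then we explain how to assemble this coherent family into a well-defined coherent family $\{\zeta_x\}_{x \in V}$ of sections over $\cP$.

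For coherence over $\hat\cP$, suppose that $\hat\rho_{y_1}(\hat D)\cap \hat\rho_{y_2}(\hat D)$ has a non-empty connected component $W$. By the very construction of $\hat\cP$ as a refinement of $\cP$ via the choice of $\hat\delta$, the image $g(W)$ lies inside $\rho_{\imath_\ast g(y_1)}(D) \cap \rho_{\imath_\ast g(y_2)}(D)$. Coherence of $\{\xi_x\}$ over $\cP$ then gives that $\mathrm{graph}(\xi_{\imath_\ast g(y_1)})$ and $\mathrm{graph}(\xi_{\imath_\ast g(y_2)})$ coincide over $\imath(g(W))$. Now recall from Claim \ref{claim.graphtransform1} that each $(g')_\ast \xi_{\imath_\ast g(y_i)}$ is the unique Lipschitz section whose graph is a piece of the $g'$-preimage of $\mathrm{graph}(\xi_{\imath_\ast g(y_i)})$. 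Since $g'$ is a diffeomorphism, these two preimages must agree over $\imath(W)$, which is precisely the coherence condition for $\hat\cP$.

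For the gluing step, fix a plaque $\rho_x \in \cP$. By the precompactness of the plaquation $\hat\cP$ and compactness of the disk $D$, only finitely many plaques $\hat\rho_{y_1},\ldots,\hat\rho_{y_k}$ meet $\rho_x(D)$. On each such $\hat\rho_{y_i}$ we already have a Lipschitz section $(g')_\ast \xi_{\imath_\ast g(y_i)}$, whose graph is tangent to the invariant cone field $\cC$, and by the previous paragraph these graphs agree on pairwise overlaps. We define $\zeta_x$ as the section of $E_x$ over $D$ whose graph coincides, on each subset $\hat\rho_{y_i}^{-1}(\hat\rho_{y_i}(\hat D) \cap \rho_x(D))$, with $\mathrm{graph}((g')_\ast \xi_{\imath_\ast g(y_i)})$; coherence guarantees that this gluing is well defined and independent of the chosen covering. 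Coherence of the resulting family $\{\zeta_x\}$ over $\cP$ is then immediate: any overlap of $\rho_x(D)$ and $\rho_{x'}(D)$ is covered by $\hat\cP$-plaques on which the pushed-forward sections already coincide.

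The main obstacle, and the point where uniformity in $g' \in \cU$ really enters, is ensuring the size bound $\zeta_x(z) \in E_x(z,\eps)$. This is handled by the cone-field choice made before Claim \ref{claim.graphtransform1}: since $(Dg')^{-1}$ maps $\cC$ strictly into its interior for every $g' \in \cU$, and the ratio $\hat\delta/\delta$ was chosen so that $g'$-preimages of graphs over $D$ contain graphs over $\hat D$ of size at most $\lambda \eps$ with $\lambda < 1$ uniform in $\cU$, the glued section $\zeta_x$ automatically satisfies the required pointwise bound. Once this is in place, the claim is established, and one is set up to iterate $(g')_\sharp$ and apply a standard contraction argument (as in \cite[Section 6]{HPS}) to produce the unique invariant $C^1$-leaf immersion $\imath_{g'}$ providing the desired branching foliation $\cs_{g'}$.
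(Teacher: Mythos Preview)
Your argument is correct and follows the same approach as the paper, which in fact does not give a separate proof of this claim but simply states it as an immediate consequence of the preceding paragraph (restriction from $\cP$ to $\hat\cP$ and gluing from $\hat\cP$ back to $\cP$) combined with Claim~\ref{claim.graphtransform1}. Your write-up fills in the details the paper leaves implicit: coherence of the pushed family over $\hat\cP$, the gluing, and the $\eps$-size bound.

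One small inaccuracy worth correcting: it is not true that only finitely many $\hat\cP$-plaques \emph{meet} $\rho_x(D)$, since there is one plaque centered at every point of $V$; what is true (and sufficient) is that finitely many suffice to \emph{cover} $\rho_x(D)$, by compactness of $D$ and the uniform lower bound on plaque radii. Your subsequent use of ``the chosen covering'' suggests you had this in mind. Also, in the first paragraph you write $g(W)$ for a subset $W\subset V$; the map acting on $V$ is $\imath_\ast g$, not $g$ (which acts on $M$), though the intended meaning is clear.
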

 
The map $(g')_{\sharp}$ is what is called a \emph{graph transform} and it is a standard argument (see, e.g., \cite[\S 4 and \S 5]{HPS} or \cite[\S 4]{CP}) to show that one can metrize the space of Lipschitz sections with bounded Lipschitz constant to get that $(g')_{\sharp}$ is a contraction and therefore has a unique fixed point. This fixed point can be showed to consist on sections whose graphs are tangent to $E^{cs}_{g'}$ (and therefore it is $C^{1}$). Moreover, the uniqueness of the fixed point is stronger, as every $(g')_{\sharp}$ invariant family of coherent sections must coincide with this fixed point which follows by the fact that $Dg'$ expands uniformly the direction generated by $E$.  

This produces a new $C^1$-leaf immersion $\imath_{g'}$ whose leaves are tangent to $E^{cs}_{g'}$ and which are permuted in the same way as $g$ permutes the leaves of $\imath$. The dynamics inside each leaf differs by something that is smaller than the size of the plaques\footnote{This is how the notion of plaque expansivity arises naturally.}. 

We need to check that leaves do not topologically cross which follows quite directly since by uniqueness one obtains the branching foliation by iterating by $(g')_{\sharp}^n$ the original branching foliation (which corresponds to the family of trivial sections corresponding to $\imath$). Since iterates preserve the local orientation, the limit cannot create crossings. 
\end{proof}

Similar argument allow to get Theorem \ref{teo.uniformHPS} (this is indeed totally contained in \cite[\S 6]{HPS}). We refer to \cite{Martinchich} for a more modern treatment in a more general setting. 

\begin{proof}[Comments on the proof of Theorem \ref{teo.uniformHPS}]
The setup of the proof of Theorem \ref{teo.uniformHPS} is very similar to the one in Theorem \ref{teo.graphtransform} except that instead of normal expansion one has normal hyperbolicity (and naturally one cannot talk about topological crossings in higher codimension, but this is not so relevant for the proof). 

Let us comment on this difference. We emphasize again that this is done in \cite[\S 6]{HPS}, and the only difference is the uniformity of the constants that is not precisely stated there, so we will only sketch the argument very briefly to try to convince the reader that the arguments do not require more than a control on the $C^1$-size of the partially hyperbolic map and the angles between bundles (to be able to construct the plaques and set up the graph transform operator). 

In particular, one needs to first use the stable manifold theorem to construct stable manifolds and unstable manifolds through each plaque; this is done with standard graph transform methods (see \cite[Theorem 6.1(a)]{HPS}). This gives families of two dimensional plaques that now are respectively normally expanded and contracted. One can apply the same arguments as in Theorem \ref{teo.graphtransform} to these families and obtain continuations of these plaques (which will now be coherent only in the center direction as the images of the center stable plaques need not coincide out of the center direction). Intersecting these plaques one obtains the desired result. Again, checking that after cutting the plaques and applying the graph transform gives rise to a new family of plaques that is still coherent involves choosing various scales to check that when plaques intersect, they do it in a coherent way. Moreover, the way the graph transform is made ensures that the new map does not move points much along the center curves with respect to the original map, and gives the existence of the  homeomorphism $\tau\colon V \to V$ which is $C^0$-close to the identity verifying that $(\imath_g)_\ast g(x) = (\imath_{g'})_\ast g' (\tau(x))$ for every $x \in V$.

See \cite[\S 6]{HPS}, in particular \cite[pages 94-100]{HPS} for more details on how the constants are chosen (note that some parts of the proof there refer to \cite[\S 4]{HPS} where the computations about contractions of the graph transforms in the appropriate metrics are performed, an alternative, maybe more modern approach can be found in \cite[\S 4.2]{CP}). 
\end{proof}

{\small \emph{Acknowledgments:} We thank S.~Crovisier and S.~Martinchich for discussions that lead to \S \ref{ss.spaceofcollapsed}. We also thank S.~Alvarez and M.~Martinez for discussions about foliations by hyperbolic leaves. Finally, we would like to thank the two reviewers for their important and detailed comments and suggestions on the paper.}

\end{document}